\newtheorem{thm}{Theorem}[section]
\newtheorem{lem}[thm]{Lemma}
\newtheorem{cor}[thm]{Corollary}
\newtheorem{prop}[thm]{Proposition}
\theoremstyle{definition}
\newtheorem{defn}[thm]{Definition}
\newtheorem{defns}[thm]{Definitions}
\newtheorem{prob}[thm]{Problem}
\newtheorem{notation}[thm]{Notation}
\newtheorem{ex}[thm]{Example}
\newtheorem{exs}[thm]{Examples}
\theoremstyle{remark}
\newtheorem{rem}[thm]{Remark}
\numberwithin{equation}{section}
\newcommand{\thmref}[1]{Theorem~\ref{#1}}
\newcommand{\corref}[1]{Corollary~\ref{#1}}
\newcommand{\secref}[1]{\S\ref{#1}}
\newcommand{\propref}[1]{Proposition~\ref{#1}}
\newcommand{\lemref}[1]{Lemma~\ref{#1}}
\newcommand{\remref}[1]{Remark~\ref{#1}}
\newcommand{\hocolim}{\operatorname*{hocolim}}
\newcommand{\holim}{\operatorname*{holim}}
\newcommand{\colim}{\operatorname*{colim}}
\newcommand{\hofib}{\operatorname*{hofib}}
\newcommand{\hocofib}{\operatorname*{hocofib}}
\newcommand{\Hom}{\operatorname{Hom}}
\newcommand{\Mor}{\operatorname{Mor}}
\newcommand{\Map}{\operatorname{Map}}
\newcommand{\A}{{\mathcal  A}}
\newcommand{\C}{{\mathcal  C}}
\newcommand{\Sp}{{\mathcal  S}}
\newcommand{\T}{{\mathcal  T}}
\newcommand{\U}{{\mathcal  U}}
\newcommand{\V}{{\mathcal  V}}
\newcommand{\D}{{\mathcal  D}}
\newcommand{\sL}{{\mathcal  L}}
\newcommand{\M}{{\mathcal  M}}
\newcommand{\HM}{{\mathcal  H \mathcal M}}
\newcommand{\Qq}{{\mathcal Q}}
\newcommand{\QM}{{\mathcal Q \mathcal M}}
\newcommand{\QU}{{\mathcal Q \mathcal U}}
\newcommand{\HQM}{{\mathcal  H \mathcal Q \mathcal M}}
\newcommand{\HQU}{{\mathcal  H \mathcal Q \mathcal U}}
\newcommand{\Ab}{{\mathcal Ab}}
\newcommand{\Z}{{\mathbb  Z}}
\newcommand{\R}{{\mathbb  R}}
\newcommand{\bL}{{\mathbb L}}
\newcommand{\Sinfty}{\Sigma^{\infty}}
\newcommand{\Oinfty}{\Omega^{\infty}}
\newcommand{\sm}{\wedge}
\newcommand{\ra}{\rightarrow}
\newcommand{\xra}{\xrightarrow}
\newcommand{\la}{\leftarrow}
\newcommand{\xla}{\xleftarrow}
\newcommand{\hra}{\hookrightarrow}
\newcommand{\coker}{\operatorname{coker}}
\newcommand{\im}{\operatorname{im}}
\begin{document}

\title[mod 2 homology of infinite loopspaces]{The mod 2 homology of infinite loopspaces}

\author[Kuhn]{Nicholas J.~Kuhn}
\address{Department of Mathematics \\ University of Virginia \\ Charlottesville, VA 22904}
\email{njk4x@virginia.edu}
\thanks{This research was partially supported by National Science Foundation grant 0967649.}

\author[McCarty]{Jason B.~McCarty}
\address{Department of Mathematics \\ Indiana University of Pennsylvania \\ Indiana, PA 15705}
\email{J.B.McCarty@iup.edu}
\thanks{}
\date{September 30, 2012.}

\subjclass[2000]{Primary 55P47; Secondary 55S10, 55S12, 55T99}

\begin{abstract}    
Applying mod 2 homology to the Goodwillie tower of the functor sending a spectrum $X$ to the suspension spectrum of its 0th space, leads to a spectral sequence for computing $H_*(\Oinfty X;\Z/2)$, which converges strongly when $X$ is 0--connected.   The $E^1$ term is the homology of the extended powers of $X$, and thus is a well known functor of $H_*(X;\Z/2)$, including structure as a bigraded Hopf algebra, a right module over the mod 2 Steenrod algebra $\A$, and a left module over the Dyer--Lashof operations.  This paper is an investigation of how this structure is transformed through the spectral sequence.

Hopf algebra considerations show that all pages of the spectral sequence are primitively generated, with primitives equal to a subquotient of the primitives in $E^1$.

We use an operad action on the tower, and the Tate construction, to determine how Dyer--Lashof operations act on the spectral sequence.  In particular, $E^{\infty}$ has Dyer--Lashof operations induced from those on $E^1$.

We use our spectral sequence Dyer--Lashof operations to determine differentials that hold for any spectrum $X$. The formulae for these universal differentials then lead us to construct an algebraic spectral sequence depending functorially on an $\A$--module $M$.  The topological spectral sequence for $X$  agrees with the algebraic spectral sequence for $H_*(X;\Z/2)$ for many spectra $X$, including suspension spectra and almost all Eilenberg--MacLane spectra.  The $E^{\infty}$ term of the algebraic spectral sequence has form and structure similar to $E^1$, but now the right $\A$--module structure is unstable. Our explicit formula involves the derived functors of destabilization as studied in the 1980's by W.~Singer, J.~Lannes and S.~Zarati, and P.~Goerss.
\end{abstract}

\maketitle


\section{Introduction and main results} \label{introduction}

An infinite loopspace is a space of the form $\Oinfty X$, the 0th space of a fibrant spectrum $X$.  Thus $X$ consists of a sequence of spaces $X_0, X_1, X_2, \dots$, together with homotopy equivalences $X_n \xra{\sim} \Omega X_{n+1}$, and $\Oinfty X = X_0$.

The homology of $X$ is defined by letting $\displaystyle H_*(X) = \colim_n H_{*+n}(X_n)$.
We let all homology be with mod 2 coefficients,  and consider the following basic problem.

\begin{prob} How can one compute $H_*(\Oinfty X)$ from knowledge of $H_*(X)$?
\end{prob}

The graded vector space $H_*(X)$ has a minimum of extra structure: it is an object in $\M$, the category of locally finite right modules over the mod 2 Steenrod algebra $\A$. By contrast, the structure of $H_*(\Oinfty X)$ is much, much richer: it is an object in the category $\HQU$ of restricted Hopf algebras in the abelian category of left modules over the Dyer--Lashof algebra with compatible unstable right $\A$--module structure.

An ideal solution to our problem would be to describe a functor from $\M$ to $\HQU$ whose value on $H_*(X)$ would be $H_*(\Oinfty X)$.  It is not a surprise that such a functor doesn't exist, and we will see examples illustrating this.  However, one punchline of this paper is that one can come surprisingly close.

Our method is to carefully study the left half plane spectral sequence $\{E_{*,*}^r(X)\}$ associated to Goodwillie tower of the functor $X \rightsquigarrow \Sinfty \Oinfty X$.  This converges strongly to $H_*(\Oinfty X)$ when $X$ is $0$--connected, and has an $E^1$ term that is a well known functor of $H_*(X)$, including structure as a primitively generated bigraded Hopf algebra, with Steenrod operations acting vertically on the right, and Dyer--Lashof operations acting on the left and doubling horizontal grading.

It is formal that pages of the spectral sequence will again be primitively generated bigraded Hopf algebras equipped with Steenrod operations, but we prove a more subtle phenomenon: for all $r$, the bigraded module of $E^r$ primitives will be a subquotient of the module of the $E^1$ primitives.  It follows that $d^r$ can be nonzero only when $r = 2^t-2^s$ for some $0\leq s<t$.

We then determine universal differentials. After identifying $d^1$, we deduce how this propagates to give information about higher differentials.  We are able to do this by using the $\Z/2$ Tate construction to reveal how Dyer--Lashof operations are reflected in the spectral sequence associated to a tower with an `action' of an $E_{\infty}$ operad. Another consequence is that the module of $E^{\infty}$ primitives has a left action by Dyer--Lashof operations induced from the action on $E^1$, though, curiously, this is not true for the intervening pages.

Guided by our formula for universal differentials, we then construct, for $M \in \M$, an algebraic spectral sequence depending functorially on $M$.  The topological spectral sequence for $X$  agrees with the algebraic spectral sequence for $H_*(X;\Z/2)$ for many spectra $X$, including suspension spectra and almost all Eilenberg--MacLane spectra, and may be a subquotient in general. Our algebraic functor $E^{alg,\infty}_{*,*}(M)$ takes values in the category $\HQU$, and is built out of the derived functors of `destabilization' which were the subject of much research in the 1980's by W.~Singer \cite{singer}, J.~Lannes and S.~Zarati \cite{lz}, and P.~Goerss \cite{goerss}.

We now introduce our cast of characters, then describe our results in more detail.

\subsection{The tower for $\Sinfty_+ \Oinfty X$.}

We let $\T$ denote the category of based topological spaces and $\Sp$ the category of $S$--modules as in \cite{ekmm}.  The suspension spectrum functor $\Sinfty: \T \ra \Sp$ and the 0th space functor $\Oinfty: \Sp \ra \T$ induce an adjoint pair on homotopy categories.  We use the notation $\Sinfty_+ Z$ for the suspension spectrum of $Z_+$, the union of the space $Z$ with a disjoint basepoint.  $\Sinfty_+ Z$ comes with a natural augmentation map $\Sinfty_+ Z \ra \Sinfty_+ * = S$ to the sphere spectrum.

T.~Goodwillie's general theory of the calculus of functors \cite{goodwillie3}, applied to the endofunctor of $\Sp$ sending $X$ to $\Sinfty_+ \Oinfty X$, yields a natural tower $P(X)$ of fibrations augmented over $S$:
\begin{equation*}
\xymatrix{&&& \vdots \ar[d] \\
&&& P_3(X) \ar[d] \\
&&& P_2(X) \ar[d] \\
\Sinfty_+ \Oinfty X \ar[rrr]^-{e_1}  \ar[urrr]^-{e_2} \ar[uurrr]^-{e_3} &&& P_1(X).}
\end{equation*}

Basic properties include the following.
\begin{itemize}
\item $P_1(X)$ identifies with the product of spectra $X \times S$, and the map induced by  $e_1$ on fibers of the augmentation corresponds to the evaluation $ \Sinfty \Oinfty X \ra X$.
\item The fiber of the map $P_d(X) \ra P_{d-1}(X)$ is naturally equivalent to the spectrum $ D_{d}X = (X^{\sm d})_{h\Sigma_d}$, the $d$th extended power of $X$.
\item If $X$ is $0$--connected, then $e_j$ is $j$--connected.
\end{itemize}

We occasionally make use of the reduced tower $\tilde P(X)$ defined by letting $\tilde P_d(X)$ be the fiber of the augmentation $P_d(X) \ra S$.  There is a natural equivalence $P_d(X) \simeq \tilde P_d(X) \times S$ for all $d$.

\subsection{The spectral sequence}
Applying mod 2 homology to the tower $P(X)$ yields a left half plane spectral sequence $\{E^r_{*,*}(X)\}$.

\begin{prop} \label{SS prop} The spectral sequence satisfies the following properties. \\

\noindent{\bf (a)} \ It converges strongly  to $H_*(\Oinfty X)$ when $X$ is $0$--connected.  \\

\noindent{\bf (b)} \  $E^1_{-d,d+*}(X) = H_*(D_dX)$. \\

\noindent{\bf (c)} \ The Steenrod algebra $\A$ acts on the columns of the spectral sequence. \\

\noindent{\bf (d)} \ Dyer--Lashof operations $Q^i$, for all $i \in \Z$, act on $E^1_{*,*}(X)$, where they take the form
$Q^i: E^1_{-d, d+j} = H_j(D_dX) \ra H_{j+i}(D_{2d}X) = E^1_{-2d, 2d+i+j}$. \\

\noindent{\bf (e)} \  The spectral sequence is a spectral sequence of Hopf algebras.  The product and coproduct on $E^{\infty}$ are induced by the H-space product and diagonal on $\Oinfty X$, while the product and coproduct on $E^1$ are induced by the multiplication maps $D_bX \sm D_cX \ra D_{b+c}X$ and the transfer maps $D_{b+c}X \ra D_bX \sm D_cX$ associated to the subgroup inclusions $\Sigma_b \times \Sigma_c \hookrightarrow \Sigma_{b+c}$.
\end{prop}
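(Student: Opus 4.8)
The plan is to assemble the five properties from standard facts about Goodwillie calculus, the structure of extended powers, and the general behavior of homology spectral sequences of towers. For part (a), strong convergence follows from the connectivity estimate in the list of basic properties: when $X$ is $0$--connected, $e_j$ is $j$--connected, so $P_j(X) \ra P_{j-1}(X)$ has highly connected fiber $D_jX$ (indeed $D_jX$ is $(j-1)$--connected when $X$ is $0$--connected), and hence in each total degree only finitely many columns contribute. This gives the usual conditional-convergence-plus-vanishing criterion, so the spectral sequence converges strongly to $H_*(\hocolim_d P_d(X)) = H_*(\Sinfty_+ \Oinfty X) = H_*(\Oinfty X)$ (the $+$ only adds a free summand we can discard). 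For part (b), the fiber of $P_d(X)\ra P_{d-1}(X)$ is $D_dX$, so the associated graded of the filtration on $H_*(\Oinfty X)$ has the $d$th filtration quotient equal to $H_*(D_dX)$; placing $D_dX$ in filtration $-d$ and using that $E^1$ of the homology spectral sequence of a tower is the homology of the successive fibers gives $E^1_{-d,\,d+*}(X) = H_*(D_dX)$, with the internal grading shifted by $d$ so that total degree is the topological degree.

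Parts (c), (d), and (e) are all instances of the principle that natural transformations of towers induce operations on the spectral sequence. For (c), the Steenrod operations act on $H_*(-)$ of every spectrum in the tower compatibly with all the maps, hence act on each column $E^r_{-d,*}$; since $\Sp^i$ preserves the filtration (it does not move filtration degree), it commutes with the differentials and passes to all pages. For (d), one uses the well-known multiplicative structure on extended powers: the maps $D_bX \sm D_cX \ra D_{b+c}X$ induced by $\Sigma_b\times\Sigma_c\hookrightarrow\Sigma_{b+c}$, together with the diagonal $\Sigma_d\hookrightarrow\Sigma_{2d}$ (as the wreath product with $\Sigma_2$ in the appropriate sense) that is the source of the Dyer--Lashof operations $Q^i\colon H_j(D_dX)\ra H_{j+i}(D_{2d}X)$. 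Concretely, $D_{2d}X$ receives a map from $D_2(D_dX)$ via the operad-style structure map $D_2 D_d \ra D_{2d}$, and $Q^i$ on $H_*(D_dX)$ composed with this map lands in $H_*(D_{2d}X)$; tracking gradings gives exactly the stated form, doubling the homological (filtration) degree. For (e), the Hopf algebra structure on $E^1$ comes from the multiplication maps and the transfer (Kahn--Priddy) maps $D_{b+c}X \ra D_bX\sm D_cX$, which are compatible across the tower, so they induce a product and coproduct on the whole spectral sequence; on $E^\infty$ these must agree with the maps induced by the loop-space product and diagonal on $\Oinfty X$ because the tower maps $e_j$ are maps of (homotopy) $H$-spaces / coalgebras and the identification of the bottom of the tower with $X\times S$ is multiplicative.

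The main obstacle is (d): one must pin down that the operation on the spectral sequence is genuinely the Dyer--Lashof operation and has the claimed bidegree, which requires being careful about what ``operad action on the tower'' means and how $D_2$ of a stage of the tower maps into a later stage. This is where the real content lies — identifying the structure map $D_2(D_dX)\ra D_{2d}X$ and checking it is compatible with the tower filtration (it multiplies filtration by $2$, matching $-d \mapsto -2d$), and then invoking the standard computation of $H_*$ of extended powers to see that the induced map on homology is exactly $x \mapsto Q^i x$ up to the usual indexing conventions. The remaining parts are essentially bookkeeping once the relevant natural transformations of towers are in hand; the connectivity input for (a) and the fiber identification for (b) are quoted directly from the basic properties listed above.
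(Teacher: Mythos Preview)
Your proposal is correct and matches the paper's own treatment: the paper gives no formal proof but remarks that (a)--(c) are evident from the listed properties of the tower, while (d) and (e) are extracted from \cite{ak}, where the operad action on the tower and the Hopf algebra structure on the spectral sequence are established.

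One small over-reading worth flagging: you identify (d) as the main obstacle and worry about how $D_2$ of a stage of the \emph{tower} maps into a later stage. But as stated, (d) is purely a claim about $E^1$, and for that you need only the fiber-level map $D_2 D_d X \to D_{2d} X$ together with the classical Dyer--Lashof operations on $H_*(D_d X)$; no tower compatibility is required. The genuinely delicate question of how these operations propagate through the spectral sequence --- which \emph{does} require the operad action on the tower, and in fact its Tate refinement --- is deferred to \thmref{theorem 1}, not handled here. Likewise, for (e) the assertion that the multiplication and transfer maps lift compatibly to the tower is the real content of \cite{ak}; your phrase ``compatible across the tower'' hides this, but so does the paper's one-line citation.
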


The first three listed properties are evident from the properties of the tower.  Using G.~Arone's explicit model for this tower \cite{arone}, further properties were explored in \cite{ak}, including property (e).  There it was also shown that the action of the little cubes operad $\C_{\infty}$ on the infinite loop space $\Oinfty X$ induces a corresponding action on the tower.  This leads to the Dyer--Lashof operations of property (d).
How these operations correspond to the Dyer-Lashof operations on $H_*(\Oinfty X)$ at the level of $E^{\infty}_{*,*}(X)$, and how they act on the rest of the spectral sequence is part of the content of \thmref{theorem 1} below.

\subsection{Lots of categories and a global description of $E^1$}

We need to describe our spectral sequence $E^1$ term in a more global and functorial manner.
We assume some familiarity with Dyer--Lashof operations and Steenrod operations, but see \secref{homology section} for detailed references and a bit more detail.  Explicit formulae for the Nishida relations and the Dyer--Lashof Adem relations are given in \secref{top def section}.

We introduce various algebraic categories.

\begin{itemize}
\item $\M$ is the category of locally finite right $\A$--modules.  The Steenrod squares go down in degree: given $x \in M \in \M$, $|x Sq^i| = |x|-i$.  A right $\A$--module $M$ is locally finite if, for all $x \in M$, $x \cdot \A$ is finite dimensional over $\Z/2$.
\item
$\U$ is the full subcategory of $\M$ consisting of modules satisfying the unstable condition: $xSq^i = 0$ whenever $2i>|x|$.
\item $\Qq$ is the category of graded vector spaces $M$ acted on by Dyer--Lashof operations $Q^i: M_d \ra M_{d+i}$, for $i \in \Z$, satisfying the Adem relations and the unstable condition: $Q^ix = 0$ whenever $i<|x|$.
\item $\QM$ is the full subcategory of $\M \cap \Qq$ consisting of objects whose Dyer--Lashof structure is intertwined with the Steenrod structure via the Nishida relations.  \item $\QU = \QM \cap \U$.
\end{itemize}
All these categories are abelian, and admit tensor products, via the Cartan formula for both Steenrod and Dyer--Lashof operations.  Then we define various categories of Hopf algebras.
\begin{itemize}
\item $\HM$ is the category of bicommutative Hopf algebras in $\M$.
\item $\HQM$ is the category of bicommutative Hopf algebras in $\QM$ satisfying the Dyer--Lashof restriction axiom: $Q^{|x|}x = x^2$.  \item $\HQU = \HQM \cap \U$. \end{itemize}
We also need two `free' functors.
\begin{itemize}
\item  $\mathcal R_*: \M \ra \QM$ is left adjoint to the forgetful functor.  Explicitly,
$\mathcal R_*M = \bigoplus_{s=0}^{\infty} \mathcal R_s M$ where $\mathcal R_s: \M \ra \M$ is given by
$$\mathcal R_sM = \langle Q^Ix \ | \ l(I) = s,  x \in M\rangle/(\text{unstable and Adem relations}).$$
Here, if $I = (i_1,\dots,i_s)$, $Q^Ix = Q^{i_1}\cdots Q^{i_s}x$, and $l(I) = s$.
\item $U_{\Qq}: \QM \ra \HQM$ is left adjoint to the functor taking an object $H \in \HQM$ to its module $PH$ of primitives.  Explicitly,
    $$ U_{\Qq}(M) = S^*(M)/(Q^{|x|}x - x^2),$$
where $S^*(M)$ is the free commutative algebra generated by $M$, and the coalgebra structure is determined by making $M$ primitive.
\end{itemize}

The classic calculation of the mod 2 homology of extended powers can be interpreted as saying the following.

\begin{prop}  The natural maps $\mathcal R_s(H_*(X)) \ra H_*(D_{2^s}(X))$ are inclusions, and induce an isomorphism
$$  U_{\Qq}(\mathcal R_*(H_*(X))) = \bigoplus_{d=0}^{\infty} H_*(D_dX)$$
as  objects in $\HQM$.
\end{prop}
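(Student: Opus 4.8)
The plan is to obtain the asserted isomorphism as a repackaging of the classical computation of the mod $2$ homology of extended powers (due to Dyer--Lashof, Kudo--Araki, Nishida, and, for spectra and arbitrary internal gradings, Bruner--May--McClure--Steinberger). The substance of the argument is to write down a natural morphism in $\HQM$ and then to reduce the verification that it is an isomorphism to the case of sphere spectra. Write $R(X) = \bigoplus_{d\geq 0} H_*(D_dX)$. Combining \propref{SS prop}(d),(e) with the standard facts about $D_2$ shows that $R(X)$ is an object of $\HQM$: the bigraded bicommutative Hopf algebra structure is that of \propref{SS prop}(e); the Dyer--Lashof operations $Q^i\colon H_*(D_dX)\to H_*(D_{2d}X)$ come from the maps $D_2D_dX\to D_{2d}X$ induced by $\Sigma_2\wr\Sigma_d\hra\Sigma_{2d}$ and satisfy the Adem relations, the unstable axiom $Q^iy=0$ for $i<|y|$, and the restriction axiom $Q^{|x|}x=x^2$ (the last two because for $D_2$ of a spectrum $Q^iy$ is supported on the bottom cell and $Q^{|x|}x$ is the image of $x\otimes x$ under $X\sm X\to D_2X$, with the general case following by naturality); and the Nishida relations hold by the classical computation of the $\A$--action. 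Now $H_*(X)=H_*(D_1X)$ is a sub--$\A$--module of $R(X)$ consisting of primitives, since it sits in Hopf degree $1$, and the Cartan formula for Dyer--Lashof operations gives, for a primitive $x$,
$$\psi(Q^ix) = \sum_{a+b=i}\left(Q^ax\otimes Q^b1 + Q^a1\otimes Q^bx\right) = Q^ix\otimes 1 + 1\otimes Q^ix,$$
since $Q^b1=0$ for $b>0$ and $Q^01=1$; thus $PR(X)$ is a sub-object of $R(X)$ in $\QM$. Hence the $\A$--module inclusion $H_*(X)\hra PR(X)$ factors, via the universal property of $\mathcal R_*$ (left adjoint to $\QM\to\M$) and then of $U_{\Qq}$, through a natural morphism
$$\Phi_X\colon U_{\Qq}(\mathcal R_*(H_*(X)))\longrightarrow R(X)$$
in $\HQM$ whose composite with the inclusion $\mathcal R_s(H_*(X))\hra U_{\Qq}(\mathcal R_*(H_*(X)))$ is the natural map $\mathcal R_s(H_*(X))\to H_*(D_{2^s}X)$ given by iterated Dyer--Lashof operations.

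\emph{Reduction to spheres.} Since $\Phi_X$ is a morphism in $\HQM$, it is an isomorphism if and only if the underlying map of bigraded $\F_2$--vector spaces is. This underlying map, together with all of the Hopf algebra and Dyer--Lashof structure on its source and target (but \emph{not} the right $\A$--action), is a functor of $H_*(X)$ regarded as a mere graded vector space, and is both exponential and continuous: a decomposition $H_*(X)=V\oplus V'$ gives $\mathcal R_*(V\oplus V')=\mathcal R_*V\oplus\mathcal R_*V'$ and $U_{\Qq}(N\oplus N')=U_{\Qq}N\otimes U_{\Qq}N'$, because $\mathcal R_*$ and $U_{\Qq}$ are left adjoints and in $\HQM$ the coproduct is the tensor product; correspondingly a choice of $\F_2$--basis $\{x_\alpha\}$ of $H_*(X)$ splits the $H\F_2$--module $H\F_2\sm X$ as $\bigvee_\alpha\Sigma^{|x_\alpha|}H\F_2$, hence splits $H\F_2\sm D_dX\simeq D^{H\F_2}_d(H\F_2\sm X)$ as $H\F_2\sm D_d\bigl(\bigvee_\alpha S^{|x_\alpha|}\bigr)$ compatibly with all of the maps defining $R(-)$ and $\Phi$; and all four functors involved commute with filtered homotopy colimits. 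Consequently, replacing $X$ by a wedge of sphere spectra with the same mod $2$ homology and writing that wedge as a filtered colimit of finite subwedges, we are reduced to showing that $\Phi_{S^n}$ is an isomorphism for every $n\in\Z$.

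\emph{The sphere case, and the inclusions.} For $X=S^n$ we have $R(S^n)=\bigoplus_d H_*(D_dS^n)$, and the classical computation of the homology of the spectra $D_dS^n$ says precisely that $R(S^n)$ is the free bigraded--commutative algebra on the length--$s$ admissible monomials $Q^I\iota_n$ of excess $\geq n$ (which lie in Hopf degree $2^s$), subject only to $Q^n\iota_n=\iota_n^2$, with the evident Dyer--Lashof action and with $\iota_n$ and all of its admissible iterates primitive --- that is, $R(S^n)=U_{\Qq}(\mathcal R_*(\Sigma^n\F_2))$, and $\Phi_{S^n}$ is this identification. (When $X$ is not bounded below one does not compare dimensions, but instead observes that the same classical computation gives surjectivity of $\Phi_X$, because the total extended power is generated as an algebra--with--Dyer--Lashof--operations by $H_*(X)$, together with injectivity, because the length--$s$ admissible monomials are linearly independent in $H_*(D_{2^s}X)$.) This last fact also yields the remaining assertion of the proposition: the natural map $\mathcal R_s(H_*(X))\to H_*(D_{2^s}X)$ is injective, being $\Phi_X$ restricted to the subspace $\mathcal R_s(H_*(X))$ of $U_{\Qq}(\mathcal R_*(H_*(X)))$.

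\emph{The main obstacle.} The delicate step is the claim used in the reduction, namely that all of $R(X)$ apart from its right $\A$--action is a functor of the graded vector space $H_*(X)$ which is exponential under wedge sums. One must check that the Hopf, Dyer--Lashof, and restriction structure on $R(X)$ is assembled entirely from maps of $H\F_2$--modules --- homotopy orbits of $H\F_2$--module smash powers, multiplication and transfer maps, and wreath--product inclusions --- so that a basis of $H_*(X)$, which splits $H\F_2\sm X$ as an $H\F_2$--module but not its $\A_*$--comodule structure, splits $R(X)$ accordingly and compatibly with $\Phi$. Granting this, the proposition collapses to the classical homology of $D_dS^n$; without it, one is forced to rerun that classical argument --- via a cellular filtration of $X$ and the homology of iterated wreath products of $\Sigma_2$ --- directly in the language of $\HQM$.
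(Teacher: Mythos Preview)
The paper does not actually prove this proposition: it is introduced with the sentence ``The classic calculation of the mod 2 homology of extended powers can be interpreted as saying the following,'' and the supporting references (to McClure's computation of $H_*(\mathbb PX)$ in \cite{bmms}) appear later in \secref{homology section} without being assembled into an argument. Your proposal therefore supplies considerably more than the paper does.

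Your approach is correct and is the natural way to make the statement precise. The construction of $\Phi_X$ via the adjunctions defining $\mathcal R_*$ and $U_{\Qq}$ is clean, and the reduction to spheres via the $H\Z/2$--module splitting $H\Z/2\sm X\simeq\bigvee_\alpha\Sigma^{|x_\alpha|}H\Z/2$ is exactly right: the change-of-rings identity $H\Z/2\sm D_dX\simeq D_d^{H\Z/2}(H\Z/2\sm X)$ that you need is one of the paper's own lemmas, and it is what guarantees that all of the Hopf and Dyer--Lashof structure on $R(X)$, but not the right $\A$--action, is visible at the level of $H\Z/2$--modules. Your explicit flagging of this as the ``main obstacle'' is apt; the paper sidesteps it by simply citing the classical computation as a black box. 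The sphere case is then indeed the content of \cite[Thm.IX.2.1]{bmms}, which gives both surjectivity (generation by $H_*(X)$ under products and Dyer--Lashof operations) and injectivity (linear independence of admissibles).

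One small remark: in your primitivity argument for $Q^ix$ you invoke the Cartan formula for the coproduct, which the paper records as a separate lemma in \secref{homology section}; you are implicitly using that lemma rather than the Cartan formula for products.
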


We begin our study of the spectral sequence with the following.

\begin{thm} \label{global ss thm}
\noindent{\bf (a)}\  $H_*(\Oinfty X)$ is an object in $\HQU$. \\

\noindent{\bf (b)} \  $ E^1_{*,*}(X) = U_{\Qq}(\mathcal R_*(H_*(X)))$
as an object in $\HQM$, and under this identification, the module of primitives $\mathcal R_s(H_*(X))$ is contained in $E^1_{-2^s,2^s+*}(X)$. \\

\noindent{\bf (c)} \ Steenrod operations act vertically, while Dyer--Lashof operations double the horizontal degree.\\

\noindent{\bf (d)} \ Each $E^r_{*,*}(X)$ is an object in $\HM$, and each $d^r$ is $\A$--linear and both a derivation and coderivation. \\

\noindent{\bf (e)} \ Each $E^r_{*,*}(X)$ is primitively generated, and its bigraded module of primitives $V^r_{*,*}(X)$ will be an $\A$--module subquotient of $\mathcal R_*(H_*(X))$.  \\

\noindent{\bf (f)} \ The only possible nonzero differentials will be $d^r$ when $r = 2^t-2^s$, with $t>s\geq 0$, and $d^{2^t-2^s}$ will be determined by its restriction to the primitives in $E^{2^t-2^s}_{-2^s,*}(X)$.
\end{thm}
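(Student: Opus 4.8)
The plan is to prove the six assertions in order, feeding each from \propref{SS prop}, from the classical structure of $H_*$ of an infinite loopspace, and from the structure theory of connected bicommutative Hopf algebras over $\F_2$. Part (a) records only classical input: $H_*(\Oinfty X)$ carries Dyer--Lashof operations (Araki--Kudo, Dyer--Lashof) with their Adem relations and lower instability $Q^ix=0$ for $i<|x|$, a right $\A$-module structure with its own lower instability, the Nishida relations intertwining the two, a bicommutative Hopf algebra structure from the loop sum and diagonal, and the restriction $Q^{|x|}x=x^2$, i.e.\ it lies in $\HQU$. Part (b) is the identification of $E^1$: \propref{SS prop}(b) gives $E^1_{-d,d+*}(X)=H_*(D_dX)$, \propref{SS prop}(e) identifies the product and coproduct on $E^1$ with those induced by the multiplication maps $D_bX\sm D_cX\to D_{b+c}X$ and the transfers $D_{b+c}X\to D_bX\sm D_cX$, and the classical computation of the homology of extended powers recalled above then gives $E^1_{*,*}(X)\cong U_{\Qq}(\mathcal R_*(H_*X))$ in $\HQM$; since $U_{\Qq}(M)=S^*(M)/(Q^{|x|}x-x^2)$ has module of primitives precisely $M$ (a Poincar\'e--Birkhoff--Witt--type fact), $V^1_{*,*}(X)=\mathcal R_*(H_*X)$, and as $H_*(X)=H_*(D_1X)$ sits in horizontal degree $-1$ and each $Q^i$ doubles horizontal degree, the summand $\mathcal R_s(H_*X)$ lies in column $-2^s$. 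Part (c) is then immediate from \propref{SS prop}(c) and (d).

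For (d): $d^r$ is $\A$-linear because mod $2$ homology is a functor from spectra to locally finite right $\A$-modules and $P(X)$ is a tower of spectra, so $\{E^r_{*,*}(X)\}$ is a spectral sequence of $\A$-modules. That each $d^r$ is simultaneously a derivation and a coderivation is the Leibniz rule, in both variances, for the pairings of $\{E^r_{*,*}(X)\}$ induced by the loop sum and the diagonal on $\Oinfty X$ --- precisely what ``spectral sequence of Hopf algebras'' in \propref{SS prop}(e) encodes (see \cite{ak}). Since $d^r$ is both a derivation and a coderivation, $E^{r+1}_{*,*}(X)=H(E^r_{*,*}(X),d^r)$ inherits a connected bicommutative Hopf algebra structure (the homology of a differential bicommutative Hopf algebra), compatible with the $\A$-action; this propagates from $E^1\in\HM$ by induction on $r$.

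The heart of the matter is (e), proved by induction on $r$: $E^r_{*,*}(X)$ is primitively generated and $V^r_{*,*}(X)$ is an $\A$-module subquotient of $\mathcal R_*(H_*X)$, the base case being (b). The structure theory enters through the criterion: a connected bicommutative Hopf algebra $H$ of finite type over $\F_2$ is primitively generated if and only if the squaring map on $H^*$ vanishes in positive degrees (equivalently, the Verschiebung of $H$ is trivial there). Granting this, $E^{r+1}_{*,*}(X)$ is again primitively generated for the formal reason that $(E^{r+1})^*=H((E^r)^*,(d^r)^*)$ and any positive-degree square $[\alpha]^2=[\alpha^2]$ in $(E^{r+1})^*$ vanishes because $\alpha^2=0$ in $(E^r)^*$ by the inductive hypothesis. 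For the primitives, $d^r$ is a coderivation, hence carries $V^r_{*,*}(X)=P(E^r_{*,*}(X))$ into itself, defining a differential $\delta^r$ on $V^r$; I claim the natural map $H(V^r_{*,*}(X),\delta^r)\to V^{r+1}_{*,*}(X)$ is surjective --- every primitive class in $E^{r+1}$ is represented by a primitive cycle in $E^r$ --- which I would prove using the coradical filtration of $E^r_{*,*}(X)$ (a filtration by subcomplexes, since $d^r$ is a coderivation) to push an arbitrary primitive representative down into the primitives. Then $V^{r+1}_{*,*}(X)$ is a quotient of the subquotient $H(V^r_{*,*}(X),\delta^r)$ of $V^r_{*,*}(X)$, hence an $\A$-module subquotient of $\mathcal R_*(H_*X)$. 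Finally (f) is bookkeeping: $V^r_{*,*}(X)$ is concentrated in the columns $-2^s$, $s\geq 0$, and $d^r$ has bidegree $(-r,r-1)$ and preserves primitives, so $d^r|_{V^r}$ vanishes unless $r=2^t-2^s$ with $t>s\geq 0$, in which case a short $2$-adic computation shows it can be nonzero only from column $-2^s$ to column $-2^t$; since $E^r_{*,*}(X)$ is primitively generated and $d^r$ a derivation, $d^{2^t-2^s}$ is determined by $d^{2^t-2^s}|_{V^{2^t-2^s}}$, that is, by its restriction to the primitives in $E^{2^t-2^s}_{-2^s,*}(X)$.

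I expect the genuine obstacle to be the primitives part of (e): the passage of ``primitively generated'' through the pages is formal once one has the criterion above, but controlling how the module of primitives changes --- concretely, that the comparison map $H(P(E^r),\delta^r)\to P(E^{r+1})$ is onto --- requires genuinely understanding which classes in $H(E^r,d^r)$ are primitive, and it is here, rather than in the Hopf-algebra formalism, that the work lies; the remaining delicate but routine point is to verify that the classical structure theory, stated for singly-graded Hopf algebras of finite type, applies in the present bigraded, $\A$-linear, locally finite setting.
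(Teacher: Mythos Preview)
Your treatment of (a)--(d) and the deduction of (f) from (e) matches the paper. The gap is in (e), exactly where you flagged it: the surjectivity of
\[
H\bigl(P(E^r),\delta^r\bigr)\longrightarrow P(E^{r+1})
\]
is \emph{false} for general primitively generated differential Hopf algebras over $\F_2$, so the coradical-filtration argument you sketch cannot succeed without further input. The paper gives the minimal counterexample: let $V=\langle x,y\rangle$ with $q=0$ and $dx=y$, so $U_q(V)=\Lambda(x,y)$. Then $H(P,\delta)=0$, but $H(\Lambda(x,y),d)=\Lambda(xy)$ has a nonzero primitive $xy$ that cannot be represented by a primitive cycle. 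More generally, the identity $d(x\,dx)=q(dx)$ in $U_q(V)$ manufactures new primitives from $\ker q\cap\operatorname{im} d$.

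What the paper uses, and what your proposal is missing, is the extra hypothesis that the squaring map on primitives is \emph{injective} above the target of the differential. Concretely: the primitives sit in columns $-2^k$ with $q:V_{2^k}\to V_{2^{k+1}}$ the bottom Dyer--Lashof operation $x\mapsto Q^{|x|}x$, and this is monic on $\mathcal R_kH_*(X)$ for all $k$. The paper packages this as condition $(\diamondsuit_t)$ and proves (\propref{Hopf alg prop}, via a Koszul-type computation in \lemref{useful lemma}) that under $(\diamondsuit_t)$ one has $H(U_q(V),d)\cong U_q(\bar V)$ for an \emph{explicit} subquotient $\bar V$ of $V$, with $(\diamondsuit_t)$ inherited by $\bar V$. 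This both establishes the subquotient claim and feeds the induction. Your coradical approach might be repairable once you build in the injectivity of $q$ --- it is precisely this that kills the obstruction class $x\,dx$ --- but as written the argument does not go through.
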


The first four properties listed here are just restatements of parts of the last two propositions.  We will see that property (e) follows from these: it is standard that each $E^r$ will be primitively generated since this is true for $E^1$, but in our case we can also control where the $E^r$ primitives occur.  See \propref{Hopf alg prop} and the subsequent discussion.  Property (f) follows from (e), as $d^r$ will send primitives to primitives. \\

\begin{rem}  The careful reader may note that $H_*(\Oinfty X)$ satisfies one more condition than has been described: the dual of the classic restriction axiom for unstable $\A$--algebras, $Sq^{|x|}x = x^2$.  This property is not preserved by the filtration on $H_*(\Oinfty X)$.  The good news is then that this extra structure is available to be used to help determine extension problems.
\end{rem}

\subsection{Universal differentials}

We now identify universal structure on the spectral sequence.

\begin{thm} \label{theorem 1}  For all spectra $X$, the following hold in $\{E^r_{*,*}(X)\}$. \\

\noindent{\bf (a)} For all $x \in H_*(X)$, $\displaystyle d^1(x) = \sum_{i\geq 0} Q^{i-1}(xSq^{i})$.\\

\noindent{\bf (b)} If $y \in H_*(D_dX)$ lives to $E^r$, and $d^r(y)$ is represented by $z \in H_*(D_{d+r}X)$, then $Q^iy \in H_*(D_{2d}X)$ lives to $E^{2r}$, and $d^{2r}(Q^iy)$ is represented by $Q^i(z) \in H_*(D_{2d+2r}X)$.   \\

\noindent{\bf (c)} If $y \in H_*(D_dX)$ represents $z \in H_*(\Oinfty X)$ in $E^{\infty}_{-d,*}(X)$, then $Q^iy \in H_*(D_{2d}X)$ represents $Q^iz \in H_*(\Oinfty X)$ in $E^{\infty}_{-2d,*}(X)$.
\end{thm}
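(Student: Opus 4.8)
The plan is to prove all three parts of \thmref{theorem 1} simultaneously, since part (c) is really the "$E^\infty$ shadow" of part (b), and part (b) is an inductive consequence of part (a) together with a multiplicativity argument for Dyer--Lashof operations on the spectral sequence. The starting point is the observation, recalled in the excerpt from \cite{ak}, that the little cubes operad $\C_\infty$ acts on the Goodwillie tower $P(X)$ compatibly with its action on $\Oinfty X$. Concretely, for each $k$ there is a map of towers $(E\Sigma_k)_+ \sm_{\Sigma_k} P(X)^{\sm k} \to P(X)$, and in particular a map $(E\Sigma_2)_+ \sm_{\Sigma_2} P(X)^{\sm 2} \to P(X)$; applying mod 2 homology and passing to the associated graded of the tower filtration gives, on $E^1$, exactly the Dyer--Lashof structure of \propref{SS prop}(d). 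The goal is to see what this does on all pages $E^r$.

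The key technical device is the $\Z/2$-Tate construction, exactly as flagged in the introduction. Rather than working with the homotopy orbits $(-)_{h\Sigma_2}$ directly, one filters $(E\Sigma_2)_+ \sm_{\Sigma_2} P(X)^{\sm 2}$ by the skeletal filtration of $E\Sigma_2 = S^\infty$, which interacts with the tower filtration; the Tate construction $P(X)^{t\Sigma_2}$ captures the "stable" part of this where Dyer--Lashof operations $Q^i$ for all $i \in \Z$ (including negative $i$) are defined, matching the indexing $i \in \Z$ in \propref{SS prop}(d). First I would set up the bifiltered object coming from (skeletal filtration of $S^\infty$) $\times$ (Goodwillie tower), take homology, and extract the two spectral sequences. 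Comparing the two orders of taking associated graded, one finds that a class $y \in E^r_{-d,*}(X)$ surviving to $E^r$ contributes a class $Q^i y \in E^?_{-2d,*}(X)$, and the crucial bookkeeping is that a $d^r$ in one filtration direction is "squared" to a $d^{2r}$ in the combined filtration because the extended power $D_2$ doubles the tower-filtration degree ($D_d X \leadsto D_{2d} X$), while the skeletal filtration of $S^\infty$ contributes the $Q^i$ with a shift. This is where part (b)'s doubling "$d^r \leadsto d^{2r}$" and the representability of $d^{2r}(Q^i y)$ by $Q^i(z)$ come out.

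For part (a), I would compute $d^1$ directly. By \thmref{global ss thm}(b), $H_*(X) = \mathcal R_1(H_*(X))$ sits as the primitives in column $E^1_{-2,2+*}(X)$... wait — more precisely $H_*(X)$ itself is the primitives in $E^1_{-1,1+*}(X)$, the $s=0$ summand $\mathcal R_0(H_*(X))$, and the target of $d^1$ on it lies in $E^1_{-2, 1+*}(X) = H_*(D_2 X)$. The map $e_1: \Sinfty_+\Oinfty X \to P_1(X) = (X\times S)$ and the next stage $P_2(X)$ with fiber $D_2 X$ give a first $k$-invariant $P_1(X) \to \Sigma D_2 X$; identifying the induced map on homology with $\sum_{i\ge 0} Q^{i-1}(xSq^i)$ is a computation I would do by reducing to the universal case, i.e.\ to $X = \Sigma^\infty_+ Y$ or to Eilenberg--MacLane spectra where $H_*(\Oinfty X)$ is completely known (this is the "unstable Dyer--Lashof $=$ free" situation), and matching the known attaching-map data; the sum $\sum_i Q^{i-1}(xSq^i)$ is recognizable as the class dual to the relation between the Steenrod and Dyer--Lashof actions, closely related to the "Singer differential" that also governs the algebraic spectral sequence later in the paper. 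Then the general $d^1$ follows because $d^1$ is a derivation and coderivation (\thmref{global ss thm}(d)) hence determined on the algebra generators, which are the primitives $\mathcal R_*(H_*(X))$, and on higher $\mathcal R_s$ one uses part (b)/the operad action to propagate from $\mathcal R_0$.

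The main obstacle I expect is the careful comparison of the two filtrations and keeping the Tate construction honest: one must check that the relevant spectral sequences converge (which on the $\Z/2$-Tate side is a completion statement, and for the tower requires $0$-connectivity or a limiting argument for general $X$), that the norm cofiber sequence relating homotopy orbits, fixed points and the Tate object is compatible with the Goodwillie filtration, and — most delicately — that the "squaring" of differentials $d^r \mapsto d^{2r}$ is not merely a numerology of filtration degrees but is realized by an actual chain-level operation, so that the representative of $d^{2r}(Q^i y)$ is genuinely $Q^i$ applied to the representative of $d^r(y)$ rather than that plus correction terms from lower filtration. Controlling those potential correction terms — showing they vanish or are absorbed — is the heart of the argument, and is exactly the point where the full strength of the operad action on the tower (not just the $\Sigma_2$-equivariant structure on $X^{\sm 2}$) is needed. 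Part (c) is then obtained by passing to $E^\infty$ in part (b), using strong convergence (\propref{SS prop}(a)) to lift the statement to the actual Dyer--Lashof action on $H_*(\Oinfty X)$ via the compatibility of the operad action on the tower with the $\C_\infty$-action on $\Oinfty X$.
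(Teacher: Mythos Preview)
Your approach to part (a) is close to the paper's: both compute the connecting map $\delta: X \to \Sigma D_2X$ by reduction to a universal example. The paper works dually in cohomology, proving $\delta^*(\sigma\hat Q_r x) = Sq^{r+n+1}x$ by reducing first to $r=0$ via $\epsilon$, then to $X = \Sigma^n H\Z/2$; the case $n>0$ is a convergence argument from \cite{k1}, $n<-1$ is trivial, and $n\in\{-1,0\}$ use injectivity of $Sq^2$ on $\A$ in low degrees.

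For (b) and (c), however, the paper does \emph{not} set up a bifiltration or compare two spectral sequences, and the obstacle you flag at the end --- controlling correction terms so that $d^{2r}(Q^iy)$ is represented by exactly $Q^iz$ --- is precisely the gap in your sketch. The paper bypasses this entirely via a different use of the Tate construction. One forms the tower of $\Z/2$--spectra $(P\sm P)_d = \holim_{b+c\le d} P_b\sm P_c$; the operad action gives maps $((P\sm P)_d)_{h\Z/2}\to P_d$, hence maps $t_{\Z/2}((P\sm P)_d)\to P_d$. The key lemma you are missing is that the off--diagonal fibers $D_b\sm D_c$ with $b\neq c$ are \emph{free} $\Z/2$--spectra, so they die under $t_{\Z/2}$, forcing equivalences $t_{\Z/2}((P\sm P)_{2d})\simeq \D_2(P_d)$ and $t_{\Z/2}((P\sm P)_{2d+2r-1})\simeq \D_2(P_{d+r-1})$. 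Since $\D_2$ is linear (a colinearization), one obtains a strictly commuting ladder from $\D_2$ of the tower at level $d, d{+}r{-}1$ to the tower at level $2d, 2d{+}2r{-}1$, with outer verticals the standard maps $\D_2 D_d\to D_{2d}$. Because $Q^i$ lifts from $H_*(D_2(-))$ to $\pi_*$ of the $H\Z/2$--module Tate spectrum, chasing $Q^iw$ through this diagram yields (b) with no correction terms to analyze; (c) is an analogous diagram with $\Sinfty_+\Oinfty X$ in place of $D_d$. So the role of Tate is not to ``stabilize the $Q^i$'' but to kill the induced cross--terms in $P\sm P$, producing an honest map $\D_2(P_d)\to P_{2d}$ that your homotopy--orbit bifiltration does not.
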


A consequence of the first two parts of the theorem is the following identification of universal differentials.

\begin{cor} \label{corollary 1}  For all spectra $X$, $x \in H_*(X)$, and $I$ of length $s$, $Q^Ix$ lives to $E^{2^s}_{-2^s,*}(X)$ and
$$d^{2^s}(Q^Ix) =  \sum_{i\geq 0} Q^I Q^{i-1}(xSq^{i}) \in E^{2^s}_{-2^{s+1},*}(X).$$
\end{cor}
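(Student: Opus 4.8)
The plan is a straightforward induction on $s = l(I)$, using \thmref{theorem 1}(b) for the inductive step and \thmref{theorem 1}(a) for the base case. In the base case $s = 0$ the sequence $I$ is empty, so $Q^I x = x$ lies in $H_*(X) = H_*(D_1X) = E^1_{-1,*}(X) = E^{2^0}_{-2^0,*}(X)$ and trivially survives to it; the asserted formula then reads $d^1(x) = \sum_{i\geq 0} Q^{i-1}(xSq^i)$, which is exactly part (a), and its right-hand side lies in $H_*(D_2X) = E^1_{-2,*}(X) = E^{2^0}_{-2^{1},*}(X)$. Note that only finitely many summands are nonzero, since $xSq^i = 0$ once $i > |x|$, so there is no ambiguity in reading off $d^{2^s}(Q^Ix)$ from a representative at any stage.

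For the inductive step, assume the corollary for sequences of length $s-1$ and let $l(I) = s \geq 1$; write $I = (i_1, I')$ with $l(I') = s-1$, so $Q^I x = Q^{i_1}\bigl(Q^{I'}x\bigr)$. By the inductive hypothesis, $y := Q^{I'}x$ lives to $E^{2^{s-1}}_{-2^{s-1},*}(X)$ and $d^{2^{s-1}}(y)$ is represented by $z := \sum_{i\geq 0} Q^{I'}Q^{i-1}(xSq^i) \in H_*(D_{2^{s}}X)$; that this particular element is a legitimate representative is built into the inductive statement (and, at the bottom, into part (a)). Applying \thmref{theorem 1}(b) with this $y$, with $d = r = 2^{s-1}$, and with the operation $Q^{i_1}$, we obtain: $Q^{i_1}y = Q^I x \in H_*(D_{2^{s}}X)$ lives to $E^{2\cdot 2^{s-1}}_{-2^{s},*}(X) = E^{2^s}_{-2^s,*}(X)$, and $d^{2^s}(Q^I x)$ is represented by $Q^{i_1}(z) \in H_*(D_{2^{s+1}}X)$, so in particular $d^{2^s}(Q^Ix) \in E^{2^s}_{-2^{s+1},*}(X)$. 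Since $Q^{i_1}$ is $\Z/2$-linear, $Q^{i_1}(z) = \sum_{i\geq 0} Q^{i_1}Q^{I'}Q^{i-1}(xSq^i) = \sum_{i\geq 0} Q^I Q^{i-1}(xSq^i)$, which is the claimed formula. This completes the induction.

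I do not anticipate a genuine obstacle here: the corollary is a formal consequence of parts (a) and (b) of \thmref{theorem 1}. The points that require a little care are bookkeeping ones: each application of part (b) doubles the horizontal filtration degree and the page index simultaneously, so that after $s$ steps one lands precisely on $d^{2^s}$ emanating from filtration $-2^s$; one uses $\Z/2$-linearity of the Dyer--Lashof operations to pull $Q^{i_1}$ through the finite sum; and one must consistently apply part (b) with the outermost not-yet-applied operation at each stage, so that $Q^{i_1}\cdots Q^{i_s}$ is assembled from the inside out.
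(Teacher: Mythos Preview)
Your proposal is correct and matches the paper's approach: the paper simply states that the corollary is ``a consequence of the first two parts of the theorem'' without spelling out the induction, and your argument is precisely the intended induction on $s=l(I)$ using \thmref{theorem 1}(a) as the base case and \thmref{theorem 1}(b) for the inductive step.
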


To further give context to what \thmref{theorem 1} says about how Dyer--Lashof operations work in the spectral sequence, let
$$ \bold 0 = B^1 \subseteq B^2 \subseteq \dots \subseteq B^r \subseteq \dots \subseteq Z^r \subseteq \dots \subseteq Z^2 \subseteq Z^1 = E^1$$
be cycles and boundaries as usual, so that $E^r = Z^r/B^r$.  Then \thmref{theorem 1}(b) implies that for all $r$, Dyer--Lashof operations on $E^1$ restrict to maps
$$Q^i: Z^r \ra Z^{2r} \text{ \ and \ } Q^i: B^{r} \ra B^{2r-1}.$$  As $Z^{2r}/B^{2r-1}$ both includes into $E^{2r-1}$ and projects onto $E^{2r}$, one gets Dyer--Lashof operations of two flavors:
$$Q^i: E^r  \ra E^{2r-1} \text{ \ and \ } Q^i: E^r  \ra E^{2r}.$$
This discussion holds when $r = \infty$, leading to the next corollary.

\begin{cor} \label{corollary 2} For all spectra $X$, $E^{\infty}_{*,*}(X) \in \HQM$, with structure induced from $E^1$: $E^{\infty}_{*,*}(X) = U_{\Qq}(V^{\infty}_{*,*}(X))$, with the bigraded module of primitives $V^{\infty}_{*,*}(X) \in \QM$ equal to a subquotient of $\mathcal R_*(H_*(X))$.  To the extent that the spectral sequence converges, this structure is also induced from $H_*(\Oinfty X)$.
\end{cor}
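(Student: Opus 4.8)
The plan is to run the analysis of the paragraph immediately preceding the statement at the value $r=\infty$, and then to recognize the resulting bigraded Hopf algebra as $U_{\Qq}$ of its primitives. From \thmref{global ss thm}(d) and (e) we already know that $E^{\infty}_{*,*}(X)$ is a bicommutative Hopf algebra in $\M$, that it is primitively generated, and that its bigraded module of primitives $V^{\infty}_{*,*}(X)$ is an $\A$--module subquotient of $\mathcal R_*(H_*(X))$. So what remains is: (i) to put a Dyer--Lashof action on $E^{\infty}$ making it an object of $\HQM$; (ii) to upgrade the subquotient statement for $V^{\infty}$ from $\M$ to $\QM$; (iii) to prove $E^{\infty}_{*,*}(X)=U_{\Qq}(V^{\infty}_{*,*}(X))$; and (iv) to match all of this with $H_*(\Oinfty X)$ when the spectral sequence converges.

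For (i), I would transport the operations to the limit page directly. By \thmref{theorem 1}(b), for every finite $r$ the operations $Q^i$ on $E^1$ restrict to maps $Q^i\colon Z^r\to Z^{2r}$ and $Q^i\colon B^r\to B^{2r-1}$, as already observed. Since each $Q^i$ is additive, intersecting over $r$ yields $Q^i\colon Z^{\infty}=\bigcap_r Z^r\to\bigcap_r Z^{2r}=Z^{\infty}$, and taking unions yields $Q^i\colon B^{\infty}=\bigcup_r B^r\to\bigcup_r B^{2r-1}=B^{\infty}$; hence $Q^i$ descends to $Q^i\colon E^{\infty}_{-d,*}(X)\to E^{\infty}_{-2d,*}(X)$. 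Because these are nothing but the restrictions to the subquotient $Z^{\infty}/B^{\infty}$ of the operations on $E^1$, every identity holding on $E^1$ among the $Q^i$, the $Sq^j$, the product and the coproduct persists on $E^{\infty}$: the Adem relations, the Nishida relations, the Cartan formulae, the unstable condition, and the restriction axiom $Q^{|x|}x=x^2$. Since $E^1\in\HQM$ by \thmref{global ss thm}(b) and $E^{\infty}$ is bicommutative, this gives $E^{\infty}_{*,*}(X)\in\HQM$.

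For (ii) and (iii): $V^{\infty}_{*,*}(X)=PE^{\infty}_{*,*}(X)$ is closed under the $Q^i$, since a Dyer--Lashof operation carries a primitive to a primitive (coproduct Cartan formula, together with $Q^a 1=0$ for $a\neq0$), and it inherits the $\QM$ relations, so $V^{\infty}_{*,*}(X)\in\QM$. To see it is a $\QM$--subquotient of $\mathcal R_*(H_*(X))=PE^1$, I would note that the sub-Hopf-algebra $Z^{\infty}\subseteq E^1$ is primitively generated, so $PZ^{\infty}$ is a $\Qq$--submodule of $PE^1$, and that the surjection $Z^{\infty}\twoheadrightarrow E^{\infty}$ of primitively generated bicommutative Hopf algebras over $\F_2$ induces a surjection $PZ^{\infty}\twoheadrightarrow V^{\infty}$, necessarily $\Qq$--linear because all the maps in sight are induced from $E^1$. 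Finally, the counit $\varepsilon\colon U_{\Qq}(V^{\infty})\to E^{\infty}$ is onto, as $E^{\infty}$ is generated as an algebra by $V^{\infty}$, and an isomorphism on primitives, since $PU_{\Qq}(M)=M$ for $M\in\QM$ and $\varepsilon$ restricts to the identity of $V^{\infty}$; as $U_{\Qq}(V^{\infty})$ is primitively generated, a nonzero element of $\ker\varepsilon$ of lowest degree would be primitive, which is impossible, and therefore $\varepsilon$ is an isomorphism.

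For (iv): when the spectral sequence converges the tower filtration presents $E^{\infty}_{*,*}(X)$ as the associated graded of $H_*(\Oinfty X)\in\HQU$; since the little cubes operad $\C_{\infty}$ acts on the whole tower, the filtration is multiplicative, comultiplicative, $\A$--linear, and --- by \thmref{theorem 1}(c) --- compatible with the Dyer--Lashof operations, so the $\HQM$--structure produced above on $E^{\infty}$ is exactly the one coming from $H_*(\Oinfty X)$ by passage to associated graded (the unstable $\A$--condition generally fails to survive, which is why $E^{\infty}$ lies in $\HQM$ rather than $\HQU$). The step I expect to be the real obstacle is (ii): promoting the primitive-subquotient statement from $\M$ to $\QM$ forces one to track compatibility with Dyer--Lashof operations through the subquotient data supplied by \thmref{global ss thm}(e) and \thmref{theorem 1}(b), and to lean on the standard but characteristic-two--sensitive facts that over $\F_2$ a surjection of primitively generated bicommutative Hopf algebras is onto primitives and that $PU_{\Qq}(M)=M$.
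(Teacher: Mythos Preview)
Your approach is exactly the paper's: the entire proof there is the sentence ``This discussion holds when $r=\infty$, leading to the next corollary,'' and you are fleshing out that one line. Parts (i), (iii), and (iv) are sound and match what the paper intends.

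The one step that needs more care is (ii), where you assert that $Z^\infty$ is a sub\emph{Hopf} algebra of $E^1$. It is certainly a subalgebra, but the subcoalgebra claim is not automatic: for a coderivation $d$ one only gets $\Delta(\ker d)\subseteq\ker(d\otimes1+1\otimes d)$, which is strictly larger than $\ker d\otimes\ker d$ in general, and iterating through the pages does not help. A cleaner route to the $\QM$--subquotient statement bypasses this. Set $\tilde Z=PE^1\cap Z^\infty$ and $\tilde B=PE^1\cap B^\infty$; both are closed under $Q^i$ by your argument in (i), and the induced map $\tilde Z/\tilde B\to V^\infty=PE^\infty$ is injective (a primitive permanent cycle that dies in $E^\infty$ lies in $B^\infty\cap PE^1$). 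For surjectivity, instead of invoking ``surjections of primitively generated Hopf algebras are onto primitives'' --- which is true but needs $Z^\infty$ to be a sub-Hopf-algebra before you can apply it --- use the explicit barcode description in \propref{Hopf alg prop}: at each passage $E^r\to E^{r+1}$ the new primitive module $\bar V_{2^k}$ is literally either a submodule $\ker(d)\subseteq V_{2^k}$ or a quotient $V_{2^k}/q^{k-t}\im(d)$, so every primitive of $E^{r+1}$ is represented by a primitive of $E^r$; iterating, every element of $V^\infty$ lifts to a primitive permanent cycle in $E^1$.
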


\begin{rem}  Though both $E^1_{*,*}(X)$ and $E^{\infty}_{*,*}(X)$ admit Dyer-Lashof operations, $Q^i$ is {\em not} generally defined on the intervening pages, $E^r_{*,*}(X)$ with $1<r<\infty$, with the exception of the squaring operation $x \mapsto Q^{|x|}(x)$, which plays a special role in the Hopf algebra theory.  See \secref{CP2 example} for a simple example illustrating this.
\end{rem}

\begin{rem} Since $H_*(\Oinfty X)$ is always an unstable $\A$--module, it follows that if $X$ is 0--connected, then $E^{\infty}_{*,*}(X) \in \HQU$.  We wonder if this is the case for {\em all} spectra $X$.
\end{rem}

\thmref{theorem 1} will be proved in \secref{proof of thm 1}, supported by the results in the preceding background section.  We briefly comment on the proof.

Statement (a) amounts to a calculation of $\delta_*$, where $\delta: X \ra \Sigma D_2X$ is the connecting map of the cofibration sequence $D_2X \ra \tilde P_2X \ra \tilde P_1X \simeq X$.  When $|x|>0$, this was calculated (in dual formulation) by the first author in \cite{k1} by means of universal example, and it is not too hard to extend this to all $x$.

We give proofs of statements (b) and (c) that show that versions of these statements will hold in the spectral sequence associated to any tower of spectra admitting an action of the operad $\C_{\infty}$. The key idea is to use the (once desuspended) $\Z/2$ Tate construction in place of homotopy orbits. For example, in the course of the proof, one might hope to construct maps
$$  (P_d(X)^{\sm 2})_{h \Z/2} = D_2(P_d(X))\ra P_{2d}(X)$$
compatible with the standard maps $D_2(D_d(X)) \ra D_{2d}(X)$.  These don't exist, but we will  show that one {\em does} have maps
$$ t_{\Z/2}(P_d(X)^{\sm 2}) \ra P_{2d}(X)$$
which do the job, where $\displaystyle t_{\Z/2}(Y^{\sm 2}) \simeq \holim_{n} \Sigma^n D_2(\Sigma^{-n}Y)$, the  colinearization of $D_2(Y)$ in McCarthy's sense \cite{mccarthy}.  A technical point is that, at an appropriate moment, we need to pass from towers of $S$--modules to towers of $H\Z/2$--modules.

\subsection{An algebraic spectral sequence}

We now build an algebraic spectral sequence using only the differentials given by the formula in \corref{corollary 1}.  Our discovery is that this spectral sequence can be completely described, with an interesting $E^{\infty}$ term.

We need yet more terminology and notation related to the category $\U$.
\begin{itemize}
\item Let $\Oinfty: \M \ra \U$ be right adjoint to the inclusion.  Explicitly, $\Oinfty M$ is the largest unstable submodule of $M$.
\item  Let $\Omega: \U \ra \U$ be right adjoint to the suspension $\Sigma: \U \ra \U$. Explicitly, $\Omega M$ is the largest unstable submodule of $\Sigma^{-1}M$.
\item The functor $\Oinfty$ is left exact, and we let $\Oinfty_s: \M \ra \U$ denote the associated right derived functors.
\end{itemize}

It is convenient to let $L_sM = \Omega \Oinfty_s\Sigma^{1-s}M$.  (Note that $L_0M = \Oinfty M$.)  We observe that these functors to the category $\U$ have extra structure.

\begin{prop} \label{DL structure on derived functors prop} There are natural operations
$Q^i: L_sM \ra L_{s+1}M$ giving $L_*M$ the structure of an object in $\QU$; indeed, $L_*M$ is a natural subquotient of $\mathcal R_*M$, viewed as an object in $\QM$.
\end{prop}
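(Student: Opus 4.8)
The plan is to leverage the classical presentation of the derived functors of destabilization as a subquotient of the Dyer--Lashof construction $\mathcal{R}_*M$, and then check that the Dyer--Lashof operations on $\mathcal{R}_*M$ descend to this subquotient. Recall that Singer, Lannes--Zarati, and Goerss computed $\Oinfty_s\Sigma^{1-s}M$ (and hence $L_sM = \Omega\Oinfty_s\Sigma^{1-s}M$) via a resolution whose associated complex in each degree involves the module $\mathcal{R}_sM$; concretely, the Singer--type presentation exhibits $L_sM$ as the homology of a two-step complex built from $\mathcal{R}_{s-1}M$, $\mathcal{R}_sM$, $\mathcal{R}_{s+1}M$, where the differentials are given by the universal destabilization formula $x \mapsto \sum_{i\ge 0} Q^{i-1}(xSq^i)$ iterated on the left by admissible $Q^I$. (This is precisely the operator appearing in \thmref{theorem 1}(a) and \corref{corollary 1}, which is not a coincidence — the topological $d^1$ is modeled on the algebraic destabilization map.) So the first step is to pin down this presentation from \cite{singer, lz, goerss} in the exact normalization needed here, writing $L_sM = \ker/\im$ of maps among the $\mathcal{R}_\bullet M$.

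Second, I would define the candidate operations $Q^i: L_sM \to L_{s+1}M$ as restrictions of the Dyer--Lashof operations $Q^i: \mathcal{R}_sM \to \mathcal{R}_{s+1}M$ — the ones described in the definition of $\mathcal{R}_*$, which come from the free object structure and raise Dyer--Lashof length by one. The content is then a diagram chase: one must verify that these operations carry the sub (the cycles, i.e.\ the kernel of the outgoing destabilization-type map) into the corresponding sub at level $s+1$, and carry the relative part (the image of the incoming map from level $s-1$) into the corresponding image at level $s+1$. Both of these reduce to a compatibility between $Q^i$ and the universal destabilization operator $\theta(x) = \sum_{j} Q^{j-1}(xSq^j)$. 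The key algebraic identity is that $Q^i \theta = \theta Q^i$ in $\mathcal{R}_*M$ — or more precisely that composing $Q^i$ with the left-multiplication-by-$\theta$ map agrees, modulo Adem relations, with $\theta$ followed by $Q^i$ — which follows formally from the fact that $\theta$ is built entirely out of Dyer--Lashof operations applied after a Steenrod operation, together with the Adem and Nishida relations already encoded in $\mathcal{R}_*M$. Once this commutation is established, the subquotient $L_*M$ inherits $Q^i$, and the unstable condition $Q^ix = 0$ for $i < |x|$ is inherited from $\mathcal{R}_*M$; similarly the Adem relations hold because they hold in $\mathcal{R}_*M$ and the subquotient is closed. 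The Nishida relations for $L_*M$ as an object of $\QU$ follow because $L_*M \subseteq$ (subquotient of) $\mathcal{R}_*M \in \QM$, and one checks the $\A$-action used here (via $\Omega$, $\Oinfty$) is the restriction of the one on $\mathcal{R}_*M$. Finally, unstableness in the $\A$-direction is automatic since $L_sM$ lands in $\U$ by construction of $\Omega$ and $\Oinfty_s$, giving $L_*M \in \QU$.

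The main obstacle I expect is the bookkeeping in the commutation identity $Q^i\theta = \theta Q^i$: the operator $\theta$ is an infinite sum, one must be careful about convergence/local finiteness (which is fine since $M \in \M$ is locally finite, so all sums are finite in each degree), and one must track how the single $Q^i$ slides past the admissible monomials $Q^{j-1}$ hidden inside $\theta$, invoking the Dyer--Lashof Adem relations from \secref{top def section} to re-express $Q^iQ^{j-1}$ in admissible form and the Nishida relations to handle $Q^i(xSq^j)$ versus $(Q^ix)Sq^?$. This is the same calculation that underlies why the topological operations of \thmref{theorem 1}(b) are compatible with $d^1$, so it is "expected to work," but writing it cleanly requires setting up the right indexing. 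A secondary subtlety is making sure the subquotient description of $L_*M$ from the three cited sources is genuinely the one with $\mathcal{R}_sM$ (and not some Koszul-dual or Lambda-algebra variant); reconciling Singer's $\Lambda$-algebra model, Lannes--Zarati's approach, and Goerss's simplicial one into a single normalized statement is a modest but necessary piece of exposition.
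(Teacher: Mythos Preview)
Your high--level plan is sound, but there is a genuine gap and a misplaced expectation of difficulty.

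First, the misplaced difficulty. The commutation you flag as ``the main obstacle'' is in fact trivial once you have the correct description of the Singer differential. You yourself note that $d_s(Q^Jx) = Q^J\,\theta(x)$ with $\theta$ applied to the \emph{innermost} element $x$; hence $Q^i(d_s(Q^Jx)) = Q^iQ^J\theta(x) = d_{s+1}(Q^iQ^Jx)$ tautologically, with no Adem or Nishida relations needed. (This is the paper's \lemref{chain maps lemma}(a).) The identity you write as ``$Q^i\theta = \theta Q^i$'' is a red herring: interpreted literally as operators on $\mathcal R_*M$ it would require $\theta$ applied to $Q^ix$, which is a different and unnecessary statement.

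Second, the actual gap. The homology of the two--step complex
\[
\mathcal R_{s-1}(\Sigma^{-1}M) \xra{d_{s-1}} \mathcal R_s(M) \xra{d_s} \mathcal R_{s+1}(\Sigma M)
\]
is $\Sigma^{-1}\Oinfty_s\Sigma^{1-s}M$, \emph{not} $L_sM = \Omega\,\Oinfty_s\Sigma^{1-s}M$. These differ by application of $\Omega$ (the largest unstable submodule of a desuspension), and $\Omega$ is not exact. So your diagram chase gives Dyer--Lashof operations on $\Sigma^{-1}\Oinfty_*\Sigma^{1-*}M$, but you still owe an argument that they restrict to $L_*M$. The paper supplies this by identifying $L_sM$ as the \emph{image} of the map
\[
\epsilon_*: \Oinfty_s(\Sigma^{-s}M) \ra \Sigma^{-1}\Oinfty_s(\Sigma^{1-s}M)
\]
(see \corref{image-epsilon-loop} and the remark following \thmref{Rs thm}), and then invoking the compatibility of $\mu$ with the evaluation maps $\epsilon$ (\lemref{mu e lemma}(b)) to show that the induced $Q^i$ carry images of $\epsilon_*$ to images of $\epsilon_*$. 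Your proposal never mentions $\epsilon$; without it, you have no mechanism to pass from the full homology to the looped piece $L_sM$.
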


\begin{thm} \label{theorem 2}  For all $M \in \M$, there is a left half plane spectral sequence $\{E^{alg, r}_{*,*}(M)\}$ described by the following properties. \\

\noindent{\bf (a)}  The spectral sequence is a functor of $M$ taking values in $\HM$, with Steenrod operations acting vertically, and with each $d^r$ both a derivation and coderivation. \\

\noindent{\bf (b)} $ E^{alg,1}_{*,*}(M) = U_{\Qq}(\mathcal R_*M)$ as an object in $\HQM$, with the module of primitives $\mathcal R_sM$ contained in $E^{alg,1}_{-2^s,2^s+*}(M)$. \\

\noindent{\bf (c)} $d^r$ is not zero only when $r=2^s$, and $d^{2^s}$ is determined by the formulae in \corref{corollary 1}: for $x \in M$ and $I$ of length $s$, $Q^Ix$ lives to
$E^{alg,2^s}_{-2^s,*}(M)$, and
$$d^{2^s}(Q^Ix) = \sum_{i \geq 0} Q^IQ^{i-1}(xSq^i).$$

\noindent{\bf (d)} For all $r$, $E^{alg,r}_{*,*}(M)$ is primitively generated with nonzero primitives concentrated in the $-2^s$ lines.  For all $r>2^s$, the module of primitives in $E^{alg,r}_{-2^s,2^s+ *}(M)$ is naturally isomorphic to $L_sM$. \\

\noindent{\bf (e)}
$ E^{alg, \infty}_{*,*}(M) \simeq U_{\Qq}(L_*M)$ as an object in $\HQU$.
\end{thm}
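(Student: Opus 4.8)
The plan is to build $\{E^{alg,r}_{*,*}(M)\}$ by hand. Set $E^{alg,1}_{*,*}(M) = U_{\Qq}(\mathcal R_*M)$, a functor of $M$ with values in bicommutative, primitively generated Hopf algebras in $\M$ with Steenrod operations acting vertically and with module of primitives $\mathcal R_*M$, the summand $\mathcal R_sM$ placed in column $-2^s$; this is part~(b). Because $U_{\Qq}$ of a module supported in the columns $\{-2^s\}$ again has its primitives supported there, and because any $d^r$ on a primitively generated page is $\A$-linear, a derivation and a coderivation, hence determined by its restriction to the primitives (\propref{Hopf alg prop}), the only differentials that can be nonzero are those of the form $d^{2^s}$, and $d^{2^s}$ is forced on the column $-2^s$ primitives to be $Q^Ix \mapsto \sum_{i\geq 0} Q^IQ^{i-1}(xSq^i)$ for $l(I)=s$ --- the formula of \corref{corollary 1}, whose right-hand side does land in $\mathcal R_{s+1}M$ --- and then extended to the whole page as an $\A$-linear derivation--coderivation, after which one sets $E^{alg,r+1}_{*,*}(M) = H_*(E^{alg,r}_{*,*}(M),d^r)$. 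Making this rigorous requires two checks. First, that $d^{2^s}$ really is a differential on $E^{alg,2^s}_{*,*}(M)$: writing $\delta(x)=\sum_i Q^{i-1}(xSq^i)$, this reduces to the Steenrod--Adem identities giving $\delta^2=0$, together with the $Q^I$-shifted consequences that force $d^{2^s}$ to annihilate the earlier boundaries. Second, that each page is again primitively generated with primitives concentrated in the columns $\{-2^s\}$ and in fact equal to the $U_{\Qq}$ of those primitives; this is bookkeeping in the category of bicommutative Hopf algebras (\propref{Hopf alg prop} and the discussion following \thmref{global ss thm}). These steps give (a), (b), (c).

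With the construction in place, the remaining content is the computation of primitives. By the above, the primitives of the pages are controlled by the cochain complex of $\A$-modules $(\mathcal R_*M,\partial)$ with $\partial\colon \mathcal R_sM \to \mathcal R_{s+1}M$ given by $\partial(Q^Ix)=Q^I\delta(x)$ for $l(I)=s$. Indeed, column $-2^s$ is the target of exactly one differential, $d^{2^{s-1}}$ from column $-2^{s-1}$, and the source of exactly one, $d^{2^s}$, so it stabilizes at page $2^s+1$, at which point its module of primitives is $\ker\partial_s/\im\partial_{s-1}=H^s(\mathcal R_*M,\partial)$. Now $(\mathcal R_*M,\partial)$ is precisely the complex assembled from the Singer construction that computes the derived functors of destabilization; this is essentially the content of \propref{DL structure on derived functors prop} and of the 1980s work of Singer, Lannes--Zarati, and Goerss that it packages, and it identifies $H^s(\mathcal R_*M,\partial)$ with $L_sM$ as an object of $\QM$, realized as the indicated subquotient of $\mathcal R_sM$. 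Feeding this back yields (d): for $r>2^s$ the module of primitives of $E^{alg,r}_{-2^s,*}(M)$ is $L_sM$, naturally in $M$ and compatibly with the Steenrod action; and since, exactly as in the discussion following \corref{corollary 1}, the $\Qq$-linearity of $\partial$ installs Dyer--Lashof operations on $E^{alg,\infty}_{*,*}(M)$, the identification $V^{alg,\infty}_{*,*}(M) := PE^{alg,\infty}_{*,*}(M) \cong L_*M$ is one of objects of $\QU$.

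Finally, (e) follows formally. The spectral sequence is one of bicommutative Hopf algebras satisfying the Dyer--Lashof restriction axiom; by (d), $E^{alg,\infty}_{*,*}(M)$ is primitively generated with primitive module $L_*M$, and such a Hopf algebra is the $U_{\Qq}$ of its primitives (\propref{Hopf alg prop}) --- equivalently, since $\partial$ is a map of $\Qq$-modules and respects the restriction axiom, the page-by-page passage to homology is $U_{\Qq}$-equivariant, so $E^{alg,\infty}_{*,*}(M) = U_{\Qq}(H^*(\mathcal R_*M,\partial)) = U_{\Qq}(L_*M)$ as an object of $\HQM$. As $L_*M$ is built from the unstable functors $\Omega$ and $\Oinfty_s$, it lies in $\QU$, whence $U_{\Qq}(L_*M)$ lies in $\HQU$; this is (e). I expect the genuine obstacle to lie in the middle paragraph: importing the destabilization machinery of Singer--Lannes--Zarati--Goerss in exactly the bigraded, Dyer--Lashof--equivariant form needed to identify $H^*(\mathcal R_*M,\partial)$ with $L_*M$ (the heart of \propref{DL structure on derived functors prop}), together with the Steenrod--Adem bookkeeping and the Hopf-algebra control that make the construction of the first paragraph legitimate.
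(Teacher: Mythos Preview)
Your overall architecture is right, and parts (a), (b), (c), (e) follow formally once (d) is in hand. The gap is in your middle paragraph, where you compute the stabilized primitives.

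You assert that the primitives in column $-2^s$ stabilize to $\ker\partial_s/\im\partial_{s-1}$, the homology of the single complex $(\mathcal R_*M,\partial)$ with $\partial_s(Q^Ix)=\sum_i Q^IQ^{i-1}(xSq^i)$. This overlooks the effect of the restriction map $q$ (squaring) built into $U_q$. By \propref{Hopf alg prop}, when you pass to homology with respect to $d^{2^{s-1}}\colon V_{2^{s-1}}\to V_{2^s}$, you do not merely kill the image in column $-2^s$; you quotient \emph{every} column $-2^k$ with $k\ge s$ by $q^{k-s}\im(d^{2^{s-1}})$. Thus at page $2^s$ the target of $d^{2^s}$ is already a nontrivial quotient $\bar{\mathcal R}_{s+1,s}M$, and the kernel of the induced map
\[
\bar{\mathcal R}_{s,s}M \longrightarrow \bar{\mathcal R}_{s+1,s}M
\]
is \emph{a priori} larger than $\ker\partial_s/\im\partial_{s-1}$: an element $[y]$ lies in the kernel as soon as $\partial_s(y)\in q\cdot(\text{earlier boundaries})$, not only when $\partial_s(y)=0$.

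Second, your identification ``$H^s(\mathcal R_*M,\partial)=L_sM$ by Singer--Lannes--Zarati--Goerss'' is not what those results say. The Singer complex computing $\Omega^\infty_sN$ is $R_sN=\Sigma\mathcal R_s\Sigma^{s-1}N$, with the suspension shifting at each stage; it is not your fixed-$M$ complex. And $L_sM=\Omega\,\Omega^\infty_s\Sigma^{1-s}M$ is not a single Singer homology but (via \corref{image-epsilon-loop}) the \emph{image} of the comparison map
\[
\Omega^\infty_s(\Sigma^{-s}M)\xrightarrow{\ \epsilon_*\ }\Sigma^{-1}\Omega^\infty_s(\Sigma^{1-s}M),
\]
i.e.\ an image between two different suspension-shifted Singer homologies. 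The paper's proof makes exactly this point: it rewrites $\bar{\mathcal R}_{s,s}M$ in a second way (as a quotient of $\mathcal R_s\Sigma^{-1}M$ by $\im d_{s-1}+\im q$) and then runs a small diagram chase (\lemref{sneaky lemma}) on the diagram with rows $\mathcal R_{s-1}\Sigma^{-2}M\to\mathcal R_s\Sigma^{-1}M\to\mathcal R_{s+1}M$ and $\mathcal R_{s-1}\Sigma^{-1}M\to\mathcal R_sM\to\mathcal R_{s+1}\Sigma M$, linked by $q_0$ and $\epsilon$, to identify the kernel with $\im\{H_s\Sigma^{-1}M\to H_sM\}=L_sM$ and the cokernel with $\bar{\mathcal R}_{s+1,s+1}M$. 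That identification is the genuine content of (d); your sketch does not supply it.
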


For a spectrum $X$, the spectral sequences $\{E^r_{*,*}(X)\}$ and $\{E^{alg, r}_{*,*}(H_*(X))\}$
will agree exactly when all topological spectral sequence differentials $d^{2^t-2^s}$ with $s<t-1$ are zero.  We call a nonzero differential $d^{2^t-2^s}$ with $s<t-1$ a {\em rogue} differential. \\

\begin{cor} \label{alg = top cor}  If $d^{2^t-2^s}$ is a rogue differential with $2^t-2^s$ smallest, then restricted to the primitives on the $-2^s$ line, it will be a nonzero map
$$ L_sH_*(X) \ra E^{2^t-2^s}_{-2^{t},2^t + *-1}(X).$$
\end{cor}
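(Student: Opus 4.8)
The plan is to deduce Corollary~\ref{alg = top cor} as a formal consequence of Theorems~\ref{global ss thm}, \ref{theorem 1}, and \ref{theorem 2}, together with the definitions of the two spectral sequences. First I would observe that the algebraic spectral sequence $\{E^{alg,r}_{*,*}(M)\}$ is constructed so that it has exactly the same $E^1$ term as the topological one, and its only differentials are the $d^{2^s}$ prescribed by Corollary~\ref{corollary 1}. Since Corollary~\ref{corollary 1} records \emph{universal} differentials that hold in the topological spectral sequence $\{E^r_{*,*}(X)\}$ for \emph{every} $X$, the differentials $d^1, d^2, d^4, \dots, d^{2^s}, \dots$ agree in the two spectral sequences on the range where no rogue differential has yet occurred. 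More precisely, I would argue by induction on $r$ that if no rogue differential $d^{2^{t'}-2^{s'}}$ with $2^{t'}-2^{s'} < r$ and $s' < t'-1$ is nonzero, then the natural map $E^{alg,r}_{*,*}(H_*(X)) \to E^r_{*,*}(X)$ is an isomorphism compatible with all structure (it is an iso on $E^1$ by Theorem~\ref{global ss thm}(b) and Theorem~\ref{theorem 2}(b), and the inductive step only requires that the differentials feeding into page $r$ agree, which is exactly Corollary~\ref{corollary 1} when $r$ is a power of $2$ and is the statement that $d^r=0$ on both sides when $r$ is not of the form $2^t-2^s$, by Theorem~\ref{global ss thm}(f), since a non-power-of-two of the form $2^t-2^s$ with $s<t-1$ would be a rogue differential, contradicting minimality).

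Now let $r_0 = 2^t - 2^s$ be the smallest rogue differential of $\{E^r_{*,*}(X)\}$, with $s < t-1$. By the induction just described, the isomorphism $E^{alg,r_0}_{*,*}(H_*(X)) \cong E^{r_0}_{*,*}(X)$ holds, respecting the $\A$-module structure, the Hopf algebra structure, and the location of primitives. By Theorem~\ref{theorem 2}(d), the primitives of $E^{alg,r_0}_{-2^s,2^s+*}(H_*(X))$ are naturally $L_sH_*(X)$ (using $r_0 > 2^s$, which holds since $r_0 = 2^t - 2^s \geq 2^{s+2} - 2^s = 3\cdot 2^s > 2^s$). Transporting along the isomorphism, the primitives of $E^{r_0}_{-2^s,*}(X)$ on the $-2^s$ line are $L_sH_*(X)$ as well. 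By Theorem~\ref{global ss thm}(f), the differential $d^{r_0}$ on $E^{r_0}_{*,*}(X)$ is determined by its restriction to the primitives in $E^{r_0}_{-2^s,*}(X)$, and by the bidegree bookkeeping $d^{r_0}$ raises the first grading by $r_0$ and lowers total degree by one, landing primitives in $E^{r_0}_{-2^s - r_0,*-1}(X) = E^{r_0}_{-2^t, 2^t + * - 1}(X)$ (using $-2^s - (2^t-2^s) = -2^t$). Since $d^{r_0}$ is a rogue differential it is nonzero, and being determined by its restriction to these primitives, that restriction must itself be nonzero. Assembling these identifications gives the asserted nonzero map $L_sH_*(X) \to E^{2^t-2^s}_{-2^t, 2^t + *-1}(X)$.

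The main obstacle I anticipate is making the inductive comparison of the two spectral sequences fully rigorous --- in particular producing a genuine natural transformation $\{E^{alg,r}_{*,*}(H_*(X))\} \to \{E^r_{*,*}(X)\}$ of spectral sequences (or at least the relevant truncation of one), rather than merely matching $E^1$ terms and differential formulae page by page. This requires knowing that the algebraic spectral sequence is, by its very construction, the ``universal'' spectral sequence generated from $E^1 = U_{\Qq}(\mathcal R_*(-))$ by imposing only the differentials of Corollary~\ref{corollary 1}, so that any spectral sequence (such as the topological one) with the same $E^1$ and containing those differentials receives a canonical map from it; this should follow from whatever construction underlies Theorem~\ref{theorem 2}, but one must check the map is compatible with the $\A$-action and Hopf structure and, crucially, with the identification of primitives in Theorem~\ref{theorem 2}(d) versus Theorem~\ref{global ss thm}(e). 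A secondary, purely bookkeeping, point is to verify that no rogue differential of size smaller than $r_0$ can have occurred \emph{on any line}, not just the $-2^s$ line, so that the comparison isomorphism is global on page $r_0$; this is immediate from minimality of $r_0$ among all rogue differentials and Theorem~\ref{global ss thm}(f), which confines nonzero differentials to $r$ of the form $2^{t'}-2^{s'}$.
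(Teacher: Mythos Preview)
Your proposal is correct and is precisely the argument the paper has in mind: the corollary is stated without proof, immediately following the sentence ``the spectral sequences $\{E^r_{*,*}(X)\}$ and $\{E^{alg, r}_{*,*}(H_*(X))\}$ will agree exactly when all topological spectral sequence differentials $d^{2^t-2^s}$ with $s<t-1$ are zero,'' and your inductive comparison together with \thmref{theorem 2}(d) and \thmref{global ss thm}(f) is the natural unpacking of this. Your worry about constructing a genuine map of spectral sequences is unnecessary here: since the $E^1$ terms are \emph{literally the same object} $U_{\Qq}(\mathcal R_*(H_*(X)))$ and you verify inductively that the differentials coincide as maps on that object, the pages remain identical up to $r_0$ without any further comparison machinery.
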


Recalling that $L_0H_*(X) = \Oinfty H_*(X)$, it follows that rogue differentials off of the $-1$ line measure the failure of the evaluation map $H_*(\Oinfty X) \ra \Oinfty H_*(X)$ to be onto.  \thmref{theorem 1}(b) and the last corollary then tell us that in some circumstances this can be the only source of rogue differentials.

\begin{cor} \label{alg = top cor 2} Suppose the following conditions hold for a spectrum $X$. \\

\noindent{\bf (a)} \ $\Oinfty H_*(X) = L_0H_*(X)$ generates $L_*H_*(X)$ as a module over the Dyer--Lashof algebra. \\

\noindent{\bf (b)} \ The evaluation map $H_*(\Oinfty X) \ra \Oinfty H_*(X)$ is onto. \\

\noindent Then  $\{E^r_{*,*}(X)\} = \{E^{alg,r}_{*,*}(H_*(X))\}$, and thus $ E^{\infty}_{*,*}(X) \simeq U_{\Qq}(L_*H_*(X)))$ as an object in $\HQU$.
\end{cor}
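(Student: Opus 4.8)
The plan is to deduce from (a) and (b) that the topological spectral sequence has no \emph{rogue} differential; granting this, the identity $\{E^r_{*,*}(X)\} = \{E^{alg,r}_{*,*}(H_*(X))\}$ is exactly the criterion recorded just before \corref{alg = top cor}, and then $E^{\infty}_{*,*}(X) \simeq U_{\Qq}(L_*H_*(X))$ in $\HQU$ follows from \thmref{theorem 2}(e). So suppose some rogue differential is nonzero and let $d^N$, with $N = 2^t-2^s$ and $0 \le s < t-1$, be one of least order (note that $N$ determines $s$ and $t$). Every possibly nonzero differential on a page before $N$ is either some $d^{2^a}$, which matches the corresponding algebraic differential by \corref{corollary 1} and \thmref{theorem 2}(c), or a rogue differential of order $<N$, hence zero by minimality of $N$ (and also zero algebraically by \thmref{theorem 2}(c)); so the topological and algebraic spectral sequences agree through page $N$, as bigraded Hopf algebras with $\A$--action. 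In particular \thmref{theorem 2}(d) identifies the primitives $V^N_{-2^s,*}(X)$ with $L_sH_*(X)$ (here $N>2^s$), and when $s\ge 1$ identifies $V^{N/2}_{-2^{s-1},*}(X)$ with $L_{s-1}H_*(X)$ (here $N/2 = 2^{t-1}-2^{s-1} > 2^{s-1}$); and \corref{alg = top cor} says $d^N$ is nonzero on $L_sH_*(X) \subseteq E^N_{-2^s,*}(X)$.

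I would first reduce to the case $s=0$. If $s\ge 1$, then $N/2 = 2^{t-1}-2^{s-1}$ is again of the rogue form $2^a-2^b$ with $b<a-1$, and $N/2<N$, so minimality of $N$ forces $d^{N/2}=0$. Hence every class in $L_{s-1}H_*(X) = V^{N/2}_{-2^{s-1},*}(X)$ survives to $E^{N/2+1}(X)$, so by \thmref{theorem 1}(b) its image under each $Q^i$ survives to $E^{2(N/2+1)}(X) = E^{N+2}(X)$ and is, in particular, a $d^N$--cycle in $E^N(X)$. By (a), $L_sH_*(X) = \sum_i Q^i(L_{s-1}H_*(X))$ for $s\ge 1$, with the Dyer--Lashof operations being those of \propref{DL structure on derived functors prop}, which by construction agree on primitives with the page--doubling operations $Q^i\colon E^{N/2}(X)\to E^N(X)$ of the discussion after \thmref{theorem 1}. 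Since $d^N$ is linear, it therefore vanishes on all of $L_sH_*(X)$, contradicting \corref{alg = top cor}. So $s=0$.

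To exclude $s=0$, bring in (b). No differential maps into the $-1$ column, so $E^\infty_{-1,*}(X) = Z^\infty_{-1,*}(X)$ is a submodule of $E^1_{-1,*}(X) = H_*(X)$, and it contains the image of the evaluation map $H_*(\Oinfty X)\to \Oinfty H_*(X) \subseteq H_*(X)$, which consists of permanent cycles since it lifts compatibly up the tower via the $e_d$. By (b) that image is all of $\Oinfty H_*(X) = L_0H_*(X)$, so every element of $L_0H_*(X)\subseteq H_*(X)$ is a permanent cycle of the spectral sequence. On the other hand $N = 2^t-1$ with $t\ge 2$, and the $-1$ column consists entirely of primitives (it is $\mathcal{R}_0H_*(X)=H_*(X)$ at $E^1$, and nothing lands there as a nontrivial product), so $V^N_{-1,*}(X)=E^N_{-1,*}(X)$; under the identification of primitives with a subquotient of $\mathcal{R}_*H_*(X)$ in \thmref{global ss thm}(e) this is $L_0H_*(X)$ sitting inside $H_*(X)$, and \corref{alg = top cor} says $d^N$ is nonzero on it. Thus some element of $L_0H_*(X)$ is not a permanent cycle --- a contradiction. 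Hence there is no rogue differential, and the corollary follows.

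The step I expect to be the main obstacle is the compatibility invoked in the reduction to $s=0$: that hypothesis (a)'s Dyer--Lashof generation statement for $L_*H_*(X)$ really does match, on primitives, the page--doubling operations on the topological spectral sequence. This is forced by naturality of the $\C_{\infty}$--action on the Goodwillie tower together with the subquotient description of the $V^r$ in \thmref{global ss thm}(e), but it is the one place where the argument must genuinely track how the algebraic structure embeds in the topology rather than proceed purely formally.
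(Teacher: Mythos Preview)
Your argument is correct and is exactly the approach the paper intends: the paper does not give a separate proof, but only the sentence preceding the corollary, indicating that condition (b) excludes rogue differentials off the $-1$ line via \corref{alg = top cor}, while \thmref{theorem 1}(b) then propagates this to the $-2^s$ lines using condition (a). Your minimal--$N$ induction, the halving step $N\mapsto N/2$, and the identification of $V^N_{-2^s,*}$ with $L_sH_*(X)$ via \thmref{theorem 2}(d) are precisely what is needed to make that sentence into a proof; the compatibility you flag at the end holds because both the Dyer--Lashof structure on $L_*H_*(X)$ (\propref{DL structure on derived functors prop}) and the page--doubling operations on the spectral sequence are induced from the same operations on $\mathcal R_*H_*(X)\subset E^1$ by passage to the subquotient.
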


\begin{rem}  One might wonder if $E^{\infty}_{*,*}(X)$ is always a subquotient of  $E^{alg,\infty}_{*,*}(H_*(X))= U_{\Qq}(L_*H_*(X)))$.  Our results say that the algebraic boundaries, $B^{alg,\infty}$, are contained in the topological boundaries $B^{\infty}$.  To conclude that $E^{\infty}$ is a subquotient of $E^{alg,\infty}$, it suffices to show that $Z^{\infty} \subseteq B^{\infty} + Z^{alg,\infty}$, when restricted to primitives.
\end{rem}

The development of our algebraic spectral sequence, and the proof of its properties as in \thmref{theorem 2}, is given in \secref{alg spec seq section}.  Besides using the Hopf algebra theory needed in the proof of \thmref{global ss thm}(e), this relies heavily on \secref{derived functors section}, which is focused on the connection between $\mathcal R_s$ and $\Oinfty_s$.  We say a bit about this connection here.

We relabel: let $R_s = \Sigma\mathcal R_s \Sigma^{s-1}: \M \ra \M$.  Let $d_s: R_s M \ra R_{s+1}M$ be given by the formula
$$ d_s(Q^Ix) = \sum_{i\geq 0} Q^IQ^{i-1}(xSq^{i}),$$
where we have suppressed some suspensions.  The next theorem is a variant of theorems in \cite{goerss} and \cite{powell}.  All such results go back to work of Singer \cite{singer} for inspiration.

\begin{thm} \label{Rs thm} For all $M \in \M$,
$$ R_0M \xra{d_0} R_1M \xra{d_1} R_2M \xra{d_2} \cdots$$
is a chain complex with $H_s(R_*M;d)$ naturally isomorphic to  $\Oinfty_sM$.
\end{thm}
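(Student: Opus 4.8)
\smallskip

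The plan is to identify the chain complex $(R_*M, d)$ with a known complex computing the derived functors of destabilization. Recall that $\mathcal R_sM$ is a quotient of the span of admissible-length-$s$ words $Q^Ix$; after the relabeling $R_s = \Sigma \mathcal R_s\Sigma^{s-1}$, the degree conventions are arranged so that $R_sM$ is precisely the degree-$s$ piece of a complex built from the free unstable module (or its "algebraic extended power" cousin) functor applied to $M$. The first step is therefore bookkeeping: write down explicitly, in terms of Dyer--Lashof monomials and the admissibility/excess conditions, what $R_sM$ is, and verify that the stated $d_s$ is well-defined, i.e.\ compatible with the Adem relations among the $Q^I$. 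This is the standard verification that the Nishida-type formula $d_s(Q^Ix) = \sum_i Q^IQ^{i-1}(xSq^i)$ descends to the relevant quotient; I would cite or mimic the corresponding check in \cite{singer}, \cite{goerss}, or \cite{powell}.

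\smallskip

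Next I would prove $d^2 = 0$. Since each $d_s$ is, by construction, $Q^I \cdot (-)$ applied to the "bottom" differential $d_0 \colon R_0M = \Sigma M \to R_1M$ followed by a shift, it suffices to check that the composite $R_0M \to R_1M \to R_2M$ vanishes, and then that the Dyer--Lashof operations carry this relation upward. The vanishing of the bottom composite is exactly the identity
$$ \sum_{i,j} Q^{j-1}Q^{i-1}\big((xSq^i)Sq^j\big) = 0 $$
after passing to the quotient by Adem relations; this is a formal consequence of the Adem relations in $\A$ together with the Dyer--Lashof Adem relations, and is the algebraic shadow of the fact that a connecting map composed with the next connecting map in a filtration is null. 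I expect this to be a computation of moderate length but no conceptual difficulty, parallel to Singer's original argument.

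\smallskip

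The heart of the proof is the homology identification $H_s(R_*M; d) \cong \Oinfty_s M$. The approach I would take: (i) observe that $\Oinfty$ is the right adjoint to the inclusion $\U \hookrightarrow \M$, so $\Oinfty_s$ is computed by applying $\Oinfty$ to an injective resolution of $M$ in $\M$, or dually by a projective/free resolution in the appropriate sense; (ii) show that $R_*M$, as a functor of $M$, is exact in a suitable range or splits into manageable pieces, so that it suffices to check the identification on a cogenerating class of objects — e.g.\ on (suspensions of) the $\A$-modules $\Sigma^n \A$ or on the injectives $J(n)$ in $\M$; (iii) on those test objects, compute both sides directly, using the known structure of $\mathcal R_s$ applied to free modules (which, by the Prop.\ on homology of extended powers, is controlled by the homology of configuration spaces / classifying spaces of symmetric groups) and the known low-dimensional behaviour of destabilization. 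An alternative and probably cleaner route is a \emph{hyper-derived functor} or spectral-sequence argument: build a double complex whose rows are $R_*$ applied to an injective resolution of $M$; one filtration collapses because $R_*$ preserves injectives up to the relevant homology (or because $R_*J$ is acyclic except at $s=0$ for $J$ injective, where it gives $\Oinfty J = J$), and the other filtration yields $\Oinfty_* M$.

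\smallskip

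The main obstacle, as I see it, is step (ii)--(iii): proving the acyclicity statement "$R_*J$ is concentrated in homological degree $0$ when $J$ is an injective (equivalently, a suitable free) object of $\M$, with $H_0 = \Oinfty J$." This is the precise point where one must invoke real information about the derived functors of destabilization — it is essentially the theorem of Singer / Lannes--Zarati / Goerss in the form used here, so in practice I would reduce \thmref{Rs thm} to their results rather than reprove them, identifying our $R_*$-complex with (a regrading of) the complex appearing in \cite{goerss} and quoting the computation of its homology. The remaining work is then to match conventions: suspensions, the shift by $\Sigma^{s-1}$, and the translation between the $\Oinfty_s$ of this paper and the $\Der_s$ or $\mathcal D_s$ of the cited sources. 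I would present that convention-matching carefully, since it is where sign- and degree-errors are most likely to creep in, and it is the only genuinely new content relative to the literature.
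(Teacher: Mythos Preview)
Your high-level architecture matches the paper's: both reduce to checking that each $R_s$ is exact, that $H_0(R_*M) = \Oinfty M$, and that $R_*(\Sigma^n\A_*)$ is acyclic in positive degrees, after which the identification with $\Oinfty_*$ follows by the usual $\delta$--functor argument. The differences lie in how the two nontrivial ingredients are supplied.

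For $d^2=0$, you propose the direct algebraic computation via Adem relations. The paper instead argues geometrically: having defined $d_s$ via the connecting map $\delta$ of the tower, it shows the composite $D_{2^{s-1}}(X)\to D_{2^s}(\Sigma X)\to D_{2^{s+1}}(\Sigma^2 X)$ is null by colinearizing and using that consecutive connecting maps in a tower compose to zero (alternatively, by invoking the Segal conjecture to kill $\D_4$ on finite spectra). Your algebraic route is acknowledged in a remark as also viable.

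For the acyclicity on injectives --- the step you flag as the main obstacle and propose to outsource to Goerss/Singer --- the paper gives a self-contained argument using the doubling functor $\Phi$. The short exact sequence $0\to\Phi(\mathcal R_{s-1}M)\xra{q_0}\mathcal R_sM\xra{\epsilon}\Sigma^{-1}\mathcal R_s(\Sigma M)\to 0$ yields a long exact sequence relating $H_s(M)$, $H_s(\Sigma M)$, and $\Phi(H_{s-1}(\Sigma M))$; the boundary is identified with $sq_0$, which is surjective on $\Oinfty(\Sigma^n\A_*)$, and then induction on $s$ plus a connectivity estimate finishes. This $\Phi$--based argument is the paper's main new input here, makes the proof independent of the earlier literature, and moreover feeds directly into later structure (the long exact sequence of \corref{les cor 2} and the Dyer--Lashof operations on $L_*M$). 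Your citation route is valid but would bypass this.
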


\begin{rem} Recall that $L_sM = \Omega \Oinfty_s(\Sigma^{1-s}M)$. \thmref{Rs thm} says that
$$ \mathcal R_{s-1}(\Sigma^{-1} M) \xra{d_{s-1}} \mathcal R_s(M) \xra{d_s} \mathcal R_{s+1}(\Sigma M)$$
has homology $\Sigma^{-1}\Oinfty_s(\Sigma^{1-s}M)$ at $\mathcal R_s(M)$, and we will see that $L_sM$ is the image of a natural homomorphism
$$ \Oinfty_s(\Sigma^{-s}M) \ra \Sigma^{-1}\Oinfty_s(\Sigma^{1-s}M).$$
This may make it plausible (though by no means obvious) that there might be an algebraic spectral sequence with $E^1 = U_{\Qq}(\mathcal R_*M)$ and $E^{\infty} = U_{\Qq}(L_*M)$.
\end{rem}

In \secref{derived functors section}, we will give a complete presentation of \thmref{Rs thm}, which is much more topologically based and less explicitly computational than similar results in the literature. Also included in this section is a proof of \propref{DL structure on derived functors prop}.

\subsection{Examples} \label{example subsection}  In \secref{example section}, we give a variety of examples illustrating the strength of our main results and their limitations.  Here we summarize some of our findings.

\corref{alg = top cor 2} holds for the following families of spectra $X$, so that the algebraic and topological spectral sequences agree, and thus
$$E^{\infty}_{*,*}(X) \simeq U_{\Qq}(L_*H_*(X)).$$
\begin{itemize}
\item $X=\Sigma^n HA$, the Eilenberg--MacLane spectrum of type $(A,n)$, unless $n=0$ or $-1$, and $A$ has 2-torsion of order at least 4.
\item $X= \Sinfty Z$, a suspension spectrum.
\item $X = S^1\langle 1 \rangle$, the 1--connected cover of $\Sinfty S^1$.
\item $X = (\Sinfty Z)(s)$, the $s^{th}$ stage of an Adams resolution of $\Sinfty Z$, with $Z$ a connected space.
\end{itemize}

Verifying the geometric hypothesis of \corref{alg = top cor 2} for this last family relies on unpublished work of Lannes and Zarati \cite{lz2} from the 1980's.

By contrast, we have examples of spectra for which the spectral sequence has rogue differentials.
\begin{itemize}
\item When $X = \hocofib \{ \Sinfty \R P^4 \xra{4} \Sinfty \R P^4\}$, $d^3$ is nonzero.
\item When $X = H\Z/2^r$, with $r \geq 2$, $d^{2^r-1}$ is the only rogue differential.
\item When $X = \Sigma^{-1}H\Z/2^r$, with $r \geq 2$, the rogue differentials are the family  $d^{2^s(2^r-1)}$, $s \geq 0$.
\end{itemize}

Note that, with $X$ as in the first of these examples, $X$ and  the suspension spectrum of $\R P^4 \vee \Sigma \R P^4$ have isomorphic homology, but differing spectral sequences.

In all of these examples, $L_0H_*(X)$ turns out to generate $L_*H_*(X)$ as a module over the Dyer--Lashof algebra.  For a simple example where this does {\em not} hold, one can let $X = S^1 \cup_{\eta}D^3$: see \secref{CP2 example} for details.

\begin{rem} From our calculations, we learn that, if $X$ is {\em any} Eilenberg--MacLane spectrum whose homotopy is a finite abelian 2-group, the topological spectral sequence correctly computes $H_*(\Oinfty X)$, even when $X$ is not 0--connected.  In ongoing work, the second author has found a couple more examples of nonconnective spectra for which the spectral sequence correctly computes $H_*(\Oinfty X)$.  As of yet the authors have no good sense of when such exotic convergence should be expected.
\end{rem}

\subsection{Acknowledgments} This research was partially supported by National Science Foundation grant 0967649.  Some of these results were presented by the first author at the September, 2011 Workshop on Homotopy Theory at Oberwolfach, with a report published as \cite{kuhnOber11}, and by the second author in the Special Session on the Calculus of Functors at the January 2012 AMS Annual Meeting in Boston.  The authors would like to thank the referee for many constructive comments and, in particular, for nudging us towards a much improved \secref{alg spec seq section}.

\section{Preliminaries} \label{geometric preliminaries}

\subsection{Prerequisites on spectra}

$\T$ will be the category of pointed topological spaces, and $\Sp$ the category of $S$--modules as in \cite{ekmm}.  An $S$--module $X$ is a spectrum of the classic sort (as in \cite{lmms}) equipped with extra structure, and we let $X_n$ denote its $n$th space.  Thus $\Oinfty X = X_0$.

By a weak natural transformation $F \ra G$ between two functors with values in a model category, we mean a zig-zig of natural transformations $F \la H \ra G$ (or $F \ra H \la G$) for which the backwards arrow is a weak equivalence (on any object).  We say that a diagram of such weak natural transformations commutes if it induces a commutative diagram in the homotopy category (on each object).

Though we will try to not dwell too deeply on the details of the model, studied in \cite{ak}, for our Goodwillie tower, the following proposition summarizes the formal properties of $S$--modules that are needed to make the arguments in \cite{ak} work.

\begin{prop} \label{S-mod prop} The category $\Sp$ of $S$--modules has the following structure.

\begin{itemize}
\item  $\Sp$ is a category enriched over $\T$.
\item $\Sp$ is tensored and cotensored over $\T$: given $K \in \T$ and $X \in \Sp$, there are spectra $K \sm X$ and $\Map_{\Sp}(K,X)$, natural in both variables, satisfying standard adjunction properties.
\item There are natural maps $\eta: \Map_{\Sp}(K,X) \ra \Map_{\Sp}(L \sm K, L \sm X)$.
\item There are natural maps $$\Map_{\Sp}(K,X) \sm \Map_{\Sp}(L,Y) \ra \Map_{\Sp}(K \sm L, X \sm Y),$$ which are weak equivalences if $K$ and $L$ are finite CW complexes.
\item The suspension spectrum functor $\Sinfty: \T \ra \Sp$ commutes with smash product.
\item There are natural maps $e: \Sinfty \Map_{\T}(K,Z) \ra \Map_{\Sp}(K, \Sinfty Z)$.
\item There is a weak natural equivalence $\displaystyle \hocolim_n \Sigma^{-n}\Sinfty X_n \ra X$.
\end{itemize}

\end{prop}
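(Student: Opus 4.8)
The plan is to derive each bullet from the established symmetric monoidal and $\T$-enriched structure on the category $\Sp$ of $S$-modules developed in \cite{ekmm}, treating only the two homotopical assertions as requiring argument. The enrichment of $\Sp$ over $\T$, together with the tensor $K \sm X$, the cotensor $\Map_{\Sp}(K,X)$, and their adjunction isomorphisms, is precisely the structure of a topological (i.e.\ $\T$-enriched) model category recorded in \cite{ekmm}: one sets $K \sm X = \Sinfty K \sm_S X$ and $\Map_{\Sp}(K,X) = F_S(\Sinfty K, X)$, using that $\Sinfty$ is strong symmetric monoidal (the fifth bullet), so that $\Sinfty(K \sm L) \cong \Sinfty K \sm_S \Sinfty L$. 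Here one should keep in mind the standard caveat that these identifications are only homotopically meaningful after the cofibrant and fibrant replacements built into the EKMM framework; as in \cite{ak}, we suppress this in the notation.

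Next I would construct the three families of natural maps. The map $\eta: \Map_{\Sp}(K,X) \ra \Map_{\Sp}(L \sm K, L \sm X)$ is adjoint to the composite $(L \sm K) \sm \Map_{\Sp}(K,X) \cong L \sm \big( K \sm \Map_{\Sp}(K,X) \big) \ra L \sm X$ built from the evaluation map. The lax structure map
\[
\Map_{\Sp}(K,X) \sm \Map_{\Sp}(L,Y) \ra \Map_{\Sp}(K \sm L, X \sm Y)
\]
is the standard one valid in any symmetric monoidal category, namely $F_S(A,X)\sm_S F_S(B,Y)\to F_S(A\sm_S B,\, X\sm_S Y)$, precomposed with $\Sinfty(K\sm L)\cong \Sinfty K \sm_S \Sinfty L$. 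Finally, $e: \Sinfty \Map_{\T}(K,Z) \ra \Map_{\Sp}(K, \Sinfty Z)$ is adjoint to $K \sm \Sinfty \Map_{\T}(K,Z) \cong \Sinfty\big(K \sm \Map_{\T}(K,Z)\big) \xra{\Sinfty(\mathrm{ev})} \Sinfty Z$.

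It remains to prove the two homotopical claims. For the fourth bullet, that the lax structure map is a weak equivalence when $K$ and $L$ are finite CW complexes, I would induct on the cells of $K$ (and symmetrically of $L$). The base case $K = S^0$ is trivial, and for a sphere $K = S^n$ one uses $\Map_{\Sp}(S^n, X) \simeq \Sigma^{-n}X$ together with the fact that smashing and desuspension commute up to natural equivalence. Attaching a cell to $K$ yields a map between cofiber sequences to which both $\Map_{\Sp}(-,X)$ and $- \sm \Map_{\Sp}(L,Y)$ apply to give (co)fiber sequences of spectra, so the five lemma finishes the inductive step; the induction on $L$ is identical. The last bullet, the weak natural equivalence $\hocolim_n \Sigma^{-n}\Sinfty X_n \ra X$, asserts that an $S$-module is reconstructed as a sequential homotopy colimit from the suspension spectra of its spaces; one checks it is an isomorphism on homotopy groups using $\pi_k(X) = \colim_n \pi_{k+n}(X_n)$ and that homotopy groups commute with sequential homotopy colimits, and the map is natural by construction.

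The main obstacle is not homotopy-theoretic depth but careful bookkeeping about the EKMM model: $\Sinfty$, $K \sm -$, $\Map_{\Sp}(K,-)$, and $- \sm_S -$ are homotopically correct only after the appropriate cofibrant/fibrant replacements, and one must verify that the maps $\eta$, $e$, and the lax structure map are strictly natural and compatible with these replacements — exactly what is needed in \cite{ak} to transport the little cubes operad action from $\Oinfty X$ to the Goodwillie tower. In particular, establishing the fourth bullet with honest point-set maps, rather than only in the homotopy category, is the one place where some attention to these replacements is genuinely required.
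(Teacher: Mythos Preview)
Your handling of the first six bullets is essentially what the paper does: it declares these to be standard properties of the EKMM model and offers no proof, so your derivations from the closed symmetric monoidal $\T$-enriched structure are fine and in the same spirit.

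The gap is in the last bullet, which is the only item the paper singles out as needing argument. You write that ``the map is natural by construction'' and then check it on homotopy groups, but you have not given the construction, and in the EKMM model there is no evident \emph{direct} natural map $\Sigma^{-n}\Sinfty X_n \to X$ of $S$--modules. Here $\Sinfty$ is the $S$--module suspension spectrum functor and $\Sigma^{-n} = \Map_{\Sp}(S^n,-)$, while $X_n$ is the $n$th space of the underlying Lewis--May spectrum of $X$; relating these requires passing through the adjunction between spectra and $S$--modules. The paper's Appendix~A (written with help from Mandell) constructs instead a \emph{zig--zag}
\[
\Sigma^{-n}\Sinfty X_n \xla{\sim} S \sm_{\sL} \bL(\Sinfty_n X_n) \lra X,
\]
where $\Sinfty_n$ is the shift--desuspension on Lewis--May spectra and both maps are adjoints built from the $(S \sm_{\sL} \bL(-),\, F_{\sL}(S,-))$ adjunction. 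This is exactly why the statement asserts a \emph{weak} natural equivalence in the paper's sense (a zig--zag with backwards weak equivalences), not an honest natural transformation. Your homotopy--group check would verify that such a comparison is a weak equivalence once it exists, but the content of the proposition is the existence of a natural (zig--zag) comparison in the first place, and that is what your proposal omits.
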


Here and elsewhere we write $\Sigma^{-n}X$ for $\Map_{\Sp}(S^n,X)$.

It is only the last item that really needs comment.  See Appendix \ref{spectra appendix} for more discussion of this point.

We end this subsection by describing the setting for the `evaluation/diagonal' natural transformations
$$ \epsilon: \Sigma F(X) \ra F(\Sigma X)$$
which play a significant role in our work.

As $\Sp$ is a category enriched over $\T$, $\Mor_{\Sp}(X,Y)$ has the structure of a based topological space.  A functor $F: \Sp \ra \Sp$ is said to be {\em continuous} if
$$ F: \Mor_{\Sp}(X,Y) \ra \Mor_{\Sp}(F(X),F(Y))$$
is a continuous function.  If $F$ is also {\em reduced}, i.e.\ $F(*)=*$, then this continuous function is also based.

\begin{defn}  Given a continuous reduced functor $F: \Sp \ra \Sp$, and $K \in \T$, we let
$$ \epsilon: K \sm F(X) \ra F(K \sm X)$$
be adjoint to the composite of continuous functions
$$ K \ra \Mor_{\Sp}(X, K \sm X) \xra{F} \Mor_{\Sp}(F(X), F(K \sm X)),$$
where the first map is the unit of the adjunction
$$ \Mor_{\Sp}(K \sm X, Y) \simeq \Mor_{\T}(K, \Mor_{\Sp}(X,Y)).$$
\end{defn}

\subsection{The Tate construction} If $G$ is a finite group, we let $G$--$\Sp$ denote the category of $S$--modules with a $G$--action: the category of `naive' $G$--spectra.

More generally, if $R$ is a commutative $S$--algebra, we let $G$--$R$--mod be the category of $R$--modules with $G$--action.  (For us, $R$ will eventually be $H\Z/2$.)

Given $Y \in G$--$R$--mod, we let $Y_{hG}$ and $Y^{hG}$ respectively denote associated homotopy orbit and homotopy fixed point $R$--modules.

The homotopy orbit construction satisfies a change-of-rings lemma.

\begin{lem} Given $Y \in G$--$\Sp$ and a commutative $S$--algebra $R$, there is a natural isomorphism of $R$--modules, $R \sm Y_{hG} = (R \sm Y)_{hG}$.
\end{lem}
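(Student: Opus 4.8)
The plan is to identify both sides with the same strict point-set construction, exploiting that $R \sm -\colon \Sp \ra R\text{--mod}$ is a left adjoint (to the forgetful functor) which is additionally compatible with the tensoring of $\Sp$ over $\T$. Recall that for a (suitably cofibrant) naive $G$--spectrum $Y$ the homotopy orbit spectrum is modelled by $Y_{hG} = EG_+ \sm_G Y$, the quotient of $EG_+ \sm Y$ by the diagonal $G$--action, where $EG$ is a free contractible $G$--CW complex; the same formula, applied inside $G$--$R$--mod, computes $Z_{hG}$ for $Z \in G$--$R$--mod, and this agrees with $Z_{hG}$ computed in $\Sp$ because the forgetful functor $G$--$R$--mod $\ra G$--$\Sp$ preserves the relevant colimits and smash products with spaces.

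The argument then runs as follows. First, give $R \sm Y$ the $G$--action induced from $Y$ (with $R$ acted on trivially), making $R \sm Y \in G$--$R$--mod naturally in $Y$. Next, use the standard properties of the EKMM smash product recorded in \cite{ekmm}: the functor $R \sm -$ preserves all colimits, and it commutes with smashing by a based space, $R \sm (K \sm Y) \cong K \sm (R \sm Y)$, naturally in $K \in \T$ and $Y \in \Sp$. Applying $R \sm -$ to $EG_+ \sm_G Y$, writing $EG_+ \sm_G Y$ as the coequalizer computing the $G$--quotient of $EG_+ \sm Y$, and commuting $R \sm -$ past this colimit and past the smash with $EG_+$ yields natural isomorphisms of $R$--modules
$$ R \sm Y_{hG} \;=\; R \sm \bigl((EG_+ \sm Y)/G\bigr) \;\cong\; \bigl(R \sm (EG_+ \sm Y)\bigr)/G \;\cong\; \bigl(EG_+ \sm (R \sm Y)\bigr)/G \;=\; (R \sm Y)_{hG}.$$
Naturality in $Y$ is immediate since every map in the chain is natural. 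Finally, since $R \sm -$ is a left Quillen functor, taking $Y$ cofibrant makes $R \sm Y$ cofibrant, so both $Y_{hG}$ and $(R\sm Y)_{hG}$ as written are the correct (derived) homotopy orbit objects, and the displayed isomorphism is one of these.

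The only point requiring care — and it is bookkeeping rather than a genuine obstacle — is making sure that the $G$--action on $R \sm Y$ used to form $(R \sm Y)_{hG}$ is exactly the one induced from $Y$, and that the cofibrancy conventions are arranged so that the strict formula $EG_+ \sm_G(-)$ models homotopy orbits both in $\Sp$ and in $R\text{--mod}$. Once this is set up, the statement is formal: commuting a left adjoint past a colimit, together with the $\T$--enriched monoidal compatibility of $R \sm -$, is all that is used.
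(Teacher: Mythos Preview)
Your argument is correct. The paper itself states this lemma without proof, treating it as an evident change-of-rings fact; your write-up is a correct and standard unpacking of why it holds, using exactly the formal properties (that $R \sm -$ is a left adjoint commuting with colimits and with tensoring by based spaces) one would expect.
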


There are various constructions in the literature, e.g.\ \cite{acd, ak, gm}, of a natural norm map
$$ N_G(Y): Y_{hG} \ra Y^{hG}.$$
The Tate spectrum of $Y$ is defined as the homotopy cofiber of $N_G(Y)$.  It will be more convenient for us to desuspend this once and define $t_G(Y)$ to be the homotopy {\em fiber} of $N_G(Y)$.  Thus $t_G(Y)$ comes equipped with a natural transformation $t_G(Y) \ra Y_{hG}$.

The next lemma lists the properties we need about this.

\begin{lem} {\bf (a)} $t_G$ takes weak equivalences and cofibration sequences in $G$--$R$--mod to weak equivalences and cofibration sequences in $R$--mod. \\

\noindent{\bf (b)} If $X$ is a nonequivariant $R$--module, $t_G(G_+ \sm X) \simeq *$.
\end{lem}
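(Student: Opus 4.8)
The plan is to work entirely from the presentation $t_G(Y) = \hofib(N_G(Y))$ together with two standard inputs: that $(-)_{hG}$ and $(-)^{hG}$ are exact functors on the stable homotopy category of $G$--$R$--modules, and that the norm map is a weak equivalence on induced modules. Throughout one fixes functorial cofibrant--fibrant replacements so that $(-)_{hG} = EG_+ \wedge_G (-)$, $(-)^{hG} = F(EG_+,(-))^G$, and $N_G$ are honest functors and an honest natural transformation; this is the routine bookkeeping underlying the weak natural transformations used in the paper.

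For (a), I would first record that both $(-)_{hG}$ and $(-)^{hG}$ preserve weak equivalences, being derived functors, and preserve cofibration sequences: $(-)_{hG}$ is a homotopy colimit and so preserves cofibration sequences, while $(-)^{hG}$ is a homotopy limit and so preserves fibration sequences, which in the stable category of $R$--modules coincide with cofibration sequences. Since $N_G$ is a natural transformation between these two exact functors, its homotopy fiber $t_G$ inherits exactness. Concretely: for a weak equivalence $Y' \to Y$ one compares the fibration sequences $t_G(Y') \to Y'_{hG} \to Y'^{hG}$ and $t_G(Y) \to Y_{hG} \to Y^{hG}$ and applies the five lemma to the associated long exact sequences of homotopy groups; for a cofibration sequence $Y' \to Y \to Y''$ one obtains a map from the cofibration sequence $Y'_{hG} \to Y_{hG} \to Y''_{hG}$ to the cofibration sequence $Y'^{hG} \to Y^{hG} \to Y''^{hG}$, and taking vertical homotopy fibers concludes that $t_G(Y') \to t_G(Y) \to t_G(Y'')$ is a cofibration sequence by the $3\times 3$ (octahedral) lemma in the triangulated category of $R$--modules.

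For (b), the object $G_+ \wedge X$, with $G$ acting through left translation on $G_+$, is the free (induced) $G$--$R$--module on the nonequivariant module $X$. The point is that $N_G(G_+ \wedge X): (G_+ \wedge X)_{hG} \to (G_+ \wedge X)^{hG}$ is a weak equivalence: this is one of the basic properties built into the construction of the norm in \cite{gm, acd, ak}, and under the natural identifications $EG_+ \wedge_G (G_+ \wedge X) \simeq X$ (coming from $EG_+ \wedge_G G_+ \simeq EG_+ \simeq S^0$) and $F(EG_+, G_+ \wedge X)^G \simeq X$ it becomes the identity of $X$. Hence $t_G(G_+ \wedge X) = \hofib(N_G(G_+ \wedge X)) \simeq \hofib(\id_X) \simeq *$.

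I expect the only genuinely non-formal step to be the freeness assertion used in (b) — that the norm is an equivalence on induced modules — which ultimately rests on the Adams/Wirthm\"uller isomorphism and is supplied by the cited constructions of $N_G$; everything in (a) is a formal consequence of the exactness of homotopy orbits and homotopy fixed points and the stability of the category of $R$--modules.
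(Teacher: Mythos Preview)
Your proposal is correct and in fact more detailed than the paper's own treatment: the paper does not prove this lemma at all, instead deferring to \cite[Part~I]{gm} for (a) and noting that (b) also follows from \cite[Prop.~2.10]{ak}. What you have written is essentially an unpacking of those references --- the exactness of $(-)_{hG}$ and $(-)^{hG}$ in a stable setting, and the fact that the norm is an equivalence on induced objects --- so there is no real divergence in approach, only in the level of detail supplied.
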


See \cite[Part~I]{gm} for these sorts of facts.  Statement (b) also follows from \cite[Prop.2.10]{ak}.

When $G=\Z/2$, there is a well known model for $t_{\Z/2}(Y)$.  Let $\rho$ be the one dimensional real sign representation of $\Z/2$, and let $S^{n\rho}$ be the one point compactification of $n\rho$.

\begin{lem} (Compare with \cite[Thm.16.1]{gm}.)  For $Y \in \Z/2$--$R$--mod, there is a natural weak equivalence
$$ t_{\Z/2}(Y) \simeq \holim_n \Map_{R\text{-mod}}(S^{n\rho}, Y)_{h\Z/2}.$$
\end{lem}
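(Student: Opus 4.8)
The plan is to filter $t_{\Z/2}(Y)$ by the finite representation spheres $S^{n\rho}$ and to match the resulting tower with the tower built from the skeleta of $E\Z/2$, i.e.\ to run the Greenlees--May argument inside $R$--mod. First I would record the skeletal model of the universal space: the unit spheres $S(n\rho) = S^{n-1}$, equipped with the antipodal $\Z/2$--action, are finite free $\Z/2$--CW complexes with $\colim_n S(n\rho)_+ = E\Z/2_+$, and the identification $S^{n\rho} = D(n\rho)/S(n\rho)$ gives, for each $n$, a cofibration sequence of based $\Z/2$--spaces
$$ S(n\rho)_+ \lra S^0 \lra S^{n\rho}, $$
natural in $n$ along the inclusions $S^{n\rho} \hra S^{(n+1)\rho}$. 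Applying $\Map_{R\text{-mod}}(-, Y)$ turns this into a fibration sequence of $\Z/2$--$R$--modules, natural in $n$:
$$ \Map_{R\text{-mod}}(S^{n\rho}, Y) \lra Y \lra \Map_{R\text{-mod}}(S(n\rho)_+, Y). $$

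Next I would apply $(-)_{h\Z/2}$ and then $\holim_n$. Both preserve fibration sequences of $R$--modules (the first as a homotopy colimit, the second as a sequential homotopy limit, in the stable category $R$--mod), and the middle terms form constant towers. This yields a fibration sequence of $R$--modules
$$ \holim_n \Map_{R\text{-mod}}(S^{n\rho}, Y)_{h\Z/2} \lra Y_{h\Z/2} \lra \holim_n \Map_{R\text{-mod}}(S(n\rho)_+, Y)_{h\Z/2}. $$
The lemma then reduces to identifying the right-hand term with $Y^{h\Z/2}$ and the displayed map with the norm map $N_{\Z/2}(Y)$; granting this, the fiber on the left is $\hofib(N_{\Z/2}(Y)) = t_{\Z/2}(Y)$, which is the claim.

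For the identification I would argue as follows. Because $S(n\rho)$ is a finite free $\Z/2$--CW complex, $\Map_{R\text{-mod}}(S(n\rho)_+, Y)$ lies in the thick subcategory of $\Z/2$--$R$--modules generated by $\Z/2_+ \sm Y$: cell by cell this is the Wirthm\"uller isomorphism $\Map_{R\text{-mod}}(\Z/2_+, Y) \simeq \Z/2_+ \sm Y$, valid in $R$--mod since $\Z/2$ is finite. On this thick subcategory the norm map is an equivalence, so $\Map_{R\text{-mod}}(S(n\rho)_+, Y)_{h\Z/2} \simeq \Map_{R\text{-mod}}(S(n\rho)_+, Y)^{h\Z/2}$, naturally in $n$. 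Passing to $\holim_n$, using that homotopy fixed points commute with homotopy limits and that $\Map_{R\text{-mod}}(E\Z/2_+, Y)^{h\Z/2} \simeq Y^{h\Z/2}$ (because $E\Z/2_+ \sm E\Z/2_+ \simeq E\Z/2_+$), gives $\holim_n \Map_{R\text{-mod}}(S(n\rho)_+, Y)_{h\Z/2} \simeq Y^{h\Z/2}$; and the composite $Y_{h\Z/2} \to Y^{h\Z/2}$ is the norm, this being exactly how $N_{\Z/2}$ is assembled from the skeleta of $E\Z/2$ in \cite{gm} (and is compatible with the constructions of \cite{acd, ak}).

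I expect the main obstacle to be precisely this last compatibility: verifying that the composite produced by the representation-sphere tower agrees with the norm map as it is actually defined in the references, rather than with some other natural map $Y_{h\Z/2} \to Y^{h\Z/2}$. A secondary point requiring care is that the Wirthm\"uller isomorphism and the norm-equivalence for finite free objects be established in $R$--mod and not merely in $\Sp$; this is formal but deserves a remark, for instance by deducing it from the case $R = S$. Everything else is bookkeeping with fibration sequences and (co)limits.
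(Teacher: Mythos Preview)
The paper does not supply its own proof of this lemma; it simply records the statement with a pointer to \cite[Thm.~16.1]{gm} and moves on. Your proposal is correct and is essentially a sketch of the Greenlees--May argument, carried out in $R$--mod rather than in $\Sp$: filter $E\Z/2$ by the spheres $S(n\rho)$, use the cofibration $S(n\rho)_+ \to S^0 \to S^{n\rho}$, and identify the resulting fiber sequence after $(-)_{h\Z/2}$ and $\holim_n$ with the defining fiber sequence of $t_{\Z/2}$. Your self-diagnosed obstacle---that the composite $Y_{h\Z/2} \to Y^{h\Z/2}$ you produce must be checked to be the norm map and not merely some map---is the only genuine point to verify, and it is handled in \cite{gm}; the Wirthm\"uller and norm-on-free-objects statements in $R$--mod follow from the case $R=S$ by base change, as you note.
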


We now specialize to the special case $Y = X\sm_R X$, with $X$ an $R$--module.

\begin{notation} Let $X$ be an $R$--module. We let $D^R_2(X) = (X\sm_R X)_{h\Z/2}$ and $\D^R_2(X) = t_{\Z/2}(X\sm_R X)$.
\end{notation}

One easily checks the following.

\begin{lem} {\bf (a)} For all $S$--modules $X$ and commutative $S$--algebras $R$, there is an isomorphism of $R$--modules, $D^R_2(R \sm X) = R \sm D_2(X)$. \\

\noindent{\bf (b)} For all $R$--modules $X$, there is a natural weak equivalence
$$\D^R_2(X) \simeq \holim_n \Sigma^n D^R_2(\Sigma^{-n}X).$$
\end{lem}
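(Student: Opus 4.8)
The plan is to deduce both statements from the corresponding facts for naive $S$-modules together with the two change-of-rings lemmas already recorded. For statement (a): by definition $D^R_2(R\sm X) = ((R\sm X)\sm_R(R\sm X))_{h\Z/2}$. The first step is the elementary observation that $(R\sm X)\sm_R(R\sm X) \cong R\sm(X\sm X)$ as $\Z/2$-equivariant $R$-modules, where the $\Z/2$-action on the right swaps the two copies of $X$ inside $X\sm X$; this is just associativity and unitality of the smash product of $S$-modules together with the fact that $(R\sm -)$ is the extension-of-scalars functor. The second step is to apply the change-of-rings lemma for homotopy orbits to $Y = X\sm X \in \Z/2$-$\Sp$, giving $R\sm(X\sm X)_{h\Z/2} = (R\sm(X\sm X))_{h\Z/2}$ as $R$-modules. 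Combining, $D^R_2(R\sm X) = (R\sm(X\sm X))_{h\Z/2} = R\sm(X\sm X)_{h\Z/2} = R\sm D_2(X)$, which is the claim. One must check that these isomorphisms are natural in $X$ and compatible with the $R$-module structure, but that is routine bookkeeping.

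For statement (b): apply the preceding lemma giving the $n\rho$-model for $t_{\Z/2}$, now in the category $\Z/2$-$R$-mod, to $Y = X\sm_R X$. This yields
$$ \D^R_2(X) = t_{\Z/2}(X\sm_R X) \simeq \holim_n \Map_{R\text{-mod}}(S^{n\rho}, X\sm_R X)_{h\Z/2}. $$
The remaining step is to identify the $n$th term of this homotopy limit with $\Sigma^n D^R_2(\Sigma^{-n}X)$. Here the key point is that the $\Z/2$-action on $S^{n\rho}$ is free away from the basepoint, and $S^{n\rho}$ restricted to the underlying nonequivariant sphere is $S^n$; so for a $\Z/2$-module of the form $X\sm_R X$ with the swap action, one has a natural equivalence $\Map_{R\text{-mod}}(S^{n\rho}, X\sm_R X)_{h\Z/2} \simeq \Sigma^n\big((\Sigma^{-n}X)\sm_R(\Sigma^{-n}X)\big)_{h\Z/2} = \Sigma^n D^R_2(\Sigma^{-n}X)$, using that $S^{n\rho}\sm_{\Z/2}(\text{free}) $ untwists appropriately and that $(\Sigma^{-n}X)\sm_R(\Sigma^{-n}X) \simeq \Sigma^{-2n}(X\sm_R X)$ carries the diagonal sign action $n\rho$. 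Passing to $\holim_n$ gives $\D^R_2(X) \simeq \holim_n \Sigma^n D^R_2(\Sigma^{-n}X)$.

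The main obstacle is the identification in statement (b): one must be careful that the representation sphere $S^{n\rho}$ enters the homotopy-orbit formula in exactly the way that recombines, upon desuspending one copy of $X$ by $S^n$ each, into the diagonal $\Z/2$-action by $n\rho$ on $\Sigma^{-2n}(X\sm_R X)$. This is precisely the manipulation that appears in McCarthy's description of the colinearization/Tate construction, so the cleanest route is to cite the analogous nonequivariant statement (the last displayed equivalence in the paragraph following Corollary~\ref{alg = top cor 2}, $t_{\Z/2}(Y^{\sm 2}) \simeq \holim_n \Sigma^n D_2(\Sigma^{-n}Y)$, which is \cite{mccarthy}) and observe that the proof goes through verbatim with $\sm$ replaced by $\sm_R$ throughout, since all that is used is that $\Sp$ (resp.\ $R$-mod) is tensored and cotensored over $\T$ with the formal properties listed in \propref{S-mod prop}. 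The compatibility of the two constructions under $R\sm -$, needed implicitly for later applications, then follows from statement (a) and Lemma on homotopy orbits by passing to the homotopy limit.
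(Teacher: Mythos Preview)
Your proof is correct. The paper itself offers no proof of this lemma: it is introduced by the phrase ``One easily checks the following'' and left at that. Your argument supplies exactly the routine verification the authors had in mind, using the change-of-rings lemma for homotopy orbits to get (a), and the $S^{n\rho}$-model for $t_{\Z/2}$ from the preceding lemma to get (b); the identification $\Sigma^n D^R_2(\Sigma^{-n}X) \simeq \Map_{R\text{-mod}}(S^{n\rho}, X\sm_R X)_{h\Z/2}$ via the equivariant equivalence $(\Sigma^{-n}X)\sm_R(\Sigma^{-n}X)\simeq \Map(S^{n+n\rho},X\sm_R X)$ is the expected manipulation.
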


Thus $\D^R_2$ is identified as the colinearization of $D^R_2$ in the sense of \cite{mccarthy}.

\begin{cor} \label{linear cor} $\D^R_2$ preserves cofibration sequences of $R$--modules.
\end{cor}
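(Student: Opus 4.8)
The plan is to reduce the statement to the homotopy cofiber of a single map and then exploit the standard $\Z/2$--equivariant filtration of a smash square, together with the two properties of the Tate construction recorded above: that $t_{\Z/2}$ preserves cofibration sequences of $\Z/2$--$R$--modules, and that $t_{\Z/2}$ kills induced (free) $\Z/2$--$R$--modules. So let $f\colon A\ra B$ be a map of $R$--modules with homotopy cofiber $C$. I want to show that
$$\D^R_2(A)\xra{\D^R_2(f)} \D^R_2(B)\ra \D^R_2(C)$$
is a cofibration sequence of $R$--modules.

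\textbf{Setting up the filtration.} Working with suitably cofibrant models so that all the smash products and pushouts below are homotopy invariant, form the pushout $L_1$ of the $\Z/2$--equivariant diagram $A\sm_R B \la A\sm_R A \ra B\sm_R A$, where the $\Z/2$--action interchanges the two outer terms. There are evident $\Z/2$--equivariant maps
$$ L_0 := A\sm_R A \ra L_1 \ra L_2 := B\sm_R B,$$
whose composite is $f\sm_R f$, and one computes the subquotients: $\hocofib(L_0\ra L_1)\simeq \Z/2_+\sm (A\sm_R C)$, the free $\Z/2$--$R$--module induced from the nonequivariant $R$--module $A\sm_R C$ --- its two underlying wedge summands $A\sm_R C$ and $C\sm_R A$ being interchanged by the $\Z/2$--action, up to the symmetry isomorphism --- while $\hocofib(L_1\ra L_2)\simeq C\sm_R C = C^{\sm_R 2}$ with its usual swap action. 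This is the usual way of resolving the quadratic construction $X\mapsto X^{\sm_R 2}$ into pieces that are ``linear in $f$''.

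\textbf{Applying the Tate construction.} Since $t_{\Z/2}$ kills $\Z/2_+\sm(A\sm_R C)$, applying $t_{\Z/2}$ to the cofibration sequence $L_0\ra L_1\ra \Z/2_+\sm(A\sm_R C)$ of $\Z/2$--$R$--modules shows that $t_{\Z/2}(L_0)\ra t_{\Z/2}(L_1)$ is a weak equivalence. Applying $t_{\Z/2}$ to $L_1\ra L_2\ra C^{\sm_R 2}$ gives a cofibration sequence $t_{\Z/2}(L_1)\ra t_{\Z/2}(L_2)\ra \D^R_2(C)$ of $R$--modules. Composing with the equivalence $t_{\Z/2}(L_0)\xra{\sim} t_{\Z/2}(L_1)$, and recalling that $\D^R_2(A)=t_{\Z/2}(L_0)$ and $\D^R_2(B)=t_{\Z/2}(L_2)$ with the composite $L_0\ra L_2$ being $f\sm_R f$, yields the desired cofibration sequence.

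\textbf{The main obstacle.} The only genuine difficulty is point-set bookkeeping: one must choose cofibrant models in $\Z/2$--$R$--mod so that $L_1$ computes the homotopy pushout, so that $L_0\ra L_1$ and $L_1\ra L_2$ are honest cofibrations with the stated homotopy cofibers, and so that $\Z/2_+\sm(A\sm_R C)$ really is a free $\Z/2$--$R$--module --- after which the two lemmas on $t_{\Z/2}$ apply verbatim. (Alternatively, one can bypass the filtration: the weak equivalence $\D^R_2(X)\simeq \holim_n\Sigma^n D^R_2(\Sigma^{-n}X)$ exhibits $\D^R_2$ as the colinearization of the quadratic functor $D^R_2$ in McCarthy's sense \cite{mccarthy}, and colinearizations of functors of $R$--modules are exact; but the filtration argument stays entirely within the framework already set up here.)
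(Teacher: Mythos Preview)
Your argument is correct. The paper, however, does not give a separate proof of this corollary: it simply states it immediately after identifying $\D^R_2$ as the colinearization $\holim_n \Sigma^n D^R_2(\Sigma^{-n}X)$ of $D^R_2$, relying on the general fact that colinearizations are exact. This is precisely the alternative you mention parenthetically at the end of your write--up, so you already know the paper's route.

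Your primary route---filtering $B\sm_R B$ by $A\sm_R A$ and the cross terms, then observing that the off--diagonal graded piece is induced and hence killed by $t_{\Z/2}$---is a genuinely different, more hands--on argument. It has the virtue of staying entirely within the two lemmas on $t_G$ already recorded and avoiding any appeal to \cite{mccarthy}. It is also worth noting that exactly this filtration idea is what the paper itself deploys later, in \lemref{Tate vanishing stuff}, to handle the more elaborate case of $(P\sm P)_{2d}$ and $F_{2d}$: there too one separates the diagonal factors $D_i\sm D_i$ from the free factors $\Z/2_+\sm D_i\sm D_j$, and the latter vanish under $t_{\Z/2}$. So your approach is in the spirit of the paper's main technique, just applied one level down; the paper simply chose the quicker colinearization shortcut for this particular corollary.
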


\subsection{The homology of extended powers} \label{homology section} When $X$ is a spectrum, a construction of the Dyer--Lashof operations
$$Q^i: H_j(D_dX) \ra H_{j+i}(D_{2d}X),$$
for all $i \in \Z$, is given by M.~Steinberger in \cite[Thm.III.1.1]{bmms}.  An alternative construction is given later in the same book by J.~McClure \cite[Prop.VIII.3.3]{bmms}.  He \cite[Thm.IX.2.1]{bmms} also computes
$H_*(\mathbb P X)$
as an algebra with both Dyer--Lashof and Steenrod operations, where $\mathbb P X = \bigvee_{d=0}^{\infty} D_dX$.

The coproduct structure on $H_*(\mathbb P X)$ seems to be less well documented in the literature. Recall that the coproduct $\Delta$ is induced by the transfer maps
$ t_{b,c}: D_{b+c}X \ra D_bX \sm D_cX$.  The following lemma is presumably well known, and is analogous to \cite[Thm.I.1.1(6)]{clm}.

\begin{lem} For all $y \in H_*(\mathbb P X)$, if $\Delta(y) = \sum y^{\prime} \otimes y^{\prime \prime}$, then
$$\Delta(Q^ky) = \sum_{i+j = k} \sum Q^iy^{\prime} \otimes Q^jy^{\prime \prime}.$$
\end{lem}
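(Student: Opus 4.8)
The plan is to reduce the statement to the analogous Cartan-type formula for the coproduct on $H_*(\mathbb P X)$ that encodes how transfers interact with the diagonal, and then deduce the Dyer--Lashof compatibility from the known construction of the operations via the $\Z/2$ extended-power structure. Recall that the coproduct $\Delta$ on $H_*(\mathbb P X)$ is induced by the transfer maps $t_{b,c}\colon D_{b+c}X \ra D_bX \sm D_cX$ coming from the inclusions $\Sigma_b \times \Sigma_c \hra \Sigma_{b+c}$, while the operation $Q^k$ on $H_*(D_dX)$ is induced (following Steinberger \cite[Thm.III.1.1]{bmms}, or McClure \cite[Prop.VIII.3.3]{bmms}) by the structure maps $D_2(D_dX) \ra D_{2d}X$ together with the homology of $D_2$ of a sphere. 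So the content of the lemma is a statement about a square of extended-power spectra, and the strategy is to prove that square homotopy-commutes and then apply homology.

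First I would set up the relevant diagram. Writing $y \in H_*(D_dX)$ with $\Delta(y) = \sum y' \otimes y''$, so that $y'$, $y''$ live in $H_*(D_aX)$, $H_*(D_bX)$ with $a+b=d$, the operation $Q^k y$ is computed from the composite $D_2(D_dX) \ra D_{2d}X$, and I want to compare the two ways of pushing $D_2(D_dX)$ into $\bigvee_{p+q=2d} D_pX \sm D_qX$: via $D_2$ of the transfer and then a suitable equivariant structure map, versus via the structure map $D_2(D_dX) \ra D_{2d}X$ followed by the transfer $t_{p,q}$. The key geometric input is the standard double-coset / transfer-in-families identity relating $D_2 \circ (D_a \sm D_b)$ to $\bigvee D_p \sm D_q$; concretely, the transfer for $\Sigma_2 \wr (\Sigma_a\times\Sigma_b) \subseteq \Sigma_{2a}\times\Sigma_{a+b}$-type inclusions refines to a sum over the subgroups $\Sigma_p\times\Sigma_q$ with $p+q=2d$, and on the wreath-product level this is exactly the combinatorics that produces the sum $\sum_{i+j=k}$. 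This is the same mechanism behind the classical Cartan formula for Dyer--Lashof operations in $H_*(\Oinfty Y)$ \cite[Thm.I.1.1]{clm}, and behind \cite[Thm.I.1.1(6)]{clm}, which the authors have already flagged as the analogue.

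Having reduced to the homotopy-commutativity of that square of extended powers, I would finish by applying $H_*(-;\Z/2)$ and comparing with the defining formula for $Q^k$. On the $D_2(\text{sphere})$ factor the relevant homology classes are the $e_i$'s, and the splitting of the transfer over $p+q=2d$ forces the index $i$ attached to $Q^iy'$ and $j$ attached to $Q^jy''$ to satisfy $i+j=k$; the unstable-range and Adem relations are automatically respected because they already are on each $H_*(D_{2a}X)$, $H_*(D_{2b}X)$. The main obstacle I expect is the bookkeeping of the equivariant transfer decomposition — making precise, at the spectrum level and $\Sigma_2$-equivariantly, the identification of $D_2$ applied to a coproduct-summand with the corresponding sum of smash products of extended powers, including the sign/ordering conventions for the $\Z/2$-action permuting the two tensor factors. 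Once that identification is pinned down (it is the natural-transformation version of the "internal Cartan formula" for $D_2$), taking homology and reading off the stated formula is routine, exactly parallel to the proof of \cite[Thm.I.1.1(6)]{clm}.
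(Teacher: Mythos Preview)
Your approach is workable in principle but takes a harder road than the paper does, and the part you flag as the main obstacle is precisely what the paper sidesteps.

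The paper's trick is to realize the transfer $t_{b,c}: D_{b+c}X \ra D_bX \sm D_cX$ not as an abstract equivariant transfer requiring double-coset analysis, but as the $(b,c)$ component of $D_d(p)$, where $p: X \ra X \vee X$ is the pinch map and one uses the splitting $D_d(X \vee X) \simeq \bigvee_{b+c=d} D_bX \sm D_cX$. With this identification, the relevant commuting square is simply
\[
\xymatrix{
D_2D_dX \ar[r]^-{D_2D_d(p)} \ar[d]_{\mu} & D_2D_d(X \vee X) \ar[d]^{\mu} \\
D_{2d}X \ar[r]^-{D_{2d}(p)} & D_{2d}(X \vee X),
}
\]
which commutes by naturality of $\mu: D_2D_d \Rightarrow D_{2d}$ in the variable $X$. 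Passing to homology and invoking the already-known external Cartan formula $Q^k(y' \otimes y'') = \sum_{i+j=k} Q^iy' \otimes Q^jy''$ for $H_*(\mathbb P(X \vee X))$ finishes the proof immediately.

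By contrast, your plan asks for a direct comparison of $\mu$-then-transfer against transfer-then-$\mu$, which amounts to analyzing double cosets of the form $(\Sigma_p \times \Sigma_q) \backslash \Sigma_{2d} / (\Sigma_2 \wr \Sigma_d)$ and their compatibility with the wreath-product structure. This can be carried out, but it is exactly the bookkeeping you anticipate, and your sketch already contains a slip (the inclusion $\Sigma_2 \wr (\Sigma_a \times \Sigma_b) \subseteq \Sigma_{2a} \times \Sigma_{a+b}$ is not the right one; the target should be $\Sigma_{2a+2b}$, and the relevant comparison is against $\Sigma_p \times \Sigma_q$ inside $\Sigma_{2d}$). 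The pinch-map reformulation buys you the commuting square for free via naturality, reducing the entire lemma to the external Cartan formula, which is standard input.
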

\begin{proof}[Sketch proof] Let $p: X \ra X \vee X$ be the pinch map.  If $b+c = d$, then $t_{b,c}$ is the $(b,c)$th component of the composite
$$ D_dX \xra{D_d(p)}D_d(X \vee X) = \bigvee_{b+c=d} D_b X \sm D_cX.$$
The diagram
\begin{equation*}
\xymatrix{
D_2D_dX \ar[d] \ar[rr]^-{D_2D_d(p)} && D_2D_d(X\vee X)\ar[d]  \\
D_{2d}X \ar[rr]^-{D_{2d}(p)} && D_{2d}(X\vee X) }
\end{equation*}
commutes, and the lemma follows from this, using the Cartan formula $Q^k(y^{\prime} \otimes y^{\prime \prime}) = \sum_{i+j=k}Q^iy^{\prime} \otimes Q^jy^{\prime \prime}$.
\end{proof}

Crucial to us is the behavior of $\epsilon: \Sigma D_d X \ra D_d \Sigma X$ on homology.

\begin{lem} \label{epsilon lemma} {\bf (a)} \ $\epsilon_*: H_*(\mathbb P X) \ra H_{*+1}(\mathbb P \Sigma X)$ sends the algebra decomposables to zero, and has image in the coalgebra primitives. \\

\noindent{\bf (b)} \ $\epsilon_*(Q^iy) = Q^i(\epsilon_*(y))$.
\end{lem}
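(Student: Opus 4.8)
## Proof proposal for Lemma \ref{epsilon lemma}

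The plan is to analyze the evaluation/diagonal map $\epsilon: \Sigma D_d X \to D_d \Sigma X$ at the space level, exploiting both its multiplicative behavior (with respect to the maps $D_b X \wedge D_c X \to D_{b+c}X$ and transfers) and its compatibility with the $E_\infty$-operad action that produces the Dyer--Lashof operations. For part (a), I would first recall that $\mathbb{P}X = \bigvee_d D_d X$ carries a Hopf algebra structure on homology, with product from the maps $D_b X \wedge D_c X \to D_{b+c}X$ and coproduct from the transfers $t_{b,c}$. The key geometric input is that $\epsilon$ is a \emph{stable} map built from the unit $K \to \Mor_{\Sp}(X, K \wedge X)$ as in the Definition preceding this subsection; such maps are \emph{additive} in a suitable sense — concretely, $\epsilon$ does not interact with the external products $D_b X \wedge D_c X \to D_{b+c}X$ in a multiplicative way but rather kills them. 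I would make this precise by observing that $\epsilon_{b+c}$ restricted to $D_b X \wedge D_c X$ factors, up to homotopy, through a map that lands in a single wedge summand only after a contraction, forcing $\epsilon_*$ to annihilate products. Dually, using the transfer description of $\Delta$ and the naturality of $\epsilon$ with respect to the pinch map $p: X \to X \vee X$ (exactly the square appearing in the proof of the previous lemma, with $D_2$ replaced by $\Sigma$), one gets that $\epsilon_*(y)$ is primitive: the suspension of a sum of decomposables under $D_d(X \vee X) = \bigvee D_b X \wedge D_c X$ has all its "mixed" components killed by $\epsilon$.

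A cleaner route to part (a), which I would likely prefer, is to use the known structure of $H_*(D_d X)$ as a functor of $H_*(X)$: by the Proposition identifying $\bigoplus_d H_*(D_d X) = U_{\Qq}(\mathcal{R}_*(H_*(X)))$, the algebra decomposables are spanned by products $Q^I x \cdot Q^J y$ with $|I| + |J| \geq 1$ nontrivial, and the coalgebra primitives are precisely $\mathcal{R}_*(H_*(X))$. Thus (a) reduces to the single assertion that $\epsilon_*$ carries $\mathcal{R}_s(H_*(X)) \subseteq H_*(D_{2^s}X)$ into $\mathcal{R}_s(H_*(\Sigma X)) \subseteq H_*(D_{2^s}\Sigma X)$ and kills everything not in the subalgebra-minus-primitives — equivalently, that $\epsilon$ followed by projection to indecomposables is injective on primitives and that $\epsilon_*$ vanishes on a set of algebra generators lying in positive internal products. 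This is most transparent for $d=2$: $\epsilon_*: H_*(D_2 X) \to H_{*+1}(D_2 \Sigma X)$ must send $Q^i x$ (with $i > |x|$, the non-primitive-square part) to $Q^i(\sigma x)$ where $\sigma$ is the homology suspension, and must send $x \otimes y$ (the image of the external product) to zero, because the map $X \wedge X \to D_2 X$ suspends to a map that $\epsilon$ detects as the zero stable map onto the relevant summand. One bootstraps to general $d$ by iterating the multiplication maps, since every decomposable in $D_d X$ comes from some $D_b X \wedge D_c X$ with $b,c < d$.

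For part (b), the identity $\epsilon_*(Q^i y) = Q^i(\epsilon_* y)$, I would use the compatibility of $\epsilon$ with the operad action established in \cite{ak}. The Dyer--Lashof operation $Q^i: H_j(D_d X) \to H_{j+i}(D_{2d}X)$ is induced by the structure map $D_2(D_d X) \to D_{2d}X$ coming from the $E_\infty$-action on $\mathbb{P}X$ together with the fundamental classes of $B\Z/2$. The claim then follows from the commutativity (up to homotopy) of the square
\[
\xymatrix{
\Sigma D_2(D_d X) \ar[d]_{\Sigma(\text{mult})} \ar[r]^-{\epsilon} & D_2(\Sigma D_d X) \ar[r]^-{D_2(\epsilon)} & D_2(D_d \Sigma X) \ar[d]^{\text{mult}} \\
\Sigma D_{2d} X \ar[rr]^-{\epsilon} && D_{2d}\Sigma X
}
\]
— that is, $\epsilon$ commutes with the operadic multiplication maps that define the $Q^i$. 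This square commutes because both composites are instances of the natural transformation $\epsilon$ for the same composite functor, and naturality of $\epsilon$ with respect to the operad structure map is exactly the kind of compatibility recorded for the little-cubes action on the tower in \cite{ak}; alternatively one checks it by hand from the adjoint description of $\epsilon$, since the operad structure maps are themselves built continuously from maps of spectra. Passing to homology and using that $Q^i$ is, by definition, the composite of this structure map with the relevant $B\Z/2$-class, one reads off $\epsilon_*(Q^i y) = Q^i(\epsilon_* y)$.

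The main obstacle I anticipate is part (a)'s claim that $\epsilon_*$ kills \emph{all} algebra decomposables: one has to rule out any "correction terms" arising because $\epsilon$ is only a weak natural transformation (a zig-zag), and because the external products $D_b X \wedge D_c X \to D_{b+c}X$ are not strictly compatible with it. I expect this to come down to a careful bookkeeping argument showing that the relevant stable maps, after one suspension, become null on the product summands — essentially the statement that the suspension of $X \wedge X \to D_2 X$ factors through the inclusion of a wedge summand in a way incompatible with the image of $\epsilon$. The cleanest way around this is probably the functorial argument of the second paragraph: reduce to generators, verify on $D_2$ by the explicit formula for $H_*(D_2 X)$, and propagate by multiplicativity — thereby trading a delicate point-set argument for a finite algebraic check.
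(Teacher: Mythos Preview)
The paper does not prove this lemma; it simply cites \cite[Ex.~6.7]{ak} for (a) and \cite[Lem.~II.5.6]{bmms} for (b). So you are supplying an argument the authors chose to outsource.

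Your treatment of (b) is correct and is the standard one: the square you draw commutes because $\epsilon$ is natural with respect to any natural transformation between continuous reduced functors, in particular the operad structure map $\mu: D_2 D_d \to D_{2d}$, and the $Q^i$ are defined through $\mu$.

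Your treatment of (a), however, has a genuine gap. You correctly isolate the key point---that the composite
\[
\Sigma(D_b X \wedge D_c X) \xrightarrow{\ \Sigma\mu\ } \Sigma D_{b+c}X \xrightarrow{\ \epsilon\ } D_{b+c}\Sigma X
\]
should be null for $b,c \ge 1$---but your explanations (``$\epsilon$ detects as the zero stable map onto the relevant summand'', ``factors through the inclusion of a wedge summand in a way incompatible with the image of $\epsilon$'') never name the actual mechanism. Here it is: by the same naturality of $\epsilon$ you invoke for (b), applied now to $\mu: D_b(-)\wedge D_c(-) \to D_{b+c}(-)$, the composite above equals $\mu \circ \epsilon_F$ where $F(X) = D_b X \wedge D_c X$ and
\[
\epsilon_F: S^1 \wedge D_b X \wedge D_c X \longrightarrow D_b(\Sigma X) \wedge D_c(\Sigma X).
\]
Unwinding the adjunction defining $\epsilon$, one sees $\epsilon_F$ factors through the reduced diagonal $\Delta: S^1 \to S^1 \wedge S^1$, which is null-homotopic (the reduced diagonal on any suspension is null). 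So the composite is null as a map, not merely on homology. The dual argument, with the transfer $t_{b,c}$ in place of $\mu$, gives the primitivity half. This works uniformly for all $b,c \ge 1$; your proposed ``bootstrap from $d=2$'' is unnecessary and, as written, incomplete.

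Your alternative route through the identification $H_*(\mathbb{P}X) = U_{\Qq}(\mathcal{R}_*(H_*(X)))$ does not actually reduce anything: after invoking that isomorphism you still need to show $\epsilon_*$ annihilates products of positive-length classes, which is precisely the content of (a).
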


One reference for (a) is \cite[Ex.6.7]{ak}.  For statement (b), see \cite[Lem.II.5.6]{bmms} (or alternatively, deduce it from \cite[Prop.VIII.3.2]{bmms}).

\begin{cor}  The image of $\epsilon_*: H_*(\Sigma D_{2^s}(\Sigma^{-1} X)) \ra H_*(D_{2^s}X)$ is precisely the subspace of primitives: the span of the elements $Q^Ix$ with $l(I)=s$ and $x \in H_*(X)$.
\end{cor}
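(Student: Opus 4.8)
The plan is to prove the two inclusions between $\im \epsilon_*$ and the primitive subspace separately: the inclusion $\im\epsilon_*\subseteq\{\text{primitives}\}$ is essentially \lemref{epsilon lemma}(a), and the reverse inclusion I would get by induction on $s$, using \lemref{epsilon lemma}(b) to climb the Dyer--Lashof length filtration.

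First I would dispose of $\im\epsilon_*\subseteq\{\text{primitives}\}$. Replacing $X$ by $\Sigma^{-1}X$ in \lemref{epsilon lemma}(a), the map $\epsilon_*\colon H_*(\mathbb P\Sigma^{-1}X)\to H_{*+1}(\mathbb P X)$ has image in the coalgebra primitives. Since $\mathbb P Y=\bigvee_d D_dY$ and $\epsilon$ is the wedge of the maps $\epsilon\colon\Sigma D_d\Sigma^{-1}X\to D_dX$, the map under consideration is exactly the $D_{2^s}$--wedge--summand of this, so its image lies in the primitives of $H_*(\mathbb P X)$ in that summand. By \thmref{global ss thm}(b) and the identification $\bigoplus_d H_*(D_dX)=U_{\Qq}(\mathcal R_*(H_*(X)))$ in $\HQM$, those primitives are precisely $\mathcal R_s(H_*(X))$, i.e.\ the span of the $Q^Ix$ with $l(I)=s$ and $x\in H_*(X)$.

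For the reverse inclusion I would induct on $s$ with $X$ fixed. In the base case $s=0$, $D_1$ is the identity functor and the transformation $\epsilon$ for the identity functor is the identity, so $\epsilon_*\colon H_*(X)=H_*(\Sigma D_1\Sigma^{-1}X)\to H_*(D_1X)=H_*(X)$ is an isomorphism with image $H_*(X)=\mathcal R_0(H_*(X))$, which is the span of the $Q^Ix$ with $l(I)=0$. For the inductive step, I would first record the elementary fact that $\mathcal R_s(H_*(X))$ is spanned, as a subspace of $H_*(D_{2^s}X)$, by the elements $Q^iz$ with $z=Q^{I'}x\in\mathcal R_{s-1}(H_*(X))$ and $i\in\Z$, since $Q^Ix=Q^i(Q^{I'}x)$ for $I=(i,I')$. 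Hence it suffices to show each such $Q^iz$ lies in $\im\epsilon_*$. By the inductive hypothesis choose $\tilde z\in H_*(\Sigma D_{2^{s-1}}\Sigma^{-1}X)$ with $\epsilon_*(\tilde z)=z$; then $Q^i\tilde z\in H_*(\Sigma D_{2^s}\Sigma^{-1}X)$ is defined, being the suspension of $Q^i\colon H_*(D_{2^{s-1}}\Sigma^{-1}X)\to H_*(D_{2^s}\Sigma^{-1}X)$, and \lemref{epsilon lemma}(b) gives $\epsilon_*(Q^i\tilde z)=Q^i(\epsilon_*\tilde z)=Q^iz$. This closes the induction and yields $\mathcal R_s(H_*(X))\subseteq\im\epsilon_*$.

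I do not expect a genuine obstacle: the substance is entirely in \lemref{epsilon lemma}, with part (a) bounding the image from above and part (b) letting surjectivity propagate along the length filtration. The only points demanding care are bookkeeping ones --- keeping straight the single degree shift hidden in $H_*(\Sigma D_d\Sigma^{-1}X)$ so that \lemref{epsilon lemma}(b) applies with exactly the indexing the induction requires, and the (routine) observation that $\mathcal R_s(H_*(X))$ is generated over the Dyer--Lashof operations by $\mathcal R_{s-1}(H_*(X))$, so that the inductive step actually hits a spanning set.
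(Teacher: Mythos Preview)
Your proposal is correct and follows exactly the approach the paper intends: the corollary is stated immediately after \lemref{epsilon lemma} without proof, and your argument---using part (a) for the upper bound on the image and inducting on $s$ via part (b) for surjectivity onto the primitives---is the natural way to unpack it. The only remark is that your citation of \thmref{global ss thm}(b) for identifying the primitives in $H_*(D_{2^s}X)$ with the span of the $Q^Ix$ is really an appeal to the classical Hopf algebra description $\bigoplus_d H_*(D_dX)=U_{\Qq}(\mathcal R_*(H_*(X)))$ (the proposition preceding that theorem), together with the fact that the primitives of $U_{\Qq}(M)$ are $M$; this is logically prior to the corollary and so no circularity arises.
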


\subsection{Dyer--Lashof operations for $\D_2$}

\begin{lem} {\bf (a)} \ The sequence
$$ \dots \xra{\epsilon_*} H_{*-2}(D_2(\Sigma^{-2}X))\xra{\epsilon_*} H_{*-1}(D_2(\Sigma^{-1}X))\xra{\epsilon_*} H_*(D_2(X))$$
is Mittag--Leffler. \\

\noindent{\bf (b)} \ $\displaystyle \pi_*(\D^{H\Z/2}_2(H\Z/2 \sm X)) = \lim_n H_{*}(\Sigma^n D_2(\Sigma^{-n}X))$.
\end{lem}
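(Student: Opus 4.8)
The statement has two parts: (a) the tower $\{H_{*-n}(D_2(\Sigma^{-n}X))\}_n$ with maps induced by $\epsilon_*$ is Mittag--Leffler, and (b) the homotopy of $\D^{H\Z/2}_2(H\Z/2\sm X)$ is the inverse limit of this tower. The plan is to deduce (b) from (a) via the Milnor exact sequence, and to prove (a) by the explicit description of $\epsilon_*$ already established.

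First I would prove (a). By the corollary just above (the image of $\epsilon_*\colon H_*(\Sigma D_{2^s}(\Sigma^{-1}X))\ra H_*(D_{2^s}X)$ is exactly the span of the primitives $Q^Ix$ with $l(I)=s$), the image of the single map $\epsilon_*\colon H_{*-1}(D_2(\Sigma^{-1}X))\ra H_*(D_2X)$ is precisely the primitive subspace, spanned by the $Q^ix$ with $x\in H_*(X)$. The key point is that $\epsilon_*$ is \emph{compatible with Dyer--Lashof operations} by \lemref{epsilon lemma}(b): $\epsilon_*(Q^iy)=Q^i(\epsilon_* y)$. Iterating, the image of the $n$-fold composite $H_{*-n}(D_2(\Sigma^{-n}X))\ra H_*(D_2X)$ is contained in $Q^i$ applied to the image of the $(n-1)$-fold composite $H_{*-(n-1)}(D_2(\Sigma^{-(n-1)}X))\ra H_{*-1}(D_2(\Sigma^{-1}X))$. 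But for $y\in H_k(X)$ (a fixed finite-dimensional space in each internal degree, by local finiteness), $Q^iy=0$ once $i<k$, so in each fixed total degree the primitive span in $H_*(D_2(\Sigma^{-n}X))$ stabilizes: desuspending $X$ raises the connectivity of the relevant classes, and only finitely many classes $Q^ix$ can land in a fixed degree of $H_*(D_2X)$. Hence the images of the composites $H_{*-n}(D_2(\Sigma^{-n}X))\ra H_*(D_2X)$ stabilize for $n$ large, which is exactly the Mittag--Leffler condition. (The same argument applies with $D_2X$ replaced by any $D_2(\Sigma^{-m}X)$.)

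Next I would prove (b). By \lemref{S-mod prop}-type facts and the change-of-rings isomorphism $D^{H\Z/2}_2(H\Z/2\sm X)=H\Z/2\sm D_2(X)$ (the $R=H\Z/2$ case of the earlier lemma), together with the colinearization formula $\D^R_2(X)\simeq\holim_n\Sigma^n D^R_2(\Sigma^{-n}X)$, we have
$$\D^{H\Z/2}_2(H\Z/2\sm X)\simeq\holim_n\Sigma^n\bigl(H\Z/2\sm D_2(\Sigma^{-n}X)\bigr).$$
Taking homotopy groups, the $\holim$ over a tower of $H\Z/2$-modules gives a Milnor $\lim^1$ short exact sequence
$$0\ra{\lim_n}^1\,\pi_{*+1}\bigl(\Sigma^n H\Z/2\sm D_2(\Sigma^{-n}X)\bigr)\ra\pi_*\bigl(\D^{H\Z/2}_2(H\Z/2\sm X)\bigr)\ra\lim_n\,\pi_*\bigl(\Sigma^n H\Z/2\sm D_2(\Sigma^{-n}X)\bigr)\ra 0,$$
and $\pi_*(\Sigma^n H\Z/2\sm D_2(\Sigma^{-n}X))=H_{*-n}(D_2(\Sigma^{-n}X))$ with the transition maps being exactly the $\epsilon_*$ of part (a). By (a) the tower is Mittag--Leffler, so the $\lim^1$ term vanishes, and (b) follows.

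The main obstacle I anticipate is being careful that the maps in the $\holim$ tower really are the $\epsilon_*$ maps of part (a), i.e.\ matching the structure map $\Sigma^{n}D^R_2(\Sigma^{-n}X)\ra\Sigma^{n+1}D^R_2(\Sigma^{-n-1}X)$ in the colinearization tower (which comes from the evaluation/diagonal transformation $\epsilon\colon \Sigma F(Y)\ra F(\Sigma Y)$ for $F=D_2$) with the $\epsilon$ used in \lemref{epsilon lemma}; this is essentially a naturality/bookkeeping check, since both are instances of the same continuous-functor construction $\epsilon\colon K\sm F(Y)\ra F(K\sm Y)$ applied to $K=S^1$. A secondary point is the passage from $S$-modules to $H\Z/2$-modules so that the $\holim$ is taken in a category where a clean Milnor sequence is available — but this is precisely the technical move flagged in the introduction, and the change-of-rings lemma handles the identification of the objects in the tower.
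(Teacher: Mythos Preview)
Your argument for (b) is correct and matches the paper's: identify the tower via the colinearization formula and the change-of-rings lemma, then apply the Milnor $\lim^1$ sequence, with the $\lim^1$ term vanishing by (a).

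For (a), you have assembled the right ingredients but miss the clean conclusion, and you introduce an unjustified hypothesis along the way. You correctly note that the image of a single $\epsilon_*$ is exactly the primitive subspace, and that $\epsilon_*$ commutes with $Q^i$. These two facts already finish the job: the primitives of $H_*(D_2(\Sigma^{-1}X))$ are spanned by classes $Q^i(\sigma^{-1}x)$, and $\epsilon_*(Q^i(\sigma^{-1}x)) = Q^i x$, so $\epsilon_*$ maps the primitives \emph{onto} the primitives. Hence the image of the $n$-fold composite into $H_*(D_2 X)$ equals the primitive subspace for every $n \geq 1$, and the tower is Mittag--Leffler in the strongest possible sense (stabilization after one step). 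This is what the paper's terse ``follows from \lemref{epsilon lemma}'' is pointing at.

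Your detour through ``a fixed finite-dimensional space in each internal degree, by local finiteness'' is both unnecessary and incorrect: local finiteness of $H_*(X)$ as a right $\A$-module (every $x \cdot \A$ finite-dimensional) does not imply that $H_n(X)$ is finite-dimensional, so your descending-chain argument does not go through for general $X$. Drop that step and simply observe the one-step stabilization.
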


Statement (a) follows from \lemref{epsilon lemma}, and then (b) follows from (a), noting that $\pi_*(D_2^{H\Z/2}(H\Z/2 \sm X)) = H_*(D_2(X))$.

\begin{cor} \label{Tate Dyer Lashof cor}
The natural transformation
$$ Q^i: H_*(X) \ra H_{*+i}(D_{2}(X))$$
lifts to a natural transformation
$$ Q^i: H_*(X) \ra \pi_{*+i}(\D_{2}^{H\Z/2}(H\Z/2 \sm X)).$$
\end{cor}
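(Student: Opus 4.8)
The plan is to realize the target group as the inverse limit computed in the preceding lemma and then to lift $Q^i$ one stage of the defining tower at a time. By that lemma (part (b)),
\[
\pi_{*+i}\big(\D_2^{H\Z/2}(H\Z/2\sm X)\big)=\lim_n H_{*+i}\big(\Sigma^n D_2(\Sigma^{-n}X)\big),
\]
the limit taken over the maps induced on homology by the evaluation maps $\epsilon\colon \Sigma D_2(\Sigma^{-1}Z)\to D_2(Z)$, after applying $\Sigma^n$ and a suspension isomorphism. Since this group maps canonically to its $n=0$ stage $H_{*+i}(D_2(X))$, it suffices to produce, naturally in $X$, a family of homomorphisms $\lambda_n\colon H_*(X)\to H_{*+i}(\Sigma^n D_2(\Sigma^{-n}X))$ which is compatible with the tower maps and has $\lambda_0=Q^i$.

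For $\lambda_n$ I would evaluate the natural transformation $Q^i\colon H_*(-)\to H_{*+i}(D_2(-))$ at the spectrum $\Sigma^{-n}X$ and conjugate by suspension isomorphisms, so that $\lambda_n$ is the composite
\[
H_*(X)\xrightarrow{\ \cong\ }H_{*-n}(\Sigma^{-n}X)\xrightarrow{\ Q^i\ }H_{*-n+i}\big(D_2(\Sigma^{-n}X)\big)\xrightarrow{\ \cong\ }H_{*+i}\big(\Sigma^n D_2(\Sigma^{-n}X)\big).
\]
Each $\lambda_n$ is natural in $X$, since $Q^i$ and the suspension isomorphisms are, and $\lambda_0$ is visibly $Q^i$.

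The only thing requiring an argument is that $\epsilon_*\lambda_{n+1}=\lambda_n$. Working in the un-suspended groups $H_{*-n+i}(D_2(\Sigma^{-n}X))$ and writing $Z=\Sigma^{-n-1}X$, the tower map from stage $n+1$ to stage $n$ becomes the degree-raising map $\epsilon_*\colon H_j(D_2 Z)\to H_{j+1}(D_2\Sigma Z)$ of \lemref{epsilon lemma}, while the change of identification $H_*(X)=H_{*-n-1}(\Sigma^{-n-1}X)\to H_{*-n}(\Sigma^{-n}X)$ is the corresponding map $\epsilon_*\colon H_j(D_1 Z)\to H_{j+1}(D_1\Sigma Z)$ for the first extended power functor $D_1=\mathrm{id}$. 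Thus $\epsilon_*\lambda_{n+1}=\lambda_n$ reduces precisely to the identity $\epsilon_*(Q^i w)=Q^i(\epsilon_* w)$ of \lemref{epsilon lemma}(b), applied to $w\in H_*(D_1 Z)\subseteq H_*(\mathbb P Z)$. Hence the $\lambda_n$ assemble into a natural transformation
\[
Q^i\colon H_*(X)\longrightarrow\lim_n H_{*+i}\big(\Sigma^n D_2(\Sigma^{-n}X)\big)=\pi_{*+i}\big(\D_2^{H\Z/2}(H\Z/2\sm X)\big),
\]
whose composite with the projection to the $n=0$ stage is the original $Q^i$. The only delicate point in executing this is the bookkeeping of suspension isomorphisms — in particular, confirming that under the preceding lemma's identification the tower maps really are the $\epsilon_*$ of \lemref{epsilon lemma}, and that part (b) of that lemma is being used with $D_1$ on the source side and $D_2$ on the target side; beyond that the argument is formal.
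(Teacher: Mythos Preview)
Your argument is correct and is exactly the reasoning the paper intends: the corollary is stated without proof as an immediate consequence of the preceding lemma's identification $\pi_*(\D_2^{H\Z/2}(H\Z/2\sm X))=\lim_n H_*(\Sigma^n D_2(\Sigma^{-n}X))$ together with the compatibility $\epsilon_*(Q^iy)=Q^i(\epsilon_*y)$ from \lemref{epsilon lemma}(b). You have simply unpacked this in detail, and the bookkeeping you flag is handled correctly.
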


This corollary will play a critical role in our proof of parts (b) and (c) of \thmref{theorem 1}: see \secref{strategy section}.

\subsection{The cohomology of $D_2X$}  \label{cohomology section} In the proof of \thmref{theorem 1}(a), it will be useful to work with mod 2 cohomology.  As in \cite{k1}, let
$$ \hat Q_0: H^*(X) \ra H^{2*}(D_2X)$$
be the squaring operation, and then, for $i>0$, let
$$ \hat Q_i: H^*(X) \ra H^{2*+i}(D_2X)$$
be defined to be the composite
$$ H^*(X) = H^{*+i}(\Sigma^{i} X) \xra{\hat Q_0} H^{2*+2i}(D_2(\Sigma^i X)) \xra{\epsilon^*} H^{2*+i}(D_2 X).$$
One also has a product $*: H^*(X) \otimes H^*(X) \ra H^*(D_2X)$ induced by $t_{1,1}: D_2X \ra X\sm X$.  One has $\hat Q_0(x+y) = \hat Q_0x + \hat Q_0y + x*y$, while, for $i>0$, $\hat Q_i$ is linear.

\begin{lem} $H^*(D_2X)$ is spanned by the elements $\hat Q_ix$ and $x*y$.
\end{lem}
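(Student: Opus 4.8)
The plan is to dualize against the known computation of $H_*(D_2X)$, using the skeletal filtration of $D_2X$ to locate the classes $\hat Q_ix$ and $x*y$. Recall that the Proposition identifying $\bigoplus_d H_*(D_dX)$ with $U_{\Qq}(\mathcal R_*(H_*(X)))$ gives, in weight $2$, a basis of $H_*(D_2X)$ consisting of the classes $Q^i\xi$ with $i\geq|\xi|$ together with the products $\xi_a\xi_b$ with $a<b$, for $\{\xi_a\}$ a basis of $H_*(X)$, the relation $Q^{|\xi|}\xi=\xi^2$ accounting for the overlap. Dually, the relations $\hat Q_0(x+y)=\hat Q_0x+\hat Q_0y+x*y$ and $x*x=0$ (the latter from $\hat Q_0(x+x)=\hat Q_0(0)=0$) let one trim the proposed spanning set to the $\hat Q_ix$ with $i\geq 1$, the $\hat Q_0x$, and the $x_a*x_b$ with $a<b$; when $H_*(X)$ is degreewise finite this set has, in each internal degree, the same cardinality as the homology basis above, so in that case the Lemma is equivalent to the linear independence of these cohomology classes. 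In general I will prove spanning directly, which does not need any finiteness.

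The mechanism is the skeletal filtration of $D_2X=(E\Z/2_+\sm X^{\sm 2})/\Z/2$ coming from the cell structure $S^0=E\Z/2^{(0)}\subset E\Z/2^{(1)}\subset\cdots$ of $E\Z/2=S^{\infty}$, which has one free $\Z/2$--cell in each nonnegative dimension. This exhibits $D_2X$ as a filtered spectrum with bottom piece $X\sm X$ (inclusion $\mu\colon X\sm X\to D_2X$) and $n$th subquotient $\Sigma^n(X\sm X)$ for $n\geq1$; equivalently there is a cofibration sequence $X\sm X\xra{\mu}D_2X\to (S^{\rho}\sm X^{\sm 2})_{h\Z/2}$ whose cofiber is built inductively from higher $\rho$--twists. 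The associated cohomology spectral sequence has $E_1^{n,*}$ a copy of $H^*(X)^{\otimes 2}$ for each $n\geq0$, $E_2=H^*(\Z/2;H^*(X)^{\otimes 2})$ for the swap action, hence $E_{\infty}^{0,*}=(S^2H^*(X))^{\vee}=(H^*(X)^{\otimes 2})^{\Z/2}$ and $E_{\infty}^{n,*}$ a copy of $H^*(X)$ for $n\geq1$. I then identify leading terms. Restriction along $\mu$ gives $\mu^*(\hat Q_0x)=x\otimes x$ (the squaring class restricts to the diagonal on $X\sm X$) and $\mu^*(x*y)=\mu^*t_{1,1}^*(x\otimes y)=(t_{1,1}\mu)^*(x\otimes y)=x\otimes y+y\otimes x$, since $t_{1,1}\circ\mu$ is the norm $1+\sigma$; since $\mu_*$ has image the subalgebra $S^2H_*(X)\subseteq H_*(D_2X)$, $\operatorname{im}\mu^*$ is exactly this invariant subspace, so $\{\hat Q_0x\}\cup\{x*y\}$ maps onto $E_{\infty}^{0,*}$. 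For $i\geq1$, the definition $\hat Q_ix=\epsilon^*\hat Q_0(\Sigma^ix)$ together with the behaviour of $\epsilon$ on the skeletal filtration — this is precisely the content packaged in \lemref{epsilon lemma} and its corollary — shows that $\hat Q_ix$ lies in filtration $\geq i$ with leading term the canonical generator of $E_{\infty}^{i,*}$. Thus the images of $\{\hat Q_ix:i\geq0\}\cup\{x*y\}$ in the associated graded span $E_{\infty}$, and so, by convergence, these classes span $H^*(D_2X)$.

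Two points need care. First, $D_2X$ is built from $X$ which need not be bounded below, so the skeletal spectral sequence is an infinite cohomology spectral sequence and convergence (and potential $\lim^1$) must be addressed; I would reduce to bounded--below $X$ using the weak equivalence $\hocolim_n\Sigma^{-n}\Sinfty X_n\xra{\sim}X$ together with the compatibility of $\hat Q_i$ and $*$ with this colimit, or argue one total degree at a time so that only finitely many cells intervene. Second, and this is the main obstacle, one must pin down that $\hat Q_ix$ ($i\geq1$) has the claimed \emph{nonzero} leading term in filtration exactly $i$. This amounts to understanding the composite $\epsilon\circ(\Sigma^i\mu)\colon\Sigma^i(X\sm X)\to D_2\Sigma^iX$ relative to the bottom--cell inclusion of $D_2\Sigma^iX$, and ultimately reduces, via naturality of $\hat Q_i$, $*$ and the skeletal filtration together with the splitting $D_2(A\vee B)\simeq D_2A\vee(A\sm B)\vee D_2B$, to the classical fact that for $X=\Sinfty S^n$ the classes $\hat Q_i\iota_n$ form a basis of $H^*(D_2S^n)=H^*(\Sinfty(\R P^{\infty}/\R P^{n-1}))$; the bounded--below reduction above then carries the statement to all $X$.
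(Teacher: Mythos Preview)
The paper does not actually prove this lemma; it is stated without proof, with the subsequent \propref{dual prop} (the pairing formulae, cited from \cite{k1}) serving as its justification.  The implicit argument is purely by duality: the homology $H_*(D_2X)$ has a basis $\{Q_jy_a,\ y_a*y_b\ (a<b)\}$, and the pairing formulae of \propref{dual prop} exhibit $\{\hat Q_ix_a,\ x_a*x_b\}$ as the dual set (modulo the quadratic relation for $\hat Q_0$), hence spanning.

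Your approach via the skeletal/Borel spectral sequence is correct and a genuinely different route.  It trades the algebraic duality argument for a geometric filtration argument, and has the virtue of not presupposing the pairing formulae (which themselves require some work).  The one point you flag as delicate --- that $\hat Q_ix$ has nonzero leading term in filtration exactly $i$ --- is precisely the content of the pairing identity $\langle\hat Q_ix,Q_jy\rangle=\delta_{ij}\langle x,y\rangle$, since in the homology spectral sequence $Q_jy$ sits in filtration $j$; so the two arguments meet at this step.  Your reduction to spheres via naturality and the splitting of $D_2(A\vee B)$ is fine, and the convergence worry is handled as you say (bounded-below approximation, or one degree at a time).  Either argument is acceptable; the paper's is shorter once \propref{dual prop} is in hand, yours is more self-contained.
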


These operations are appropriately dual to the homology Dyer--Lashof operations.  In the next proposition, $Q_ix = Q^{i+|x|}x$, as is standard.

\begin{prop} \label{dual prop}  Let $\langle x,y \rangle$ denote the cohomology/homology pairing.  Given $w,x \in H^*(X)$ and $y,z \in H_*(X)$, the following formulae hold.\\

\noindent{\bf (a)} \ $
\langle {\hat Q_i x},{Q_j y} \rangle =
\begin{cases}
\langle {x},{y} \rangle & \text{if } i=j \\ 0 & otherwise.
\end{cases}
$\\

\noindent{\bf (b)} \ $
\langle {\hat Q_i x},{y*z} \rangle =
\begin{cases}
\langle {x},{y} \rangle \langle {x},{z} \rangle& \text{if } i = 0 \\ 0 & otherwise.
\end{cases}
$\\

\noindent{\bf (c)} \ $
\langle {w*x},{Q_iy} \rangle =
\begin{cases}
\langle {w},{y} \rangle \langle {x},{y} \rangle & \text{if } i = 0 \\ 0 & otherwise.
\end{cases}
$\\

\noindent{\bf (d)} \ $
\langle {w*x},{y*z} \rangle =
\langle {w},{y} \rangle \langle {x},{z} \rangle + \langle {w},{z} \rangle \langle {x},{y} \rangle.
$
\end{prop}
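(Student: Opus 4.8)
The four formulae together assert that, for a homogeneous basis $\{x_\alpha\}$ of $H_*(X)$ with dual basis $\{x^\alpha\}$ of $H^*(X)$, the spanning set $\{\hat Q_i x^\alpha\}_{i\geq 0}\cup\{x^\alpha*x^\beta\}_{\alpha<\beta}$ of $H^*(D_2X)$ is the basis dual to $\{Q_i x_\alpha\}_{i\geq 0}\cup\{x_\alpha*x_\beta\}_{\alpha<\beta}$ of $H_*(D_2X)$, where $Q_0 x_\alpha = x_\alpha*x_\alpha$ and $\hat Q_0 x^\alpha$ is dual to it. The plan is to reduce the proposition to this ``dual bases'' statement and then prove it using the two natural maps $\mu\colon X\sm X = D_1X\sm D_1X\to D_2X$ (the multiplication) and $t_{1,1}\colon D_2X\to X\sm X$ (the transfer), together with the suspension maps $\epsilon$ of \secref{cohomology section}.

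First I would record the basic compatibilities. The squaring operation satisfies $\mu^*(\hat Q_0 x)=x\otimes x$; polarising this identity gives $\mu^*(w*x)=w\otimes x+x\otimes w$ and $\mu^*(\hat Q_i x)=0$ for $i>0$. Dually, the restriction axiom together with the definition of the $*$-product give $Q_0 y=\mu_*(y\otimes y)$ and $y*z=\mu_*(y\otimes z)$, and the composite $t_{1,1}\circ\mu$, being the transfer of an inclusion, is $1+\tau$ with $\tau$ the factor-swap. Given these, formula (d) is immediate by adjointness: $\langle w*x,\,y*z\rangle=\langle t_{1,1}^*(w\otimes x),\,\mu_*(y\otimes z)\rangle=\langle w\otimes x,\,(1+\tau)_*(y\otimes z)\rangle=\langle w,y\rangle\langle x,z\rangle+\langle w,z\rangle\langle x,y\rangle$, and formula (b) is immediate from $\langle\hat Q_i x,\,y*z\rangle=\langle\mu^*\hat Q_i x,\,y\otimes z\rangle$.

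Next I would handle formula (a). The case $j=0$ follows from (b) since $Q_0 y=y*y$, giving $\langle\hat Q_i x,Q_0 y\rangle=\delta_{i,0}\langle x,y\rangle^2=\delta_{i,0}\langle x,y\rangle$. For general $j$ I would use the suspensions: by the definition in \secref{cohomology section} one has $\hat Q_i x=\epsilon^*(\hat Q_{i-1}\Sigma x)$ for $i\geq 1$, while \lemref{epsilon lemma} gives $\epsilon_*(Q_j y)=Q_{j-1}\Sigma y$ and $\epsilon_*$ annihilates the homology $*$-products; adjointness then yields $\langle\hat Q_i x,Q_j y\rangle=\langle\hat Q_{i-1}\Sigma x,Q_{j-1}\Sigma y\rangle$ whenever $i\geq 1$, so descending induction reduces the computation either to the case $i=j$ (which follows from the $j=0$ case above after $j$ suspensions) or to the case $i=0<j$, where it remains to see the pairing vanishes.

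The outstanding inputs — the vanishing $\langle\hat Q_0 x,Q_j y\rangle=0$ for $j>0$, and formula (c) — are where the real content lies; they are not formal consequences of the adjunction games above. The clean route is a bookkeeping argument using McClure's computation of $H_*(D_2X)$ and its dual (\cite[Chs.~VIII--IX]{bmms}): the classes $\{Q_j y:j>0\}$ are exactly the transfer-coalgebra primitives of $H_*(D_2X)$, i.e.\ the image of $\epsilon_*$ (the corollary to \lemref{epsilon lemma}), and the $*$-products span a complement, with the dual statement in cohomology. Relative to these bases the pairing is, by the previous two paragraphs, block-triangular with identity $*$-$*$ block and zero $\hat Q$-$*$ block; since it is perfect and grading-preserving, the $*$-$Q$ block and the off-diagonal $\hat Q$-$Q$ entries are forced to vanish, which is precisely (c) and the missing case of (a). (Alternatively one can compute $(t_{1,1})_*$ on $Q_i y$ head-on from the coproduct Cartan formula of \secref{homology section}, or check all four formulae against the universal examples $X=\Sigma^m H\Z/2$ and $X=\Sigma^m H\Z/2\vee\Sigma^n H\Z/2$, for which $D_2X$ is a wedge of stunted projective spaces.) I expect the main obstacle to be exactly this separation of the two families of classes in $H_*(D_2X)$, together with showing that the various \emph{a priori} nonzero degenerate pairings (between classes of equal internal degree arising from distinct basis elements) vanish: this is the one place where the perfectness of the duality pairing and the mutual compatibility of $\hat Q_i$, $*$, $\mu$, $t_{1,1}$ and $\epsilon$ are genuinely needed, rather than mere adjunction formalities.
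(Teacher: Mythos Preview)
The paper does not prove this proposition; it simply cites \cite[Prop.~A.1]{k1}.  So there is no ``paper's proof'' to compare against, and the question is just whether your argument stands on its own.

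Your proofs of (d), (b), and the case $i\geq j$ of (a) via adjunction and the suspension recursion are fine.  The gap is in the step you flag as ``where the real content lies'': the inference
\[
\text{block--triangular} \ + \ \text{perfect pairing} \ \Longrightarrow \ \text{off--diagonal blocks vanish}
\]
is simply false.  With the $*$--$*$ block equal to the identity, the $\hat Q$--$*$ block zero, and the $\hat Q$--$Q$ block lower--unitriangular (from your $i\geq j$ case), the pairing matrix has the form
$\left(\begin{smallmatrix}A&0\\ C&I\end{smallmatrix}\right)$
with $A$ invertible regardless of its strictly upper part; this is nonsingular for \emph{any} $C$ and any upper--triangular completion of $A$.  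Perfectness of the pairing gives no further constraint, so neither (c) nor the $i<j$ case of (a) follows.

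Your parenthetical alternatives are the right repair, and you should promote them to the main line.  For (c): since $y\in H_*(X)$ is primitive in $H_*(\mathbb PX)$, \lemref{epsilon lemma} (or the Cartan formula for $\Delta$) shows that the $(1,1)$--component $(t_{1,1})_*(Q_iy)$ vanishes, which gives $\langle w*x,\,Q_iy\rangle=0$ immediately by adjunction.  For the $i=0<j$ case of (a), the cleanest route is Spanier--Whitehead duality on finite $X$: the equivalence $D_2(DX)\simeq D(D_2X)$ interchanges the homology and cohomology operations, so the missing case is the dual of the $i>j$ case you have already established.  Either of these replaces the invalid linear--algebra step with an honest argument.
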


See \cite[Prop.A.1]{k1}.

\section{Proof of \thmref{theorem 1}} \label{proof of thm 1}

\subsection{Proof of \thmref{theorem 1}(a)} It suffices to prove this formula assuming $X$ is a spectrum whose homology is bounded below and of finite type.  In this case, it is easiest to first prove the cohomology version of \thmref{theorem 1}(a).

Recall from \secref{cohomology section} that $H^*(D_2 X)$ is spanned by elements $\hat Q_i x$ and $x*y$, with $x,y \in H^*(X)$ and $i \geq 0$.

As in the introduction, let $\delta: X \ra \Sigma D_2 X$ be the connecting map of the cofibration sequence $D_2 X \ra \tilde{P}_2 X \ra X$.

\begin{prop}
For $x \in H^n(X)$, we have $\delta^*(\sigma \hat{Q}_rx) = Sq^{r + n + 1} x$.
\end{prop}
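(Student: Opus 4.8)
The plan is to reduce the formula, through naturality, to one universal computation — that of the first $k$-invariant of the Goodwillie tower — which is exactly what was carried out (in dual, homological form) by the first author in \cite{k1}. First I would reduce to the case $r = 0$. By definition $\hat Q_r x = \epsilon^*(\hat Q_0(\sigma^r x))$, where $\sigma^r x$ denotes $x$ regarded in $H^{n+r}(\Sigma^r X)$ and $\epsilon\colon \Sigma^r D_2 X \ra D_2(\Sigma^r X)$ is the natural assembly map. Since the tower is modeled by continuous functors, these maps $\epsilon$ are compatible with $D_2 X \ra \tilde P_2 X \ra X$, so they assemble into a morphism from the $r$-fold suspension of the cofibration sequence $D_2 X \ra \tilde P_2 X \ra X \xra{\delta} \Sigma D_2 X$ to the corresponding sequence for $\Sigma^r X$, restricting to the identity on $\Sigma^r X = \Sigma^r \tilde P_1 X = \tilde P_1 (\Sigma^r X)$. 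Comparing connecting maps gives
$$ \Sigma^r\bigl(\delta_X^*(\sigma \hat Q_r x)\bigr) \;=\; \delta_{\Sigma^r X}^*\bigl(\sigma \hat Q_0(\sigma^r x)\bigr), $$
so it is enough to prove $\delta^*(\sigma \hat Q_0 x) = Sq^{|x| + 1} x$ for every $x \in H^*(X)$ (with $X$ of finite type and bounded below, as reduced above).

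For this last statement, the map $x \mapsto \delta^*(\sigma \hat Q_0 x)$ is a natural transformation $H^n(-) \ra H^{2n+1}(-)$ of functors on spectra, hence by Yoneda is effected by a single class $\theta_n = \delta^*(\sigma \hat Q_0 \iota_n) \in H^{2n+1}(\Sigma^n H\Z/2) = \A_{n+1}$, where $\iota_n$ is the fundamental class; I must show $\theta_n = Sq^{n+1}$. For $n \le -2$ this is vacuous, since $\A_{n+1} = 0$. For the rest I would bound $\theta_n$ on two sides. For the upper bound: on a suspension spectrum $X = \Sinfty Z$ the tower splits (Snaith), so $\delta = 0$ there and $\theta_n$ annihilates $\tilde H^n(Z)$ for every space $Z$; taking $Z = K(\Z/2, n)$ with $n \ge 1$, into whose cohomology the free unstable module on a degree-$n$ generator embeds, forces $\theta_n$ to be a sum of admissible monomials of excess $> n$, and $Sq^{n+1}$ is the only admissible monomial of degree $n+1$ with that property; thus $\theta_n \in \{0, Sq^{n+1}\}$. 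For non-vanishing: when $n > 0$ this is the universal-example computation of \cite{k1}. When $n = 0$ one argues directly from convergence of the tower for $H\Z/2$ (its zeroth space being the discrete $S^0$, so $\Sinfty\Oinfty H\Z/2 \simeq S$): a count in total degree $0$ shows that the copy of $\Z/2$ in $H_0(D_2 H\Z/2)$ can only die through an incoming $d^1$ from $H_1(H\Z/2)$, so $d^1 \ne 0$ there and $\theta_0 = Sq^1$. The case $n = -1$ is handled the same way, using the (likewise convergent) tower for $\Sigma^{-1}H\Z/2$.

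The main obstacle is the non-vanishing when $n > 0$, i.e. the concrete identification of $\delta\colon X \ra \Sigma D_2 X$ on (co)homology — equivalently, the first $k$-invariant of the Goodwillie tower of $\Sinfty\Oinfty$. This is the substance of the universal-example argument in \cite{k1}, which I would cite; the remaining steps are formal, relying only on naturality, the evaluation maps $\epsilon$, the splitting of the tower on suspension spectra, and the excess estimate in the Steenrod algebra.
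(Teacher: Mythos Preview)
Your reduction to $r=0$ via the naturality of $\epsilon$ is the same as the paper's, and for $n>0$ you both ultimately invoke \cite{k1}. Your Snaith-splitting/excess argument for the upper bound $\theta_n \in \{0, Sq^{n+1}\}$ is a pleasant alternative to the convergence argument in \cite{k1}, though strictly redundant once that reference is cited.

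The gap is in your treatment of $n=0$ and $n=-1$. Your argument there rests on convergence of the tower (equivalently, strong convergence of the spectral sequence) for $X=H\Z/2$ and $X=\Sigma^{-1}H\Z/2$. But the only convergence statement available a priori is for $0$--connected $X$, and neither of these spectra is $0$--connected. The paper does eventually observe (in the examples section) that the spectral sequence for such Eilenberg--MacLane spectra computes the right thing, but that observation is downstream of the very proposition you are trying to prove, so invoking it here is circular. Without convergence you cannot conclude that the degree-$0$ class in $H_0(D_2 H\Z/2)$ must die, and your non-vanishing argument for $\theta_0$ and $\theta_{-1}$ collapses.

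The paper handles these two cases by a different bootstrap: it applies $Sq^2$ to $\delta^*(\sigma\hat Q_0\iota_n)$, uses the Nishida relation for $Sq^2\hat Q_0$ together with the vanishing of $\delta^*$ on nontrivial products, and reduces to instances of $P(r,m)$ already established (namely $P(0,1)$, and then $P(2,-1)$ and $P(0,0)$). Since multiplication by $Sq^2$ is injective on $\A$ in degrees $0$ and $1$, this determines $\theta_0$ and $\theta_{-1}$. You could patch your proof by inserting exactly this step for $n\in\{0,-1\}$; everything else you wrote stands.
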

\begin{proof}
The proof uses ideas from \cite[Prop.4.3]{k1} and \cite[Appendix~A]{kuhntate}.

Let $P(r, n)$ be the statement $$\delta^*(\sigma \hat{Q}_rx) = Sq^{r + n + 1} x \text{ for all } x \in H^n(X).$$
We need to prove that $P(r,n)$ is true for all $r \geq 0$ and $n \in \Z$.

We first observe that, for $r>0$, $P(r-1,n+1)$ implies $P(r,n)$. To see this, we use that the diagram
$$
\xymatrix{
  X \ar[r]^-{\delta} & \Sigma D_2 X \ar[d]^{\epsilon} \\
  X \ar@{=}[u] \ar[r]_-{\Sigma^{-1} \delta} & D_2 \Sigma X}
$$
commutes by the naturality of $\delta$.
So, if $x \in H^n(X)$ and $P(r-1, n+1)$ holds, then
\begin{equation*}
\begin{split}
\delta^*(\sigma \hat{Q}_rx)
  &= \delta^*(\epsilon^*(\hat{Q}_{r-1} \sigma x)) \\
  &= (\Sigma^{-1} \delta)^*(\hat{Q}_{r-1}\sigma x) \\
  &= \sigma^{-1} Sq^{(r-1) + (n+1) + 1} \sigma x \\
  &= Sq^{r + n + 1} x.
\end{split}
\end{equation*}

Thus it suffices to show $P(0, n)$ for all $n$.  By naturality, it is enough to show that
$$\delta^*(\sigma \hat{Q}_0 \iota_n) = Sq^{n + 1} \iota_n,$$
where $\iota_n \in H^n(\Sigma^n H\Z/2)$ is the fundamental class.

We break this into cases.

When $n > 0$, this was proven in \cite[Prop.4.3]{k1} as follows. As $\Sigma^n H\Z/2$ is 0--connected, the cohomology tower spectral sequence for $\Sigma^n H\Z/2$ strongly converges to $H^*(K(\Z/2, n))$.  Thus the element $Sq^{n+1} \iota_n$ must be an eventual boundary, as $Sq^{n+1} \iota_n = 0$ in $H^*(K(\Z/2, n))$.
For degree reasons, the only way this could happen is if $\delta^*(\sigma \hat{Q}_0\iota_n) = Sq^{n+1} \iota_n$.

When $n < -1$, the degree of $\sigma \hat{Q}_0\iota_n$ is $2n + 1 < n$, so $\delta^*$ takes this element to zero.  As desired, $Sq^{n + 1} \iota_n$ is also zero since $n + 1 < 0$.

For the remaining cases, we use the fact that $Sq^2$ is injective on $\A$ in degrees 0 and 1.

We have $Sq^2 \delta^*(\sigma \hat{Q}_0\iota_n) = \delta^*(\sigma Sq^2 \hat{Q}_0\iota_n)$.
The Nishida relations for the operation $\hat Q_0$ \cite[Prop.3.15]{k1} tell us that
$$Sq^2 \hat{Q}_0\iota_n = \binom{n}{2} \hat{Q}_2\iota_n + \binom{n-1}{0} \hat{Q}_0Sq^1 \iota_n +  \iota_n * Sq^2 \iota_n.$$
Since $\delta^*$ takes nontrivial products to zero, we deduce that
$$Sq^2 \delta^*(\sigma \hat{Q}_0\iota_n) = \binom{n}{2}\delta^*(\sigma \hat{Q}_2\iota_n) + \delta^*(\sigma \hat{Q}_0Sq^1 \iota_n).$$

When $n = 0$, this equation and the established fact $P(0, 1)$ imply that
$$Sq^2 \delta^*(\sigma \hat{Q}_0\iota_0) = Sq^2 Sq^1 \iota_0.$$
We deduce that $\delta^*(\sigma \hat{Q}_0\iota_0) = Sq^1 \iota_0$.

When $n = -1$, the above equation and the established facts $P(2, -1)$ (implied by $P(0, 1)$) and $P(0, 0)$ imply that $$Sq^2 \delta^*(\sigma \hat{Q}_0\iota_{-1}) = Sq^2 \iota_{-1} + Sq^1 Sq^1 \iota_{-1} = Sq^2 \iota_{-1}.$$
We deduce that $\delta^*(\sigma \hat{Q}_0\iota_{-1}) = \iota_{-1}$.
\end{proof}

If we define $\hat{Q}^ix = \hat{Q}_{i-|x|}x$, the proposition says that, for all $x \in H^*(X)$,
$$\delta^*(\sigma \hat{Q}^ix) = Sq^{i + 1} x.$$
By duality, we get the formula stated in \thmref{theorem 1}(a): for all $x \in H_*(X)$,
$$ \delta_*(x) = \sum_{i \geq 0} \sigma Q^{i-1}(x Sq^{i}),$$

\begin{rem} Variants of the formula in \thmref{theorem 1}(a) go back at least as far as the 1966 paper \cite{six author}.
\end{rem}

\subsection{The strategy for the proof of statements (b) and (c)} \label{strategy section}

We outline the strategy of the proof of \thmref{theorem 1}(b) and (c).

First of all, what do we have to show?

In (c), the statement
$$ \text{`$y \in H_*(D_dX)$ represents $z \in H_*(\Oinfty X)$'} $$
means that, under the maps
$$ \Sinfty_+ \Oinfty X \xra{e_d} P_d(X) \xla{i_d} D_d(X),$$
we have $e_{d*}(z) = i_{d*}(y)$.  So to prove (c), we just need to show that then,
under the maps
$$ \Sinfty_+ \Oinfty X \xra{e_{2d}} P_{2d}(X) \xla{i_{2d}} D_{2d}(X),$$
we have $e_{2d*}(Q^iz) = i_{2d*}(Q^iy)$.

In (b), the statement
$$ \text{`$y \in H_*(D_dX)$ lives to $E^r$, and $d^r(y)$ is represented by $z \in H_*(D_{d+r}X)$'}$$
means that there is an element $w \in H_*(P_{d+r-1}(X))$, such that under the maps
$$ D_d(X) \xra{i_d} P_d(X) \xla{p_{d+r-1,d}} P_{d+r-1}(X) \xra{\delta_{d+r-1}} \Sigma D_{d+r}(X),$$
we have $i_{d*}(y) = {p_{d+r-1,d}}_*(w)$ and ${\delta_{d+r-1}}_*(w) = \sigma z$.  So to prove (b), we just need to show that then, there is an element $w_i \in H_*(P_{2d+2r-1}(X))$ such that
under the maps
$$ D_{2d}(X) \xra{i_{2d}} P_{2d}(X) \xla{p_{2d+2r-1,2d}} P_{2d+2r-1}(X) \xra{\delta_{2d+2r-1}} \Sigma D_{2d+2r}(X),$$
we have $i_{2d*}(Q^iy) = {p_{2d+2r-1,d}}_*(w_i)$ and ${\delta_{2d+2r-1}}_*(w_i) = \sigma Q^iz$. \\

We can also pass to $H\Z/2$--modules, and use elements in homotopy.  {\em Suppressing this from our notation, we will assume this in what we do below.}  For example, $H_*(\D_2(P_d(X)))$ will `really' mean $\pi_*(\D_2^{H\Z/2}(H\Z/2 \sm P_d(X)))$. \\

\thmref{theorem 1}(c) follows from the following.

\begin{prop} \label{thm1c prop} There is a commutative diagram of weak natural transformations
\begin{equation*}
\xymatrix{
\D_2(\Sinfty_+ \Oinfty X) \ar[r]^-{\D_2e_d} \ar[d]& \D_2(P_{d}(X)) \ar[d]&  \D_2(D_{d}(X)) \ar[l]_{\D_2i_d} \ar[d]  \\
\Sinfty_+ \Oinfty X \ar[r]^-{e_{2d}} & P_{2d}(X) &  D_{2d}(X) \ar[l]_{i_{2d}}}
\end{equation*}
in which left and right vertical maps are the composites
$$ \D_2(\Sinfty_+ \Oinfty X) \ra D_2(\Sinfty_+ \Oinfty X) \xra{\mu} \Sinfty_+ \Oinfty X$$
and
$$ \D_2(D_d(X)) \ra D_2(D_d(X)) \xra{\mu} D_{2d}(X),$$
where $\mu$ is the standard operad action.
\end{prop}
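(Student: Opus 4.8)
The plan is to build the middle vertical map $\D_2(P_d(X)) \to P_{2d}(X)$ as a weak natural transformation, and then verify that the two resulting squares commute in the homotopy category. The key structural input is the operad action on the Goodwillie tower established in \cite{ak}: the little cubes operad $\C_\infty$ acts on the whole tower $P(X)$ compatibly with its action on $\Sinfty_+\Oinfty X$ (via the infinite loop structure) and on the layers $D_d(X)$ (via the standard maps $D_2 D_d(X) \to D_{2d}(X)$). However, as the authors flag in the introduction, the naive homotopy-orbit map $D_2(P_d(X)) = (P_d(X)^{\sm 2})_{h\Z/2} \to P_{2d}(X)$ does \emph{not} exist; the fix is to replace $D_2$ by the desuspended Tate construction $\D_2$, using \lemref{linear cor} (that $\D_2^R$ preserves cofibration sequences of $R$-modules) as the crucial formal property that $D_2$ lacks.

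First I would pass to $H\Z/2$-modules, as the authors announce: replace $X$ by $H\Z/2 \sm X$ throughout, so that $\D_2$ becomes $\D_2^{H\Z/2}$ and, being linear (\corref{linear cor}), it now converts the tower of fibrations $P(H\Z/2 \sm X)$ into another tower whose layers are $\D_2^{H\Z/2}(H\Z/2 \sm D_d(X))$. The second ingredient is the McCarthy-style identification $\D_2^R(Y) \simeq \holim_n \Sigma^n D_2^R(\Sigma^{-n}Y)$: since each $D_2^R(\Sigma^{-n}P_d(X))$ maps to $\Sigma^{-n}\cdot(\text{something in the }2d\text{-stage})$ after enough suspension — more precisely, the operad structure maps $D_2 D_d(Z) \to D_{2d}(Z)$ together with the evaluation transformations $\epsilon\colon \Sigma F(X) \to F(\Sigma X)$ of the preliminary section assemble the tower maps into a compatible system in $n$ — one obtains a map on the homotopy limit, i.e.\ a map $\D_2(P_d(X)) \to P_{2d}(X)$. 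The operadic coherence from \cite{ak} is exactly what makes this system of maps commute with the tower structure maps $P_d \to P_{d-1}$, so the construction is natural up to the weak-natural-transformation bookkeeping already set up in \secref{geometric preliminaries}.

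With the middle map in hand, the left square says that the Tate-diagonal on $\Sinfty_+\Oinfty X$ followed by the operad multiplication $\mu$ agrees, after applying $\D_2 e_d$ and then the new middle map, with $e_{2d}$ applied to that same multiplication; this is just the statement that $e_d$ is a map of towers-with-operad-action, which is the main output of \cite{ak}, combined with the fact that our middle map was \emph{built} from the operad structure maps. The right square is the analogous statement for $i_d\colon D_d(X) \to P_d(X)$: one uses that $i_d$ is operadic and that the middle map restricted to the layer is, by construction, the standard $\D_2(D_d(X)) \to D_2(D_d(X)) \xra{\mu} D_{2d}(X)$. Both squares reduce to diagrams that commute in the homotopy category by the coherence already proven for the operad action on the tower.

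The \textbf{main obstacle} is the construction of the middle vertical map and verifying it is genuinely natural and compatible with \emph{all} the tower structure maps simultaneously — this is where one must be careful that $\D_2$, not $D_2$, is used (so that \corref{linear cor} applies and the Tate tower behaves correctly), that the passage to $H\Z/2$-modules is done at the right moment so that the homotopy limit defining $\D_2$ interacts correctly with the homotopy limit defining the tower, and that the evaluation transformations $\epsilon$ assemble coherently. The commutativity of the two squares, by contrast, should be essentially formal once the operad action of \cite{ak} is invoked. A secondary technical point is checking that the homotopy limits (over $n$ in the definition of $\D_2$, and over $d$ in the tower) can be interchanged or at least that the relevant Mittag--Leffler condition holds — but the lemmas of \secref{geometric preliminaries} on $\D_2^{H\Z/2}$ are designed precisely to handle this.
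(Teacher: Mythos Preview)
Your proposal has a genuine gap in the construction of the middle vertical map $\D_2(P_d(X)) \to P_{2d}(X)$.  You propose to build it from the colinearization model $\D_2(Y) \simeq \holim_n \Sigma^n D_2(\Sigma^{-n}Y)$ by producing compatible maps $\Sigma^n D_2(\Sigma^{-n}P_d(X)) \to P_{2d}(X)$ using ``the operad structure maps $D_2D_d(Z)\to D_{2d}(Z)$ together with the evaluation transformations $\epsilon$.''  But this is exactly the map that does \emph{not} exist: there is no map $D_2(P_d(X)) \to P_{2d}(X)$ compatible with the operad action, and desuspending the input does not help, because $P_d(X)$ is a fixed spectrum, not a functor to which $\epsilon$ applies.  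The sentence ``each $D_2^R(\Sigma^{-n}P_d(X))$ maps to $\Sigma^{-n}\cdot(\text{something in the }2d\text{-stage})$'' is not substantiated and is in fact false.

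The paper's construction is quite different and uses two ingredients you do not mention.  First, it introduces the tower smash product $(P\sm P)$ with $(P\sm P)_d = \holim_{b+c\le d}P_b\sm P_c$; \thmref{operad action on tower thm} supplies an honest map of towers $\mu\colon (P\sm P)_{h\Z/2} \to P$, so in particular $((P\sm P)_{2d})_{h\Z/2} \to P_{2d}$.  Second, \lemref{Tate vanishing stuff} shows that the natural $\Z/2$-map $(P\sm P)_{2d} \to P_d\sm P_d$ becomes an equivalence after applying $t_{\Z/2}$, because the off-diagonal composition factors $\Z/2_+\sm D_i\sm D_j$ are induced and hence Tate-acyclic.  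Thus $\D_2(P_d) \simeq t_{\Z/2}((P\sm P)_{2d})$, and the middle map is the zig-zag
\[
\D_2(P_d) \xla{\ \sim\ } t_{\Z/2}((P\sm P)_{2d}) \lra ((P\sm P)_{2d})_{h\Z/2} \xra{\ \mu\ } P_{2d}.
\]
The two squares then commute by stacking this zig-zag against the analogous ones for $\Sinfty_+\Oinfty X$ and for the fiber $F_{2d}$, invoking \thmref{operad action on tower thm} for the bottom squares and \lemref{Tate vanishing stuff} for the top.  The linearity of $\D_2$ (\corref{linear cor}) plays no role here; it is used only in \propref{thm1b prop}, to make sense of $\D_2$ applied to a connecting map.
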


To deduce \thmref{theorem 1}(c) from this, suppose that $e_{d*}(z) = i_{d*}(y)$ as in the discussion above.  Then the diagram shows that $e_{2d*}(Q^iz) = i_{2d*}(Q^iy)$, where $Q^iz$ and $Q^iy$ can be viewed as being in the homology of the appropriate Tate construction, courtesy of \corref{Tate Dyer Lashof cor}.

\thmref{theorem 1}(b) follows from the following.

\begin{prop} \label{thm1b prop} There is a commutative diagram of weak natural transformations
\begin{equation*}
\xymatrix{
\D_{2}D_d(X) \ar[r]^-{\D_2i} \ar[d]& \D_2 P_{d}(X) \ar[d]&  \D_2 P_{d+r-1}(X) \ar[r]^-{\D_2\delta} \ar[l]_-{\D_2p} \ar[d]& \Sigma \D_2D_{d+r}(X) \ar[d]  \\
D_{2d}(X) \ar[r]^-{i} & P_{2d}(X) &  P_{2d+2r-1}(X) \ar[r]^-{\delta} \ar[l]_-{p}& \Sigma D_{2d+2r}(X)}
\end{equation*}
in which left and right vertical maps are as in the previous proposition.
\end{prop}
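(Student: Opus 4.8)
The plan is to reduce \propref{thm1b prop} to the construction of a single coherent family of maps. Using the action of the $E_\infty$ operad $\C_\infty$ on the Goodwillie tower established in \cite{ak} --- base-changed to $H\Z/2$-modules, in accordance with the convention adopted in \secref{strategy section} --- I will construct weak natural transformations
$$\theta_k \colon \D_2 P_k(X) \ra P_{2k}(X), \qquad k \geq 0,$$
with two properties: (i) the $\theta_k$ are compatible with the tower projections, so that they assemble into a map from the tower $\{\D_2 P_k(X)\}_k$ to the reindexed tower $\{P_{2k}(X)\}_k$; and (ii) on $k$th layers $\theta_k$ restricts (via $\D_2$ of the layer inclusion, followed by $i_{2k}$) to the standard operadic map $\D_2 D_k(X) \ra D_2 D_k(X) \xra{\mu} D_{2k}(X)$. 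Granting this family, \propref{thm1b prop} follows formally. By \corref{linear cor} the functor $\D_2$ preserves cofiber sequences, hence also commutes with $\Sigma$ (apply it to $X \ra \ast \ra \Sigma X$), so applying $\D_2$ to the fiber sequence $D_{d+r}(X) \ra P_{d+r}(X) \ra P_{d+r-1}(X)$ produces a fiber sequence $\D_2 D_{d+r}(X) \ra \D_2 P_{d+r}(X) \ra \D_2 P_{d+r-1}(X)$. By (ii), $\theta_{d+r}$ carries this compatibly into the fiber sequence $D_{2(d+r)}(X) \ra P_{2(d+r)}(X) \ra P_{2(d+r)-1}(X)$; the induced map on cofibers, $\D_2 P_{d+r-1}(X) \ra P_{2d+2r-1}(X)$, is the right-hand vertical of the diagram --- and this is exactly where the index $2d+2r-1 = 2(d+r)-1$ comes from --- while the associated square of connecting maps, after identifying the boundary of the top row with $\D_2 \delta$ via $\D_2 \Sigma \simeq \Sigma \D_2$, is precisely the right-hand square of \propref{thm1b prop}. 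The left-hand square is property (ii) for $k = d$, and the middle square is immediate from property (i). (The same maps $\theta_k$, with the added compatibility with the structure maps $e_k$, yield \propref{thm1c prop}; only (i) and (ii) are needed here.)

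The heart of the matter is the construction of the $\theta_k$, and this is exactly where the once-desuspended Tate construction is indispensable. As observed in the introduction, the naive maps $D_2 P_k(X) \ra P_{2k}(X)$ do not exist, because $D_2$ is a quadratic functor and does not commute with the extensions assembling $P_k(X)$ out of its layers; by contrast $\D_2 = t_{\Z/2}((-)^{\sm 2})$ is additive --- $t_{\Z/2}$ kills induced summands (Lemma of \secref{geometric preliminaries}), so $\D_2(Y \vee Z) \simeq \D_2 Y \vee \D_2 Z$ --- and, being exact, it does commute with those extensions. The $\C_\infty$-action of \cite{ak} presents the reduced tower $\tilde P_\bullet(X)$ as a filtered non-unital $E_\infty$-ring: $\tilde P_\infty(X)$ carries a decreasing filtration $\{F^n\}$ with $F^n/F^{n+1} \simeq D_nX$, $\tilde P_\infty(X)/F^{k+1} \simeq \tilde P_k(X)$, and multiplications restricting $\Sigma_2$-equivariantly to $F^{k+1}\sm F^{k+1} \ra F^{2k+2}$. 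Passing to Tate constructions and using \corref{linear cor} to see that $\D_2$ sends the cofiber sequence $F^{k+1} \ra \tilde P_\infty(X) \ra \tilde P_k(X)$ to a cofiber sequence, the commuting square relating $\D_2 F^{k+1} \ra F^{2k+2} \hra F^{2k+1}$ to the squaring map $\D_2 \tilde P_\infty(X) \ra D_2\tilde P_\infty(X) \ra \tilde P_\infty(X)$ induces on cofibers the map $\D_2\tilde P_k(X) \ra \tilde P_\infty(X)/F^{2k+1} = \tilde P_{2k}(X)$; the summand of $\D_2 P_k(X)$ coming from the augmentation is handled in the evident way, giving $\theta_k$. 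Property (ii) is then the observation that on the top filtration quotient this map reduces to the operadic layer map $\D_2 D_kX \ra D_{2k}X$, and (i) is the functoriality of the construction in the truncation index $k$.

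The step I expect to be the main obstacle is making this construction rigorous at the level of a model category, coherent enough that $\theta_k$ and the squares above are genuine diagrams of weak natural transformations. The key technical maneuver, flagged in the introduction, is to replace towers of $S$-modules by towers of $H\Z/2$-modules at the appropriate moment: over $H\Z/2$ the construction $\D_2$ is modeled by $\holim_n \Sigma^n D_2(\Sigma^{-n}(-))$, the relevant inverse systems are Mittag--Leffler (\secref{geometric preliminaries}), so $\pi_* \D_2^{H\Z/2}(H\Z/2 \sm P_k(X)) = \lim_n H_*(\Sigma^n D_2(\Sigma^{-n}P_k(X)))$, and the operadic maps for the suspended and desuspended towers pass to the limit. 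A secondary, purely bookkeeping point is to check that one and the same construction simultaneously realizes the compatibilities with $i$, $p$, and $\delta$ needed for \propref{thm1b prop} and, for \propref{thm1c prop}, with the operad multiplication and the maps $e_k$ --- so that both propositions, and hence \thmref{theorem 1}(b),(c), come out at once.
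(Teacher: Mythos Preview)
Your strategy is sound and would ultimately yield the same vertical maps, but it is organized differently from the paper's proof, and the difference is exactly at the point you flag as the main obstacle.

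The paper does \emph{not} attempt to build $\theta_k:\D_2 P_k(X)\ra P_{2k}(X)$ directly from a filtered $E_\infty$--ring structure and then pass to cofibers.  Instead it introduces the smash-square tower $(P\sm P)$ with $(P\sm P)_n=\holim_{b+c\le n}P_b\sm P_c$, and uses two facts.  First, the $\C_\infty$--action of \cite{ak} is packaged as a genuine map of towers $\mu:(P\sm P)_{h\Z/2}\ra P$ (\thmref{operad action on tower thm}); composing with the norm gives strict tower maps $t_{\Z/2}((P\sm P)_n)\ra((P\sm P)_n)_{h\Z/2}\ra P_n$ for every $n$, including the odd index $2d+2r-1$.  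Second, a short filtration argument (\lemref{Tate vanishing stuff}) shows that the induced factors in $(P\sm P)_{2d}$, $(P\sm P)_{2d+1}$, and $F_{2d}$ die under $t_{\Z/2}$, yielding weak equivalences $t_{\Z/2}((P\sm P)_{2d+1})\simeq t_{\Z/2}((P\sm P)_{2d})\simeq\D_2 P_d$ and $t_{\Z/2}(F_{2d})\simeq\D_2 D_d$.  The entire diagram of \propref{thm1b prop} is then a single four-row commutative ladder, with the top row $\D_2(\cdots)$ connected by these equivalences to the row $t_{\Z/2}((P\sm P)_{\cdots})$, then by the norm to $((P\sm P)_{\cdots})_{h\Z/2}$, then by $\mu$ to $P_{\cdots}$.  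All squares commute on the nose because they come from a tower map and natural transformations; no coherence needs to be arranged by hand.

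By contrast, your route manufactures the odd-indexed vertical map $\D_2 P_{d+r-1}\ra P_{2d+2r-1}$ as an induced map on cofibers of $\theta_{d+r}$, and then must separately check its compatibility with $\theta_d$ and with the connecting maps.  This is where the coherence burden lands, and it is precisely what the $(P\sm P)$ intermediary absorbs: in the paper's argument that map is simply the tower map at level $2d+2r-1$, and the identification with $\D_2 P_{d+r-1}$ comes from the Tate-vanishing lemma.  Your appeal to $\tilde P_\infty(X)$ as a filtered object is also slightly delicate when $X$ is not $0$--connected, whereas the $(P\sm P)$ tower is defined for any $X$.  So your outline is correct in spirit, but the paper's device of passing through $t_{\Z/2}((P\sm P)_\bullet)$ is what turns the heuristic into a two-line proof.
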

In interpreting the right square in this diagram, one should recall that $\Sigma \D_2D_{d+r}(X) \simeq \D_2\Sigma D_{d+r}(X)$, thanks to \corref{linear cor}.

To deduce \thmref{theorem 1}(b) from this, the needed element $w_i \in H_*(P_{2d+2r-1}(X))$ will then be the image of $Q^iw \in H_*(\D_2 P_{d+r-1}(X))$ under the vertical map second from the right.

It remains to prove these two propositions.  We do this at the end of the next subsection.

\subsection{Operad actions on towers}

The following definition is from \cite{ak}.
\begin{defn}  If $P$ is a tower in $\Sp$, then $P\sm P$ is the tower in $\Z/2$--$\Sp$ with
$$ (P \sm P)_d = \holim_{b+c \leq d} P_b \sm P_c.$$
\end{defn}

Suggestively, we will let $D_d$ denote the fiber of $P_d \ra P_{d-1}$, and then let $F_d$ denote the fiber of $(P\sm P)_d \ra (P\sm P)_{d-1}$.  From \cite[Cor.5.3]{ak} we learn

\begin{lem} There is a weak natural equivalence in $\Z/2$--$\Sp$
$$ F_d \simeq \prod_{b+c =d} D_b \sm D_c.$$
\end{lem}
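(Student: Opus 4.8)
The plan is to compute both sides from the definitions of the towers $P$ and $P \sm P$ and the fiber sequences they sit in. Recall $(P \sm P)_d = \holim_{b+c \le d} P_b \sm P_c$, and that $F_d$ is defined as the fiber of the projection $(P\sm P)_d \to (P\sm P)_{d-1}$. The indexing poset for the homotopy limit defining $(P\sm P)_d$ is the set of pairs $(b,c)$ with $b+c \le d$, ordered so that $(b,c) \le (b',c')$ when $b \le b'$ and $c \le c'$ (with the obvious structure maps $P_{b'} \sm P_{c'} \to P_b \sm P_c$ coming from the tower maps in each factor); $(P\sm P)_{d-1}$ is the homotopy limit over the sub-poset $b+c \le d-1$. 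So the fiber $F_d$ should only see the ``new'' part of the indexing diagram, namely the antidiagonal $\{(b,c) : b+c = d\}$.

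**The key step.** First I would make precise that the inclusion of posets $\{b+c \le d-1\} \hookrightarrow \{b+c \le d\}$ has complement the antidiagonal $\{b+c = d\}$, and that this antidiagonal is a discrete subposet (no two of its elements are comparable) which moreover is a ``cofinal-free'' or ``initial'' piece in the appropriate sense: every element of $\{b+c\le d\}$ not in $\{b+c\le d-1\}$ is precisely one of the $(b,c)$ with $b+c=d$, and such an element is maximal. Thus the homotopy limit over $\{b+c \le d\}$ is computed as a homotopy pullback of the homotopy limit over $\{b+c \le d-1\}$ against the product $\prod_{b+c=d} P_b \sm P_c$ over the ``matching object'' for the antidiagonal. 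Taking vertical fibers of the resulting square and using that the fiber of $P_b \sm P_c \to (\text{matching object})$ is exactly $D_b \sm D_c$ — because the matching object for $(b,c)$ maximal is $\holim$ over strictly smaller pairs, and the fiber of $P_b \to P_{b-1}$ smashed appropriately assembles to $D_b \sm D_c$ by a cofiber-of-cofiber argument using that smashing with a fixed spectrum preserves fiber sequences — gives $F_d \simeq \prod_{b+c=d} D_b \sm D_c$. Throughout, the $\Z/2$-action is the one permuting the two factors, and every equivalence in sight is equivariant because it is built naturally out of the swap-symmetric diagram $P \sm P$, so the equivalence is a weak natural equivalence in $\Z/2$--$\Sp$.

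**Invoking the cited result.** In fact the cleanest route — and the one the paper intends — is simply to cite \cite[Cor.5.3]{ak}, where exactly this computation of the layers of $P \sm P$ is carried out; the proof above is the sketch of why that citation applies. So the actual write-up is short: recall the definition of $F_d$, recall from \cite{ak} that the tower $P \sm P$ has $d$th layer naturally equivalent to $\prod_{b+c=d} D_b \sm D_c$ with the swap $\Z/2$-action, and note equivariance comes for free from naturality in the symmetric bifunctor $(Y,Z) \mapsto Y \sm Z$.

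**Main obstacle.** The one genuinely delicate point is the identification of the iterated fiber: showing that the fiber of $P_b \sm P_c \to (P \sm P)_{d-1}$-matching-object is $D_b \sm D_c$ requires knowing that $P_b \sm P_c \to P_{b-1}\sm P_c \times_{P_{b-1}\sm P_{c-1}} P_b \sm P_{c-1}$ has fiber $D_b \sm D_c$, i.e.\ a Blakers--Massey/base-change statement for the square of towers, together with the fact that $- \sm X$ is exact (preserves fiber sequences) in $\Sp$. Since this is precisely the content abstracted in \cite[Cor.5.3]{ak}, I expect to handle it by citation rather than reproving it; the risk is only in making sure the indexing conventions for $P \sm P$ here match those in \cite{ak}, which they do by the definition stated just above the lemma.
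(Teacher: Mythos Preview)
Your proposal is correct and matches the paper's approach exactly: the paper offers no proof of this lemma, stating it with the preface ``From \cite[Cor.5.3]{ak} we learn,'' so the entire argument is a citation. Your sketch of what underlies the cited result is accurate and helpful, but for the purposes of matching the paper you need only the citation.
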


Note that there are $\Z/2$--equivariant maps
$$ (P \sm P)_{2d+1} \ra (P \sm P)_{2d} \ra P_d \sm P_d$$
and
$$ F_{2d} \ra D_d \sm D_d.$$

\begin{lem} \label{Tate vanishing stuff}  These maps induce equivalences of Tate spectra:
$$ t_{\Z/2}((P \sm P)_{2d+1}) \xra{\sim} t_{\Z/2}((P \sm P)_{2d}) \xra{\sim} \D_2(P_d)$$
and
$$  t_{\Z/2}(F_{2d}) \xra{\sim} \D_2(D_d).$$
\end{lem}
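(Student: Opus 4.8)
The plan is to deduce everything from the two vanishing properties of the Tate construction already available: Lemma~1 about $t_G(G_+ \sm X) \simeq *$ (so $t_{\Z/2}$ kills induced/``free'' $\Z/2$--spectra), and the fact that $t_{\Z/2}$ takes cofibration sequences to cofibration sequences (Lemma, part (a)). The strategy in each case is a standard filtration/induction argument: compare the towers $(P\sm P)_\bullet$ and $P_d \sm P_d$ (respectively $F_\bullet$ and $D_d \sm D_d$) stage by stage, and show every subquotient that is introduced by going further up the tower is built from summands of the form $D_b \sm D_c$ with $b \neq c$, which carry a {\em free} $\Z/2$--action (the swap interchanges the two distinct factors), hence are annihilated by $t_{\Z/2}$.

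First I would treat the map $t_{\Z/2}((P\sm P)_{2d}) \to \D_2(P_d) = t_{\Z/2}(P_d \sm P_d)$. The map $(P\sm P)_{2d} \to P_d \sm P_d$ is a map of $\Z/2$--spectra; I would show its homotopy fiber, when hit with $t_{\Z/2}$, is contractible. Using the description $(P\sm P)_{2d} = \holim_{b+c \leq 2d} P_b \sm P_c$ together with the identification $F_e \simeq \prod_{b+c=e} D_b \sm D_c$ of the successive fibers, one sees that the fiber of $(P\sm P)_{2d} \to P_d \sm P_d$ is a finite iterated extension of spectra of the form $D_b \sm D_c$ with $b+c \le 2d$ but {\em not} $b=c=d$; in every such term either $b \ne c$ (giving a free $\Z/2$--piece, as $\Z/2$ permutes the two unequal smash factors freely, so the term is of the form $\Z/2_+ \sm (D_b \sm D_c)$ as a $\Z/2$--spectrum) or $\{b,c\}$ is a pair of equal indices strictly below $d$, but those do not occur in the fiber since the comparison map is an equivalence on the ``diagonal'' piece of lowest filtration. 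Applying Lemma~1 (the $t_G(G_+\sm X)\simeq *$ statement) to each free piece and the cofibration-preserving property of $t_{\Z/2}$ repeatedly, the whole fiber has contractible Tate construction, so $t_{\Z/2}((P\sm P)_{2d}) \xra{\sim} \D_2(P_d)$. The same argument, with the single extra fiber $F_{2d+1} \simeq \prod_{b+c = 2d+1} D_b\sm D_c$ (all of whose summands have $b\ne c$, since $2d+1$ is odd!), handles $t_{\Z/2}((P\sm P)_{2d+1}) \xra{\sim} t_{\Z/2}((P\sm P)_{2d})$.

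Next, the map $t_{\Z/2}(F_{2d}) \to \D_2(D_d) = t_{\Z/2}(D_d \sm D_d)$ is handled identically: by the lemma $F_{2d} \simeq \prod_{b+c=2d} D_b\sm D_c$, and the projection onto the $b=c=d$ factor $D_d\sm D_d$ has fiber the product of the $D_b \sm D_c$ with $b+c = 2d$, $b\ne c$. These pair up under the swap as $\Z/2_+ \sm (D_b\sm D_c)$, so $t_{\Z/2}$ kills the fiber, and $t_{\Z/2}(F_{2d}) \xra{\sim} \D_2(D_d)$.

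The main obstacle, and the only point requiring real care, is making precise that the relevant homotopy fibers are genuinely built (as $\Z/2$--spectra) out of {\em free} pieces $\Z/2_+\sm(D_b\sm D_c)$ and nothing else --- i.e.\ controlling the $\Z/2$--equivariant structure of the layers of the $\holim$-tower, not just their underlying homotopy types. One must check that the swap action on $\holim_{b+c\le e} P_b\sm P_c$ restricts compatibly to the filtration by the $F_e$'s and that on each $F_e$ it is the evident action permuting the factors of $\prod_{b+c=e} D_b\sm D_c$; this is exactly the content needed from \cite[Cor.5.3]{ak} and the naturality of that identification, and once it is in hand the rest is a formal consequence of the two cited properties of $t_{\Z/2}$. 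I would also remark that finiteness of the towers (so that only finitely many extensions are involved and $t_{\Z/2}$ commutes with the relevant (co)limits) is automatic here since at each stage the $\holim$ is over a finite poset.
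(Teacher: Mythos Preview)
Your approach is correct and essentially identical to the paper's: both rest on the observation that off-diagonal composition factors $D_b \sm D_c$ (with $b \neq c$) pair into free $\Z/2$--spectra $\Z/2_+ \sm D_b \sm D_c$ killed by $t_{\Z/2}$, while the diagonal factors $D_i \sm D_i$ ($i \leq d$) match on both sides. The paper simply lists the composition factors of $(P\sm P)_{2d+\epsilon}$ and of $P_d \sm P_d$ directly rather than analyzing the fiber; this is slightly cleaner and sidesteps your hesitation about the diagonal pieces with $b=c<d$ --- in fact the fiber of $(P\sm P)_{2d} \to P_d \sm P_d$ consists precisely of those $D_b \sm D_c$ with $b+c \leq 2d$ and $\max(b,c)>d$, all of which are off-diagonal.
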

\begin{proof} With $\epsilon$ either 0 or 1, filtered in the usual way, $(P \sm P)_{2d+\epsilon}$ has composition factors of two types:
\begin{itemize}
\item $D_i \sm D_i$ with $i \leq d$.
\item $\Z/2_+ \sm D_i \sm D_j$ with $i<j$ and $i+j \leq 2d+\epsilon$.
\end{itemize}
Meanwhile
$P_d \sm P_d$ has composition factors:
\begin{itemize}
\item $D_i \sm D_i$ with $i \leq d$.
\item $\Z/2_+ \sm D_i \sm D_j$ with $i <j \leq d$.
\end{itemize}
The first type of factors match up, and after applying $t_{\Z/2}$, the second type become null.

The proof for $F_{2d}$ is similar and easier.
\end{proof}

Now let $P$ be the tower $P(X)$, the Goodwillie tower for $\Sinfty_+ \Oinfty X$.

Recall that the $\C_{\infty}$ operad acts on the space $\Oinfty X$.  In particular, there is a map
$$ \mu: \C_{\infty}(2) \times_{\Z/2} (\Oinfty X)^2 \ra \Oinfty X.$$
The next theorem is our key geometric input.  It is quite easily deduced from \cite[Thm.1.10]{ak}, and hopefully seems plausible.  See Appendix \ref{tower appendix} for a bit more detail.

\begin{thm} \label{operad action on tower thm} There is a weak natural transformation of towers
$$\mu: (P\sm P)_{h\Z/2} \ra P$$ with the following properties. \\

\noindent{\bf (a)} There is a commutative diagram of weak natural transformations
\begin{equation*}
\xymatrix{
(\Sinfty_+ (\Oinfty X)^2)_{h\Z/2} \ar[d]^{\mu} \ar[rr]^-{(e\sm e)_{h\Z/2}} && (P\sm P)_{h\Z/2} \ar[d]^{\mu}  \\
\Sinfty_+ \Oinfty X \ar[rr]^{e} && P. }
\end{equation*}

\noindent{\bf (b)} On fibers, $\mu$ corresponds to the maps $D_b \sm D_c \ra D_{b+c}$ and $D_2D_d \ra D_{2d}$.
\end{thm}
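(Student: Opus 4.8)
The plan is to deduce this from the operad action established in \cite[Thm.~1.10]{ak}, specialized to arity~$2$. Recall from loc.\ cit.\ that the little $\infty$--cubes operad $\C_\infty$ acts on the tower $P = P(X)$; in particular there is a weak natural transformation of towers
$$ \mu' \colon \C_\infty(2)_+ \sm_{\Z/2} (P \sm P) \ra P, $$
natural in $X$, intertwined via $e$ with the $\C_\infty$--algebra structure on $\Sinfty_+\Oinfty X$ obtained by applying $\Sinfty_+$ to the operad action on $\Oinfty X$, and whose restriction to the fiber towers is the standard $\C_\infty$--action on $\{D_d\}$. Here $\C_\infty(2)_+ \sm_{\Z/2} (P \sm P)$ is the tower with $d$th term $(\C_\infty(2)_+ \sm (P\sm P)_d)/\Z/2$, the $\Z/2$--action being diagonal with the swap action on $(P\sm P)_d = \holim_{b+c\le d} P_b \sm P_c$. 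Since $\C_\infty(2)$ is a free contractible $\Z/2$--space, the projection $\C_\infty(2) \ra *$ induces a levelwise weak equivalence $\C_\infty(2)_+ \sm_{\Z/2}(P\sm P) \xra{\sim} (P\sm P)_{h\Z/2}$, and I would take $\mu$ to be the weak natural transformation represented by the zig-zag
$$ (P\sm P)_{h\Z/2} \xla{\sim} \C_\infty(2)_+\sm_{\Z/2}(P\sm P) \xra{\mu'} P. $$

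For part~(a), I would evaluate the $\C_\infty$--algebra map $e$ on the arity--$2$ operation. Because $\Sinfty_+$ is symmetric monoidal there are natural identifications $\Sinfty_+\Oinfty X \sm \Sinfty_+ \Oinfty X = \Sinfty_+((\Oinfty X)^2)$ and $\C_\infty(2)_+ \sm_{\Z/2} (\Sinfty_+\Oinfty X)^{\sm 2} = \Sinfty_+\big(\C_\infty(2) \times_{\Z/2} (\Oinfty X)^2\big)$, under which the arity--$2$ part of the $\C_\infty$--algebra structure on $\Sinfty_+\Oinfty X$ becomes $\Sinfty_+$ of the operadic map $\mu \colon \C_\infty(2)\times_{\Z/2}(\Oinfty X)^2 \ra \Oinfty X$. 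Combining this with the equivalence $\C_\infty(2)\times_{\Z/2}(\Oinfty X)^2 \xra{\sim} (\Oinfty X)^2_{h\Z/2}$ and the fact that $\Sinfty_+$ commutes with homotopy orbits, the square in~(a) is precisely the naturality square for $e$ applied to this one operation.

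For part~(b), I would use that both $\C_\infty(2)_+\sm_{\Z/2}(-)$ and $(-)_{h\Z/2}$ are exact, so they take the fiber sequence $F_d \ra (P\sm P)_d \ra (P\sm P)_{d-1}$ to a fiber sequence, whence the $d$th fiber of $(P\sm P)_{h\Z/2}$ is $(F_d)_{h\Z/2}$. By the lemma preceding the theorem (\cite[Cor.~5.3]{ak}), $F_d \simeq \prod_{b+c=d} D_b \sm D_c$ with $\Z/2$ permuting the factors; the terms with $b \ne c$ are induced from the trivial group and contribute $\bigvee_{b<c} D_b\sm D_c$ to $(F_d)_{h\Z/2}$, while the diagonal term (present when $d=2e$) contributes $(D_e\sm D_e)_{h\Z/2} = D_2D_e$. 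The map induced by $\mu$ on these fibers is the restriction to fibers of the $\C_\infty$--action on $P$, which by \cite[Thm.~1.10]{ak} is the standard one on $\{D_d\}$; this is exactly the statement that on fibers $\mu$ consists of the maps $D_b\sm D_c \ra D_{b+c}$ and $D_2D_d \ra D_{2d}$.

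The step I expect to be the main obstacle is the first one: verifying that the operad action of \cite[Thm.~1.10]{ak} genuinely produces an arity--$2$ operation of the homotopy--orbit form above, with the stated behaviour on fibers, given that the tower $P\sm P$ is assembled from the homotopy limits $\holim_{b+c\le d}P_b\sm P_c$ rather than from literal smash products. Once this model-level bookkeeping --- the content deferred to Appendix~\ref{tower appendix} --- has been carried out, parts~(a) and~(b) follow formally.
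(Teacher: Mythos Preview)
Your proposal is correct and follows essentially the same approach as the paper: both deduce the theorem from \cite[Thm.~1.10]{ak} by identifying $\C_\infty(2)_+\sm_{\Z/2}(-)$ with $(-)_{h\Z/2}$ and reading off the compatibility with $e$ and the effect on fibers. The paper's Appendix~\ref{tower appendix} carries out precisely the model-level bookkeeping you flag at the end, working first with $\C_n$ acting on $P(S^n,X_n)$ and then passing to the hocolimit over $n$; your sketch of parts~(a) and~(b) matches what happens once that is in place.
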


\begin{proof}[Proof of \propref{thm1c prop}]  We have a commutative diagram of weak natural transformations
\begin{equation*}
\xymatrix{
\D_2(\Sinfty_+ \Oinfty X) \ar[r]^-{\D_2e} & \D_2(P_{d}) &  \D_2(D_{d}) \ar[l]_{\D_2i}   \\
\D_2(\Sinfty_+ \Oinfty X) \ar[r] \ar@{=}[u] \ar[d]& t_{\Z/2}((P\sm P)_{2d}) \ar[u]_{\wr} \ar[d]&  t_{\Z/2}(F_{2d}) \ar[l] \ar[u]_{\wr} \ar[d]  \\
D_2(\Sinfty_+ \Oinfty X) \ar[r]  \ar[d]& ((P\sm P)_{2d})_{h\Z/2}  \ar[d]&  (F_{2d})_{h\Z/2} \ar[l] \ar[d]  \\
\Sinfty_+ \Oinfty X \ar[r]^-{e} & P_{2d} &  D_{2d}. \ar[l]_{i}}
\end{equation*}
Here the bottom squares commute by the last theorem, and the right two top vertical maps are weak equivalences by \lemref{Tate vanishing stuff}.
\end{proof}

\begin{proof}[Proof of \propref{thm1b prop}]  This time we have a commutative diagram of weak natural transformations
\begin{equation*}
\xymatrix{
\D_{2}D_d \ar[r]^-{\D_2i} & \D_2 P_{d} &  \D_2 P_{d+r-1} \ar[r]^-{\D_2\delta} \ar[l]_-{\D_2p} & \Sigma \D_2D_{d+r}   \\
t_{\Z/2}(F_{2d}) \ar[r] \ar[d] \ar[u]_{\wr} & t_{\Z/2}((P \sm P)_{2d}) \ar[d] \ar[u]_{\wr}&  t_{\Z/2}((P \sm P)_{2d+2r-1}) \ar[r] \ar[l] \ar[d] \ar[u]_{\wr}& \Sigma t_{\Z/2}(F_{2d+2r}) \ar[d]  \ar[u]_{\wr}\\
(F_{2d})_{h\Z/2} \ar[r] \ar[d] & ((P \sm P)_{2d})_{h\Z/2} \ar[d] &  ((P \sm P)_{2d+2r-1})_{h\Z/2} \ar[r] \ar[l] \ar[d] & \Sigma (F_{2d+2r})_{h\Z/2} \ar[d]  \\
D_{2d} \ar[r]^-{i} & P_{2d} &  P_{2d+2r-1}\ar[r]^-{\delta} \ar[l]_-{p}& \Sigma D_{2d+2r}.}
\end{equation*}
Again the top  vertical maps are weak equivalences by \lemref{Tate vanishing stuff}.
\end{proof}

\section{Derived functors of destabilization} \label{derived functors section}

In this section, we will carefully define the Singer complex
$$ R_0M \xra{d_0} R_1M \xra{d_1} R_2M \xra{d_2} R_3M \xra{d_3} \dots$$
of the introduction and prove \thmref{Rs thm}, which says that the homology of this complex computes the derived functors $\Oinfty_s M$ for all $M \in \M$.

As a free standing theorem, \thmref{Rs thm} is similar (and maybe identical) to \cite[Thm.3.17]{goerss}.  Goerss works totally algebraically, and at key moments his proof appeals to computations and ad hoc arguments by others including Singer \cite{singer}, Brown and Gitler \cite{brown gitler}, and Bousfield et.~al.\ \cite{six author}.  By contrast, we give a geometrically based construction of this chain complex, with explicit calculations bypassed by appealing to our knowledge of the homology of the extended powers.

Our proof of \thmref{Rs thm} makes use of the doubling functor $\Phi: \M \ra \M$, dual to Powell's use of it in the cohomological setting \cite{powell}.  Also included in this section is a presentation of properties of $\Phi$ and $\Omega: \U \ra \U$ needed in our construction of the algebraic spectral sequence in \secref{alg spec seq section}.  Much of what we say about these functors is dual to cohomological presentations in \cite{lz} and \cite{s}.

\subsection{Injective resolutions in $\M$}

We say a bit about injectives in the category $\M$.

Since modules in $\M$ are locally finite, they are certainly also locally Noetherian.  The abelian category $\M$ is thus a locally Noetherian abelian category satisfying good exactness properties, and so one knows a priori \cite[IV.2]{gabriel} that arbitrary direct sums of injectives in $\M$ are again injective, and injectives can be written essentially uniquely as the direct sum of indecomposable injectives. It is useful for us to show this explicitly.

Let $\A_* \in \M$ be the dual of $\A$, so $\A_* = H_*(H\Z/2)$.  Let $\V$ denote the category of $\Z$--graded vector spaces.  Given $V \in \V$, we let $IV = V \otimes A_*$.  Note that $\epsilon: \A_* \ra \Z/2$ induces a map of graded vector spaces $\epsilon_V: IV \ra V$.

\begin{lem} For all $M \in \M$, the natural map
$$ \epsilon_{M,V}: \Hom_{\M}(M, IV) \simeq \Hom_{\V}(M,V)$$
sending $f$ to $\epsilon_V \circ f$ is an isomorphism.
\end{lem}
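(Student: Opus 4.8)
The plan is to identify this lemma with the universal property of a cofree comodule. Recall that because every $M \in \M$ is locally finite, dualizing the right $\A$-action turns $M$ into a graded right comodule over the coalgebra $\A_*$: the structure map $M \otimes \A \to M$ is adjoint to a map $M \to \Hom_{\Z/2}(\A, M)$, and local finiteness (for each $x$, $x\cdot\A$ is finite-dimensional, so $\theta \mapsto x\theta$ is finitely supported) guarantees that this factors through the natural inclusion $M \otimes \A_* \hookrightarrow \Hom_{\Z/2}(\A, M)$; write $\nu_M \colon M \to M \otimes \A_*$ for the resulting map. Associativity and unitality of the action become coassociativity and counitality of $\nu_M$, and the assignment $M \mapsto (M,\nu_M)$ is natural and inverse to the reverse construction, giving an equivalence between $\M$ and the category of graded $\A_*$-comodules under which $\A$-linear maps correspond exactly to comodule maps. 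The first task is then to check that under this equivalence $IV = V \otimes \A_*$ (with $\A$ acting only on the $\A_*$ factor, via its standard right-module structure on $\A_* = H_*(H\Z/2)$) corresponds to the cofree comodule $(V \otimes \A_*,\ \id_V \otimes \Delta)$; this is the classical fact that $\A_*$ with its right $\A$-module structure is precisely the comodule $(\A_*,\Delta)$, tensored up by $V$.

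Granting that, the lemma is the adjunction between the forgetful functor $\M \to \V$ and the cofree-comodule functor. Explicitly, I would write down the candidate inverse $\eta_{M,V}\colon \Hom_\V(M, V) \to \Hom_\M(M, IV)$ by $\eta_{M,V}(g) = (g \otimes \id_{\A_*})\circ \nu_M$; it is $\A$-linear since $\nu_M$ is and since $g \otimes \id_{\A_*}$ is the identity on the $\A_*$ factor on both sides. One composite is $\epsilon_{M,V}(\eta_{M,V}(g)) = (\id_V\otimes\epsilon)\circ(g\otimes\id_{\A_*})\circ\nu_M = g\circ(\id_M\otimes\epsilon)\circ\nu_M = g$ by counitality of $\nu_M$. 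For the other, $\A$-linearity of an $f\colon M \to IV$ makes it a comodule map, $(\id_V\otimes\Delta)\circ f = (f\otimes\id_{\A_*})\circ\nu_M$, and applying $\id_V\otimes\epsilon\otimes\id_{\A_*}$ to both sides and using the counit axiom for $\Delta$ gives $f = (\epsilon_V\circ f \otimes \id_{\A_*})\circ\nu_M = \eta_{M,V}(\epsilon_{M,V}(f))$. Hence $\epsilon_{M,V}$ is a bijection, and naturality in $M$ and $V$ is immediate from the construction.

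The only real content is the bookkeeping in the first paragraph: that local finiteness is exactly the hypothesis that lets the action dualize into $M \otimes \A_*$ rather than the larger $\Hom_{\Z/2}(\A, M)$, together with the identification of the comodule underlying $IV$. This is classical—it is the comodule dictionary underlying the Adams spectral sequence—so I would either cite it or spell it out in a sentence or two rather than belabor it, and everything after that is a formal diagram chase with the counit. (A purely dual route is to graded-linearly dualize, turning the statement into the free-module adjunction $\Hom_{\A\text{-mod}}(\A\otimes V^\vee, M^\vee) \cong \Hom_\V(V^\vee, M^\vee)$; but because objects of $\M$ need not be degreewise finite, this forces one first to justify commuting $\Hom_\M(M,-)$ past the infinite direct sum $V = \bigoplus_n V_n$, so the comodule construction above is the cleaner choice.)
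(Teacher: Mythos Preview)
Your proof is correct but takes a genuinely different route from the paper's. The paper proceeds by a straightforward bootstrapping argument: first verify the isomorphism by hand when $M$ is finite and $V = \Sigma^n\Z/2$ (where it reduces to duality, $\Hom_{\M}(M,\Sigma^n\A_*) \simeq (M_n)^{\#}$), then extend to arbitrary $V$ by filtering through finite-dimensional subspaces, and finally to arbitrary $M$ by filtering through finite submodules. No comodule language appears.

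Your approach, by contrast, identifies the statement globally as the cofree-comodule adjunction once one has translated $\M$ into $\A_*$-comodules via local finiteness, and then writes down the inverse $\eta_{M,V}$ explicitly. This is more conceptual and yields the isomorphism in one stroke, with the local-finiteness hypothesis entering exactly where it should (to land $\nu_M$ in $M\otimes\A_*$ rather than $\Hom(\A,M)$). The paper's argument is more elementary and avoids setting up the comodule dictionary, at the cost of three reduction steps. Your parenthetical remark about the dual free-module route and its direct-sum subtlety is also apt; the paper's filtration argument is in effect handling that same issue on the other side.
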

\begin{proof}[Sketch proof]  If $M$ is finite, $\epsilon_{M,\Sigma^n \Z/2}: \Hom_{\M}(M,\Sigma^n \A_*) \simeq (M_n)^{\#}$
is readily checked to be an isomorphism, and thus the same is true for $\epsilon_{M,V}$ when $M$ is finite and $V$ is finite dimensional.

For finite $M$ and arbitrary $V$, one then sees that $\epsilon_{M,V}$ is an isomorphism by filtering $V$ by its finite dimensional subspaces.

For arbitrary $M$ and $V$, one then sees that $\epsilon_{M,V}$ is an isomorphism by filtering $M$ by its finite submodules.
\end{proof}

\begin{cor}  The modules $IV$ are injective objects of $\M$, and every $M \in \M$ admits an injective resolution of the form
$$ 0 \ra M \ra IV(0) \ra IV(1) \ra IV(2) \ra \dots,$$
for some graded vector spaces $V(s) \in \V$.
\end{cor}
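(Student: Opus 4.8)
The final statement to prove is the Corollary: the modules $IV$ are injective objects of $\M$, and every $M \in \M$ admits an injective resolution by such modules.

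\medskip

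The plan is to deduce everything formally from the adjunction isomorphism
$$ \Hom_{\M}(M, IV) \cong \Hom_{\V}(M, V) $$
established in the preceding Lemma, which exhibits $I(-) \colon \V \ra \M$ as right adjoint to the (exact) forgetful functor $\M \ra \V$. First I would observe that injectivity of $IV$ is immediate from this adjunction: given a monomorphism $M' \hra M$ in $\M$, it is in particular a monomorphism of graded vector spaces, so $\Hom_{\V}(M, V) \ra \Hom_{\V}(M', V)$ is surjective (as $\V$ is semisimple, or simply because a monomorphism of graded vector spaces splits); transporting across the natural isomorphism $\epsilon_{-,V}$, the map $\Hom_{\M}(M, IV) \ra \Hom_{\M}(M', IV)$ is surjective, which is exactly the defining property of an injective object.

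\medskip

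Next I would construct the resolution by the standard procedure. Given $M \in \M$, let $V(0)$ be the underlying graded vector space of $M$; the identity in $\Hom_{\V}(M, V(0))$ corresponds under the adjunction to a map $M \ra IV(0)$ in $\M$, and one checks this is a monomorphism because its composite with $\epsilon_{V(0)} \colon IV(0) \ra V(0)$ is the identity of $M$. Set $C(0) = \coker(M \ra IV(0))$, which again lies in $\M$ since $\M$ is abelian, and repeat: let $V(1)$ be the underlying graded vector space of $C(0)$, embed $C(0) \hra IV(1)$, and so on. Splicing these short exact sequences together gives the long exact sequence
$$ 0 \ra M \ra IV(0) \ra IV(1) \ra IV(2) \ra \cdots $$
with each $IV(s)$ injective, which is the desired injective resolution. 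One small point worth noting is that local finiteness of $M$ guarantees $C(0)$ is again locally finite, so the construction stays inside $\M$; this is where the hypothesis that $\M$ consists of \emph{locally finite} modules is used, although the $V(s)$ themselves need not be finite-dimensional.

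\medskip

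I do not expect any serious obstacle here — the corollary is a formal consequence of having a right adjoint to an exact faithful functor into a semisimple category, and the only thing requiring a moment's thought is checking that the canonical map $M \ra I(\text{underlying }M)$ is monic, which follows from the splitting $\epsilon_{V} \circ (\text{this map}) = \id_M$. If anything, the mild subtlety is purely bookkeeping: making sure the cokernels remain in $\M$ at each stage, which is handled by the remark that $\M$ is abelian and closed under quotients, together with local finiteness being inherited by subquotients.
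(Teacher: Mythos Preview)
Your proof is correct and follows essentially the same approach as the paper: injectivity of $IV$ is deduced from the adjunction of the preceding Lemma together with exactness of $\Hom_{\V}(-,V)$, and the resolution is built from the unit map $M \ra IM$ (corresponding to $1_M$), which is monic because $\epsilon_M$ retracts it. Your write-up simply fills in more detail than the paper's terse version.
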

\begin{proof} As the functor sending $M$ to $\Hom_{\V}(M,V)$ is exact, we conclude that $IV$ is injective in $\M$.

Given $M \in \M$, the $\A$--module map $M \ra IM$ corresponding to $1_M \in \Hom_{\V}(M,M)$ is clearly monic.  It follows that injective resolutions of the asserted sort exist.
\end{proof}

It follows that every injective in $\M$ is a direct sum of modules of the form $\Sigma^n \A_*$, and thus is isomorphic to $IV$ for some $V \in \V$.

\subsection{Exact functors from the category $\M$ via topology}

Let $H_*(\Sp) \subset \M$ be the subcategory obtained as the image of $H_*: \Sp \ra \M$.  Thus the objects are the locally finite $\A$--modules of the form $H_*(X)$, with morphisms all $\A$--module maps of the form $f_*: H_*(X) \ra H_*(Y)$ for some $f: X \ra Y$.

Let $\Ab$ be an abelian category, for example $\M$.  Call a functor $F: H_*(\Sp) \ra \Ab$ {\em homological} if whenever $X \ra Y \ra Z$ induces a sequence $H_*(X) \ra H_*(Y) \ra H_*(Z)$ that is exact at $H_*(Y)$, then $F(H_*(X)) \ra F(H_*(Y)) \ra F(H_*(Z))$ is exact at $F(H_*(Y))$.  The following is a useful way to construct exact functors from $\M$, and natural transformations between such.

\begin{prop} \label{exact functor prop} {\bf (a)} Any homological functor $F: H_*(\Sp) \ra \Ab$ extends uniquely to an exact functor $F: \M \ra \Ab$. \\

\noindent{\bf (b)}  Let $F,G: H_*(\Sp) \ra \Ab$ be homological.  Any natural transformation $\phi: F \ra G$ extends uniquely to a natural transformation between the extended functors.
\end{prop}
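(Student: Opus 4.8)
The plan is to build the extension from the \emph{functorial} injective resolution of the previous subsection, using that every injective of $\M$, and every morphism between injectives, is realized topologically. The Lemma on $\epsilon_{M,V}$ identifies $I(-)=(-)\otimes\A_*$ as the right adjoint of the forgetful functor $\M\to\V$, hence as a functor, with natural monic unit $M\hookrightarrow IM$; iterating $C^{s+1}(-)=\coker(C^s\hookrightarrow IC^s)$ produces a functorial injective resolution $M\to I^\bullet M=(I^0M\xra{d^0}I^1M\xra{d^1}\cdots)$ with $I^sM=I(C^sM)$ and $C^0M=M$. Writing $\hat IV$ for a wedge of shifted copies of $H\Z/2$ with $H_*(\hat IV)=IV$ (with the usual care when $V$ is infinite), the representability of mod $2$ cohomology together with $[\Sigma^nH\Z/2,\Sigma^mH\Z/2]=\A^{m-n}\cong\Hom_\M(\Sigma^n\A_*,\Sigma^m\A_*)$ yields natural bijections
$$[\hat IV,\hat IW]\cong\Hom_\M(IV,IW)\quad\text{and}\quad[X,\hat IW]\cong\Hom_\V(H_*X,W)\cong\Hom_\M(H_*X,IW)$$
for all $V,W\in\V$ and all spectra $X$. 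In particular each $d^s$ is a morphism of $H_*(\Sp)$, so $F(I^\bullet M)$ is a cochain complex in $\Ab$ depending functorially on $M$, and I would \emph{define} $\bar FM:=H^0\big(F(I^\bullet M)\big)=\ker\big(F(d^0)\colon F(I^0M)\to F(I^1M)\big)$.

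The heart of the matter is the vanishing $H^s(F(I^\bullet M))=0$ for $s\ge1$. For such $s$, the piece $I^{s-1}M\xra{d^{s-1}}I^sM\xra{d^s}I^{s+1}M$ is exact at $I^sM$ (this is just $\ker d^s=\im d^{s-1}=C^sM$), and, being realized by a sequence of spectra $\hat I^{s-1}M\to\hat I^sM\to\hat I^{s+1}M$, it is exact on mod $2$ homology at the middle term; since $F$ is homological, $F(d^{s-1})$ and $F(d^s)$ then form a sequence exact at $F(I^sM)$, which is the claimed vanishing. Given this, exactness of $\bar F$ is formal: a short exact sequence $0\to M'\to M\to M''\to0$ in $\M$ yields, since $I$ is exact and by the snake lemma applied inductively, a short exact sequence of resolutions $0\to I^\bullet M'\to I^\bullet M\to I^\bullet M''\to0$ which is split in each degree because its terms are injective; applying the additive functor $F$ gives a short exact sequence of complexes, and its long exact cohomology sequence, combined with the vanishing above, collapses to $0\to\bar FM'\to\bar FM\to\bar FM''\to0$.

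It then remains to see that $\bar F$ extends $F$, is the unique exact extension, and accommodates natural transformations. For $M=H_*X$ I would realize the unit $H_*X\hookrightarrow I^0H_*X$ by the (unique and natural, by the second bijection above) spectrum map $\eta\colon X\to\hat I^0H_*X$, with cofiber $C$, so that $H_*C=C^1H_*X$ and $d^0$ is realized by $\hat I^0H_*X\to C\to\hat I^1H_*X$; feeding the sequences $*\to X\to\hat I^0H_*X$, $X\to\hat I^0H_*X\to C$, and $*\to C\to\hat I^1H_*X$ to the homological functor $F$ shows that $F(\eta)\colon F(H_*X)\to F(I^0H_*X)$ is monic with image exactly $\ker F(d^0)$, giving a natural isomorphism $F\xra{\ \sim\ }\bar F|_{H_*(\Sp)}$. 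If $G\colon\M\to\Ab$ is exact with $G|_{H_*(\Sp)}\cong F$, then applying $G$ to the left exact sequence $0\to M\to I^0M\xra{d^0}I^1M$ gives $GM=\ker G(d^0)\cong\ker F(d^0)=\bar FM$ naturally, which is the uniqueness in (a). For (b), a natural transformation $\phi\colon F\to G$ induces $\bar\phi_M\colon\bar FM\to\bar GM$ by restricting $\phi_{I^0M}$ to kernels (legitimate by naturality of $\phi$ at $d^0$); it corresponds to $\phi$ on $H_*(\Sp)$ under the isomorphisms just built, and it is the unique such extension because $\bar FM\hookrightarrow F(I^0M)$ is a monomorphism natural in $M$, so $\bar\phi_M$ is forced by $\bar\phi_{I^0M}=\phi_{I^0M}$.

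The main obstacle is the realizability input in the first paragraph: identifying the relevant $\Hom$ groups in $\M$ — between injectives, and from $H_*X$ into an injective — with groups of homotopy classes of maps of spectra, including the bookkeeping needed when the modules involved are not of finite type. Everything after that is either formal homological algebra or a direct application of the hypothesis that $F$ sends homology-exact sequences of spectra to exact sequences; the single genuinely used consequence of that hypothesis is that realizing a truncated injective resolution by a sequence of spectra converts its exactness into the acyclicity $H^{\ge1}(F(I^\bullet M))=0$ that forces right exactness of $\bar F$.
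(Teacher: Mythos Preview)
Your argument is correct and follows essentially the same route as the paper: define the extension as the kernel of $F$ applied to the first map of an injective resolution, using that injectives in $\M$ and maps between them are realized by generalized Eilenberg--MacLane spectra. The paper's proof is terser---it chooses a two-term injective presentation and declares the verifications ``routine''---whereas you use the functorial resolution $I^\bullet M$ (so well-definedness is automatic) and make the exactness argument explicit via the vanishing $H^{\geq 1}(F(I^\bullet M))=0$ and the long exact sequence; but these are precisely the routine details the paper omits, not a different strategy.
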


To prove this, we first note that injectives in $\M$ can be topologically realized: given $V \in \V$, there is an associated generalized Eilenberg--MacLane spectrum $HV$, satisfying $\pi_*(HV) = V$ and $H_*(HV) = IV$.  The next lemma is clear.

\begin{lem} For all spectra $X$, the Hurewitz map induces an isomorphism
$$ [X, HV] \simeq \Hom_{\M}(H_*(X),IV).$$
\end{lem}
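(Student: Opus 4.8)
The plan is to read off the Hurewicz map as a composite of identifications that are obviously isomorphisms. By definition the map is $\phi\mapsto\phi_*\colon [X,HV]\to\Hom_{\M}(H_*(X),H_*(HV))=\Hom_{\M}(H_*(X),IV)$, using $H_*(HV)=IV$. Post-composing with the isomorphism $\Hom_{\M}(H_*(X),IV)\xra{\ \sim\ }\Hom_{\V}(H_*(X),V)$ of the preceding lemma (the one sending $f$ to $\epsilon_V\circ f$) gives a map $[X,HV]\to\Hom_{\V}(H_*(X),V)$, $\phi\mapsto\epsilon_V\circ\phi_*$. Since the post-composed map is an isomorphism, it suffices to prove that this composite is an isomorphism.

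The key observation is that $HV$ is an $H\Z/2$--module, so $\epsilon_V$ has a homotopical description: under $H_*(HV)=\pi_*(H\Z/2\wedge HV)$, the map $\epsilon_V\colon H_*(HV)\to V=\pi_*(HV)$ is induced by the module action $H\Z/2\wedge HV\to HV$. (This is a quick degreewise check: on $HV=\Sigma^nH\Z/2$ the action on homotopy is $\mu_*\colon\A_*=\pi_*(H\Z/2\wedge H\Z/2)\to\pi_*(H\Z/2)=\Z/2$, which is exactly the augmentation $\epsilon$ defining $\epsilon_V$.) Consequently, for $\phi\colon X\to HV$ with $H\Z/2$--module adjoint $\tilde\phi\colon H\Z/2\wedge X\to HV$, the composite $\epsilon_V\circ\phi_*$ equals $\pi_*(\tilde\phi)$. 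Thus, via the free--forgetful adjunction $[X,HV]\cong[H\Z/2\wedge X,HV]_{H\Z/2\text{-mod}}$, the map in question is identified with $[H\Z/2\wedge X,HV]_{H\Z/2\text{-mod}}\xra{\pi_*}\Hom_{\V}\bigl(\pi_*(H\Z/2\wedge X),\pi_*(HV)\bigr)=\Hom_{\V}(H_*(X),V)$.

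Finally, this last map is an isomorphism because $H\Z/2$ is a field spectrum: every $H\Z/2$--module splits as a wedge $\bigvee_\alpha\Sigma^{n_\alpha}H\Z/2$, and $[\Sigma^{n}H\Z/2,M]_{H\Z/2\text{-mod}}=\pi_n(M)$, so maps out are detected on homotopy; applying this to the free module $H\Z/2\wedge X$ (and using $\pi_*(H\Z/2\wedge X)=H_*(X)$, $\pi_*(HV)=V$) gives the claim, and the splitting into a product over $\alpha$ on both sides handles possibly infinite-dimensional $V$ with no finiteness hypothesis. I do not expect a genuine obstacle here: the only point needing care is the identification of $\epsilon_V$ with the action-induced map, which is the short degreewise computation above; if one prefers to avoid invoking the field-spectrum fact, one can instead verify the case $V=\Sigma^n\Z/2$ by hand, using $[X,\Sigma^nH\Z/2]=H^n(X;\Z/2)$, universal coefficients $H^n(X;\Z/2)=(H_n(X))^{\#}$, and the fundamental-class relation $\langle f^*\iota_n,x\rangle=\langle\iota_n,f_*x\rangle$ to match the Hurewicz map with the sketch-proof isomorphism $\Hom_{\M}(H_*(X),\Sigma^n\A_*)\cong(H_n(X))^{\#}$, and then reassemble over a homogeneous basis of $V$. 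This is exactly the sort of bookkeeping the authors have in mind in calling the lemma clear.
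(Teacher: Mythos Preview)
Your argument is correct. The paper itself gives no proof at all --- it simply declares the lemma ``clear'' --- so there is nothing to compare against line by line. Your main route via the free--forgetful adjunction for $H\Z/2$--modules and the field-spectrum splitting is sound and the identification of $\epsilon_V$ with the action map on homotopy is exactly the small check needed to make the diagram commute. The alternative you sketch at the end (representability $[X,\Sigma^n H\Z/2]=H^n(X;\Z/2)$, universal coefficients over $\Z/2$, then reassemble over a basis of $V$) is almost certainly what the authors had in mind by ``clear,'' and is the more elementary of the two; your module-theoretic argument buys a cleaner conceptual picture and avoids choosing a basis of $V$, at the cost of invoking slightly more machinery.
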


\begin{proof}[Proof of \propref{exact functor prop}]  Given a homological functor $F: H_*(\Sp) \ra \Ab$, we define its extension $F: \M \ra \Ab$ as follows.  Given $M \in \M$, choose an exact sequence
$0 \ra M \ra H_*(HV(0)) \xra{f_*} H_*(HV(1))$, and then define $F(M)$ to be the kernel of $F(f_*)$.  Given a morphism $\alpha: M \ra N$ in $\M$, one can construct a diagram
\begin{equation*}
\xymatrix{
0 \ar[r] & M \ar[d]^{\alpha} \ar[r] & H_*(HV(0)) \ar[d]^{\beta_*}  \ar[r]^{f_*} & H_*(HV(1)) \ar[d]^{\gamma_*}  \\
0 \ar[r] & N  \ar[r] & H_*(HW(0))   \ar[r]^{g_*} & H_*(HW(1))   }
\end{equation*}
with exact rows.  Applying $F$ to the right square and taking kernels, defines a map $F(\alpha): F(M) \ra F(N)$.
It is routine to check that this gives a well defined exact functor which extends the original functor up to natural isomorphism. This proves (a).  The proof of (b) is similar.
\end{proof}

\subsection{A topological definition of $\mathcal R_sM$} \label{top def section}

We construct various functors and natural transformations using the method of \propref{exact functor prop}.

\begin{defn}  Define $\mathcal R_s: H_*(\Sp) \ra \M$ by the formula
$$ \mathcal R_sH_*(X) = \im \{\epsilon_*: H_*(\Sigma D_{2^s}(\Sigma^{-1} X) \ra H_*(D_{2^s}X)\}.$$
\end{defn}

Thanks to our knowledge of $\epsilon_*$ as summarized in  \lemref{epsilon lemma}, we see that
$$\mathcal R_sH_*(X) = \langle Q^Ix \ | \ l(I) = s,  x \in H_*(X) \rangle/(\text{unstable and Adem relations}).$$

We remind readers that the Dyer--Lashof Adem relations are
$$Q^r Q^s = \sum_i \binom{i-s-1}{2i-r} Q^{r+s-i} Q^i,$$
and that the Steenrod algebra acts via the Nishida relations
$$(Q^sx)Sq^r = \sum_i \binom{s-r}{r-2i} Q^{s-r+i}(x Sq^i).$$

\begin{lem} $\mathcal R_s$ is homological.
\end{lem}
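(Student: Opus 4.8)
The plan is to deduce this from the elementary fact that a direct sum of exact functors is exact, once one has the standard monomial basis for $\mathcal{R}_s H_*(X)$ in hand. First I would record what was already observed after the definition: as a graded vector space,
\[
 \mathcal{R}_s H_*(X) = \langle Q^I x \mid l(I) = s,\ x \in H_*(X)\rangle / (\text{unstable and Adem relations}),
\]
and, by \lemref{epsilon lemma} together with McClure's computation of $H_*(\mathbb{P}X)$ recalled in \secref{homology section}, a basis is given by the admissible monomials $Q^{i_1}\cdots Q^{i_s}x$ with $i_k \le 2i_{k+1}$ and excess $\mathrm{exc}(I) = i_1 - i_2 - \cdots - i_s$ at least $|x|$, where $x$ ranges over a homogeneous basis of $H_*(X)$. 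The essential point for us is that the defining relations of $\mathcal{R}_s$ — the instability constraint $Q^i y = 0$ for $i < |y|$ and the Dyer--Lashof Adem relations — involve only the internal grading of $H_*(X)$ and not its $\A$--module structure. Hence $\mathcal{R}_s$, regarded as a functor $\M \ra \M$ (with target $\A$--action supplied by the Nishida relations), depends on its argument only through the underlying graded vector space.

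Grouping the above basis by admissible sequence, one gets, for any $M \in \M$, an isomorphism of graded vector spaces
\[
 \mathcal{R}_s M \;\cong\; \bigoplus_{\substack{I\ \text{admissible}\\ l(I)=s}} \Sigma^{|I|}\,\tau_{\le \mathrm{exc}(I)}M,
\]
where $|I| = i_1 + \cdots + i_s$ and $\tau_{\le k}M$ denotes the graded subspace of $M$ spanned by the classes of degree $\le k$ (the sum is finite in each internal degree). The right $\A$--module structure on the left-hand side does not respect this splitting — the Nishida relations mix the summands — but this is immaterial, since exactness of a sequence in $\M$ is detected on underlying graded vector spaces, where direct sums are plainly exact. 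Each summand functor $M \mapsto \Sigma^{|I|}\tau_{\le \mathrm{exc}(I)}M$ is a shift composed with projection onto a set of graded pieces, hence exact. Therefore $\mathcal{R}_s \colon \M \ra \M$ is exact, and in particular $\mathcal{R}_s \colon H_*(\Sp) \ra \M$ carries any sequence $H_*(X) \ra H_*(Y) \ra H_*(Z)$ that is exact at $H_*(Y)$ to one exact at $\mathcal{R}_s H_*(Y)$; that is, $\mathcal{R}_s$ is homological.

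The only step that is not purely formal is the first one — producing the admissible-monomial basis and noting that the relations cutting out $\mathcal{R}_s$ see nothing beyond the internal grading — and even there no new work is needed, as this is exactly the content of \lemref{epsilon lemma} and the structure of $H_*(\mathbb{P}X)$ already recalled. As a byproduct, combining this lemma with \propref{exact functor prop} reconciles the topological $\mathcal{R}_s$ of \secref{top def section} with the algebraic $\mathcal{R}_s$ of the introduction: the unique exact extension to $\M$ of the homological functor $\mathcal{R}_s \colon H_*(\Sp) \ra \M$ must coincide with the algebraic $\mathcal{R}_s$, since the latter is exact and restricts to the former on $H_*(\Sp)$.
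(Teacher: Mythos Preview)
Your proof is correct and follows essentially the same idea as the paper's: both arguments reduce to the observation that, as a functor of graded vector spaces, $\mathcal R_s$ decomposes as a direct sum of manifestly exact pieces. The only difference is organizational --- the paper indexes the decomposition by the degree $n$ of the input class, writing $\mathcal R_s H_*(X) \cong \bigoplus_{n} \mathcal R_s(\Sigma^n\Z/2)\otimes H_n(X)$, which avoids invoking the admissible basis and excess explicitly; your decomposition by admissible sequence $I$ reaches the same conclusion with slightly more bookkeeping.
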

\begin{proof}  This follows immediately from the observation that the natural map of graded vector spaces
$$ \bigoplus_{n \in \Z} \mathcal R_s(\Sigma^n \Z/2) \otimes H_n(X) \ra \mathcal R_s(H_*(X))$$
sending $Q^I\iota_n \otimes x$ to $Q^Ix$ is an isomorphism. (In this formula, $H_n(X)$ should be regarded as  just a degree 0 vector space.)
\end{proof}

\begin{defns} {\bf (a)} Let $\mathcal R_s: \M \ra \M$ be the exact extension of $\mathcal R_s: H_*(\Sp) \ra \M$. \\

\noindent{\bf (b)}   Let $\epsilon: \Sigma \mathcal R_sM \ra \mathcal R_s\Sigma M$ be the natural $\A$--module map induced by the natural transformation $\epsilon: \Sigma D_{2^s}X \ra D_{2^s}\Sigma X$. \\

\noindent{\bf (c)}   Let $\mu: \mathcal R_s \mathcal R_t M \ra \mathcal R_{s+t} M$ be the natural $\A$--module map induced by the natural transformation $\mu: D_{2^s}D_{2^t}X \ra D_{2^{s+t}}X$. \\

\noindent{\bf (d)} Let $Q^i: (\mathcal R_sM)_n \ra (\mathcal R_{s+1}M)_{n+i}$ be the natural $\Z/2$--linear map induced by the Dyer--Lashof operation $Q^i: H_n(D_{2^s}X) \ra H_{n+i}(D_{2^{s+1}}X)$.
\end{defns}

We can immediately deduce lots of properties of these natural transformations. We note, in particular, a couple.

\begin{lem} \label{mu e lemma} {\bf (a)} The operations $Q^i$ satisfy the Adem relations, the Nishida relations, and the Dyer--Lashof unstable relation. \\

\noindent{\bf (b)} The diagram
\begin{equation*}
\xymatrix{
\Sigma \mathcal R_s \mathcal R_t M \ar[d]^{\mu} \ar[r]^{\epsilon} & \mathcal R_s \Sigma \mathcal R_t M \ar[r]^{\epsilon} & \mathcal R_s \mathcal R_t \Sigma M \ar[d]^{\mu}  \\
\Sigma \mathcal R_{s+t} M \ar[rr]^{\epsilon} && \mathcal R_{s+t} \Sigma M }
\end{equation*}
commutes.
\end{lem}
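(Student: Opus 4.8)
The plan is to deduce both parts from \propref{exact functor prop}, by reducing each assertion to the corresponding topological statement about the homology of extended powers, which is already recorded in \secref{homology section}. Throughout, the point is that $\mathcal R_s\colon \M \to \M$ is by construction the exact extension of its restriction to $H_*(\Sp)$, so $H_*(\Sp)$ detects equalities of natural transformations between functors assembled out of the $\mathcal R_s$.

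For part~(a), each asserted relation is an identity between two natural transformations of functors $\M \to \V$ built from the $\Z/2$--linear operations $Q^i$ constructed in \secref{top def section} and from the right $\A$--module structure carried naturally by $\mathcal R_s M$. For instance the Dyer--Lashof Adem relation reads $Q^rQ^s = \sum_i \binom{i-s-1}{2i-r}Q^{r+s-i}Q^i$ as maps $(\mathcal R_sM)_* \to (\mathcal R_{s+2}M)_*$, and the Nishida relation is an identity of maps $(\mathcal R_sM)_* \to (\mathcal R_{s+1}M)_*$; in every case both sides are compositions and sums of exact functors (the $\mathcal R_s$, together with degree truncations) and natural transformations between them, hence restrict to homological functors and a natural transformation on $H_*(\Sp)$. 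By the uniqueness clauses of \propref{exact functor prop} it therefore suffices to check each relation for $M = H_*(X)$. But there, by the construction of \secref{top def section}, the $Q^i$ on $\mathcal R_sH_*(X)$ are the restrictions of the topological Dyer--Lashof operations on $H_*(D_{2^s}X)$ and the $\A$--action is the topological one, so the Adem and Nishida relations hold by the standard structure of $H_*(\mathbb P X)$ (\cite{bmms}), while $Q^ix = 0$ for $i < |x|$ holds in $\mathcal R_{s+1}H_*(X)$ by the very definition of $\mathcal R_{s+1}$ as a quotient by the unstable relations.

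For part~(b), every functor in the square is exact (a composite of suspensions and the exact functors $\mathcal R_s$), and the two legs are natural transformations between the same pair of functors $\M \to \M$, so again by \propref{exact functor prop} it is enough to verify commutativity for $M = H_*(X)$. There the four arrows are induced on mod~$2$ homology by the topological natural transformations $\mu\colon D_{2^s}D_{2^t}(-) \to D_{2^{s+t}}(-)$ and $\epsilon\colon \Sigma D_{2^k}(-) \to D_{2^k}\Sigma(-)$, the top row being exactly the factorization of $\epsilon$ for the composite functor $D_{2^s}D_{2^t}$ as the $\epsilon$ for $D_{2^s}$ (with argument $D_{2^t}X$) followed by $D_{2^s}$ applied to the $\epsilon$ for $D_{2^t}$. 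Hence it suffices to show the resulting square of spectra commutes in the homotopy category. This is formal from the definition of $\epsilon$ in \secref{geometric preliminaries}: since $\epsilon$ is built only from the continuous--functor structure and the unit of the tensor/cotensor adjunction, it is natural with respect to maps of continuous reduced functors, in particular with respect to $\mu$; combining this with the composite--functor factorization just described and the naturality of $\mu$ yields precisely the asserted diagram. (The underlying compatibility of $\mu$ with suspension is implicit in the construction of the operad action on the tower in \cite{ak}.)

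The only step that is more than bookkeeping is this last topological compatibility in part~(b) — that the operadic maps $D_{2^s}D_{2^t}X \to D_{2^{s+t}}X$ intertwine the evaluation/suspension transformations $\epsilon$. Everything else follows by importing relations wholesale from the topological setting via \propref{exact functor prop}, with no binomial-coefficient manipulation to carry out.
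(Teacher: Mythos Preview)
Your proposal is correct and matches the paper's intended argument: the paper gives no explicit proof, merely prefacing the lemma with ``We can immediately deduce lots of properties of these natural transformations,'' which is exactly the reduction via \propref{exact functor prop} to the topological case that you have written out. Your elaboration of part~(b)---that $\epsilon$ is natural in the continuous--functor variable and factors through composites in the expected way, so that compatibility with $\mu$ is automatic---is the content the paper is taking for granted.
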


\begin{rem}  Observe that $\mathcal R_0(M) = M$, and $\epsilon: \Sigma \mathcal R_0M \ra \mathcal R_0\Sigma M$ is just the identity map on $\Sigma M$.
\end{rem}

Another elementary property we will need involves connectivity.

\begin{lem} \label{conn lemma} If $M$ is $(n-1)$ connected, then $\mathcal R_s M$ is $2^sn-1$ connected.
\end{lem}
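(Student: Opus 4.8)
The plan is to read the conclusion straight off the explicit monomial description of $\mathcal{R}_s M$ together with the Dyer--Lashof unstable relations, keeping a geometric argument in reserve in case one does not want to invoke that description for a general $M\in\M$.

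Recall that $\mathcal R_s M$ is spanned by the length-$s$ monomials $Q^I x = Q^{i_1}\cdots Q^{i_s}x$ with $x\in M$, subject to the Adem relations and the unstable relations, and that the operations on $\mathcal R_*M$ satisfy the unstable relation $Q^i y=0$ for $i<|y|$ by \lemref{mu e lemma}(a). Suppose $M$ is concentrated in degrees $\geq n$ and take $x\in M$ with $x\neq 0$, so $|x|\geq n$. Reading the unstable relation from the inside out, $Q^{i_1}\cdots Q^{i_s}x$ can be nonzero in $\mathcal R_s M$ only if $i_s\geq |x|$, then $i_{s-1}\geq |Q^{i_s}x|=|x|+i_s$, and so on; hence $|Q^{i_s}x|\geq 2|x|$, $|Q^{i_{s-1}}Q^{i_s}x|\geq 4|x|$, and inductively $|Q^Ix|\geq 2^s|x|\geq 2^s n$. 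Thus every nonzero length-$s$ monomial sits in degree $\geq 2^s n$. Since the Adem relations $Q^rQ^s=\sum_i\binom{i-s-1}{2i-r}Q^{r+s-i}Q^i$ are homogeneous of degree $r+s$, rewriting an arbitrary monomial as a combination of admissibles preserves degree, so any monomial in a degree $d<2^s n$ is a sum of length-$s$ monomials of that same degree, each of which vanishes. Therefore $(\mathcal R_s M)_d=0$ for $d<2^s n$, i.e.\ $\mathcal R_s M$ is $(2^s n-1)$--connected (and this is sharp, via $Q^{2^{s-1}n}\cdots Q^{2n}Q^n x$ with $|x|=n$).

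Alternatively, argue geometrically. Choose an injective resolution $0\to M\to IV(0)\to IV(1)$ in $\M$ with $V(0),V(1)\in\V$ concentrated in degrees $\geq n$ (possible since $M$, and hence $IM/M$, is so concentrated and $\A_*$ lives in degrees $\geq 0$). Realize $IV(i)=H_*(HV(i))$ with $HV(i)$ a generalized Eilenberg--MacLane spectrum; as $\pi_*(HV(i))=V(i)$ vanishes below degree $n$, $HV(i)$ is $(n-1)$--connected. Exactness of $\mathcal R_s$ exhibits $\mathcal R_s M$ as a submodule of $\mathcal R_s H_*(HV(0))$, which by its definition is a subspace of $H_*(D_{2^s}HV(0))$. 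Now $D_{2^s}(HV(0))=(HV(0)^{\sm 2^s})_{h\Sigma_{2^s}}$: the smash power $HV(0)^{\sm 2^s}$ is $(2^s n-1)$--connected, and the homotopy-orbit spectral sequence $H_p(\Sigma_{2^s};\pi_q Z)\Rightarrow\pi_{p+q}(Z_{h\Sigma_{2^s}})$ shows that forming homotopy orbits does not lower connectivity, so $H_*(D_{2^s}HV(0))$, and hence $\mathcal R_s M$, is concentrated in degrees $\geq 2^s n$.

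Neither route meets a genuine obstacle, as befits an elementary property; the only mildly delicate bookkeeping is the iterated application of the unstable relation in the first route and the connectivity estimate for homotopy orbits (equivalently, the vanishing line of its spectral sequence) in the second.
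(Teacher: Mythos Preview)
Both routes are correct. Your second, geometric argument is precisely the paper's proof, just unpacked: the paper records only the single line ``if $X$ is $(n-1)$ connected, then $D_dX$ is $(dn-1)$ connected,'' leaving implicit the extension from $H_*(\Sp)$ to all of $\M$ via the injective-resolution machinery of \propref{exact functor prop}, which you spell out.

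Your first argument is a genuinely different, purely algebraic alternative: it reads the connectivity directly from the unstable Dyer--Lashof relation $Q^iy=0$ for $i<|y|$, iterated along a length-$s$ monomial. This avoids any appeal to spectra or extended powers and makes the bound visibly sharp (via the bottom monomial $Q^{2^{s-1}n}\cdots Q^{2n}Q^n x$). The trade-off is that the geometric route fits the paper's overall philosophy of deducing algebraic facts about $\mathcal R_s$ from topology, while your algebraic route is self-contained but needs the explicit monomial description. Your sentence about the Adem relations being degree-homogeneous is harmless but unnecessary: once every spanning element $Q^Ix$ of degree $<2^sn$ has image zero in $\mathcal R_sM$, you are done.
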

\begin{proof} This follows from the observation that if $X$ is $(n-1)$ connected, then $D_dX$ is $dn-1$ connected.
\end{proof}

Now we introduce algebraic differentials. As before, $\delta: X \ra \Sigma D_2 X$ is the connecting map of the cofibration sequence $D_2 X \ra \tilde{P}_2 X \ra X$.

\begin{defn}  Define $d_s: \mathcal R_s(M) \ra \mathcal R_{s+1}(\Sigma M)$ to be the natural transformation induced by
the composite
$$ \delta_s: D_{2^s}X \xra{D_{2^s}\delta} D_{2^s}\Sigma D_2X \xra{D_{2^s}\epsilon} D_{2^s}D_2\Sigma X \xra{\mu}
 D_{2^{s+1}}\Sigma X.$$
\end{defn}

Explicitly, the computation of $\delta_*$ given in \thmref{theorem 1}(a) tells us that
$$ d_s(Q^Ix) = \sum_{i \geq 0} Q^IQ^{i-1}(\sigma xSq^i).$$

\begin{prop} \label{complex prop} The composite $$\mathcal R_{s-1}(\Sigma^{-1}M) \xra{d_{s-1}} \mathcal R_{s}(M) \xra{d_{s}}\mathcal R_{s+1}(\Sigma M)$$ is zero.
\end{prop}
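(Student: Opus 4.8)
The plan is to reduce to the topological level using \propref{exact functor prop}. Since $\mathcal{R}_{s-1}$, $\mathcal{R}_s$, $\mathcal{R}_{s+1}$ and the differentials $d_{s-1}, d_s$ are all defined as exact extensions of functors (and natural transformations) on $H_*(\Sp)$, it suffices to verify that the composite
$$
\mathcal{R}_{s-1}(\Sigma^{-1}H_*(X)) \xra{d_{s-1}} \mathcal{R}_s(H_*(X)) \xra{d_s} \mathcal{R}_{s+1}(\Sigma H_*(X))
$$
vanishes for all spectra $X$. By naturality and the fact that every $M \in \M$ is a quotient of $H_*(X)$ for a suitable wedge of sphere spectra (or, more precisely, by the universal-example method underlying \propref{exact functor prop}), it is enough to check this when $X = \Sigma^{-1} HV$ for $V$ ranging over generators, so one is really just chasing a topological composite.

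First I would unwind the definition: $d_s$ is induced by the composite map of spectra
$$
\delta_s: D_{2^s}Y \xra{D_{2^s}\delta} D_{2^s}\Sigma D_2 Y \xra{D_{2^s}\epsilon} D_{2^s}D_2 \Sigma Y \xra{\mu} D_{2^{s+1}}\Sigma Y,
$$
and similarly $d_{s-1}$ (applied to $\Sigma^{-1}Y$, landing in $\mathcal{R}_s$) is built from $\delta_{s-1}: D_{2^{s-1}}\Sigma^{-1}Y \to D_{2^s}Y$ via $D_{2^{s-1}}\delta$, $D_{2^{s-1}}\epsilon$ and $\mu$. The composite $d_s \circ d_{s-1}$ is then induced, after assembling the $\mu$'s and $\epsilon$'s using the compatibility in \lemref{mu e lemma}(b), by a map that factors through $D_{2^{s-1}}(\delta \circ \Sigma^{-1}\delta')$ for appropriate connecting maps. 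The key point is that the relevant topological composite
$$
\Sigma^{-1}Y \xra{\delta} D_2\Sigma^{-1}Y \cdot (\text{shifts}) \xra{\delta} D_2 D_2 Y \cdot (\text{shifts})
$$
is null on homology — indeed the two-fold connecting map $Y \to \Sigma D_2 Y \to \Sigma^2 D_2 D_2 Y$ (suitably desuspended) composes to zero because $\delta$ is the connecting map of the cofiber sequence $D_2 X \to \tilde P_2 X \to X$, and two consecutive connecting maps in the associated filtration, after applying the quadratic construction once more, compose trivially on homology; equivalently, this is the $d^2 = 0$ statement for the relevant layer of the tower. Concretely, one can instead verify the vanishing purely algebraically: using the explicit formula $d_s(Q^I x) = \sum_{i \geq 0} Q^I Q^{i-1}(\sigma x Sq^i)$, the composite sends $Q^J x$ (with $l(J) = s-1$) to $\sum_{i,j} Q^J Q^{i-1} Q^{j-1}(\sigma^2 x Sq^i Sq^j)$ modulo Adem relations, and this double sum vanishes after applying the Adem relations for the Steenrod squares together with the Nishida relations and Dyer--Lashof Adem relations — this is the classical computation underlying the Singer differential squaring to zero.

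The main obstacle is precisely this last verification: showing that the double sum
$$
\sum_{i,j \geq 0} Q^{i-1} Q^{j-1}(x Sq^i Sq^j) = 0
$$
in $\mathcal{R}_*M$. I would handle it by the universal-example trick: reduce to $M = \A_*$ (or a sum of suspensions of $\Z/2$), where one can either invoke the corresponding statement in the literature (Singer \cite{singer}, or the cohomological dual in \cite{powell}, \cite{lz}) or give a direct topological proof by identifying $d_s \circ d_{s-1}$ with $(\delta_{s})_* \circ (\delta_{s-1})_*$ and observing that the composite of the two connecting maps
$$
D_{2^{s-1}}\Sigma^{-1}X \to D_{2^s}X \to D_{2^{s+1}}\Sigma X
$$
is null because it factors (up to the $\mu$ and $\epsilon$ identifications) through $D_{2^{s-1}}$ applied to the composite $\tilde P_2$-connecting maps $X \to \Sigma D_2 X$ and $\Sigma D_2 X \to \Sigma^2 D_2 D_2 X$, whose composite is null-homotopic since each factors through the fiber of the next stage of the tower (so the composite factors through a fiber followed by a map to a cofiber of the same map, hence is null). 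Passing back through \propref{exact functor prop} then yields $d_s \circ d_{s-1} = 0$ on all of $\M$, which is what \propref{complex prop} asserts.
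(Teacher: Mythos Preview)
Your overall strategy is the paper's: reduce via \propref{exact functor prop} to the topological statement that
$D_{2^{s-1}}(X) \xra{\delta_{s-1}} D_{2^s}(\Sigma X) \xra{\delta_s} D_{2^{s+1}}(\Sigma^2 X)$
is null, and observe this factors through $D_{2^{s-1}}$ applied to
$X \xra{\delta_0} D_2(\Sigma X) \xra{\delta_1} D_4(\Sigma^2 X)$.
The gap is in your justification that this last composite is null.

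You assert it is null because it ``factors through a fiber followed by a map to a cofiber of the same map'' or is ``the $d^2=0$ statement for the relevant layer of the tower.'' Neither is correct as stated. The map $\delta_0$ comes from the connecting map of $D_2X \to \tilde P_2 X \to X$, while $\delta_1$ is built from $D_2$ applied to the connecting map for the tower of $\Sigma X$ --- a \emph{different} tower. These are not two consecutive boundary maps in a single cofibration sequence or a single tower, so no immediate ``$\partial\circ\partial=0$'' argument applies. The paper closes this gap by \emph{colinearizing}: replacing $D_d$ by $\D_d(Y)=\holim_n \Sigma^n D_d(\Sigma^{-n}Y)$. Since $\D_1(X)\xra{\sim} X$, the composite $\delta_1\circ\delta_0$ lifts to $\D_1(X)\to\D_2(\Sigma X)\to\D_4(\Sigma^2 X)$, and the paper then either invokes $\D_4(X)\simeq *$ for finite $X$ (a consequence of the Segal conjecture), or identifies this lifted composite with two consecutive connecting maps in the colinearization of the single tower $\tilde P_4(X)\to\tilde P_2(X)\to\tilde P_1(X)$, which is genuinely null. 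This colinearization step is the idea your argument is missing.

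Your fallback to the algebraic identity $\sum_{i,j\ge 0} Q^{i-1}Q^{j-1}(xSq^iSq^j)=0$ is legitimate and is exactly what the paper records in its remark (citing Goerss and Brown--Gitler), but as you present it, it is a citation rather than a proof.
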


This is an immediate consequence of the following topological version, and since homology is compactly supported, we really just need this result when $X$ is a finite CW spectrum.

\begin{prop}  \label{top complex prop} The composite $$D_{2^{s-1}}(X) \xra{\delta_{s-1}}D_{2^s}(\Sigma X) \xra{\delta_{s}}
 D_{2^{s+1}}(\Sigma^2 X)$$ is null.
\end{prop}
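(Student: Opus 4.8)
The plan is to establish the topological statement, \propref{top complex prop}, that the composite
$$D_{2^{s-1}}(X) \xra{\delta_{s-1}} D_{2^s}(\Sigma X) \xra{\delta_s} D_{2^{s+1}}(\Sigma^2 X)$$
is null-homotopic for finite CW spectra $X$. The algebraic consequence \propref{complex prop} then follows by applying $H_*(-)$ and passing to the subquotients $\mathcal R_*$, using that homology is compactly supported so that it suffices to treat finite $X$. The key observation is that each $\delta_s$ is built, by applying the functor $D_{2^s}(-)$ and composing with the structural maps $\epsilon$ and $\mu$, out of the single connecting map $\delta: Y \to \Sigma D_2 Y$; so I want to unwind both composites back to a statement purely about $\delta$ applied to iterated extended powers, and then show the relevant composite of $\delta$'s is null for a structural reason coming from the Goodwillie tower.

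First I would make the reduction precise: using naturality of $\delta$ together with the compatibility of $\epsilon$ and $\mu$ recorded in \lemref{mu e lemma} and \lemref{epsilon lemma}(b), rewrite $\delta_s \circ \delta_{s-1}$ as the outer edge of a diagram in which the two inner $\delta$-maps appear as $\delta: Z \to \Sigma D_2 Z$ and $D_2\delta: D_2 Z \to D_2 \Sigma D_2 Z$ for $Z = D_{2^{s-1}}X$ (after suitable suspensions have been absorbed), all composed with the operad multiplications $D_{2^{s-1}}D_2 D_2 \to D_{2^{s-1}} D_4 \to D_{2^{s+1}}$ and the structure maps $\epsilon$. Since $\mu$ and $\epsilon$ are honest maps of spectra and the whole picture is natural in $X$, nullity of the composite reduces to nullity (in the homotopy category) of the composite of connecting maps
$$ Z \xra{\delta} \Sigma D_2 Z \xra{\Sigma \delta_{D_2 Z}} \Sigma^2 D_2 D_2 Z,$$
i.e. to the statement that $(\Sigma \delta_{D_2 Z}) \circ \delta_Z = 0$ after desuspension, which is exactly the $s=1$ case of the proposition with $X$ replaced by $Z$. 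So the genuine content is the base case: \emph{for all $Y$, the composite $Y \xra{\delta} \Sigma D_2 Y \xra{\Sigma D_2 \delta}\Sigma D_2 \Sigma D_2 Y \xra{\Sigma \epsilon \text{ and } \mu} \Sigma^2 D_4 Y$ is null.}

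To prove the base case I would work directly with the Goodwillie tower. Recall $\delta$ is the connecting map of the cofibration sequence $D_2 Y \to \tilde P_2 Y \to \tilde P_1 Y = Y$, so the claim is about the two-stage truncation $\tilde P_2$. The cleanest route is to identify $\delta_1 = \mu \circ D_2\epsilon \circ D_2 \delta$ with the connecting map of a \emph{three-stage} truncation: namely, the composite
$$ D_2 Y \to \tilde P_2 Y \to Y \xra{\delta_1} \Sigma D_4 Y$$
should be null because it factors through the fiber sequence $D_4 Y \to \tilde P_4 Y / D_2 Y \to \tilde P_2 Y$ — more precisely, $\delta_1$ is (up to sign/shift) the boundary map $\tilde P_2 Y \to \Sigma D_4 Y$ of the cofiber sequence relating the $2$nd and $4$th stages of the reduced tower, whose composite with $D_2 Y \to \tilde P_2 Y$ lands in the image of something that dies. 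I would make this precise by invoking the model of the tower from \cite{ak} and its compatibility with the operad action (\thmref{operad action on tower thm}): the maps $\mu: D_2 D_d \to D_{2d}$ are exactly the ones governing how the connecting maps of different stages compose, so that $\delta_{s-1}$ followed by $\delta_s$ becomes the composite of two consecutive connecting maps in a single filtration of $\tilde P_{2^{s+1}} X$, which is zero for the usual reason that in any tower of fibrations the composite of two consecutive boundary maps vanishes.

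**The hard part will be** pinning down this last identification rigorously — i.e. showing that $\delta_s \circ \delta_{s-1}$, as assembled from $\delta$, $\epsilon$, $\mu$ by the definitions, really does coincide (in the homotopy category, up to the expected suspension and sign conventions) with a composite of two consecutive connecting homomorphisms in a fixed tower of fibrations, rather than merely resembling one. This requires knowing that the operad multiplications $\mu$ on the tower are compatible with the $\delta$'s in the strong sense that the square relating the connecting map of $(D_2$-applied$)$ truncations to the connecting map of the larger truncation commutes up to homotopy; that compatibility is essentially what \thmref{operad action on tower thm}(b) provides on the level of fibers, but promoting it to the needed statement about the total spaces and their boundary maps is the technical heart. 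Once that diagram is in hand, the vanishing is formal. As a fallback, if the tower-theoretic identification proves awkward, one can instead prove \propref{complex prop} purely algebraically by a direct computation with the formula $d_s(Q^I x) = \sum_{i\ge 0} Q^I Q^{i-1}(\sigma x Sq^i)$, expanding $d_s d_{s-1}(Q^I x)$, applying the Nishida relations to move each $Sq^i$ past the outermost $Q$, and checking the resulting double sum vanishes mod $2$ — but this is exactly the sort of ad hoc computation (as in \cite{singer}, \cite{six author}) that the present geometric approach is designed to avoid, so I would only resort to it if needed.
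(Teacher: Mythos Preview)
Your reduction to the base case $s=1$ is correct and matches the paper: the composite $\delta_s \circ \delta_{s-1}$ does factor through $D_{2^{s-1}}$ applied to $X \xra{\delta_0} D_2(\Sigma X) \xra{\delta_1} D_4(\Sigma^2 X)$, so one only needs this last composite to be null.

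The gap is in your treatment of the base case. You want to identify $\delta_1 \circ \delta_0$ with the composite of two consecutive connecting maps in the tower $\tilde P_4(X) \ra \tilde P_2(X) \ra \tilde P_1(X)$, but this fails as stated for two related reasons. First, the fiber of $\tilde P_4(X) \ra \tilde P_2(X)$ is an extension of $D_3X$ by $D_4X$, not $D_4X$ alone, so the connecting map out of $\tilde P_2(X)$ does not land in $\Sigma D_4X$. Second, $\delta_1$ is a map $D_2(\Sigma X) \ra D_4(\Sigma^2 X)$ built from $\delta$, $\epsilon$, and $\mu$; since $D_d$ does not commute with suspension, there is no direct identification of this with a boundary map in the tower for $X$. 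The compatibility in \thmref{operad action on tower thm}(b) is a statement about fibers and does not by itself promote to the tower-level identification you need.

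The paper's missing ingredient is \emph{colinearization}: replace $D_d$ by $\D_d(X) = \holim_n \Sigma^n D_d(\Sigma^{-n}X)$. This repairs both defects at once. Since $\D_d$ preserves cofibration sequences, it commutes with suspension, so the diagonal maps $\epsilon$ become equivalences and one gets a commuting square relating $\delta_0, \delta_1$ to maps $\D_1(X) \ra \Sigma \D_2(X) \ra \Sigma^2 \D_4(X)$; moreover $\D_1(X) \ra X$ is an equivalence, so it suffices to show this left-hand composite is null. Now one finishes in either of two ways: for finite $X$, the Segal conjecture gives $\D_4(X) \simeq *$ outright; or, more elementarily, $\D_d \simeq *$ unless $d$ is a power of $2$, so the colinearized tower $\D_1 \la \widetilde{\D P}_2 \la \widetilde{\D P}_4$ has \emph{exactly} the fibers $\D_2$ and $\D_4$, and the composite in question is then genuinely the composite of two consecutive connecting maps, hence null. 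Your instinct about consecutive boundary maps is right, but it only becomes true after colinearizing.
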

\begin{proof} It is easy to see that this composite factors through $D_{2^{s-1}}$ applied to the composite
$$ X \xra{\delta_0} D_2(\Sigma X) \xra{\delta_1} D_4(\Sigma^2 X).$$
Thus we just need to show that this last composite is null.

The trick now is to colinearize these functors and maps.  Generalizing our previous notation $\D_2$, for any $d$, let $\D_d(X) = \holim_n \Sigma^n D_d(\Sigma^{-n}X)$.

Colinearization then yields a commutative diagram of weak natural transformations
\begin{equation*}
\xymatrix{
\D_{1}(X) \ar[d] \ar[r] & X \ar[d]^{\delta_{0}}  \\
\D_{2}(\Sigma X) \ar[d] \ar[r] & D_{2}(\Sigma X) \ar[d]^{\delta_1}  \\
\D_4(\Sigma^2X)  \ar[r] & D_{4}(\Sigma^2 X).  }
\end{equation*}

As the top horizontal map is clearly an equivalence, the proposition will follow if we can show the left composite is null.

We offer two rather different reasons for this.

The first argument only seems to hold when $X$ is finite, and depends on consequences of the Segal Conjecture for elementary abelian 2--groups. Namely, the first author showed \cite[Cor.5.3]{kuhn chevalley} that $\D_4(X) \simeq *$ if $X$ is finite. (In this case, it is also true that the top left vertical map is an equivalence after completing at 2.)

A second, more elementary argument goes roughly as follows.  The colinearized functors $\D_d$ preserves cofibration sequences, and are null unless $d$ is a power of 2.  Then it is not too hard to show that the left vertical sequence is equivalent to the composite
$$ \D_1(X) \ra \Sigma \D_2(X) \ra \Sigma^2 \D_4(X)$$
of the two connecting maps associated to the colinearization of the tower
$$ \tilde P_4(X) \ra \tilde P_2(X) \ra  \tilde P_1(X),$$
so that their composite is null.
\end{proof}

\begin{rem}  A direct algebraic proof of \propref{complex prop} is possible.  Using both the Dyer--Lashof Adem relations and the Adem relations in $\A$, one needs to show that
$$ \sum_{i\geq 0} \sum_{j \geq 0} Q^{i-1}Q^{j-1}(x Sq^i Sq^j) = 0.$$
Goerss \cite[Lem.3.13]{goerss} points to Brown and Gitler's assertion that a calculation like this is straightforward \cite[Lem.2.3]{brown gitler}, and one can check that it is.
\end{rem}

\subsection{The doubling functor and $\mathcal R_*(M)$}

In the cohomological setting, the following definition should be familiar to readers of \cite{lz} and \cite{s}.
\begin{defn}  If $M \in \M$, $\Phi(M) \in \M$ is defined to be the module concentrated in even degrees, with $\Phi(M)_{2n} = M_n$ and with $\phi(x)Sq^{2i} = \phi(xSq^i)$.  (Here, given $x \in M_n$, we have written $\phi(x)$ for the corresponding element in $\Phi(M)_{2n}$.)
\end{defn}

Basic properties are listed in the next lemma.

\begin{lem} {\bf (a)} $\Phi$ is an exact functor preserving unstable modules. \\

\noindent{\bf (b)} $\Phi(N \otimes M) = \Phi(N) \otimes \Phi(M)$.  In particular, $\Phi(\Sigma M) = \Sigma^2\Phi(M)$. \\

\noindent{\bf (c)} Let $\Gamma^2(M) = (M \otimes M)^{\Z/2}$ and $S^2(M) = (M\otimes M)_{\Z/2}$. The composite  $\Gamma^2(M) \hookrightarrow M\otimes M \twoheadrightarrow S^2(M)$ naturally factors as a composite  $$\Gamma^2(M) \twoheadrightarrow \Phi(M) \hookrightarrow S^2(M),$$ where the second map sends $\phi(x)$ to $x^2$. \\

\noindent{\bf (d)} Let $sq_0: M \ra \Phi(M)$ be the linear map defined by letting $sq_0(x) = \phi(xSq^n)$ if $x \in M_{2n}$.  If $M$ is unstable, then $sq_0$ is $\A$--linear.
\end{lem}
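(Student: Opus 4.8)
The plan is to dispatch (a) and (b) as bookkeeping on underlying graded vector spaces, to treat (c) as linear algebra of the swap action on $M\otimes M$ over $\Z/2$, and to recognise (d) --- the only part that genuinely uses unstability --- as the dual of a standard fact about the cohomological Frobenius.

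For (a) and (b): the functor $\Phi$ alters the underlying graded vector space of $M$ only by relabelling $M_n$ as $\Phi(M)_{2n}$ (and inserting zeros in odd degrees), and odd Steenrod squares act as $0$ on $\Phi(M)$ for degree reasons. (That the prescribed even-square action really defines an object of $\M$ can be seen by dualizing the classical cohomological doubling functor, or from the observation that the Frobenius $\alpha\mapsto\alpha^2$ is a Hopf-algebra endomorphism of the dual Steenrod algebra $\A_*$; I would recall this in a sentence.) Granting this, exactness of $\Phi$ is immediate, since a kernel or cokernel of a map of doubled modules is the double of the corresponding kernel or cokernel. Preservation of unstability is a direct check: if $2(2i)>2n=|\phi(x)|$, then $2i>n=|x|$, so $\phi(x)Sq^{2i}=\phi(xSq^i)=0$ because $M$ is unstable, while odd squares vanish automatically. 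For (b), both $\Phi(N\otimes M)$ and $\Phi(N)\otimes\Phi(M)$ have underlying graded vector space $\bigoplus_{p+q=n}N_p\otimes M_q$ in degree $2n$, and the diagonal actions agree because $\phi(a\otimes b)Sq^{2k}=\phi((a\otimes b)Sq^k)=\sum_{i+j=k}\phi(aSq^i)\otimes\phi(bSq^j)$ is the Cartan formula on $\Phi(N)\otimes\Phi(M)$ after discarding the cross terms containing an odd square in a factor, which are zero. Taking $N=\Sigma\Z/2$, which $\Phi$ sends to $\Sigma^2\Z/2$, then gives $\Phi(\Sigma M)=\Sigma^2\Phi(M)$.

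For (c): fix a homogeneous $\Z/2$-basis $\{e_\alpha\}$ of $M$. Over $\Z/2$ the invariants $\Gamma^2(M)$ have basis the $e_\alpha\otimes e_\alpha$ together with the symmetrized elements $e_\alpha\otimes e_\beta+e_\beta\otimes e_\alpha$ ($\alpha\ne\beta$), while the coinvariants $S^2(M)$ have basis the classes $[e_\alpha\otimes e_\beta]$, $\alpha\le\beta$. Hence the composite $\Gamma^2(M)\hookrightarrow M\otimes M\twoheadrightarrow S^2(M)$ kills every symmetrized element and sends $e_\alpha\otimes e_\alpha$ to $[e_\alpha\otimes e_\alpha]$, so its image is the span of the squares $x^2:=[x\otimes x]$; here $x\mapsto x^2$ is additive because $[x\otimes y]=[y\otimes x]$ in $S^2(M)$. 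I would then define $j\colon\Phi(M)\to S^2(M)$ by $j(\phi(x))=x^2$; it is injective with image exactly this span, and it is $\A$-linear because in $S^2(M)$ the Cartan formula gives $x^2\,Sq^{2i}=\sum_{a+b=2i}[xSq^a\otimes xSq^b]=(xSq^i)^2$ (off-diagonal pairs cancel in characteristic $2$) and $x^2\,Sq^{2i+1}=0$, matching the action on $\Phi(M)$. Since the diagonal $\A$-action commutes with the swap ($\A$ is cocommutative), $\Gamma^2(M)\hookrightarrow M\otimes M\twoheadrightarrow S^2(M)$ is $\A$-linear; having image $\im j$, it factors as $\Gamma^2(M)\twoheadrightarrow\Phi(M)\xrightarrow{j}S^2(M)$, as asserted.

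Part (d) is where I expect the real work, as it is the only statement genuinely using the unstable hypothesis. Because $M$ is locally finite, it is the graded dual $N^{\#}$ of a locally finite unstable module $N$ in the cohomological convention (Steenrod squares raising degree), and one checks that $sq_0\colon M\to\Phi M$ is precisely the transpose of the Frobenius $\lambda\colon\Phi N\to N$, $\lambda(\phi(y))=Sq^{|y|}y$: both maps vanish in odd degrees, and in degree $2m$ the defining formula $sq_0(x)=\phi(xSq^m)$ is exactly dual to the formula for $\lambda$. So $\A$-linearity of $sq_0$ is equivalent to $\A$-linearity of $\lambda$, which is standard and appears in \cite{lz} and \cite{s}; I would either cite it or reprove it, the mechanism being that the Adem relations write $Sq^{2j}Sq^m$ as $Sq^{m+j}Sq^j$ plus admissible monomials $Sq^aSq^c$ with $c<j$, and $Sq^{2j+1}Sq^m$ as a sum of admissible monomials $Sq^aSq^c$ with $c\le j$, so that when these are applied to a class of degree $m$ in an unstable module exactly the unwanted terms die by instability. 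The one subtlety in transcribing this argument is keeping the left-module cohomological and right-module homological conventions (and the bookkeeping of suspensions) straight.
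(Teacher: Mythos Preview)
Your proposal is correct and in fact considerably more detailed than the paper's own treatment: the paper states the lemma without proof, merely appending the single sentence ``For a proof of (d) in the cohomological setting, see \cite[p.26]{s}.'' Your approach to (d) --- dualizing $sq_0$ to the cohomological Frobenius $\lambda\colon\Phi N\to N$, $\phi(y)\mapsto Sq^{|y|}y$, and invoking (or reproving via Adem relations plus instability) its $\A$-linearity from \cite{s} or \cite{lz} --- is exactly the route the paper implicitly endorses by that citation, while your explicit arguments for (a)--(c) simply fill in what the paper leaves as routine.
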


For a proof of (d) in the cohomological setting, see \cite[p.26]{s}.

\begin{defn}  Let $q_0: \Phi(M) \ra \mathcal R_1M$ be defined by the formula $q_0(\phi(x)) = Q^{|x|}x$.  More generally, define $q_0: \Phi(\mathcal R_{s}M) \ra \mathcal R_{s+1}M$ to be the composite
$ \Phi(\mathcal R_{s}M) \xra{q_0} \mathcal R_1 \mathcal R_s M \xra{\mu} \mathcal R_{s+1}M$.
\end{defn}

\begin{lem}  $q_0$ is $\A$--linear.
\end{lem}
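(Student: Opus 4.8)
The plan is to reduce immediately to the simplest case and then let the Nishida relations do the work. By its definition, the map $q_0 : \Phi(\mathcal R_s M) \to \mathcal R_{s+1}M$ is the composite
$$\Phi(\mathcal R_s M) \xra{q_0} \mathcal R_1 \mathcal R_s M \xra{\mu} \mathcal R_{s+1}M ,$$
and $\mu$ is already an $\A$--module map (it is induced by the natural transformation $\mu : D_{2}D_{2^s}X \to D_{2^{s+1}}X$). So it suffices to show that, for an arbitrary $N \in \M$, the map $q_0 : \Phi(N) \to \mathcal R_1 N$ sending $\phi(x)$ to $Q^{|x|}x$ is $\A$--linear, and then apply this with $N = \mathcal R_s M$.

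Both $q_0$ and the $\A$--action are $\Z/2$--linear and $\Phi(N)$ is spanned by the elements $\phi(x)$, so it is enough to verify, for each $x \in N_n$ and each $r \geq 0$, the identity $q_0(\phi(x)\,Sq^r) = (Q^{n}x)\,Sq^r$ in $\mathcal R_1 N$. The left side is read off from the definition of $\Phi$: $\phi(x)\,Sq^r$ equals $\phi(x\,Sq^{r/2})$ if $r$ is even and equals $0$ if $r$ is odd, so $q_0(\phi(x)\,Sq^r)$ equals $Q^{\,n-r/2}(x\,Sq^{r/2})$ if $r$ is even and $0$ if $r$ is odd. For the right side I would invoke the Nishida relations in $\mathcal R_1 N$, valid by \lemref{mu e lemma}(a):
$$(Q^{n}x)\,Sq^r = \sum_{i} \binom{n-r}{r-2i}\, Q^{\,n-r+i}(x\,Sq^i) .$$
Here the binomial coefficient vanishes unless $r-2i \geq 0$, while the Dyer--Lashof instability relation (again \lemref{mu e lemma}(a)) forces $Q^{\,n-r+i}(x\,Sq^i) = 0$ unless $n - r + i \geq |x\,Sq^i| = n-i$, i.e.\ unless $2i \geq r$. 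Hence the only summand that can be nonzero is the one with summation index $i = r/2$, which requires $r$ even; in that case the coefficient is $\binom{n-r}{0}=1$ and the summand is $Q^{\,n-r/2}(x\,Sq^{r/2})$. Comparing with the left side in both parities of $r$ gives the desired identity, and hence the $\A$--linearity of $q_0$.

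I do not expect a genuine obstacle here: the content is the index bookkeeping in the Nishida relation together with the observation that the Dyer--Lashof instability condition is precisely what annihilates every term off the diagonal $i = r/2$. One could alternatively check $\A$--linearity only on the algebra generators $Sq^{2^k}$ of $\A$, but the computation above is uniform in $r$, so there is nothing to be gained by doing so.
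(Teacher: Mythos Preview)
Your proof is correct and takes a genuinely different route from the paper's. The paper argues geometrically: reducing first to $N = H_*(X)$ via the exact extension principle (\propref{exact functor prop}), it observes that the identity $Q^{|x|}x = x^2$ in $H_*(D_2X)$ identifies the composite $\Phi(H_*(X)) \xra{q_0} \mathcal R_1(H_*(X)) \subseteq H_*(D_2X)$ with the manifestly $\A$--linear composite $\Phi(H_*(X)) \hookrightarrow S^2(H_*(X)) \ra H_*(D_2X)$, the first map coming from the basic lemma on $\Phi$ and the second being induced by a map of spectra. Your argument, by contrast, is purely algebraic and works directly in $\mathcal R_1 N$ for arbitrary $N \in \M$, using only the Nishida relations together with Dyer--Lashof instability to collapse the sum to the single term $i = r/2$. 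The paper's approach is more conceptual and explains \emph{why} $q_0$ is $\A$--linear (it is the squaring map in disguise), while yours is a self-contained computation that avoids invoking the topological origin of $\mathcal R_1$ altogether.
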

\begin{proof} As usual, one need just check this when $M = H_*(X)$.  The identity $Q^{|x|}x = x^2 \in H_*(D_2X) $ for all $x \in H_*(X)$ implies that the composite $$\Phi(H_*(X)) \xra{q_0} \mathcal R_1(H_*(X)) \subseteq H_*(D_{2}X)$$ equals the composite $$\Phi(H_*(X)) \hra S^2(H_*(X))  \ra H_*(D_{2}X),$$ and so is $\A$--linear.
\end{proof}

The following lemma is crucial.

\begin{lem} \label{ses lem} For all $M \in \M$ and $s>0$, the sequence
$$ 0 \ra \Phi(\mathcal R_{s-1}(M)) \xra{q_0} \mathcal R_s(M) \xra{\epsilon} \Sigma^{-1}\mathcal R_s(\Sigma M) \ra 0$$
is short exact. \\
\end{lem}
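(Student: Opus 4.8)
The plan is to reduce the statement to the topologically realizable case $M = H_*(X)$, and then to verify it by hand on the standard monomial basis of $\mathcal R_sM$, where the assertion becomes a transparent additive fact about admissible Dyer--Lashof monomials sorted by excess.

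First I would set up the reduction. All three functors $\Phi\circ\mathcal R_{s-1}$, $\mathcal R_s$, and $\Sigma^{-1}\mathcal R_s\Sigma$ are exact on $\M$, being composites of exact functors, and each is the exact extension of its restriction to $H_*(\Sp)$; likewise $q_0$ and $\epsilon$ are the extensions of their restrictions. Hence, by the uniqueness part of \propref{exact functor prop}, it suffices to prove the sequence short exact on modules of the form $H_*(X)$. The underlying principle is that a natural transformation of exact functors $\M\to\M$ whose kernel and cokernel vanish on every $H_*(X)$ must vanish on all of $\M$, since every $M\in\M$ embeds into some $H_*(HV)$. Applying this to $\ker q_0$ shows $q_0$ is monic; then $\coker(q_0)$ is exact (as the cokernel of a monomorphism of exact functors), it receives a natural map $\coker(q_0)\to\Sigma^{-1}\mathcal R_s\Sigma$ induced by $\epsilon$, and this map is an isomorphism on $H_*(\Sp)$ once we know the sequence there, hence an isomorphism on all of $\M$ again by \propref{exact functor prop}(b).

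Next, for $M = H_*(X)$, I would argue directly on monomials. By \lemref{epsilon lemma} and the identification of $\mathcal R_sH_*(X)$ with the primitives of $H_*(D_{2^s}X)$, the module $\mathcal R_sM$ has the usual basis of classes $Q^Ix$ with $I=(i_1,\dots,i_s)$ admissible ($i_j\le 2i_{j+1}$) and of excess $e(I):=i_1-i_2-\cdots-i_s\ge|x|$, as $x$ runs over a basis of $M$. On such a class, $\epsilon(Q^Ix)=Q^I(\sigma x)$ by \lemref{epsilon lemma}(b), and this is nonzero in $\mathcal R_s(\Sigma M)$ precisely when $e(I)\ge|\sigma x|=|x|+1$; so $\epsilon$ is surjective, with $\ker\epsilon$ spanned by the admissible $Q^Ix$ of excess exactly $|x|$. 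On the other side $q_0(\phi(Q^Jx))=Q^{|Q^Jx|}(Q^Jx)$, and the resulting monomial $I=(|Q^Jx|,j_1,\dots,j_{s-1})$ is admissible --- the top inequality $|Q^Jx|\le 2j_1$ is exactly $e(J)\ge|x|$ --- and has excess exactly $|x|$; conversely every admissible $Q^Ix$ of length $s$ with $e(I)=|x|$ satisfies $i_1=|Q^{(i_2,\dots,i_s)}x|$ with $(i_2,\dots,i_s)$ admissible of excess $\ge|x|$, hence lies in the image of $q_0$. Thus $q_0$ is a degree-preserving bijection of basis elements onto $\ker\epsilon$, in particular injective, so the sequence is additively short exact; $\A$--linearity of $q_0$ and $\epsilon$ then promotes this to a short exact sequence in $\M$.

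I do not expect a genuine obstacle: the content is bookkeeping, namely keeping the admissibility-and-excess normal form for $\mathcal R_s$ synchronized with the explicit effect of $q_0$ and of $\epsilon$, together with the routine reduction to $H_*(\Sp)$. The one point worth a moment's care is the equivalence ``$e(J)\ge|x|$ for $J$'' $\Leftrightarrow$ ``$(|Q^Jx|,J)$ is admissible'', which is precisely what makes $\im(q_0)$ coincide with $\ker(\epsilon)$ on the nose.
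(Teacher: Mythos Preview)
Your proposal is correct and follows essentially the same approach as the paper: a direct verification on the admissible monomial basis of $\mathcal R_sM$. Two minor differences are worth noting. First, your reduction to $M=H_*(X)$ is unnecessary---the explicit description of $\mathcal R_sM$ as a span of $Q^Ix$ modulo relations holds for arbitrary $M\in\M$ once a homogeneous basis is chosen, and the paper simply works there directly. Second, the paper streamlines the bookkeeping by passing to lower indices $Q_ix=Q^{|x|+i}x$: in this notation the basis condition becomes $0\le i_1\le\cdots\le i_s$, the map $\epsilon$ sends $Q_{i_1}\cdots Q_{i_s}x$ to $\sigma^{-1}Q_{i_1-1}\cdots Q_{i_s-1}\sigma x$, and $q_0$ prepends a $Q_0$, so $\ker\epsilon=\{i_1=0\}=\im q_0$ is immediate without tracking excess. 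Your upper-index/excess argument reaches the same conclusion with slightly more arithmetic.
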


\begin{proof}  It is convenient to use lower indices for Dyer--Lashof operations: $Q_ix = Q^{|x|+i}x$.  Suppose $M$ has a homogeneous basis $\{x_{\alpha}\}$.  Then the Adem relations show that $\mathcal R_s(M)$ then has a basis given by
$$ \{ Q_{i_0}Q_{i_1} \dots Q_{i_s}x_{\alpha} \ | \ 0 \leq i_0 \leq i_1 \leq \dots \leq i_s\}.$$
Since
$$ \epsilon( Q_{i_0}Q_{i_1} \dots Q_{i_s}x_{\alpha}) = \sigma^{-1} Q_{i_0-1}Q_{i_1-1} \dots Q_{i_s-1}\sigma x_{\alpha},$$
and $Q_ix = 0$ if $i<0$, the lemma follows.
\end{proof}

The next lemma is clear from the definitions.

\begin{lem} \label{chain maps lemma}
\noindent{\bf (a)} The diagram
\begin{equation*}
\xymatrix{
\mathcal R_{s}\mathcal R_t (M) \ar[d]^{\mathcal R_s (d_{t})} \ar[r]^{\mu} & \mathcal R_{s+t}(M) \ar[d]^{d_{s+t}}  \\
\mathcal R_{s}\mathcal R_{t+1} (\Sigma M) \ar[r]^{\mu} & \mathcal R_{s+t+1}(\Sigma M) }
\end{equation*}
commutes. \\

\noindent{\bf (b)} The diagram
\begin{equation*}
\xymatrix{
\Phi(\mathcal R_{s-1}(M)) \ar[d]^{\Phi(d_{s-1})} \ar[r]^{q_0} & \mathcal R_s(M) \ar[d]^{d_s}  \ar[r]^-{\epsilon} & \Sigma^{-1}\mathcal R_s(\Sigma M) \ar[d]^{ \Sigma^{-1} d_s}\\
\Phi(\mathcal R_{s}(\Sigma M)) \ar[r]^{q_0} & \mathcal R_{s+1}(\Sigma M) \ar[r]^-{\epsilon} & \Sigma^{-1}\mathcal R_{s+1}(\Sigma^2 M) }
\end{equation*}
commutes.
\end{lem}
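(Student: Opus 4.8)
The plan is to deduce both squares from the corresponding statements on the subcategory $H_*(\Sp)$, and then to unwind the defining formulas. By \propref{exact functor prop}(b), a natural transformation between exact extensions of homological functors is determined by its restriction to $H_*(\Sp)$, so it suffices to check that each square commutes after evaluating on $M=H_*(X)$ for all spectra $X$. There, by construction, $\mu$, $\epsilon$, $d_s$ and $q_0$ are induced respectively by the maps of spectra $\mu\colon D_{2^s}D_{2^t}X\to D_{2^{s+t}}X$, $\epsilon\colon\Sigma D_{2^s}X\to D_{2^s}\Sigma X$, $\delta_s\colon D_{2^s}X\to D_{2^{s+1}}\Sigma X$, and (for $q_0$) by $\mu$ precomposed with the natural transformation $q_0\colon\Phi(N)\to\mathcal R_1N$, $\phi(z)\mapsto Q^{|z|}z$. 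Under the identification of $\mathcal R_sN$ with the span of Dyer--Lashof words $Q^Iz$, $l(I)=s$, $z\in N$ (modulo Adem and instability relations), these read: $\mu(Q^I(Q^Jx))=Q^IQ^Jx$ (concatenate, then reduce); $\epsilon$ lowers every Dyer--Lashof index by one, as in the proof of \lemref{ses lem}; $d_s(Q^Ix)=\sum_{i\ge 0}Q^IQ^{i-1}(\sigma x\,Sq^i)$; and $q_0(\phi(z))=Q^{|z|}z$. I would also record the trivial fact that $\mathcal R_s$, being given on generators by $\mathcal R_sN=\langle Q^Iz\rangle/(\text{relations})$, sends a morphism $f$ to the map $Q^Iz\mapsto Q^If(z)$; equivalently this is the natural isomorphism $\mathcal R_s(H_*(Y))\cong\bigoplus_n\mathcal R_s(\Sigma^n\Z/2)\otimes H_n(Y)$ used earlier to see that $\mathcal R_s$ is homological.

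For (a), I would simply chase a word $Q^I(Q^Jx)$ with $l(I)=s$, $l(J)=t$, $x\in M$. Along the top then right edge one gets $\mu(Q^I(Q^Jx))=Q^IQ^Jx$ and then $d_{s+t}(Q^IQ^Jx)=\sum_iQ^IQ^JQ^{i-1}(\sigma x\,Sq^i)$. Along the left then bottom edge one gets $\mathcal R_s(d_t)(Q^I(Q^Jx))=Q^I(d_t(Q^Jx))=\sum_iQ^I(Q^JQ^{i-1}(\sigma x\,Sq^i))$, and applying $\mu$ gives $\sum_iQ^IQ^JQ^{i-1}(\sigma x\,Sq^i)$; the two agree. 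A more conceptual alternative is to insert $\mu$, $D_{2^s}(\delta_t)$, $\delta_{s+t}$, $\mu$ to form a square of spectra, unwind $\delta_s=\mu\circ D_{2^s}(\epsilon)\circ D_{2^s}(\delta)$, and combine naturality of $\mu$ in the spectrum variable with the standard associativity of the extended-power composition maps $D_{2^a}D_{2^b}\to D_{2^{a+b}}$ (part of the operad structure of \thmref{operad action on tower thm}); both composites then equal $\delta_{s+t}\circ\mu$.

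For (b), the left square follows formally from (a): since $q_0\colon\Phi(-)\to\mathcal R_1(-)$ is natural and $q_0\colon\Phi(\mathcal R_{s-1}M)\to\mathcal R_sM$ equals $\mu\circ q_0$, naturality applied to the morphism $d_{s-1}\colon\mathcal R_{s-1}M\to\mathcal R_s\Sigma M$ gives $q_0\circ\Phi(d_{s-1})=\mu\circ\mathcal R_1(d_{s-1})\circ q_0$, and by part (a) with $(s,t)$ replaced by $(1,s-1)$ we have $\mu\circ\mathcal R_1(d_{s-1})=d_s\circ\mu$, so $q_0\circ\Phi(d_{s-1})=d_s\circ\mu\circ q_0=d_s\circ q_0$. (Directly: for $z=Q^Jx$ with $l(J)=s-1$, both composites send $\phi(z)$ to $\sum_iQ^{|z|}Q^JQ^{i-1}(\sigma x\,Sq^i)$, using $|Q^JQ^{i-1}(\sigma x\,Sq^i)|=|z|$.) The right square asserts that $d_s$ commutes with the destabilization map $\epsilon$: topologically $\delta_s$ is assembled from the natural connecting map $\delta\colon X\to\Sigma D_2X$ together with $\epsilon$ and $\mu$, and since $\delta$ is natural and compatible with $\epsilon$ (exactly as used in the proof of \thmref{theorem 1}(a)), since $\epsilon$ commutes with $\mu$ by \lemref{mu e lemma}(b), and since $\epsilon$ commutes with itself on homology by \lemref{epsilon lemma}(b), one obtains $\epsilon\circ\Sigma\delta_s=\delta_s\circ\epsilon$, which is the square; alternatively one compares the two explicit formulas after lowering all Dyer--Lashof indices by one.

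The only real work is the bookkeeping: keeping straight the single suspensions introduced by $d_s$ and by $\epsilon$, the index-shift convention for $\epsilon$ (cleanest in lower indices, as in the proof of \lemref{ses lem}), and the fact that $\mathcal R_s$ of a morphism acts on Dyer--Lashof words through their coefficients. I anticipate no genuine obstacle; the content of the lemma is exactly that all four natural transformations are induced by topological maps whose compatibilities are either manifest from the defining formulas or are instances of naturality and the operad associativity of \thmref{operad action on tower thm}.
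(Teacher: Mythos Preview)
Your proposal is correct and is precisely an unfolding of the paper's one-line proof, which reads in its entirety ``The next lemma is clear from the definitions.'' Your explicit chase of $Q^I(Q^Jx)$ for (a), the reduction of the left square of (b) to (a) via naturality of $q_0\colon\Phi(-)\to\mathcal R_1(-)$, and the verification of the right square via the formula for $\epsilon$ in upper indices are exactly the bookkeeping the paper is leaving to the reader.
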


\subsection{The derived functors of destabilization}

We now relabel as in the introduction.

\begin{defn} Let $R_s = \Sigma \mathcal R_s\Sigma^{s-1}: \M \ra \M$.
\end{defn}

With this notation, the chain complex
$$ \Sigma \mathcal R_0(\Sigma^{-1}M) \xra{d_0} \Sigma \mathcal R_1(M) \xra{d_1} \Sigma \mathcal R_2(\Sigma M) \xra{d_2} \Sigma \mathcal R_3(\Sigma^2 M) \ra \dots$$
rewrites as
$$ R_0(M) \xra{d_0} R_1(M) \xra{d_1}R_2(M) \xra{d_2}R_3(M)  \ra \dots.$$

The following is a restatement of \thmref{Rs thm}.

\begin{thm} \label{Rs thm: 2} For all $M \in \M$, there is a natural isomorphism $$H_s(R_*(M); d_*) \simeq \Oinfty_s M.$$
\end{thm}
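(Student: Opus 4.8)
The plan is to identify the sequence of functors $\{H_s(R_*(-);d)\}_{s\ge 0}$ with the right derived functors $\Oinfty_s$, which is exactly the theorem since $\Oinfty_0=\Oinfty$. Each $R_s=\Sigma\mathcal R_s\Sigma^{s-1}$ is exact, being a degree shift of the exact functor $\mathcal R_s$ of \secref{top def section}, so a short exact sequence in $\M$ yields a short exact sequence of cochain complexes and hence a long exact sequence in homology; thus $\{H_s(R_*(-);d)\}$ is a $\delta$--functor of the same variance as $\{\Oinfty_s\}$. By the universality of derived functors it then suffices to prove \textbf{(i)} $H_0(R_*(M);d)\cong\Oinfty M$ naturally, and \textbf{(ii)} $H_s(R_*(IV);d)=0$ for $s>0$ whenever $IV$ is injective in $\M$.

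For (i): $R_0(M)=M$, and unwinding the relabeling, $d_0\colon M\to\Sigma\mathcal R_1(M)$ is the map $x\mapsto\sum_{i\ge 0}Q^{i-1}(xSq^i)$ of \thmref{theorem 1}(a). The Dyer--Lashof unstable relation $Q^jy=0$ for $j<|y|$ kills every summand with $2i\le|x|$, and for distinct $i$ the surviving summands are supported on disjoint sets of basis elements of $\mathcal R_1(M)$, so they cannot cancel; hence $d_0(x)=0$ if and only if $xSq^i=0$ for all $i$ with $2i>|x|$. Since $d_0$ is $\A$--linear its kernel is a submodule of $M$, therefore the \emph{largest} unstable submodule, i.e.\ $\Oinfty M$. (The same bookkeeping gives $d_s\equiv 0$ for every $s$ when $M$ is unstable, so that in that case $H_s(R_*(M);d)=R_sM$, recovering Singer's description of $\Oinfty_s$ on $\U$.)

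For (ii): every injective of $\M$ is a direct sum of suspensions of $\A_*=H_*(H\Z/2)$ and $\mathcal R_s$ commutes with direct sums and filtered colimits, so one reduces to $M=\Sigma^n\A_*$. Feeding \lemref{ses lem} and \lemref{chain maps lemma}(b) through the relabeling and using that $\Phi$ is exact with $\Phi\Sigma=\Sigma^2\Phi$, one obtains a short exact sequence of complexes
\[
0\to\Sigma^{-1}\Phi\big(R_{*-1}(\Sigma M)\big)\to R_*(M)\to\Sigma^{-1}R_*(\Sigma M)\to 0,
\]
whose first term has $\Sigma^{-1}\Phi(R_{s-1}(\Sigma M))$ in degree $s$. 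Writing $G_s=H_s(R_*(-);d)$, the associated long exact sequence reads
\[
\cdots\to\Sigma^{-1}\Phi\big(G_{s-1}(\Sigma M)\big)\to G_s(M)\to\Sigma^{-1}G_s(\Sigma M)\to\Sigma^{-1}\Phi\big(G_s(\Sigma M)\big)\to G_{s+1}(M)\to\cdots .
\]
Now induct on $s$, and for fixed $s$ induct downward on the number of suspensions applied: by \lemref{conn lemma} the complex $R_*(\Sigma^{n+k}\A_*)$ is highly connected for $k\gg 0$, so $G_s(\Sigma^{n+k}\A_*)$ vanishes in any prescribed internal degree, starting the downward induction. For $s\ge 2$ the term $\Sigma^{-1}\Phi(G_{s-1}(\Sigma M))$ vanishes by inductive hypothesis, the long exact sequence forces $G_s(M)\hookrightarrow\Sigma^{-1}G_s(\Sigma M)$, and propagating down gives $G_s(\Sigma^n\A_*)=0$. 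The case $s=1$ is the delicate one: the long exact sequence exhibits $G_1(\Sigma^n\A_*)$ as an extension of a submodule of $\Sigma^{-1}G_1(\Sigma^{n+1}\A_*)$ (handled by the downward induction) by a quotient of $\Sigma^{-1}\Phi(\Oinfty(\Sigma^{n+1}\A_*))$, namely the cokernel of the $Sq_0$--type map $\Oinfty(\Sigma^{n+1}\A_*)\to\Phi(\Oinfty(\Sigma^{n+1}\A_*))$ cut out by the $q_0$--part of $d_0$; this cokernel is $\Sigma\Omega_1(\Oinfty(\Sigma^{n+1}\A_*))$ by the four--term exact sequence $0\to\Sigma\Omega N\to N\to\Phi N\to\Sigma\Omega_1 N\to 0$ on $\U$ (the homological dual of the sequence in \cite{lz}, \cite{s}), and it vanishes since $\Oinfty(\Sigma^m\A_*)$ is the dual of a free unstable module and free unstable modules are $\Omega$--acyclic. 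Hence $G_1(\Sigma^n\A_*)=0$, completing (ii).

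The main obstacle is the $s=1$ step of (ii): one must pin down the map $\Oinfty(\Sigma^{n+1}\A_*)\to\Phi(\Oinfty(\Sigma^{n+1}\A_*))$ extracted from the $q_0$--component of $d_0$, recognize it as the homological $Sq_0$, and combine the $\Phi$/$\Omega$ structure theory (dual to Lannes--Zarati and Schwartz) with the $\Omega$--acyclicity of free unstable modules — the one place where a purely algebraic input beyond our standing knowledge of the homology of extended powers is needed. A pervasive technical nuisance is keeping track of the suspensions suppressed in the definitions of $d_s$, $\epsilon$, $\mu$ and $q_0$, and of the precise relabeling $R_s=\Sigma\mathcal R_s\Sigma^{s-1}$ that makes $H_s$ come out as $\Oinfty_sM$ on the nose rather than a shifted variant.
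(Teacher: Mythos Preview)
Your proposal is correct and follows essentially the same route as the paper: exactness of each $R_s$, the identification $H_0=\Oinfty$, and acyclicity on $\Sigma^n\A_*$ via the short exact sequence of complexes from \lemref{ses lem}, induction on $s$, and the connectivity bound. Your handling of the base case $s=1$ through $\Omega_1$--acyclicity of $\Oinfty(\Sigma^m\A_*)$ is equivalent to the paper's direct statement that $sq_0$ is surjective there (\lemref{reduced lemma}), and the identification of the boundary map with $sq_0$ that you flag as the main obstacle is exactly \lemref{boundary = sq0 lem}.
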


In the usual way, this theorem is a consequence of the next three lemmas.

\begin{lem} $R_s$ is exact for all $s$.
\end{lem}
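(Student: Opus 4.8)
The plan is to deduce exactness of $R_s = \Sigma\mathcal R_s\Sigma^{s-1}$ from exactness of $\mathcal R_s:\M\ra\M$, which holds by construction since $\mathcal R_s$ was defined in \propref{exact functor prop} as the exact extension of the homological functor $\mathcal R_s$ on $H_*(\Sp)$. Since $\Sigma$ and $\Sigma^{s-1}$ are exact autoequivalences of $\M$ (degree shifts), the composite $\Sigma\circ\mathcal R_s\circ\Sigma^{s-1}$ is exact. So at the level of this single lemma the proof is essentially immediate: a composite of exact functors is exact.

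Concretely, I would first recall that $\Sigma^k:\M\ra\M$ sends $M$ to the module with $(\Sigma^k M)_n = M_{n-k}$ and the $\A$-action shifted accordingly; this is visibly an exact functor (it is even an equivalence of categories, with inverse $\Sigma^{-k}$), so it carries short exact sequences to short exact sequences. Next I would invoke \propref{exact functor prop}(a): the functor $\mathcal R_s:\M\ra\M$ is the exact extension of the homological functor $\mathcal R_s:H_*(\Sp)\ra\M$, hence exact. Finally, given a short exact sequence $0\ra M'\ra M\ra M''\ra 0$ in $\M$, apply $\Sigma^{s-1}$ to get another short exact sequence, then $\mathcal R_s$ (still short exact, by exactness of $\mathcal R_s$), then $\Sigma$ (still short exact). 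The resulting sequence $0\ra R_sM'\ra R_sM\ra R_sM''\ra 0$ is exact, which is what we need.

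There is really no substantive obstacle here; the only thing to be careful about is that exactness of $\mathcal R_s$ is genuinely available — it is, because it was built that way via \propref{exact functor prop}, using that $\mathcal R_s$ is homological on $H_*(\Sp)$ (the splitting $\bigoplus_n \mathcal R_s(\Sigma^n\Z/2)\otimes H_n(X)\xra{\sim}\mathcal R_s(H_*(X))$ shows homologicality). One might worry that $\mathcal R_*M = \bigoplus_s \mathcal R_sM$ being a \emph{polynomial}-type functor of $M$ (quadratic when $s\geq 1$, since it involves terms like $S^2$) could obstruct exactness of the summand $\mathcal R_s$; but the point of the topological definition is precisely that each individual $\mathcal R_s$, defined as the image of $\epsilon_*$, turns out to be additive and exact, unlike the full free Dyer--Lashof functor. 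So the lemma follows formally, and I would present it in two or three sentences.

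\begin{proof}
The functor $\mathcal R_s \colon \M \ra \M$ is exact: by construction (Definitions following \lemref{conn lemma}, via \propref{exact functor prop}(a)) it is the exact extension of the homological functor $\mathcal R_s \colon H_*(\Sp) \ra \M$. The suspension functors $\Sigma \colon \M \ra \M$ and $\Sigma^{s-1} \colon \M \ra \M$ are equivalences of categories, hence exact. Therefore $R_s = \Sigma \circ \mathcal R_s \circ \Sigma^{s-1}$, being a composite of exact functors, is exact.
\end{proof}
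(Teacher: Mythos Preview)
Your argument is correct and matches the paper's treatment: the paper simply declares this lemma ``evident'' and moves on, and the reason it is evident is exactly what you wrote---$\mathcal R_s$ is exact by construction (via \propref{exact functor prop}(a)) and suspensions are exact, so their composite $R_s = \Sigma \mathcal R_s \Sigma^{s-1}$ is exact. One minor citation slip: the definition of $\mathcal R_s:\M\ra\M$ as the exact extension appears \emph{before} \lemref{conn lemma}, not after it.
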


\begin{lem} \label{R0 lemma} $H_0(R_*(M); d_*) = \Oinfty M$.
\end{lem}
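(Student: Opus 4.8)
The claim $H_0(R_*(M); d_*) = \Oinfty M$ unwinds, after stripping the single suspension hidden in the relabeling $R_s = \Sigma\mathcal R_s\Sigma^{s-1}$, to the statement that the cokernel of $d_0 \colon \mathcal R_0(\Sigma^{-1}M) \to \mathcal R_1(M)$ — equivalently, the cokernel of a map $\Sigma^{-1}M \to \mathcal R_1(\Sigma^{-1}M)$ followed by a suspension — is the largest unstable submodule $\Oinfty M$ of $M$. I would first reduce to computing $H_0$ of the two-term complex $\mathcal R_0(\Sigma^{-1}M) \xra{d_0} \mathcal R_1(M)$. Since $\mathcal R_0(N) = N$ and $d_0$ is, by the explicit formula $d_0(x) = \sum_{i\geq 0} Q^{i-1}(\sigma\, xSq^i)$, given on $\Sigma^{-1}M$ by $x \mapsto \sum_{i \geq 0} Q^{i-1}(xSq^i)$ landing in $\mathcal R_1(M)$, the task is to identify $\coker(d_0)$ inside $\mathcal R_1(M)/(\text{something})$ — but wait, $H_0$ is a \emph{sub}quotient: $H_0(R_*M;d) = \ker(d_0)$, not a cokernel, since $R_{-1}=0$. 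So the real statement is $\ker\{d_0 \colon R_0(M) \to R_1(M)\} = \Oinfty M$, i.e. $\ker\{\Sigma^{-1}M \to \mathcal R_1(\Sigma^{-1}M),\ x \mapsto \sum_i Q^{i-1}(xSq^i)\}$, suitably resuspended, is the largest unstable submodule.

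The cleanest route is to use \propref{exact functor prop}: both $M \mapsto H_0(R_*(M);d)$ and $M \mapsto \Oinfty M$ are exact-ish functors of $M$ (the former is a kernel of a natural transformation of exact functors $R_0 \to R_1$, hence left exact; the latter is left exact as the right adjoint noted in the introduction), and a natural transformation between them can be built and then checked on the injective cogenerators $IV = V \otimes \A_*$, or even on the free modules $\Sigma^n\A$ — but since we are in right modules, on the $\Sigma^n\A_*$ and their finite quotients. First I would construct the natural map $\Oinfty M \hookrightarrow M = R_0(M)$ (a single suspension of the inclusion of the largest unstable submodule of $\Sigma^{-1}M$) and verify it factors through $\ker d_0$: this amounts to the observation that on an unstable class $x$, $xSq^i = 0$ for $2i > |x|$, and for $2i \leq |x|$ the term $Q^{i-1}(xSq^i)$ vanishes by the Dyer--Lashof unstable relation $Q^j y = 0$ for $j < |y|$, since $|Q^{i-1}(xSq^i)|$-bookkeeping gives $i - 1 < |xSq^i| = |x| - i$ exactly when $2i - 1 < |x|$; the boundary case $2i = |x|$ needs the restriction $Q^{|x|}x = x^2$ being an algebra decomposable, hence primitive-trivial — this should be handled by passing to the quotient defining $\mathcal R_1$ or by the argument that $d_0$ is a coderivation landing in primitives. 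Then I would show this map $\Oinfty M \to \ker d_0$ is an isomorphism by checking it on injectives: for $M = IV = \Sigma^n\A_*$ (so $M$ is the Steenrod algebra dual, which is as far from unstable as possible in appropriate ways), compute $\ker d_0$ by brute force using the explicit basis $\{Q_{i_0}x_\alpha\}$ of $\mathcal R_1$ from the proof of \lemref{ses lem} and the fact that $d_0$ is, up to the $q_0$-splitting, injective modulo the unstable part.

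Actually the slickest version exploits \lemref{ses lem} directly: for $s = 1$ it gives a short exact sequence $0 \to \Phi(M) \xra{q_0} \mathcal R_1(M) \xra{\epsilon} \Sigma^{-1}\mathcal R_1(\Sigma M) \to 0$, and \lemref{chain maps lemma}(b) says $d_0 \colon \mathcal R_0(\Sigma^{-1}M) = \Sigma^{-1}M \to \mathcal R_1(M)$ is compatible with these; composing with $\epsilon$ one identifies the part of $d_0$ that matters. I would chase the diagram to see that $\ker d_0$ is precisely the submodule of $x \in \Sigma^{-1}M$ with $xSq^i = 0$ for all $i$ with $2i$ exceeding $|x|$ — which is the definition of the largest unstable submodule, up to the overall suspension converting $\Sigma^{-1}M$ back to $M$ and the unstable condition's index shift. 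Finally, since everything is natural and both sides are left exact, \propref{exact functor prop}(b) upgrades the check-on-injectives to a natural isomorphism on all of $\M$.

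\textbf{Main obstacle.} The genuinely delicate point is the \emph{boundary terms} in the formula $d_0(x) = \sum_{i\geq 0} Q^{i-1}(xSq^i)$: deciding exactly which $Q^{i-1}(xSq^i)$ survive in the quotient $\mathcal R_1(M) = \langle Q^I x\rangle/(\text{unstable and Adem})$ and tracking the degree inequality $2i \lessgtr |x|$ against the Dyer--Lashof instability $Q^j y = 0$ for $j<|y|$, so that one sees precisely the \emph{unstable} $\A$-module condition emerge — no more, no less. The half-step mismatch between ``$2i > |x|$'' (kills $xSq^i$ in $\Oinfty$) and ``$i-1 < |x|-i$'' (kills $Q^{i-1}(xSq^i)$ in $\mathcal R_1$) is what makes the two kernels agree exactly, and getting the $2i = |x|$ case right — where $xSq^i$ need not vanish but $Q^{i-1}(xSq^i) = Q^{|xSq^i|-1}(xSq^i)$ is \emph{just below} the squaring degree, hence zero in $\mathcal R_1$ — is the crux; I expect this to occupy most of the real work, with everything else being diagram-chasing backed by \propref{exact functor prop}.
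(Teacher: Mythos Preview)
Your proposal is correct but dramatically overcomplicated compared to the paper's proof, which is a five-line direct computation. The paper simply writes out $d_0(x) = \sum_{i\geq 0}\sigma Q^{i-1}(xSq^i)$ and observes that the terms $Q^{i-1}(xSq^i)$ are linearly independent in $\mathcal R_1 M$ for distinct $i$ (they land in distinct $Q_k$-summands, since $k = 2i-1-|x|$ determines $i$), so $d_0(x)=0$ iff each term vanishes; then $Q^{i-1}(xSq^i)=0$ holds automatically for $2i\le |x|$ by Dyer--Lashof instability, and for $2i>|x|$ forces $xSq^i=0$, which is exactly the unstable condition. That is the whole proof.

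Your ``main obstacle'' paragraph essentially contains this computation, so you already have the proof in hand --- you just don't recognize it as complete. The machinery you propose (\propref{exact functor prop}, checking on injectives $\Sigma^n\A_*$, invoking \lemref{ses lem} and \lemref{chain maps lemma}) is all unnecessary: the formula for $d_0$ and the basis of $\mathcal R_1 M$ are available for every $M$, so there is no need to reduce to special $M$. Moreover, checking on injectives would still require exactly the same termwise analysis, so the abstraction buys nothing. Two small corrections: there is no delicate boundary case at $2i=|x|$ --- there $i-1 < |x|-i$ strictly, so $Q^{i-1}(xSq^i)=0$ by instability, and the inequalities match perfectly with no half-step mismatch to worry about; and the restriction axiom $Q^{|x|}x=x^2$ plays no role whatsoever (you seem to realize this yourself a few lines later).
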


\begin{lem} \label{acyclicity lemma} For all $n \in \Z$ and $s>0$, $H_s(R_*( \Sigma^n \A_*); d_*)=0$.
\end{lem}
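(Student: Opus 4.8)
The goal is to show that the Singer complex $R_*(\Sigma^n\A_*)$ is acyclic in positive degrees, i.e.\ has homology only $\Oinfty(\Sigma^n\A_*)$ concentrated in degree $0$. Since $\A_* = H_*(H\Z/2)$, the free-standing algebraic object $\Sigma^n\A_*$ is realized topologically as $H_*(\Sigma^n H\Z/2)$, and by the topological description of $\mathcal R_s$ in \secref{top def section} the complex $R_*(\Sigma^n\A_*)$ is (up to suspension) the complex built from the groups $H_*(D_{2^s}(\Sigma^n H\Z/2))$ with differential $\delta_{s*}$. So the first move is to reinterpret the acyclicity statement topologically: the complex in question computes, via \lemref{epsilon lemma} and the explicit formula $d_s(Q^Ix)=\sum_i Q^IQ^{i-1}(\sigma xSq^i)$, the $E_1$-primitives and $d^1$-homology of the Goodwillie tower spectral sequence for $X = \Sigma^n H\Z/2$. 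The key point is that when $X$ is $0$-connected this spectral sequence converges strongly (\propref{SS prop}(a)), and we already know its whole structure.

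The core idea I would use is the short exact sequence of \lemref{ses lem},
$$ 0 \to \Phi(\mathcal R_{s-1}(M)) \xra{q_0} \mathcal R_s(M) \xra{\epsilon} \Sigma^{-1}\mathcal R_s(\Sigma M) \to 0,$$
together with the compatibility of $d_s$ with these maps from \lemref{chain maps lemma}(b). Applying this with $M = \Sigma^{n}\A_*$ (appropriately suspended) gives a short exact sequence of chain complexes
$$ 0 \to \Phi(R_*(\Sigma^{n-2}\A_*))[\,1\,] \to R_*(\Sigma^n\A_*) \to R_*(\Sigma^{n+1}\A_*) \to 0,$$
hence a long exact sequence in homology relating $H_s(R_*(\Sigma^n\A_*))$ for varying $n$ and the doubled complex. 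The plan is then to run an induction: since $\Phi$ is exact (so commutes with taking homology of complexes), and since $R_s(\Sigma^n\A_*)$ vanishes below degree roughly $2^s n$ by the connectivity estimate \lemref{conn lemma}, for fixed total degree only finitely many of the suspension-shifted complexes contribute. This finiteness lets an induction on degree, fed by the long exact sequence, collapse the higher homology.

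The step I expect to be the main obstacle is pinning down the base case and the exact bookkeeping of the induction: I need to identify $H_0$ of each $R_*(\Sigma^n\A_*)$ (this is where \lemref{R0 lemma}, i.e.\ $\Oinfty$ of a suspension of $\A_*$, enters — and one must check this is the free unstable module on a generator, with the cokernel of $q_0$ on $H_0$ matching up correctly), and then verify that the connecting maps in the long exact sequence are exactly the maps $q_0$ and $\epsilon$ induce on homology, so that the long exact sequence degenerates into the short exact sequences $0\to \Phi(\Oinfty(\Sigma^{n-2}\A_*)) \to \Oinfty(\Sigma^n\A_*)\to\Oinfty(\Sigma^{n+1}\A_*)\to 0$ forced by \lemref{ses lem} in degree $0$, with all higher terms zero. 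An alternative, cleaner route — which I would pursue if the bookkeeping gets unwieldy — is to appeal directly to the strong convergence of the topological spectral sequence for $X=\Sigma^n H\Z/2$ with $n>0$: there the $E^\infty$ primitives on the $-2^s$ line are $L_s H_*(\Sigma^n H\Z/2)$, and since $H_*(\Sigma^n H\Z/2)=\Sigma^n\A_*$ is already unstable for $n>0$ one has $L_s(\Sigma^n\A_*)=0$ for $s>0$ while the $d^r$ for $r>1$ vanish for connectivity/primitive-degree reasons, forcing $d^1$-homology to vanish in positive filtration; the general $n$ follows by suspension/desuspension using the $\Phi$-exact sequence again. Either way the engine is \lemref{ses lem} plus exactness of $\Phi$; only the organization of the induction differs.
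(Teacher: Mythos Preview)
Your main route is the paper's argument: the short exact sequence of chain complexes coming from \lemref{ses lem} and \lemref{chain maps lemma}(b) (packaged as \propref{ses prop}) gives a long exact sequence in homology, and connectivity (\lemref{conn lemma}) then closes an induction. Two things need fixing. First, your displayed short exact sequence has a wrong index: \propref{ses prop} reads $0 \to \Phi(R_{*-1}(\Sigma M)) \to \Sigma R_*(M) \to R_*(\Sigma M) \to 0$, so with $M=\Sigma^n\A_*$ the subcomplex is $\Phi(R_{*-1}(\Sigma^{n+1}\A_*))$, not $\Phi(R_*(\Sigma^{n-2}\A_*))$ --- the $\Phi$ term carries the \emph{same} suspension as the quotient. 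Second, and more importantly, the step that actually makes the induction go is missing from your outline: one must identify the connecting homomorphism $H_0(\Sigma M)\to\Phi(H_0(\Sigma M))$ as $sq_0$ (this is \lemref{boundary = sq0 lem}) and then use that $sq_0$ is \emph{onto} on $\Oinfty(\Sigma^n\A_*)$ for every $n$ --- dually, $Sq_0$ is monic on the free unstable module $F(n)$ (\lemref{reduced lemma}). This surjectivity is exactly what forces $\epsilon_*\colon\Sigma H_s(\Sigma^n\A_*)\to H_s(\Sigma^{n+1}\A_*)$ to be injective, first for $s=1$ and then by induction on $s$; the connectivity bound then kills $H_s$, since iterating $\epsilon_*$ embeds $H_s(\Sigma^n\A_*)$ into modules of arbitrarily high connectivity. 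Your phrases ``free unstable module on a generator'' and ``cokernel of $q_0$ matching up'' are in the right neighborhood but do not isolate this input; without it the long exact sequence alone does not start the induction.

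Your alternative route fails as stated. The claim that $\Sigma^n\A_*$ is unstable for $n>0$ is false: $\Oinfty(\Sigma^n\A_*)$ is dual to $F(n)$, a \emph{proper} quotient of $\Sigma^n\A$, so $\Oinfty(\Sigma^n\A_*)\subsetneq\Sigma^n\A_*$ (concretely, $\xi_1^{d}\,Sq^{d}\neq 0$ in $\A_*$, so unstability fails in $\Sigma^n\A_*$ once $d>n$). Moreover, any argument that passes through $L_sH_*(X)$ or through the identification $H_s(R_*M)\simeq\Oinfty_sM$ is circular here: that identification is \thmref{Rs thm}, and \lemref{acyclicity lemma} is precisely one of the three lemmas assembled to prove it.
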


The first of these lemmas is evident, and we quickly check the second.
\begin{proof}[Proof of \lemref{R0 lemma}]
We need to compute the kernel of $d_0: M \ra \Sigma \mathcal R_1M$, and we recall that
$$d_0(x) = \sum_{i\geq 0} \sigma Q^{i-1}(xSq^i).$$
Then
\begin{equation*}
\begin{split}
x \in \ker(d_0) &
\Leftrightarrow Q^{i-1}(xSq^i)=0 \text{ for all } i\geq 0 \\
  & \Leftrightarrow xSq^i=0 \text{ whenever } i-1\geq |x|-i \\
  & \Leftrightarrow xSq^i=0 \text{ whenever } 2i > |x|. \\
  &  \Leftrightarrow x \in \Oinfty M.
\end{split}
\end{equation*}
\end{proof}

The proof of \lemref{acyclicity lemma} will take a bit of preparation.  Firstly, \lemref{ses lem} and \lemref{chain maps lemma}(b) combine to tell us the following.

\begin{prop} \label{ses prop}  $ 0 \ra \Phi(R_{*-1}(\Sigma M)) \xra{q_0} \Sigma R_*(M)  \xra{\epsilon} R_*(\Sigma M) \ra 0$ is a short exact sequence of chain complexes.
\end{prop}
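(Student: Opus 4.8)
The plan is to obtain this purely formally from \lemref{ses lem} and \lemref{chain maps lemma}(b); the only real content is unwinding the suspensions hidden in the relabeling $R_s = \Sigma\mathcal R_s\Sigma^{s-1}$.

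First I would check degreewise short exactness. For $s>0$, substitute $\Sigma^{s-1}M$ for $M$ in \lemref{ses lem} and apply the exact functor $\Sigma^2(-)$ to the resulting short exact sequence
$$ 0 \ra \Phi(\mathcal R_{s-1}(\Sigma^{s-1}M)) \xra{q_0} \mathcal R_s(\Sigma^{s-1}M) \xra{\epsilon} \Sigma^{-1}\mathcal R_s(\Sigma^s M) \ra 0.$$
Using the definition of $R_s$, the middle term becomes $\Sigma R_s(M) = \Sigma^2\mathcal R_s(\Sigma^{s-1}M)$ and the right term becomes $R_s(\Sigma M) = \Sigma\mathcal R_s(\Sigma^s M)$; using in addition $\Phi(\Sigma N) = \Sigma^2\Phi(N)$, the left term becomes $\Phi(R_{s-1}(\Sigma M)) = \Phi(\Sigma\mathcal R_{s-1}(\Sigma^{s-1}M)) = \Sigma^2\Phi(\mathcal R_{s-1}(\Sigma^{s-1}M))$. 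Thus in each degree $s>0$ the sequence of the proposition is exactly the $\Sigma^2$-suspension of \lemref{ses lem}, hence short exact. In degree $s=0$ both outer terms vanish (since $R_{-1}=0$), and the middle map is $\epsilon\colon \Sigma R_0(M)\to R_0(\Sigma M)$, which is the identity on $\Sigma M$ by the remark following \lemref{mu e lemma}; so this degree is fine too.

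It then remains to see that $q_0$ and $\epsilon$ are maps of chain complexes, i.e.\ commute with the differentials $d_s$. This is exactly the content of \lemref{chain maps lemma}(b): after the same substitution of $\Sigma^{s-1}M$ for $M$ and the same $\Sigma^2$-suspension, the left square of that lemma says $q_0$ is a chain map from $\Phi(R_{*-1}(\Sigma M))$ to $\Sigma R_*(M)$, and the right square says $\epsilon$ is a chain map from $\Sigma R_*(M)$ to $R_*(\Sigma M)$. I do not foresee a genuine obstacle here; the only point requiring care is the indexing shift, namely that the degree-$s$ piece of the complex $\Phi(R_{*-1}(\Sigma M))$ is $\Phi(R_{s-1}(\Sigma M))$ and so carries the differential $\Phi(d_{s-1})$ — which is precisely the vertical map appearing in \lemref{chain maps lemma}(b).
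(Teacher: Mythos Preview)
Your proposal is correct and follows exactly the paper's approach: the paper's proof is simply the sentence ``\lemref{ses lem} and \lemref{chain maps lemma}(b) combine to tell us the following,'' and you have spelled out the suspension bookkeeping that makes this precise. There is nothing to add.
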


Temporarily, let $H_s(M) = H_s(R_*(M); d_*)$.  The short exact sequence of \propref{ses prop} induces a long exact sequence
\begin{multline*}
0 \ra \Sigma H_0(M) \xra{\epsilon_*} H_0(\Sigma M) \xra{\partial} \Phi(H_0(\Sigma M)) \xra{q_0} \Sigma H_1(M) \ra \dots \\
\dots \ra H_{s-1}(\Sigma M) \xra{\partial} \Phi(H_{s-1}(\Sigma M)) \xra{q_0} \Sigma H_s(M) \xra{\epsilon_*} H_s(\Sigma M) \ra \dots
\end{multline*}

We need to identify the first boundary map.

\begin{lem} \label{boundary = sq0 lem} $H_0(\Sigma M) \xra{\partial} \Phi(H_0(\Sigma M))$ identifies with the map  $$\Oinfty(\Sigma M) \xra{sq_0} \Phi(\Oinfty(\Sigma M)).$$
\end{lem}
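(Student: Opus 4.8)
The claim is that the connecting map $\partial\colon H_0(\Sigma M)\to\Phi(H_0(\Sigma M))$ of the long exact sequence coming from the short exact sequence of chain complexes in \propref{ses prop} is exactly the map $sq_0\colon\Oinfty(\Sigma M)\to\Phi(\Oinfty(\Sigma M))$. Since \lemref{R0 lemma} identifies $H_0(R_*(N);d)$ with $\Oinfty N$ for any $N$, both source and target are already pinned down: $H_0(\Sigma M)=\Oinfty(\Sigma M)$ and, because $\Phi$ is exact (\lemref{chain maps lemma} environment, part (a) of the $\Phi$-properties lemma) and commutes with taking $H_0$ here, $\Phi(H_0(\Sigma M))=\Phi(\Oinfty(\Sigma M))$. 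So the content is purely a matter of chasing the snake-lemma boundary map through the explicit description of $q_0$, $\epsilon$, and the differential $d_0$.

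\textbf{Steps.} First I would write out the degree-$0$ part of the short exact sequence of complexes from \propref{ses prop}:
$$
0\to \Phi(R_{-1}(\Sigma M))\to \Sigma R_0(M)\xra{\epsilon} R_0(\Sigma M)\to 0,
$$
and unwind the relabelling $R_s=\Sigma\mathcal R_s\Sigma^{s-1}$: here $R_0(M)=\Sigma M$, so $\Sigma R_0(M)=\Sigma^2 M$, $R_0(\Sigma M)=\Sigma^2 M$ as well, and $\Phi(R_{-1}(\Sigma M))$ is $\Phi$ of the $s=1$ term shifted, which by $\Phi(\Sigma N)=\Sigma^2\Phi(N)$ rewrites as $\Sigma^2\Phi(M)$ (this is exactly the edge of the sequence where \lemref{ses lem} with $s=1$ applies). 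Next, to compute $\partial$ of a class $\bar x\in H_0(R_*(\Sigma M))=\Oinfty(\Sigma M)$ represented by $x\in R_0(\Sigma M)$: lift $x$ along $\epsilon$ to $\tilde x\in\Sigma R_0(M)$, apply the differential $\Sigma d_0$ to get an element of $\Sigma R_1(M)$ that lies in the image of $q_0$, and pull it back to $\Phi(R_0(\Sigma M))=\Phi(\Oinfty(\Sigma M))$. The one ingredient that makes this concrete is the explicit formula $d_0(x)=\sum_{i\ge 0}\sigma Q^{i-1}(xSq^i)$ together with the description of $q_0$ as $\phi(y)\mapsto Q^{|y|}y$ (followed by $\mu$, which is the identity in this low range). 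For $x$ with $2|x|=|x|$-type unstable behaviour — i.e. $x$ actually in $\Oinfty(\Sigma M)$, say $|x|=2n$ — the only term of $d_0(\tilde x)$ that survives into the image of $q_0$ is the top one $\sigma Q^{n-1}(xSq^n)$, which under the identification $q_0(\phi(z))=Q^{|z|}z$ corresponds to $\phi(xSq^n)$; and $\phi(xSq^n)$ is precisely $sq_0(x)$ by the definition of $sq_0$ in the $\Phi$-properties lemma. That matches the claimed identification. I would also invoke \lemref{chain maps lemma}(b) to guarantee that $\epsilon$ and $q_0$ genuinely commute with the $d_s$, so that the snake-lemma boundary is well defined on homology, and \lemref{R0 lemma} (applied to both $M$ and $\Sigma M$, and, via exactness of $\Phi$, to $\Phi$ of these) to justify all the identifications of $H_0$'s.

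\textbf{Main obstacle.} The routine bookkeeping is in keeping the suspensions $\Sigma$, the shifts $\Sigma^{s-1}$ hidden inside $R_s$, and the doubling $\Phi(\Sigma M)=\Sigma^2\Phi(M)$ all consistent, so that ``the top Dyer--Lashof term'' really is the one in the image of $q_0$ and really does land on $\phi(xSq^n)$ rather than, say, $\phi(xSq^{n})$ shifted by one. The genuine mathematical point — and the only place where something could go wrong — is verifying that when $x$ represents a class in $\Oinfty(\Sigma M)$ all the lower terms $Q^{i-1}(xSq^i)$ with $i<n$ vanish after projecting to $\Sigma^{-1}\mathcal R_1(\Sigma M)$ (so they don't interfere) while the $i=n$ term is exactly what $q_0^{-1}$ picks out; this is essentially the computation already done in the proof of \lemref{R0 lemma} and in \lemref{ses lem}, so I expect it to go through cleanly. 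I would present the argument as a short diagram chase rather than a long calculation, citing \lemref{R0 lemma}, \lemref{ses lem}, and \lemref{chain maps lemma}(b) for the structural inputs and the definitions of $d_0$, $q_0$, $sq_0$ for the explicit matching.
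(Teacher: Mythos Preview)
Your approach is the paper's: chase the snake-lemma boundary through the short exact sequence of \propref{ses prop}, using the explicit formula for $d_0$ and the definitions of $q_0$ and $sq_0$. The paper's entire proof is the single computation
$$(\Sigma d_0)(\sigma x) = \sigma d_0(x) = \sigma Q^{n-1}(xSq^n) = q_0(\phi(\sigma xSq^n)) = q_0(sq_0(\sigma x)),$$
for $\sigma x\in\Oinfty(\Sigma M)$ with $|\sigma x|=2n$.

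One point in your outline needs correction. You say the terms $Q^{i-1}(xSq^i)$ with $i<n$ ``vanish after projecting to $\Sigma^{-1}\mathcal R_1(\Sigma M)$'', i.e.\ lie in $\ker\epsilon$. That is true but does not do the job: $\ker\epsilon=\im q_0$, so if those terms were merely in $\im q_0$ they would contribute nontrivially to the $q_0$-preimage and spoil the identification $\partial=sq_0$. What actually happens is stronger and simpler: for $i<n$ one has $i-1<|xSq^i|=2n-1-i$, so $Q^{i-1}(xSq^i)=0$ already in $\mathcal R_1 M$ by the unstable condition for Dyer--Lashof operations; and for $i>n$ one has $xSq^i=0$ because $\sigma x\in\Oinfty(\Sigma M)$. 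Thus $d_0(x)$ genuinely equals the single term $\sigma Q^{n-1}(xSq^n)$, whose $q_0$-preimage is manifestly $\phi(\sigma xSq^n)=sq_0(\sigma x)$. With this fix your argument is complete and matches the paper's. (Also a bookkeeping slip: $R_0(M)=\Sigma\mathcal R_0\Sigma^{-1}M=M$, not $\Sigma M$; this is harmless since the relevant $\epsilon$ in degree~$0$ is still an isomorphism.)
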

\begin{proof}  If $\sigma x \in \Oinfty(\Sigma M)$ has $|\sigma x|=2n$, then we have the correspondence, under the maps $\Sigma M \xra{\Sigma d_0} \Sigma R_1(M) \xla{q_0} \Phi(\Sigma M)$,
$$(\Sigma d_0)(\sigma x) = \sigma d_0(x) = \sigma Q^{n-1}(xSq^n) = q_0(\phi(\sigma xSq^n)) = q_0(sq_0(\sigma x)).$$
Thus $\partial(\sigma x) = sq_0(\sigma x)$.
\end{proof}

In dual form, the following lemma corresponds to the familiar fact that the map $Sq_0: F(n) \ra F(n)$, sending $x$ to $Sq^{|x|}x$, is monic.  Here $F(n)$ is the free unstable $\A$--module on an $n$--dimensional class.

\begin{lem} \label{reduced lemma} For all $n \in \Z$, $\Oinfty(\Sigma^n A_*) \xra{sq_0} \Phi(\Oinfty(\Sigma^n A_*))$ is onto.
\end{lem}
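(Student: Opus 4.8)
The plan is to prove \lemref{reduced lemma} by dualizing it to a statement about the cohomological free unstable module $F(n)$, where it becomes the well-known fact that $Sq_0 \colon x \mapsto Sq^{|x|}x$ is injective on the subspace of $F(n)$ spanned by admissibles with excess exactly $n$. Concretely, $\Oinfty(\Sigma^n\A_*)$ is dual to the largest unstable \emph{quotient} of $\Sigma^n\A$, which is precisely $F(n)$, the free unstable $\A$--module on a generator $\iota_n$ of degree $n$. Under this duality the map $sq_0 \colon \Oinfty(\Sigma^n\A_*) \to \Phi(\Oinfty(\Sigma^n\A_*))$ is dual to a map $\Phi(F(n)) \to F(n)$, and by Lemma (d) on $sq_0$ (applied in the cohomological setting, cf.\ \cite[p.26]{s}) this dual map is exactly the $\A$--linear map sending $\phi(y)$ to $Sq^{|y|}y = Sq_0(y)$. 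So ``$sq_0$ onto'' is equivalent to ``$Sq_0 \colon \Phi(F(n)) \to F(n)$ is a split monomorphism'', and since everything is finite type it suffices to show $Sq_0$ is a monomorphism onto a direct summand — or, working one degree at a time with finite-dimensional vector spaces, simply that it is injective with the right dimension count.

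**First** I would recall the standard admissible basis of $F(n)$: it consists of the classes $Sq^I\iota_n = Sq^{i_1}\cdots Sq^{i_k}\iota_n$ with $I = (i_1,\dots,i_k)$ admissible (meaning $i_j \geq 2i_{j+1}$) and of excess $e(I) = i_1 - (i_2 + \cdots + i_k) \leq n$. **Next** I would recall the key identity in $\A$: for an admissible $I$ with excess $e(I) \leq n$, one has the Cartan-type formula $Sq^{|Sq^I\iota_n|}(Sq^I\iota_n) = Sq^{2I}\iota_{2n}$ after doubling, or more precisely the statement — going back to the structure of $F(n)$ — that $Sq_0$ sends the basis element indexed by $I$ (with $e(I) \le n$) in $\Phi(F(n))$ to $Sq^{J}\iota_n$ where $J = (2i_1 + |Sq^{I'}\iota_n| \text{-shifted}, \ldots)$, and that this assignment is a bijection onto the subset of admissibles $J$ of excess \emph{exactly} $n$ after one observes that doubling an excess-$\le n$ admissible and prepending the appropriate top operation produces exactly an excess-$n$ admissible, and conversely every excess-$n$ admissible arises this way. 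The cleanest route is the classical one: $F(n) \cong \Phi(F(n)) \oplus \Sigma F(n-1)$ as $\A$--modules (the image of $Sq_0$ is the $\Phi(F(n))$ summand, with complement spanned by admissibles whose leading term is strictly less than forced), which immediately gives that $Sq_0$ is a split mono and hence dually that $sq_0$ is (split) epi.

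**The main obstacle** is bookkeeping the excess condition correctly under the doubling functor $\Phi$ and the shift $\Sigma^n$: one must verify that $\Oinfty(\Sigma^n\A_*)^\#$ is $F(n)$ and not something off by a suspension, and that the $\Phi$ appearing in the lemma matches the $\Phi$ in the splitting $F(n) \cong \Phi F(n) \oplus \Sigma F(n-1)$ under the degree conventions of this paper (where $\Phi(\Sigma M) = \Sigma^2\Phi(M)$, per Lemma (b)). Once the identification $\Oinfty(\Sigma^n\A_*) \cong F(n)^\#$ and $sq_0^\# = Sq_0$ is pinned down, the surjectivity is immediate from the well-known $\A$--module splitting of $F(n)$, so I would spend essentially all the care on the duality/grading setup and then invoke the splitting. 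An alternative, fully self-contained argument avoids the splitting: argue by induction on $n$ using the short exact sequence $0 \to \Sigma F(n-1) \to F(n) \to \Phi(F(n)) \to 0$ (equivalently, its dual) together with the fact that $Sq_0$ is compatible with these sequences, reducing surjectivity in degree $2n$ to a dimension count that is forced by the admissible basis.
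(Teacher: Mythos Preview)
Your proposal is correct and takes essentially the same approach as the paper: the paper offers no proof beyond the sentence preceding the lemma, which says that in dual form the statement is exactly the familiar fact that $Sq_0: F(n) \to F(n)$, $x \mapsto Sq^{|x|}x$, is monic. Your write-up simply fills in the details of this duality and of why $Sq_0$ is monic (via the admissible basis and the splitting $F(n) \cong \Phi F(n) \oplus \Sigma F(n-1)$); note that since everything is of finite type over $\Z/2$, injectivity of $Sq_0$ alone already dualizes to surjectivity of $sq_0$, so the ``split mono'' and ``dimension count'' remarks are unnecessary.
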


We are finally ready to prove \lemref{acyclicity lemma}.  The proof is dual to the proof of \cite[Prop.9.4.1]{powell}.

\begin{proof}[Proof of \lemref{acyclicity lemma}]  By induction on $s\geq 1$, we prove that $H_s(\Sigma^n A_*) = 0$.  In all cases, we consider the exact sequence
$$ H_{s-1}(\Sigma^{n+1} A_*) \xra{\partial} \Phi(H_{s-1}(\Sigma^{n+1} A_*)) \xra{q_0} \Sigma H_s(\Sigma^n A_*) \xra{\epsilon_*} H_s(\Sigma^{n+1} A_*).$$
In the initial case when $s=1$, the previous two lemmas show that $\partial$ is onto.  If $s>1$, then, under the inductive hypothesis, $\Phi(H_{s-1}(\Sigma^{n+1} A_*)) = 0$.  Thus, in all cases, we can conclude that $\Sigma H_s(\Sigma^n A_*) \xra{\epsilon_*} H_s(\Sigma^{n+1} A_*)$ is monic for all $n$.  But, by \lemref{conn lemma}, the connectivity of $\Sigma^{-m}H_s(\Sigma^{m+n}\A_*)$ is at least $(2^s-1)m+2^s(n+s-1)$, and so goes to infinity as $m$ goes to infinity.
\end{proof}

\subsection{First consequences}

\thmref{Rs thm: 2}, when combined with \propref{ses prop} and \lemref{boundary = sq0 lem}, implies the following.

\begin{cor} \label{les cor 2}  For all $M \in \M$, there is a natural long exact sequence
\begin{multline*}
0 \ra \Sigma \Oinfty_0(M) \xra{\epsilon_*} \Oinfty_0(\Sigma M) \xra{sq_0} \Phi(\Oinfty_0(\Sigma M)) \xra{q_0} \Sigma \Oinfty_1(M) \ra \dots \\
\dots \ra \Oinfty_{s-1}(\Sigma M) \xra{sq_0} \Phi(\Oinfty_{s-1}(\Sigma M)) \xra{q_0} \Sigma \Oinfty_s(M) \xra{\epsilon_*} \Oinfty_s(\Sigma M) \ra \dots
\end{multline*}
\end{cor}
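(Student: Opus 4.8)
The plan is essentially homological algebra: feed the short exact sequence of chain complexes of \propref{ses prop},
$$ 0 \ra \Phi(R_{*-1}(\Sigma M)) \xra{q_0} \Sigma R_*(M) \xra{\epsilon} R_*(\Sigma M) \ra 0, $$
into the long exact homology sequence and then identify the terms and maps that appear; this long exact sequence is already the one recorded in the excerpt just before \lemref{boundary = sq0 lem}, written there in the temporary notation $H_s(-)=H_s(R_*(-);d_*)$, so the real content of the corollary is to name its terms and its connecting maps. Since the short exact sequence is natural in $M$ (the maps $q_0$ and $\epsilon$ being natural transformations), the resulting long exact sequence is automatically natural in $M$, which disposes of the naturality clause once the identifications below are in place.

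First I would identify the homology of the three complexes. The isomorphism $H_s(R_*(M);d_*)\simeq\Oinfty_s(M)$ of \thmref{Rs thm: 2} is natural in $M$, so, combined with the exactness of suspension, it gives $H_s(\Sigma R_*(M);d_*)\simeq\Sigma\Oinfty_s(M)$ and $H_s(R_*(\Sigma M);d_*)\simeq\Oinfty_s(\Sigma M)$ (the latter an unstable module, so that $sq_0$ is defined and $\A$--linear on it). For the third complex, $\Phi$ is exact and hence commutes with the formation of homology; accounting for the single degree shift built into ``$R_{*-1}$'', $H_s(\Phi(R_{*-1}(\Sigma M));d_*)\simeq\Phi(H_{s-1}(R_*(\Sigma M);d_*))\simeq\Phi(\Oinfty_{s-1}(\Sigma M))$. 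Since all three complexes are concentrated in nonnegative homological degrees and $\Oinfty_{-1}=0$, the long exact sequence begins $0\ra\Sigma\Oinfty_0(M)\xra{\epsilon_*}\Oinfty_0(\Sigma M)\ra\cdots$, and the maps induced on homology by the chain maps $q_0$ and $\epsilon$ are exactly the arrows labelled $q_0$ and $\epsilon_*$ in the statement.

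The one point that is not pure bookkeeping --- and the step I expect to be the main obstacle --- is the identification of the connecting homomorphism $\partial\colon\Oinfty_s(\Sigma M)\ra\Phi(\Oinfty_s(\Sigma M))$ with $sq_0$. For $s=0$ this is exactly \lemref{boundary = sq0 lem}. For $s>0$ I would run the analogous chain-level argument: given an even-degree cocycle $z\in R_s(\Sigma M)$ of degree $2n$, lift it along the surjection $\epsilon$ to some $\tilde z\in\Sigma R_s(M)$; then $\epsilon(d\tilde z)=d(\epsilon\tilde z)=dz=0$ puts $d\tilde z$ in the image of $q_0$, say $d\tilde z=q_0(w)$, and the uniform formulae $d_s(Q^Ix)=\sum_{i\geq0}Q^IQ^{i-1}(\sigma xSq^i)$ and $q_0(\phi(u))=Q^{|u|}u$, together with the instability relations, should identify $[w]$ with $[\phi(zSq^{n})]=sq_0[z]$; on odd-degree classes both $\partial$ and $sq_0$ vanish for degree reasons. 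The subtlety absent when $s=0$ is that the cocycle condition in $R_s(\Sigma M)$ now involves the nontrivial Adem relations in the higher $\mathcal R$'s, so one must check that the terms of $d\tilde z$ other than $q_0(\phi(zSq^{n}))$ die in $\Phi(\Oinfty_s(\Sigma M))$; alternatively one might reduce the degree-$s$ connecting map to the degree-$0$ case using the $\mu$--compatibility of \lemref{chain maps lemma}(a). Everything preceding this step is a formal application of the long exact homology sequence of a short exact sequence of complexes, using only \thmref{Rs thm: 2}, the exactness of $\Phi$, and the exactness of suspension.
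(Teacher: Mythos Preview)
Your outline matches the paper's one-sentence proof: the long exact sequence is the homology long exact sequence of the short exact sequence of complexes in \propref{ses prop}, with the three homology groups identified via \thmref{Rs thm: 2} together with the exactness of $\Sigma$ and $\Phi$.

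You are right that identifying the connecting map $\partial_s$ with $sq_0$ for $s>0$ is the one point requiring care, and the paper does not spell it out either. But the chain-level computation you sketch is harder than necessary and, as you yourself note, entangles the Adem and Nishida relations nontrivially. The clean argument---and almost certainly the one intended, since the paper invokes only the $s=0$ case \lemref{boundary = sq0 lem}---is the universal $\delta$-functor argument already underlying the proof of \thmref{Rs thm: 2}. Since $R_s$, $\Sigma$, and $\Phi$ are all exact, both $(\Oinfty_s(\Sigma-))_s$ and $(\Phi\Oinfty_s(\Sigma-))_s$ are $\delta$-functors on $\M$, and the former is universal (effaceable). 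The connecting map $\{\partial_s\}$ is a morphism of $\delta$-functors by the standard naturality of boundaries; since $sq_0\colon N\to\Phi N$ is a natural transformation of functors on $\U$, applying it to each $\Oinfty_s(\Sigma-)$ gives another morphism of $\delta$-functors. A morphism out of a universal $\delta$-functor is determined by its degree-$0$ component, and \lemref{boundary = sq0 lem} says these agree there, so $\partial_s=sq_0$ for all $s$. This bypasses any explicit manipulation of $Q^I$-monomials.
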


\begin{rem}  This long exact sequence already appears (in dual form) in \cite[\S 4.1]{lz}. One observes that, if $M \ra I_*(M)$ is an injective resolution in $\M$, then so is $\Sigma M \ra \Sigma I_*(M)$, and
$$ 0 \ra \Sigma \Oinfty(I_*(M)) \xra{\epsilon} \Oinfty(\Sigma I_*(M)) \xra{sq_0} \Phi(\Oinfty(\Sigma I_*(M))) \ra 0$$
is short exact.  This short exact sequence of chain complexes then induces the long exact sequence of the corollary.

It is amusing that $sq_0$ identifies with the boundary map in our derivation, while $q_0$ identifies with the boundary map in the Lannes--Zarati approach.
\end{rem}

Next we note that \lemref{conn lemma} implies the following general connectivity estimate.

\begin{cor}  If $M \in \M$ is $n$--connected, then $\Oinfty_s(M)$ is at least $2^s(n+s)$--connected. For all $M \in \M$, $\displaystyle \colim_n \Sigma^{-n} \Oinfty_s(\Sigma^n M) = 0$ for all $s \geq 1$.
\end{cor}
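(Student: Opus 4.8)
The plan is to read off both statements from \lemref{conn lemma} together with the identification $\Oinfty_sM \simeq H_s(R_*(M);d_*)$ of \thmref{Rs thm: 2}, in which $R_s = \Sigma\mathcal R_s\Sigma^{s-1}$.

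First I would dispatch the connectivity estimate. If $M$ is $n$--connected, then $\Sigma^{s-1}M$ is $(n+s-1)$--connected, so \lemref{conn lemma} shows $\mathcal R_s(\Sigma^{s-1}M)$ is $(2^s(n+s)-1)$--connected, hence $R_sM$ is $2^s(n+s)$--connected. Since $\Oinfty_sM$ is a subquotient of $R_sM$ (being a homology group of the complex $R_*(M)$), it is at least $2^s(n+s)$--connected. This step is purely formal.

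Next I would treat the colimit statement under the extra hypothesis that $M$ is bounded below, say $n$--connected. Then $\Sigma^mM$ is $(n+m)$--connected, so by the first part $\Sigma^{-m}\Oinfty_s(\Sigma^mM)$ is $((2^s-1)m+2^s(n+s))$--connected. Because $s\geq 1$, this connectivity bound grows without bound as $m\to\infty$, so in each fixed degree the terms of the colimit system eventually vanish; since the colimit is filtered it is therefore zero, irrespective of what the transition maps are.

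Finally I would remove the boundedness hypothesis by a compact-support argument. Given a class in $\colim_m\Sigma^{-m}\Oinfty_s(\Sigma^mM)$, represent it by a cycle in some $R_s(\Sigma^NM)=\Sigma\mathcal R_s(\Sigma^{N+s-1}M)$; only finitely many elements of $M$ appear in that cycle, so it lies in $R_s(\Sigma^NM')$ for some finitely generated --- hence finite-dimensional, hence bounded below --- submodule $M'\subseteq M$. Exactness of $R_*$ makes $R_*(\Sigma^NM')$ a subcomplex of $R_*(\Sigma^NM)$, so the cycle represents a class over $M'$ mapping to the given one, and by the bounded-below case that class vanishes in $\colim_m\Sigma^{-m}\Oinfty_s(\Sigma^mM')$; naturality of the suspension maps $\epsilon_*$ in the variable $M$ then forces the original class to vanish. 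As the class was arbitrary, the colimit is $0$. I expect this last reduction to be the only point that is not completely routine, since a module in $\M$ need not be bounded below and one has to be a little careful that the chosen class genuinely descends to a finitely generated submodule and that the two colimit systems match up; the connectivity input itself is immediate.
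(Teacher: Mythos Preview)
Your argument is correct and follows precisely the route the paper intends: the paper merely records that the corollary follows from \lemref{conn lemma} (combined implicitly with \thmref{Rs thm: 2}), and your first two steps are exactly the intended elaboration. Your third step, reducing an arbitrary $M\in\M$ to a bounded-below (indeed finite) submodule via local finiteness and exactness of $R_*$, supplies a detail the paper leaves implicit; it is sound and exactly what one would do to make the second assertion honest for modules not bounded below.
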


The reasoning we gave in the proof of \lemref{acyclicity lemma} then proves the following useful criterion for the vanishing of the higher derived functors.

\begin{prop} \label{vanishing prop}  If $sq_0:\Oinfty(\Sigma^n M) \ra \Phi(\Oinfty(\Sigma^n M))$ is onto for all $n \geq 1$, then $\Oinfty_s(M) = 0$ for all $s \geq 1$.
\end{prop}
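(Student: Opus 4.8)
If $sq_0 \colon \Oinfty(\Sigma^n M) \to \Phi(\Oinfty(\Sigma^n M))$ is onto for all $n \geq 1$, then $\Oinfty_s(M) = 0$ for all $s \geq 1$.

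The plan is to run exactly the bootstrap argument used in the proof of \lemref{acyclicity lemma}, but starting from the surjectivity hypothesis rather than from \lemref{reduced lemma}. First I would fix attention on the long exact sequence of \corref{les cor 2} applied to $\Sigma^n M$ for $n \geq 1$:
$$ \cdots \to \Oinfty_{s-1}(\Sigma^{n+1} M) \xra{sq_0} \Phi(\Oinfty_{s-1}(\Sigma^{n+1} M)) \xra{q_0} \Sigma \Oinfty_s(\Sigma^n M) \xra{\epsilon_*} \Oinfty_s(\Sigma^{n+1} M) \to \cdots .$$
I would prove, by induction on $s \geq 1$, that $\Oinfty_s(\Sigma^n M) = 0$ for all $n \geq 1$. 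The key observation in every inductive step is that the map $q_0$ in the displayed sequence has image zero: in the base case $s=1$ this is because the hypothesis forces $sq_0 \colon \Oinfty(\Sigma^{n+1}M) \to \Phi(\Oinfty(\Sigma^{n+1}M))$ to be onto (here $\Oinfty_0 = \Oinfty$); in the case $s>1$ it is because, by the inductive hypothesis, $\Oinfty_{s-1}(\Sigma^{n+1}M) = 0$, so the group $\Phi(\Oinfty_{s-1}(\Sigma^{n+1}M))$ feeding into $q_0$ is itself zero. Either way, exactness then shows that $\epsilon_* \colon \Sigma \Oinfty_s(\Sigma^n M) \to \Oinfty_s(\Sigma^{n+1} M)$ is a monomorphism for every $n \geq 1$.

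With injectivity of the suspension maps in hand, I would finish by the same connectivity argument as in \lemref{acyclicity lemma}. Iterating the monomorphisms gives, for each $m \geq 0$, an injection $\Sigma^m \Oinfty_s(\Sigma^n M) \hookrightarrow \Oinfty_s(\Sigma^{n+m} M)$, equivalently an injection $\Oinfty_s(\Sigma^n M) \hookrightarrow \Sigma^{-m} \Oinfty_s(\Sigma^{n+m} M)$. But by \lemref{conn lemma} (applied to $\mathcal R_s$, of which $\Oinfty_s$ is a subquotient after a shift — or, more directly, by the connectivity estimate for $\Oinfty_s$ noted in the corollary following \corref{les cor 2}), the connectivity of $\Sigma^{-m} \Oinfty_s(\Sigma^{n+m} M)$ grows without bound as $m \to \infty$, since $s \geq 1$. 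A graded vector space that injects into arbitrarily highly connected spaces is zero, so $\Oinfty_s(\Sigma^n M) = 0$; taking $n$ as small as the induction allows (one may simply run it for all $n \geq 1$ and then note that $M$ itself, being a summand situation, is covered, or absorb $M$ into the $n=1$ case by replacing $M$ with $\Sigma^{-1}$ of something — in fact the statement for $n=0$ follows from the $n=1$ case via the long exact sequence one more time, since $q_0$ into $\Sigma\Oinfty_s(M)$ lands in $\Phi(\Oinfty_{s-1}(\Sigma M)) = 0$ for $s \geq 2$ and is zero by hypothesis for $s=1$). This yields $\Oinfty_s(M) = 0$ for all $s \geq 1$.

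The main obstacle is purely bookkeeping: one must be careful that the hypothesis is only assumed for $n \geq 1$, so the induction and the final connectivity sweep must be organized to never require surjectivity of $sq_0$ on $\Oinfty(M)$ itself, and the case $s = 1$, $n = 0$ must be recovered from the long exact sequence after the $n \geq 1$ cases are settled. There is no genuinely hard step — the proof is essentially a transcription of the argument for \lemref{acyclicity lemma} with \lemref{reduced lemma} replaced by the standing hypothesis — but the degree/suspension indices need to be tracked with some care.
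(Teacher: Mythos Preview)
Your proof is correct and follows exactly the approach the paper indicates: the paper simply says ``the reasoning we gave in the proof of \lemref{acyclicity lemma}'' proves this proposition, and that is precisely what you have written out.

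One minor simplification: your separate recovery of the $n=0$ case at the end is unnecessary. The long exact sequence for $N = \Sigma^n M$ only ever invokes the hypothesis on $sq_0$ at the module $\Sigma^{n+1} M$, so running the induction directly over all $n \geq 0$ already uses only the hypothesis for indices $\geq 1$. Your parenthetical alternatives (``summand situation'', ``replacing $M$ with $\Sigma^{-1}$'') are not quite right, but the final option you land on---one more pass through the long exact sequence---is correct and is in fact just the $n=0$ instance of the same argument. Also, the phrase ``connectivity grows without bound'' is literally correct only for bounded-below $M$; for general $M \in \M$ you should invoke the second clause of the connectivity corollary (that $\colim_n \Sigma^{-n}\Oinfty_s(\Sigma^n M) = 0$ for all $M$) together with the fact that your $\epsilon_*$'s are monic, so the colimit is a union and each term injects into zero.
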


Following \cite{lz} and \cite{goerss}, we now deduce some properties of $\Oinfty_s(\Sigma^{-t}M)$ when $M$ is unstable.

\begin{lem} Suppose $M$ is unstable. Then, for all $s \geq 0$, $\mathcal R_s(M)$ is also unstable, and $d_{s}: \mathcal R_{s}(\Sigma^{-1}M) \ra \mathcal R_{s+1}(M)$ is zero.
\end{lem}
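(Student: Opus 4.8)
The plan is to treat the two assertions separately: the vanishing of $d_s$ is formal, while the instability of $\mathcal R_s(M)$ reduces, via an induction on $s$, to a short manipulation of the Nishida relations.

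\emph{Instability of $\mathcal R_s(M)$.} I would induct on $s$, the case $s=0$ being the hypothesis since $\mathcal R_0(M)=M$. For the inductive step, note that the natural $\A$--map $\mu\colon\mathcal R_1(\mathcal R_{s-1}(M))\to\mathcal R_s(M)$ is surjective: this is clear from the explicit description of $\mathcal R_*(M)$ in \secref{top def section}, since every generator $Q^{i_1}Q^{i_2}\cdots Q^{i_s}x$ of $\mathcal R_s(M)$ is the image of the element $Q^{i_1}(Q^{i_2}\cdots Q^{i_s}x)$. As a quotient of an unstable module is unstable, the inductive hypothesis ($\mathcal R_{s-1}(M)\in\U$) reduces us to the case $s=1$: it suffices to show $\mathcal R_1(N)\in\U$ whenever $N\in\U$. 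Fixing a homogeneous basis of $N$ and using that each $Sq^r$ is additive, it is enough to check $(Q^ax)Sq^r=0$ for a basis element $x$ of degree $n$, for $a\ge n$, and for $2r>a+n=|Q^ax|$. Expanding by the Nishida relations,
$$(Q^ax)Sq^r=\sum_i\binom{a-r}{r-2i}\,Q^{a-r+i}(xSq^i),$$
a summand can be nonzero only if the mod $2$ binomial coefficient is nonzero, which forces $0\le r-2i\le a-r$ and hence $2i\ge 2r-a$, and only if $xSq^i\ne 0$, which by instability of $N$ forces $2i\le n$; together these give $2r-a\le n$, contradicting $2r>a+n$. So every summand vanishes, $\mathcal R_1(N)$ is unstable, and the induction closes.

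\emph{Vanishing of $d_s$.} Here I would use the explicit formula for $d_s$ coming from \thmref{theorem 1}(a): on a generator $Q^I\sigma^{-1}x$ of $\mathcal R_s(\Sigma^{-1}M)$ with $x\in M$,
$$d_s(Q^I\sigma^{-1}x)=\sum_{i\ge 0}Q^IQ^{i-1}(xSq^i),\qquad xSq^i\in M.$$
Fix $i\ge 0$. If $2i>|x|$ then $xSq^i=0$ because $M$ is unstable, so the $i$th summand vanishes. If $2i\le|x|$ (so in particular $|x|\ge 0$), then $xSq^i$ has degree $|x|-i\ge i>i-1$, so $Q^{i-1}(xSq^i)=0$ by the instability condition defining $\Qq$ (namely $Q^jw=0$ when $j<|w|$), and the $i$th summand again vanishes. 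Hence $d_s$ is zero on the spanning elements $Q^I\sigma^{-1}x$, and being $\Z/2$--linear it is identically zero.

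I expect the only step requiring genuine care to be the bookkeeping in the Nishida--relation computation of the first part (and the verification that $\mu$ is onto, which is immediate from the generators-and-relations presentation of $\mathcal R_*(M)$); everything else is forced by the two instability conditions.
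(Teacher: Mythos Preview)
Your argument is essentially correct and takes a genuinely different route from the paper. The paper deliberately gives a \emph{topological} proof: it reduces (via exactness of $\mathcal R_s$ and an embedding of $M$ into $\Oinfty H_*(HV)$) to the case $M=H_*(Z)$ for a space $Z$, observes that $\mathcal R_s(H_*(Z))\subset H_*(D_{2^s}Z)$ is unstable because $D_{2^s}Z$ is a space, and shows $d_s=0$ by noting that the underlying stable map $\delta_0\colon\Sigma^{-1}Z\to D_2(Z)$ is null since the Goodwillie tower for $\Sinfty\Oinfty\Sinfty Z$ splits. The authors explicitly remark that an algebraic proof exists but choose the geometric one to illustrate how the algebra follows the topology; your direct computation with the Nishida relations and the explicit formula for $d_s$ is exactly such an algebraic proof. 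Your approach is more elementary and self-contained; the paper's is more conceptual and ties the lemma back to the tower.

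One small repair is needed in your instability step. The inference ``$\binom{a-r}{r-2i}\ne 0$ forces $r-2i\le a-r$'' is only valid when $a-r\ge 0$; for negative upper index the binomial coefficient can be nonzero mod $2$ (e.g.\ $\binom{-1}{k}\equiv 1$). You should also invoke the Dyer--Lashof instability in the target: a nonzero summand requires $Q^{a-r+i}(xSq^i)\ne 0$, hence $a-r+i\ge n-i$, and together with $2i\le n$ this forces $a-r\ge n-2i\ge 0$, after which your binomial bound and the rest of the argument go through unchanged.
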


\begin{proof}  Though this admits an algebraic proof, to show how the algebra follows the topology, we offer a topologically based proof.

If $M \subset H_*(HV)$ and is unstable, then $M \hookrightarrow \Oinfty H_*(HV) \twoheadleftarrow H_*(\Oinfty HV)$.  Using the exactness of the functors $\mathcal R_s$, one easily sees that the conclusions of the lemma for the module $H_*(\Oinfty HV)$ imply the same for $M$.  Thus it suffices to prove the lemma when $M = H_*(Z)$, where $Z$ is a space.

In this case, $\mathcal R_s(H_*(Z)) \subset H_*(D_{2^s}Z)$, which is unstable, as $D_{2^s}Z$ is a space.

To see that $d_{s}: \mathcal R_{s}(\Sigma^{-1}H_*(Z)) \ra \mathcal R_{s+1}(H_*(Z))$ is zero, we recall that it is induced by a geometric stable map $\delta_s: D_{2^s}(\Sigma^{-1}Z) \ra D_{2^{s+1}}(Z)$.  (We identify $Z$ with $\Sinfty Z$.)  We observe that this map is null: $\delta_s$ factors through $D_{2^s}(\delta_0)$, and $\delta_0: \Sigma^{-1}Z \ra D_2(Z)$ is null as $\Sigma \delta_0$ is the first boundary map in the tower associated to $\Sinfty \Oinfty \Sinfty Z$, which splits into the product of its fibers.
\end{proof}

As $\Oinfty_s(\Sigma^{1-s}M)$ is the homology at the middle term of the complex
$$ \Sigma \mathcal R_{s-1}(\Sigma^{-1}M) \xra{d_{s-1}} \Sigma \mathcal R_s(M) \xra{d_s} \Sigma \mathcal R_{s+1}(\Sigma M),$$
the lemma leads to the next result.

\begin{thm} \label{unstable thm} Suppose $M$ is unstable. \\

\noindent{\bf (a)}  $\Oinfty_s(\Sigma^{1-s}M) \simeq \Sigma \mathcal R_s(M)$, so that
$\Omega \Oinfty_s(\Sigma^{1-s}M) \simeq \mathcal R_s(M)$. \\

\noindent{\bf (b)} More generally, if $s>t$, then $\Oinfty_s(\Sigma^{-t}M) \simeq \Sigma \mathcal R_s(\Sigma^{s-t-1}M)$, which is a quotient of $\Sigma^{s-t} \mathcal R_s(M)$.  Thus $\Oinfty_s(\Sigma^{-t}M)$ is an $(s-t)$--fold suspension of an unstable module, and so
$ \Omega^{s-t}\Oinfty_s(\Sigma^{-t}M) = \Sigma^{-s + t+ 1}\mathcal R_s(\Sigma^{s-t-1}M)$. \\

\noindent{\bf (c)} $\Oinfty_s(\Sigma^{-s}M) \simeq \coker \{ d_{s-1}: \Sigma \mathcal R_{s-1}(\Sigma^{-2}M) \ra \Sigma \mathcal R_{s}(\Sigma^{-1}M) \}$. \\
\end{thm}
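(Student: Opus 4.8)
The plan is to read all three statements off the three-term complex
$$\Sigma\mathcal R_{s-1}(\Sigma^{-1}N)\xra{d_{s-1}}\Sigma\mathcal R_s(N)\xra{d_s}\Sigma\mathcal R_{s+1}(\Sigma N),$$
whose middle homology is $\Oinfty_s(\Sigma^{1-s}N)$ by \thmref{Rs thm: 2} (equivalently, the identification recorded immediately before the present theorem), combined with the preceding lemma: for unstable $N'$, the module $\mathcal R_j(N')$ is unstable and $d_j\colon\mathcal R_j(\Sigma^{-1}N')\to\mathcal R_{j+1}(N')$ is zero. I will also use freely that $\Sigma$ and $\Phi$ preserve $\U$ and that $\Omega$ (right adjoint to $\Sigma$) satisfies $\Omega\Sigma=\id$ on $\U$, hence $\Omega^k\Sigma^k=\id$ on $\U$.

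For (a), take $N=M$. The differential $d_{s-1}$ feeding into $\Sigma\mathcal R_s(M)$ is the value of $d_{s-1}$ on $\Sigma^{-1}M$, so it vanishes by the preceding lemma applied to $M$; the differential $d_s$ leaving $\Sigma\mathcal R_s(M)$ is the value of $d_s$ on $M=\Sigma^{-1}(\Sigma M)$, so it vanishes by the preceding lemma applied to the unstable module $\Sigma M$. Hence the middle homology is all of $\Sigma\mathcal R_s(M)$, giving $\Oinfty_s(\Sigma^{1-s}M)\simeq\Sigma\mathcal R_s(M)$; since $\mathcal R_s(M)$ is unstable, applying $\Omega$ gives $\Omega\Oinfty_s(\Sigma^{1-s}M)\simeq\mathcal R_s(M)$. (The case $s=0$ is just $\Oinfty_0=\Oinfty$ together with $\mathcal R_0=\id$.)

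For (b), where $s>t\geq0$ so that $s\geq1$, apply (a) to the unstable module $\Sigma^{s-t-1}M$ and reindex via $\Sigma^{1-s}\Sigma^{s-t-1}M=\Sigma^{-t}M$, obtaining $\Oinfty_s(\Sigma^{-t}M)\simeq\Sigma\mathcal R_s(\Sigma^{s-t-1}M)$. That this is a quotient of $\Sigma^{s-t}\mathcal R_s(M)$ follows by iterating the surjection $\Sigma\mathcal R_s(N)\twoheadrightarrow\mathcal R_s(\Sigma N)$ coming from the short exact sequence of \lemref{ses lem}: one gets $\Sigma^{s-t-1}\mathcal R_s(M)\twoheadrightarrow\mathcal R_s(\Sigma^{s-t-1}M)$ and then suspends once. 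That $\Oinfty_s(\Sigma^{-t}M)$ is an $(s-t)$-fold suspension of an unstable module follows from a short induction showing that $\mathcal R_k(\Sigma^j M)$ is a $j$-fold suspension of an unstable module for all $k,j\geq0$ — again via \lemref{ses lem} together with $\Phi(\Sigma N)=\Sigma^2\Phi(N)$. The final formula is then obtained by desuspending this $(s-t)$-fold suspension: $\Omega^{s-t}\Sigma\mathcal R_s(\Sigma^{s-t-1}M)=\Sigma^{t+1-s}\mathcal R_s(\Sigma^{s-t-1}M)$.

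For (c), take $N=\Sigma^{-1}M$, so that the complex computing $\Oinfty_s(\Sigma^{-s}M)$ is
$$\Sigma\mathcal R_{s-1}(\Sigma^{-2}M)\xra{d_{s-1}}\Sigma\mathcal R_s(\Sigma^{-1}M)\xra{d_s}\Sigma\mathcal R_{s+1}(M).$$
Here the outgoing differential is the value of $d_s$ on $\Sigma^{-1}M$ with $M$ unstable, hence zero by the preceding lemma, so the middle homology is exactly the cokernel of the incoming $d_{s-1}$, which is assertion (c). The one place requiring care is the bookkeeping of suspension shifts — consistently recognizing each relevant differential as ``$d_j$ evaluated on $\Sigma^{-1}$ of an unstable module'' so that the preceding lemma applies, and tracking the iterated short exact sequences of \lemref{ses lem} through the reindexing in (b) — but each separate ingredient (vanishing of these differentials, exactness and suspension behavior of $\mathcal R_s$ and $\Phi$, and $\Omega\Sigma=\id$ on $\U$) is already in hand.
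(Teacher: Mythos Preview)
Your proof is correct and follows the same approach as the paper: read each statement off the three-term complex whose middle homology is $\Oinfty_s(\Sigma^{1-s}N)$, using the preceding lemma to kill the appropriate differentials. One small simplification is available in (b): once you know $\Oinfty_s(\Sigma^{-t}M)$ is a quotient of $\Sigma^{s-t}\mathcal R_s(M)$ with $\mathcal R_s(M)\in\U$, the $(s-t)$-fold suspension claim follows immediately, since the property ``$\Sigma^{-(s-t)}P\in\U$'' is inherited by quotients; your inductive argument via \lemref{ses lem} and $\Phi(\Sigma N)=\Sigma^2\Phi(N)$ works but is more than is needed.
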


The first statement here is the main algebraic theorem of \cite{lz}, and the last was observed in \cite[Cor.5.4]{goerss}.

\subsection{Dyer--Lashof operations on derived functors}

We need to explain \propref{DL structure on derived functors prop}, which said that the sum of the looped derived functors $\Omega \Oinfty_* \Sigma^{1-*}M$ is an object in $\QM$.  Otherwise said, we need to explain why there exist natural transformations
$$ \mu: \mathcal R_s \Omega \Oinfty_t \Sigma^{1-t}M \ra \Omega \Oinfty_{s+t} \Sigma^{1-s-t}M$$
compatible in the usual way.

Firstly we note that \lemref{chain maps lemma}(a) and \thmref{Rs thm: 2} together imply, when one is careful with suspensions, that the maps
$$\mu: \mathcal R_s \mathcal R_t M \ra \mathcal R_{s+t} M$$
induce maps
$$ \mu: \mathcal R_s \Sigma^{-1} \Oinfty_t \Sigma^{1-t}M \ra \Sigma^{-1} \Oinfty_{s+t} \Sigma^{1-s-t}M$$

We now need a better understanding of $\Omega \Oinfty_s(M)$ for general $M \in \M$. The following lemma is dual to \cite[Prop.1.7.5]{s}.

\begin{lem} \label{omega 1 lemma} $\Omega: \U \ra \U$ has only one nonzero right derived functor $\Omega_1$.  For all $M \in \U$, there is an exact sequence
$$ 0 \ra \Sigma \Omega M \ra M \xra{sq_0} \Phi(M) \ra \Sigma \Omega_1 M \ra 0.$$
\end{lem}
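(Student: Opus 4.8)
The plan is to deduce the four–term sequence from the short exact sequence $0 \ra \Sigma\Omega J \xra{} J \xra{sq_0} \Phi J \ra 0$ for $J$ injective in $\U$, together with the left–exactness of $sq_0$ on all of $\U$, by the usual dimension–shifting argument applied to an injective resolution. Throughout one uses that $\Omega$, being right adjoint to the exact functor $\Sigma\colon\U\ra\U$, is left exact, so that its right derived functors $\Omega_s$ are defined.

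First I would identify the left segment. The counit $\Sigma\Omega M \ra M$ of the $(\Sigma,\Omega)$–adjunction is, by the explicit description of $\Omega M$ as the largest unstable submodule of $\Sigma^{-1}M$, simply the inclusion of $\Sigma$ of that submodule; in particular it is monic. A degree count identifies its image with $\ker(sq_0)$: the map $sq_0$ annihilates all odd–degree elements and sends $x \in M_{2n}$ to $\phi(xSq^n)$, so $x \in \ker(sq_0)$ precisely when $xSq^i = 0$ for all $2i \geq |x|$, and (since $M$ is already unstable) this is exactly the condition that $\sigma^{-1}x$ generate an unstable submodule of $\Sigma^{-1}M$. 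Hence $0 \ra \Sigma\Omega M \ra M \xra{sq_0} \Phi M$ is exact, naturally in $M \in \U$.

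Next I would prove that $sq_0\colon J \ra \Phi J$ is surjective for $J$ injective in $\U$. Every injective of $\M$ is a direct sum of suspensions of $\A_* = H_*(H\Z/2)$; since $\Oinfty\colon\M\ra\U$ is right adjoint to the exact inclusion, it preserves injectives and commutes with direct sums, so every injective of $\U$ is a retract of a direct sum of modules $\Oinfty(\Sigma^n\A_*)$. For these, $sq_0$ is onto by \lemref{reduced lemma}; surjectivity then passes to direct sums (as $\Phi$ commutes with $\bigoplus$ and $sq_0$ is natural) and to retracts. Combined with the previous paragraph, $0 \ra \Sigma\Omega J \ra J \xra{sq_0} \Phi J \ra 0$ is short exact whenever $J$ is injective.

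Finally, choose an injective resolution $0 \ra M \ra J^{\bullet}$ in $\U$. Applying $sq_0$ degreewise gives a short exact sequence of cochain complexes $0 \ra \Sigma\Omega J^{\bullet} \ra J^{\bullet} \xra{sq_0} \Phi J^{\bullet} \ra 0$. Since $\Phi$ is exact, $\Phi J^{\bullet}$ is a resolution of $\Phi M$, so $H^0(\Phi J^{\bullet}) = \Phi M$ and $H^{>0}(\Phi J^{\bullet}) = 0$; since $\Sigma$ is exact, $H^s(\Sigma\Omega J^{\bullet}) = \Sigma\Omega_s M$; and $H^0(J^{\bullet}) = M$, $H^{>0}(J^{\bullet}) = 0$. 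The associated long exact cohomology sequence then reads $0 \ra \Sigma\Omega M \ra M \xra{sq_0} \Phi M \ra \Sigma\Omega_1 M \ra 0$ in low degrees (applying $\Sigma^{-1}$ gives the asserted sequence), and forces $\Sigma\Omega_s M = 0$, hence $\Omega_s M = 0$, for all $s \geq 2$. I expect the only real friction to be the bookkeeping in the middle step — pinning down the injectives of $\U$ and reducing surjectivity of $sq_0$ to the single input \lemref{reduced lemma}; the rest is formal homological algebra once the left–exactness in the first step is in place.
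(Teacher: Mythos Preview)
Your argument is correct; the paper does not actually prove this lemma, but simply cites it as dual to \cite[Prop.~1.7.5]{s}, and your proof is essentially that standard argument, dualized. One small point worth making explicit: in your first step, the implication from $sq_0(x)=0$ to ``$\sigma^{-1}x$ generates an unstable submodule'' uses that $\ker(sq_0)$ is closed under the $\A$--action, which is exactly the $\A$--linearity of $sq_0$ on unstable modules recorded earlier in the paper; you use this implicitly but it is the crux of that step. The parenthetical ``applying $\Sigma^{-1}$ gives the asserted sequence'' is unnecessary, as the lemma is stated with the suspensions already in place.
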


From the long exact sequence of \corref{les cor 2}, we thus deduce the following.
\begin{cor}
\label{image-epsilon-loop}
For $M \in \M$, the following diagram commutes, and the bottom row is short exact:
$$\xymatrix{
  \Sigma^{-1} \Phi \Omega^\infty_s (\Sigma M) \ar[dr]^{q_0} \ar@{->>}[d] & & \Sigma^{-1} \Omega^\infty_{s+1} (\Sigma M) \\
  \Omega_1 \Omega^\infty_s (\Sigma M) \ar@{ (->}[r] & \Omega^\infty_{s+1} (M) \ar@{->>}[r] \ar[ur]^{\epsilon_*} & \Omega \Omega^\infty_{s+1} (\Sigma M). \ar@{ (->}[u]
}$$
\end{cor}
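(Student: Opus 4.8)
The plan is to assemble the statement from the long exact sequence of \corref{les cor 2} and the four-term exact sequence of \lemref{omega 1 lemma}. Note first that $\Oinfty_s(\Sigma M)$ and $\Oinfty_{s+1}(\Sigma M)$ are both unstable, being values of the derived functors of $\Oinfty \colon \M \to \U$, so \lemref{omega 1 lemma} applies to each of them.

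Reindexing \corref{les cor 2} extracts the exact segment
$$ \Oinfty_s(\Sigma M) \xra{sq_0} \Phi(\Oinfty_s(\Sigma M)) \xra{q_0} \Sigma\Oinfty_{s+1}(M) \xra{\epsilon_*} \Oinfty_{s+1}(\Sigma M) \xra{sq_0} \Phi(\Oinfty_{s+1}(\Sigma M)). $$
Applying \lemref{omega 1 lemma} to $\Oinfty_s(\Sigma M)$ identifies $\coker(sq_0\colon \Oinfty_s(\Sigma M) \to \Phi\Oinfty_s(\Sigma M))$ with $\Sigma\Omega_1\Oinfty_s(\Sigma M)$; since exactness of the segment gives $\ker q_0 = \im sq_0$, the map $q_0$ factors as its cokernel surjection followed by a monomorphism,
$$ \Phi(\Oinfty_s(\Sigma M)) \twoheadrightarrow \Sigma\Omega_1\Oinfty_s(\Sigma M) \hookrightarrow \Sigma\Oinfty_{s+1}(M). $$
Desuspending once, this is precisely the left-hand triangle of the diagram, with $\Omega_1\Oinfty_s(\Sigma M)$ realized as the submodule $\im(\Sigma^{-1}q_0) = \ker(\Sigma^{-1}\epsilon_*)$ of $\Oinfty_{s+1}(M)$, and the vertical arrow the (desuspended) cokernel map.

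For the right-hand part, apply \lemref{omega 1 lemma} to $\Oinfty_{s+1}(\Sigma M)$: its first term gives $\ker(sq_0\colon \Oinfty_{s+1}(\Sigma M) \to \Phi\Oinfty_{s+1}(\Sigma M)) = \Sigma\Omega\Oinfty_{s+1}(\Sigma M)$, so by exactness at $\Oinfty_{s+1}(\Sigma M)$ we get $\im\epsilon_* = \Sigma\Omega\Oinfty_{s+1}(\Sigma M)$. Combining this with $\ker\epsilon_* = \im q_0 = \Sigma\Omega_1\Oinfty_s(\Sigma M)$ from the previous step and desuspending yields the short exact sequence
$$ 0 \to \Omega_1\Oinfty_s(\Sigma M) \to \Oinfty_{s+1}(M) \to \Omega\Oinfty_{s+1}(\Sigma M) \to 0 $$
forming the bottom row, whose surjection is $\Sigma^{-1}\epsilon_*$ corestricted to its image. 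The remaining (right-hand) triangle then commutes because the inclusion $\Omega\Oinfty_{s+1}(\Sigma M) \hookrightarrow \Sigma^{-1}\Oinfty_{s+1}(\Sigma M)$ in the statement is the canonical one — the largest unstable submodule of a desuspension — through which $\Sigma^{-1}\epsilon_*$ factors by construction.

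I expect the only care needed is the bookkeeping of the single suspensions, matching the $\Sigma$'s in \corref{les cor 2} and \lemref{omega 1 lemma} against the $\Sigma^{-1}$'s in the diagram, together with checking that the epimorphism onto $\Omega_1$ and the monomorphism out of $\Omega$ displayed in the diagram are literally the maps produced by the two exact sequences rather than merely abstractly isomorphic to them; both identifications are immediate once one writes $\Sigma\Omega_1 N = \coker(sq_0)$ and $\Sigma\Omega N = \ker(sq_0)$ as in \lemref{omega 1 lemma}. There is no substantive obstacle beyond this.
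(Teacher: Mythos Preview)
Your proposal is correct and follows exactly the approach the paper intends: the corollary is stated immediately after \lemref{omega 1 lemma} with the remark that it follows from the long exact sequence of \corref{les cor 2}, and your argument simply fills in the details of combining those two ingredients. There is nothing to add.
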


\begin{proof}[Proof of \propref{DL structure on derived functors prop}]
From the commutative diagram of \lemref{mu e lemma},
\begin{equation*}
\xymatrix{
\Sigma \mathcal R_s \mathcal R_t M \ar[d]^{\mu} \ar[r]^-{\epsilon} & \mathcal R_s \Sigma \mathcal R_t M \ar[r]^-{\mathcal R_s \epsilon} & \mathcal R_s \mathcal R_t \Sigma M \ar[d]^{\mu}  \\
\Sigma \mathcal R_{s+t} M \ar[rr]^-{\epsilon} && \mathcal R_{s+t} \Sigma M, }
\end{equation*}
we deduce that the following diagram commutes:
\begin{equation*}
\xymatrix{
\Sigma \mathcal R_s \Sigma^{-1} \Oinfty_t \Sigma^{-t}M \ar[d]^{\mu} \ar@{->>}[r]^-{\epsilon} & \mathcal R_s \Oinfty_t \Sigma^{-t}M \ar[r]^-{\mathcal R_s \epsilon_*} & \mathcal R_s \Sigma^{-1} \Oinfty_t \Sigma^{1-t}M \ar[d]^{\mu}  \\
\Oinfty_{s+t} \Sigma^{-s-t}M \ar[rr]^-{\epsilon_*} && \Sigma^{-1} \Oinfty_{s+t} \Sigma^{1-s-t}M. }
\end{equation*}

For $M \in \M$, we then define
$$ \mu: \mathcal R_s \Omega \Oinfty_t \Sigma^{1-t}M \ra \Omega \Oinfty_{s+t} \Sigma^{1-s-t}M$$
to be the natural transformation induced by taking the image of the top and bottom horizontal maps in this last diagram.

These natural transformations for all $s$ and $t$ are equivalent to defining natural Dyer--Lashof operations
$$Q^i: \Omega \Oinfty_s\Sigma^{1-s}M \ra \Omega \Oinfty_{s+1}\Sigma^{-s}M,$$
for all $i \in \Z$, which raise degree by $i$, and satisfy the usual properties.
\end{proof}

\begin{defn} \label{q_0 defn} Define $q_0: \Phi(\Omega \Oinfty_s(\Sigma M)) \ra \Omega \Oinfty_{s+1}(M)$ by the formula $q_0(\phi(x)) = Q^{|x|}x.$
\end{defn}

\section{Hopf algebras and the algebraic spectral sequence} \label{alg spec seq section}

In this section, we first discuss part (e) of \thmref{global ss thm}, which said that each $E^r_{*,*}(X)$ is primitively generated, with its bigraded module of primitives an $\A$--module  subquotient of $\mathcal R_*H_*(X)$.  This is really a reflection of an aspect of the general theory of differential Hopf algebras.

Using some of the same theory, we then go on to develop the algebraic spectral sequence, as described in \thmref{theorem 2}.

The Hopf algebras of this paper, $E^r_{*,*}(X)$, can be viewed as {\em connected} bicommutative Hopf algebras, by viewing $E^r_{-i,*}(X)$ as having grading $i$.  As these are our concern, in this section by the term Hopf algebra, we will mean a connected bicommutative Hopf algebra over $\Z/2$.

\subsection{Primitively generated Hopf algebras and barcode modules.}

We recall some classic observations about primitively generated Hopf algebras.

One has as examples $\Z/2[x_d]$ and $\Z/2[x_d]/(x^{2^s}_d)$, with $x_d$ primitive and homogeneous of some positive grading $d$.  The work of Milnor and Moore \cite{milnor moore} then tells us that any primitively generated Hopf algebra will be a tensor product of Hopf algebras of these types.

A more basis--free way to discuss primitively generated Hopf algebras is via modules of primitives.

If $A$ is a Hopf algebra, its module of primitives has the structure of a positively graded vector space $V$ equipped with a linear map which doubles grading $q: V_* \ra V_{2*}$.  We will call such a $V$ a {\em barcode} module, as it is the sort of $\Z/2[q]$--module appearing in the persistent homology literature \cite{zc}, where the classification is given by a barcode.

There is then an equivalence of categories
$$ \text{barcode modules} \simeq \text{primitively generated Hopf algebras}.$$
In one direction, the correspondence takes a barcode module $V$ to
$$U_q(V) = S^*(V)/(x^2 - q(x): x \in V).$$
In the other direction, the primitives of a Hopf algebra, together with the squaring map, form a barcode module.

\begin{exs}  $\Z/2[x_d] = U_q(W)$, where
$$W = \langle x_d, x_{2d},x_{4d},\dots \rangle, \text{ and } q(x_{2^td}) = x_{2^{t+1}d}.$$  Similarly $\Z/2[x_d]/(x^{2^s}_d) = U_q(V)$, where
$$V = \langle x_d,x_{2d}, \dots, x_{2^{s-1}d} \rangle, \text{ and } q(x_{2^td}) = \begin{cases}
x_{2^{t+1}d} & \text{if } t<s-1 \\ 0 & \text{if } t = s-1.
\end{cases}$$
\end{exs}

\begin{rem} \label{grading remark} In our spectral sequences, $V$ will be a graded object in $\M$, equipped with an $\A$--module map $q: \Phi(V_*) \ra V_{2*}$.  In this situation,  $U_q(V)$ becomes an object in $\HM$, bigraded by giving $V_{i,j}$ bigrading $(-i,i+j)$.
\end{rem}

\begin{ex} $E^1_{*,*}(X) = U_q(V)$, where
$$V_i = \begin{cases}
\mathcal R_sH_*(X) & \text{if } i = 2^s \\ 0 & \text{otherwise, }
\end{cases}$$
with $q(x) = Q^{|x|}x$.
\end{ex}

\subsection{The homology of certain differential Hopf algebras}

A {\em differential} Hopf algebra is a Hopf algebra equipped with a homogeneous differential of some degree $r$ (not necessarily $\pm 1$) which is both a derivation and a coderivation.  It is easy to see that such differentials on $U_q(V)$ correspond to homogeneous linear maps $d: V_* \ra V_{*+r}$ such that $d^2 = 0 = dq$.  We will call such pairs $(V,d)$ differential barcode modules.

Results in \cite{milnor moore} show that a Hopf algebra (bicommutative over $\Z/2$) is primitively generated if and only if squares are zero in the dual.  Browder \cite{browder} notes that, when such a Hopf algebra has a differential, this property will be preserved by taking homology.

It follows that given a differential barcode module $(V,d)$, the homology Hopf algebra $H(U_q(V);d)$ will have the form $U_q(W)$ for some new barcode module $W$.  The natural problem now is to try to determine $W$ from $(V,d)$.

It seems difficult to say something useful about the problem in this generality.  In particular, the following simple example shows that $W$ need not be a subquotient of $V$.

\begin{ex} Let $V = \langle x,y \rangle$, with $q=0$, and $dx=y$.  Then $U_q(V)$ is the exterior algebra $\Lambda(x,y)$ and $H(U_q(V);d) = \Lambda(xy) = U_q(W)$, where $W = \langle xy \rangle$.
\end{ex}

More generally, the identity $d(xdx) = qdx$ holds in $U_q(V)$, for any pair $(V,d)$, so that elements in $\ker q \cap \im d$ lead to cycles in $H_*(U_q(V);d)$ similar to the cycle $xy$ in this last example.

Our discovery is that the differential Hopf algebras $U_q(V)$ arising in our spectral sequences satisfy an extra condition avoiding this problem, and ensuring that the module of primitives in $H(U_q(V);d)$ {\em is} a natural subquotient of $V$.

\begin{prop} \label{Hopf alg prop}  Suppose a differential barcode module $(V,d)$ is nonzero only in degrees of the form $2^k$, and $d$ has nonzero component $d:V_{2^s} \ra V_{2^t}$ for some $s<t$.
If also
\begin{equation*}\tag{$\diamondsuit_t$} q: V_{2^k} \ra V_{2^{k+1}} \text{ is monic for } k \geq t,
\end{equation*}
then $H(U_q(V);d) \simeq U_q(\bar V)$, where $\bar V$ is the following explicit barcode module subquotient of $V$:
\begin{equation*}
\bar V_{2^k} =
\begin{cases}
V_{2^k} & \text{if } k <t \text{ and } k\neq s \\ \ker(d) & \text{if } k=s \\
V_{2^k}/q^{k-t}\im(d) & \text{if } k \geq t.
\end{cases}
\end{equation*}
In particular, $\bar V$ still satisfies condition $(\diamondsuit_t)$.
\end{prop}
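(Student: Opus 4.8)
By the Milnor--Moore and Browder facts recalled above, $H(U_q(V);d)$ is again primitively generated, so $H(U_q(V);d)=U_q(\bar V)$ for a unique barcode module $\bar V$; since the primitives of $U_q(V)$ are exactly $V$, the whole task is to identify $\bar V$ and its squaring map. First I would pin down the differential. Being homogeneous of degree $r=2^t-2^s$ with $V$ concentrated in degrees $2^k$, the restriction $d|_V$ has only the component $V_{2^s}\to V_{2^t}$, and $dq=0$ forces it to vanish on $qV_{2^{s-1}}$. Fix a bar decomposition of $V$, arranged --- routinely, recombining bars of equal length --- so that a subset $x_1,\dots,x_m$ of the bar-bottoms in degree $2^s$ spans a complement of $\ker(d|_{V_{2^s}})$; then $y_i:=d(x_i)$ are linearly independent in $V_{2^t}$ and span $\im(d)$, while $d$ annihilates every other bar-bottom and every $q$-multiple. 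Under the resulting identification $U_q(V)=\bigotimes_\alpha\Z/2[x_\alpha]/(x_\alpha^{2^{n_\alpha}})$ (the bar of $x_\alpha$ being infinite exactly when $n_\alpha=\infty$, i.e.\ the factor is polynomial), $d$ is the derivation with $d(x_i)=y_i$ and $d(x_\alpha)=0$ otherwise.

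Now the decisive observation: by $(\diamondsuit_t)$ a finite bar's top element would be killed by $q$, so every bar meeting a degree $\geq 2^t$ is infinite; hence each $y_i\in V_{2^t}$ is a $\Z/2$-linear combination of squares $z_\beta^{2^{t-k_\beta}}$ of bar-bottoms $z_\beta$ of \emph{infinite} bars. Inside the polynomial ring on the $z_\beta^{2^{t-k_\beta}}$ the $y_i$ are linearly independent \emph{linear} forms, so $(y_1,\dots,y_m)$ is a regular sequence there, and this persists in the polynomial subalgebra $R\subseteq U_q(V)$ on the target-bar bottoms by flatness. Splitting off --- via the Künneth theorem, which applies since $U_q$ converts direct sums of barcode modules into tensor products of Hopf algebras compatibly with $d$ --- the tensor factor $R'$ spanned by the bars on which $d$ vanishes and that occur in no $y_i$, one is reduced to the factor $A\otimes R$ with $A=\bigotimes_i\Z/2[x_i]/(x_i^{2^{n_i}})$, $d|_R=0$, $d(x_i)=y_i$. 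Computing one source variable at a time with the elementary identity
\[
H\bigl(\Z/2[x]/(x^{2^{n}})\otimes C,\ dx=f\bigr)\;=\;\Z/2[x^2]/(x^{2^{n}})\otimes(C/fC)
\]
(valid for any commutative $\Z/2$-algebra $C$ with zero differential and any nonzerodivisor $f\in C$, with $\Z/2[x^2]$ read as polynomial when $n=\infty$; immediate from $\ker d=\Z/2[x^2]/(x^{2^{n}})\otimes C$ and $\im d=\Z/2[x^2]/(x^{2^{n}})\otimes fC$), and using that $y_i$ stays a nonzerodivisor modulo the relations already imposed because $(y_1,\dots,y_m)$ is regular, one arrives at
\[
H(U_q(V);d)\;=\;\Bigl(\textstyle\bigotimes_i\Z/2[x_i^2]/(x_i^{2^{n_i}})\Bigr)\otimes\bigl(R/(y_1,\dots,y_m)\bigr)\otimes R'.
\]

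It remains to read off $\bar V$. The factors $\Z/2[x_i^2]/(x_i^{2^{n_i}})$ are the source bars re-indexed to start in degree $2^{s+1}$, hence of length one less (trivial when $n_i=1$). For $R/(y_1,\dots,y_m)$: in each relation $y_i=\sum_\beta z_\beta^{2^{a_{i\beta}}}$ pick $z_\gamma$ with $2^{a_{i\gamma}}$ minimal among the exponents occurring; as these are powers of $2$ the minimal one divides the rest, so the identity $(p+q)^{2^a}=p^{2^a}+q^{2^a}$ in characteristic $2$ rewrites $y_i=v_i^{2^{a_{i\gamma}}}$ for a primitive $v_i$ gotten by adding to $z_\gamma$ suitable squares of the other bar-bottoms, and the coordinate change $z_\gamma\mapsto v_i$ turns the relation into $v_i^{2^{a_{i\gamma}}}=0$ --- truncating one infinite target bar to length $t-k_\gamma$ and consuming it; iterating over $i$ does this simultaneously since each step eliminates a distinct variable, so $R/(y_1,\dots,y_m)$ is again a tensor product of polynomial and truncated-polynomial algebras on primitives. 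Assembling contributions degree by degree --- and using that $q$ is injective on $V_{2^t}$ by $(\diamondsuit_t)$, so $q^{k-t}\im(d)$ is $m$-dimensional in degree $2^k$ for $k\geq t$ --- one obtains $H(U_q(V);d)=U_q(\bar V)$ with
\[
\bar V_{2^k}\;=\;\begin{cases}V_{2^k}&k<t,\ k\neq s,\\ \ker\bigl(d|_{V_{2^s}}\bigr)&k=s,\\ V_{2^k}\big/ q^{\,k-t}\im(d)&k\geq t,\end{cases}
\]
with squaring induced from $V$; the new truncated bars $v_i$ die below degree $2^t$ while the re-indexed source bars and the surviving target bars stay infinite, so $(\diamondsuit_t)$ holds for $\bar V$.

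I expect the crux to be the middle paragraph: proving that the $y_i$ form a regular sequence and that $R/(y_1,\dots,y_m)$ is again of the form $U_q(\text{barcode module})$ --- precisely the places where $(\diamondsuit_t)$, through its forcing of all target bars to be infinite and hence the ambient ring torsion-free, is indispensable. A secondary bookkeeping point is the ``self-interaction'' case in which some $y_i$ involves the square $x_j^{2^{t-s}}$ of an \emph{infinite source-bar} bottom; one keeps those terms inside the factor $A$ and runs the same straightening, with $x_j$ then playing both roles, and in the extreme case $d(x)=x^{2^{t-s}}$ one computes directly $H(\Z/2[x];d)=\Z/2[x^2]/(x^{2^{t-s}})$, in accord with the formula. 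That $(\diamondsuit_t)$ cannot be dropped is already visible for $V=\langle x,y\rangle$ with $q=0$ and $d(x)=y$: here $U_q(V)=\Lambda(x,y)$, $H=\Lambda(xy)$, and its generator $xy$ lies in degree $2^s+2^t$, which is not of the form $2^k$, so $\bar V$ is not even concentrated in the degrees of $V$.
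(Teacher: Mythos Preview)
Your argument is correct and reaches the same conclusion, but the route is genuinely different from the paper's. The paper does not choose a bar decomposition at all: instead it introduces an auxiliary construction $\Gamma(q)=S^*(U\oplus W)/(q(x)-x^2)$ for a map $q:U\to U\oplus W$, and proves a general lemma that $H(\Gamma(q);d)\cong\Gamma(q')$ with $U'=\ker d$, $W'=\coker d$. This lemma is established by two filtration/spectral sequence reductions---first killing $q_W$, then killing $q$ entirely---until one is left with the Koszul complex $\Lambda^*(B)\otimes S^*(\Sigma^r B)$, whose acyclicity is standard. The proposition then follows by taking $U=\bigoplus_{k<t}V_{2^k}$ and letting $W$ be $V_{2^t}$ together with chosen complements of $q(V_{2^{k-1}})$ in $V_{2^k}$ for $k>t$; condition $(\diamondsuit_t)$ is what makes $U_q(V)\cong\Gamma(q)$. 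Your approach instead computes directly: decompose into bars, observe that $(\diamondsuit_t)$ forces every bar meeting degree $2^t$ to be infinite so the $y_i$ form a regular sequence in a polynomial ring, peel off one source variable at a time via the Koszul-type identity, and then straighten $R/(y_1,\dots,y_m)$ using Frobenius. What the paper's method buys is uniformity: there is no self-interaction case, no need to justify the iterated change of variables, and no explicit tracking of which bars occur where. What your method buys is concreteness---one sees exactly which truncations appear and why---at the cost of the bookkeeping you flag (the self-interaction case, and checking that the straightening step consumes a fresh variable each time, which does work since modulo the relations already imposed the remaining $y_j$ are linearly independent combinations of the surviving $z_\beta^{2^{t-k_\beta}}$).
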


We will prove this in the next subsection.

\begin{cor} \label{Hopf alg ss cor}  Suppose that one has a left half plane homological spectral sequence $\{E^r\}$ of differential objects in $\HM$ with $E^1 = U_q(V(1,0))$, where $V(1,0)$ is as in \remref{grading remark}.  If $V(1,0)$ is nonzero only in degrees of the form $2^k$ and satisfies condition $(\diamondsuit_1)$, then \\

\noindent{\bf (a)} \ $d^r$ can only be nonzero when $r = 2^t - 2^s$ with $s<t$, and \\

\noindent{\bf (b)} \  $E^{2^t-2^s} = U_q(V(t,s))$ where $V(t,s)$ is a subquotient of $V(1,0)$ satisfying condition $(\diamondsuit_t)$.\\
\end{cor}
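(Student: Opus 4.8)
The plan is to derive Corollary~\ref{Hopf alg ss cor} directly from Proposition~\ref{Hopf alg prop} by an induction on the pages of the spectral sequence, reading off the bookkeeping of which differentials can be nonzero from the combinatorics of the allowed degrees. First I would set up the induction: suppose inductively that $E^{r_0} = U_q(V(t_0,s_0))$ for the current page $r_0 = 2^{t_0}-2^{s_0}$ (with the base case $E^1 = U_q(V(1,0))$ given), where $V(t_0,s_0)$ is a subquotient of $V(1,0)$, is nonzero only in degrees of the form $2^k$, and satisfies $(\diamondsuit_{t_0})$. Since $E^{r_0}$ is a primitively generated differential object in $\HM$ whose module of primitives lives only in degrees $2^k$, the differential $d^{r_0}$ is determined by a barcode-module differential $d\colon V(t_0,s_0)_* \to V(t_0,s_0)_{*+r_0}$; for this to have any nonzero component it must carry degree $2^a$ to degree $2^b$, forcing $r_0 = 2^b - 2^a$ with $a<b$. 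This already gives part (a): the pages on which a nonzero differential can occur are exactly those indexed by $r = 2^t-2^s$.

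Next I would handle the bookkeeping that lets me apply Proposition~\ref{Hopf alg prop} with the correct value of $t$. On the page $E^{2^t-2^s}$ there may in principle be several nonzero components $d\colon V_{2^a}\to V_{2^b}$ with $2^b-2^a = 2^t-2^s$; but $2^b - 2^a = 2^t - 2^s$ with $a<b$ and $s<t$ forces $a=s$, $b=t$ (writing $2^a(2^{b-a}-1) = 2^s(2^{t-s}-1)$ and using unique factorization), so the only nonzero component is exactly $d\colon V_{2^s}\to V_{2^t}$. Thus the hypothesis of Proposition~\ref{Hopf alg prop} is met with the $(s,t)$ appearing in the page index, and condition $(\diamondsuit_t)$ holds because it is implied by $(\diamondsuit_{t_0})$ for the previous page together with $t_0 \le t$ — here I need to observe that the sequence of pages carrying differentials has strictly increasing $t$-values, or at any rate that every page $E^{2^t-2^s}$ reached by the spectral sequence inherits $(\diamondsuit_{t})$ from the last application of the proposition, which guarantees $(\diamondsuit_{t'})$ for all $t' \ge t$. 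Applying Proposition~\ref{Hopf alg prop} then gives $E^{2^t-2^s+1} \cong \dots$; but since no differential can be nonzero until the next page of the form $2^{t'}-2^{s'}$, we get $E^{2^{t'}-2^{s'}} = U_q(\bar V) = U_q(V(t',s'))$, a subquotient of $V(t,s)$ and hence of $V(1,0)$, still concentrated in degrees $2^k$ and still satisfying $(\diamondsuit_{t'})$ (here using the last sentence of Proposition~\ref{Hopf alg prop}, that $\bar V$ satisfies $(\diamondsuit_t)$, hence a fortiori $(\diamondsuit_{t'})$ for $t' \ge t$). This closes the induction and proves (b).

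The main obstacle I anticipate is purely organizational rather than mathematical: keeping the three indexing conventions straight — the spectral-sequence page $r$, the "homological" bigrading in which $E^r_{-i,*}$ sits in barcode-degree $i$ (Remark~\ref{grading remark}), and the internal Steenrod grading that rides along $\A$-linearly — and checking that $d^r$ really does descend to an $\A$-linear barcode differential on the module of primitives. This last point is where I would lean on Theorem~\ref{global ss thm}(d)--(e): $d^r$ is $\A$-linear and is simultaneously a derivation and coderivation, hence preserves primitives, and $E^r$ is primitively generated with primitives in the $-2^k$ lines; so the reduction of the whole spectral sequence to a sequence of differential barcode modules $(V(t,s),d)$ is legitimate, and Proposition~\ref{Hopf alg prop} applies verbatim (its proof, given in the next subsection, being compatible with the extra $\A$-module structure since all the maps $q$, $d$, $\ker$, $\im$, $\Phi$ occurring in it are $\A$-linear). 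I would also remark that part (a) can alternatively be quoted directly from Theorem~\ref{global ss thm}(f), but giving the self-contained degree argument above makes the corollary independent of that theorem's proof.
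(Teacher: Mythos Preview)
Your proof is correct and follows essentially the same approach as the paper: induct over the pairs $(t,s)$ totally ordered by $2^t-2^s$, applying Proposition~\ref{Hopf alg prop} at each step and using $(\diamondsuit_t)\Rightarrow(\diamondsuit_{t'})$ for $t'\ge t$ when the $t$-index increases. Your argument is in fact more detailed than the paper's one-sentence proof --- in particular you spell out the unique-factorization step showing that $2^b-2^a=2^t-2^s$ forces $(a,b)=(s,t)$, which the paper leaves implicit. One small correction to your final paragraph: the logical dependence runs the other way --- in the paper, \thmref{global ss thm}(e)--(f) are \emph{deduced from} this corollary, so you should not invoke them here; the reduction to a differential on primitives follows from the general Hopf-algebra setup of the surrounding section (a derivation/coderivation on a primitively generated Hopf algebra preserves primitives).
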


\begin{proof}  Using the proposition, the corollary is proved by induction on the set of pairs $\{(t,s) \ | \ 0\leq s<t\}$ totally ordered by the value $2^t-2^s$. (Thus $(1,0)<(2,1)<(2,0)<(3,2)<\dots$.) Note that, in passing from the pair $(t,0)$ to $(t+1,t)$, one uses that $(\diamondsuit_t) \Rightarrow (\diamondsuit_{t+1})$.
\end{proof}

\begin{proof}[Proof of \thmref{global ss thm}(e) and (f)]  The corollary applies to $\{E^r_{*,*}(X)\}$, with $V(1,0)_{2^s} = \mathcal R_sH_*(X)$, as this $V(1,0)$ satisfies condition ($\diamondsuit_0$).
\end{proof}

\subsection{A useful lemma}

Given two positively graded vector spaces $U$ and $W$ and a degree doubling map $q: U \ra U \oplus W$, we let
$$\Gamma(q) = S^*(U \oplus W)/(q(x) - x^2),$$
a primitively generated Hopf algebra.

The map $q$ has components $q_U: U \ra U$ and $q_W: U \ra W$.  If a homogeneous map $d: U \ra W$ satisfies $dq_U = 0$, $d$ will induce a differential on $\Gamma(q)$.  It is useful to then let $U^{\prime} = \ker(d)$, $W^{\prime} = \coker(d)$, and $q^{\prime}: U^{\prime} \ra U^{\prime} \oplus W^{\prime}$ be the map induced by $q$.

There is an evident inclusion into the cycles
$$U^{\prime} \oplus W \hra Z_*(\Gamma(q);d),$$
and this induces a natural map of Hopf algebras
$$ \alpha(q,d):  \Gamma(q^{\prime}) \ra H_*(\Gamma(q);d).$$

\begin{lem} \label{useful lemma}  In this situation, $\alpha(q,d)$ is an isomorphism.
\end{lem}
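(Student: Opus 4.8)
The plan is to realize $(\Gamma(q),d)$ as a Koszul complex over its subalgebra of cycles, and to read off the homology from there.

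First I would fix a vector space splitting $U = U'\oplus U_2$ with $U' = \ker d$, and a splitting $W = W_1\oplus W_2$ with $W_1 = \im d$, so that $d$ restricts to an isomorphism $U_2 \xrightarrow{\sim} W_1$ while $W_2 \xrightarrow{\sim} W' = \coker d$; then pick a basis $\{u_\alpha\}$ of $U_2$ and set $w_\alpha = du_\alpha$, a basis of $W_1$. Because $dq_U = 0$ forces $\im q_U \subseteq \ker d = U'$, the map $q$ sends $U'$ into $U'\oplus W$, so $C := \Gamma(q|_{U'}) = S^*(U'\oplus W)/(x^2 - q(x) : x\in U')$ is defined, and I claim $C$ is precisely the subalgebra of $\Gamma(q)$ generated by $U'\oplus W$. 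Two observations drive everything: (i) $d$ is a derivation vanishing on $U'$ and on $W$, hence vanishes on $C$, so $C\subseteq Z_*(\Gamma(q);d)$; (ii) grouping the defining relations of $\Gamma(q)$ according to the basis $\{x_i\}\cup\{u_\alpha\}$ of $U$ gives $\Gamma(q) = C[\{u_\alpha\}]/(u_\alpha^2 - q(u_\alpha))$ with $q(u_\alpha)\in U'\oplus W\subseteq C$, so $\Gamma(q)$ is \emph{free} as a $C$-module on the square-free monomials $\prod_{\alpha\in S}u_\alpha$ ($S$ finite), and under this identification the differential $d$ is exactly the Koszul differential of the sequence $(w_\alpha)_\alpha$ over $C$ (the twist $u_\alpha^2 = q(u_\alpha)$ changes the algebra structure but not the underlying complex).

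Granting that $(w_\alpha)_\alpha$ is a regular sequence in $C$ — the key point, addressed below — this Koszul complex is a resolution of $C/(W_1)$, so $H_*(\Gamma(q);d) = C/(W_1)$, concentrated in homological degree $0$. Now $C/(W_1) = S^*(U'\oplus W/W_1)/(x^2 - q'(x) : x\in U') = \Gamma(q')$, since killing $W_1$ replaces $W$ by $W' = W/W_1$ and replaces $q_W(x)$ by its image in $W'$. Unwinding the identifications, the composite $C\hookrightarrow Z_*(\Gamma(q);d)\twoheadrightarrow H_*(\Gamma(q);d)$ is just the quotient $C\twoheadrightarrow C/(W_1) = \Gamma(q')$; restricted to the generators $U'\oplus W$ it is the identity on $U'$ and the projection $W\twoheadrightarrow W'$ (with $W_1 = \im d$ dying, being boundaries of the $u_\alpha$). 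Since $\alpha(q,d)$ is by definition the Hopf algebra map determined by exactly this behaviour on generators, it must coincide with the isomorphism $\Gamma(q')\xrightarrow{\sim}H_*(\Gamma(q);d)$ just produced, which finishes the proof.

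The step I expect to be the real work is the regularity of $(w_\alpha)_\alpha$ in $C$. I would deduce this from the stronger statement that $C$ is \emph{free} as a module over its polynomial subalgebra $R = \Z/2[W]$: the $w_\alpha$ are then part of a polynomial generating set of $R$, and tensoring the Koszul resolution of $R/(\{w_\alpha\})$ over $R$ with the flat module $C$ keeps it a resolution. Freeness of $C$ over $R$ comes from a weight filtration in which every generator of $U'$ has weight $1$ and every generator of $W$ has weight $0$: the leading term of each relation $x^2 - q_U(x) - q_W(x)$ is $x^2$ (weight $2$), so $\operatorname{gr} C \cong \Lambda(U')\otimes_{\Z/2} R$ as $R$-modules, which is $R$-free; as the filtration is by $R$-submodules and is finite in each internal degree, $C$ is $R$-free as well. (All of this reduces to the finitely generated case in the evident way.) As an alternative, the same weight filtration applied directly to $\Gamma(q)$ gives a spectral sequence with $E_1 = \Lambda(U)\otimes\Z/2[W]$, first differential the derivation $u_\alpha\mapsto w_\alpha$, hence $E_2 = \Lambda(U')\otimes\Z/2[W']$, which collapses for degree reasons on primitives; one then matches Poincar\'e series and checks surjectivity of $\alpha$. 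I prefer the Koszul route because it pins down $\alpha$ itself rather than merely counting dimensions.
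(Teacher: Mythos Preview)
Your argument is correct and takes a genuinely different route from the paper's.  The paper proceeds by a three-stage filtration reduction: it first treats the case $q=0$ directly (where $\Gamma(0)=\Lambda^*(U)\otimes S^*(W)$ and the computation is the classical Koszul one), then reduces $q_W=0$ to $q=0$ via an increasing filtration by $U$--word--length, and finally reduces arbitrary $q$ to $q_W=0$ via a decreasing filtration by $W$--word--length; at each step a short spectral sequence comparison shows that $\alpha$ induces an isomorphism on associated gradeds.

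You instead recognise $(\Gamma(q),d)$ in one stroke as the Koszul complex on $(w_\alpha)$ over the cycle subalgebra $C=\Gamma(q|_{U'})$, using that $dq_U=0$ forces $q(U)\subseteq U'\oplus W\subseteq C$ so that the ``twist'' $u_\alpha^2=q(u_\alpha)$ lives in the base and leaves the underlying $C$--linear complex untouched.  The cost is the regularity step, which you pay via the weight filtration showing $C$ is free over $S^*(W)$; the gain is that $H_*(\Gamma(q);d)=C/(W_1)=\Gamma(q')$ drops out as an explicit quotient of an explicit subalgebra of cycles, so the identification with $\alpha(q,d)$ is immediate rather than mediated through associated gradeds.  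Your parenthetical alternative --- a single weight filtration on $\Gamma(q)$ with $E_1=\Lambda(U)\otimes S^*(W)$ --- is essentially the paper's two nested filtrations collapsed into one.
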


\begin{proof}  We first consider the case when $q$ is identically zero.
As a Hopf algebra,
$$ \Gamma(0) = \Lambda^*(U) \otimes S^*(W).$$
We can also assume that our map $d: U \ra W$, say of degree r, has the form
$$ U^{\prime}\oplus B  \twoheadrightarrow B \xra{\sim} \Sigma^r B \hookrightarrow \Sigma^r B \oplus W^{\prime}.$$
Thus, as differential Hopf algebras,
$$ (\Gamma(0),d)  = \Lambda^*(U^{\prime}) \otimes (\Gamma_B,d_B) \otimes S^*(W^{\prime}),$$
where $(\Gamma_B,d_B) = \Lambda^*(B) \otimes S^*(\Sigma^r B)$ with the Koszul differential: $d_B(b\otimes 1) = 1 \otimes \sigma^r b$.  The complex $(\Gamma_B,d_B)$ is well known to be acyclic \cite[Cor.4.5.5]{weibel}, and easily checked to be:
it is the tensor product of complexes of the form $\Lambda^*(x) \otimes \Z/2[dx]$ whose homology is $\Z/2$.  Thus we see that
$$ H_*(\Gamma(0);d) = \Lambda^*(U^{\prime}) \otimes S^*(W^{\prime}) = \Gamma(0^{\prime}),$$
and thus the lemma is true for the case $q=0$.

Now we consider the case when just the component $q_W$ is assumed to be zero, so that $q=q_U$ for an arbitrary map $q_U: U \ra U$ satisfying $dq_U=0$.  We reduce to the previous case with a little spectral sequence argument.

We define an increasing filtration $F_0 \subset F_1 \subset F_2 \subset \dots $ of the chain complex $(\Gamma(q_U),d)$ by letting
$$ F_k = \im\{ S^{* \leq k}(U) \otimes S^*(W) \ra \Gamma(q)\}.$$

Then $d: F_k \ra F_{k-1}$, and in the induced spectral sequence $E^0 = E^1 = (\Gamma(0),d)$, so that $E^2 = \Gamma(0^{\prime})$.  Noting that $E^0(\Gamma(q_U^{\prime})) = \Gamma(0^{\prime})$, one simultaneously sees that $E^2 = E^{\infty}$ and $\alpha(q_U,d)$ is an isomorphism.

Finally we consider the case of a general $q = (q_U,q_W)$.  Once again we reduce to the previous case with a spectral sequence argument.

This time, we define a decreasing filtration $F^0 \supset F^1 \supset F^2 \supset \dots $ of the chain complex $(\Gamma(q),d)$ by letting
$$ F^k = \im\{ S^{*}(U) \otimes S^{*\geq k}(W) \ra \Gamma(q)\}.$$

Then $d: F^k \ra F^{k+1}$, and in the induced spectral sequence $E_0 = E_1 = (\Gamma(q_U),d)$, so that $E_2 = \Gamma(q_U^{\prime})$.  Noting that $E_0(\Gamma(q^{\prime})) = \Gamma(q_U^{\prime})$, one deduces that $E_2 = E_{\infty}$ and $\alpha(q,d)$ is an isomorphism.
\end{proof}

\begin{ex}  Here is an example that illustrates the various reductions made in the last proof.  Let $U = \langle x_1,x_2,x_4\rangle$, $W = \langle y_4\rangle$, $q(x_1) = x_2$, $q(x_2) = x_4 + y_4$, $q(x_4) = 0$, and $d(x_1) = y_4$. Then
$$\Gamma(q) = \Z/2[x_1,x_2, x_4, y_4]/(x_1^2 - x_2, x_2^2 - x_4 - y_4, x_4^2),$$ $$E_1(\Gamma(q)) = \Gamma(q_U) = \Z/2[x_1]/(x_1^8)\otimes \Z/2[y_4], \text{ and}$$ $$E^1E_1(\Gamma(q)) = \Gamma(0) = \Lambda(x_1,x_2,x_4) \otimes \Z/2[y_4].$$
The associated homology Hopf algebras are
$$H(\Gamma(q);d) = H(\Gamma(q_U);d) = \Z/2[x_2]/(x_2^4), \ H(\Gamma(0);d) = \Lambda(x_2, x_4).$$
\end{ex}

\begin{proof}[Proof of \propref{Hopf alg prop}]  We are given
$$ V_1 \xra{q} V_2 \xra{q} V_4 \xra{q} \dots,$$
and $V_{2^s} \xra{d} V_{2^t}$ such that $dq: V_{2^{s-1}} \ra V_{2^t}$ is zero. We also know that
\begin{equation*}\tag{$\diamondsuit_t$} q: V_{2^k} \ra V_{2^{k+1}} \text{ is monic for } k \geq t,
\end{equation*}

For $k>t$ choose a subspace $W_{2^k} \subset V_{2^k}$ such that $W_{2^k} \oplus q(V_{2^{k-1}}) = V_{2^k}$, and let $U = \bigoplus_{k<t} V_{2^k}$, $W = V_{2^t} \oplus \bigoplus_{k>t} W_{2^k}$.  Slightly abusing notation, $q$ and $d$ define maps $q: U \ra U \oplus W$ and $d: U \ra W$ satisfying $dq_U = 0$.  Condition ($\diamondsuit_t$) implies that
$(U_q(V),d) \simeq (\Gamma(q),d)$ as chain complexes.  \lemref{useful lemma} thus implies that
$$ H(U_q(V);d) \simeq \Gamma(q^{\prime}).$$
Now one observes that the righthand side rewrites as $U_q(\bar V)$, with $\bar V$ as in the proposition: the key point is that, since $W_{2^k} \oplus q(V_{2^{k-1}}) = V_{2^k}$, $W_{2^k} \oplus q(\bar V_{2^{k-1}}) = \bar V_{2^k}$.
\end{proof}

\subsection{The algebraic spectral sequence}

In this section we explain why, given $M \in \M$, there is a well defined spectral sequence of Hopf algebras as in \thmref{theorem 2}:
\begin{itemize}
\item $ E^{alg, 1}_{*,*}(M) = U_{\Qq}(\mathcal R_*(M))$.
\item Nonzero differentials are only the $d^{2^s}$, and, for $x \in M$ and $I$ of length $s$, $Q^Ix$ lives to $E^{alg, 2^s}_{*,*}(M)$, and
$d^{2^s}(Q^Ix) = \sum_{i \geq 0} Q^IQ^{i-1}(xSq^i)$.
\item For all $r$, $E^{alg,r}_{*,*}(M)$ is primitively generated with primitives concentrated in the $-2^s$ lines.  For all $r>2^s$, the module of primitives in $E^{alg,r}_{-2^s,2^s+ *}(M)$ is naturally isomorphic to $L_sM$.

\item  $ E^{alg, \infty}_{*,*}(M) = U_{\Qq}(L_*M)$.
\end{itemize}
(We recall that $L_sM = \Omega \Oinfty_s \Sigma^{1-s}M$.)

We do this by explicitly describing all the intermediate pages $E^{alg,2^s}_{*,*}(M)$, so that \propref{Hopf alg prop} applies.

In the rest of this subsection we use the following notation.

\begin{notation}  We let $H_sM$ be the homology at $\mathcal R_sM$ in the sequence
$ \mathcal R_{s-1}\Sigma^{-1} M \xra{d_{s-1}} \mathcal R_s M \xra{d_{s}} \mathcal R_{s+1} \Sigma M$.  We generically use `$q$' to denote maps induced by bottom Dyer--Lashof operations (a.k.a.\ squaring).  For $k\geq s$, we let
$\bar{\mathcal R}_{k,s}M = \mathcal R_kM/\im\{q^{k-s}d_{s-1}: \mathcal R_{s-1}\Sigma^{-1}M \ra \mathcal R_kM\}$.
\end{notation}

We note a few of the relationships between these:
\begin{itemize}
\item $0 \ra H_sM \ra \bar{\mathcal R}_{s,s}M \xra{d_s} \mathcal R_{s+1}\Sigma M$ is exact.
\item $L_sM = \im \{\Sigma H_s \Sigma^{-1}M \xra{\epsilon_*} H_sM\}$, by \corref{image-epsilon-loop}.
\item For $k\geq s$, the square
\begin{equation*}
\xymatrix{
\mathcal R_kM \ar[d] \ar[r]^-q & \mathcal R_{k+1}M \ar[d]  \\
\bar{\mathcal R}_{k,s}M \ar[r]^-q & \bar{\mathcal R}_{k+1,s}M }
\end{equation*}
is a pushout square with epic vertical map, thus defining the lower horizontal map.
\end{itemize}

\begin{lem}  $d_s: \mathcal R_s\Sigma^{-1}M \ra \mathcal R_{s+1}M$ induces a map
$d_s: \bar{\mathcal R}_{s,s}M \ra \bar{\mathcal R}_{s+1,s}M$.
\end{lem}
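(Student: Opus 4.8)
The plan is to deduce the induced map purely formally from the chain-complex identity $d_s\circ d_{s-1}=0$ recorded in \propref{complex prop}; no new computation is required.

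First I would unwind the two quotients. By definition $\bar{\mathcal R}_{s,s}M=\mathcal R_sM/\im\{d_{s-1}\colon\mathcal R_{s-1}\Sigma^{-1}M\to\mathcal R_sM\}$, since the exponent in $q^{s-s}d_{s-1}$ is $q^0=\id$; likewise $\bar{\mathcal R}_{s+1,s}M=\mathcal R_{s+1}M/\im\{q\,d_{s-1}\}$. To produce a map $\bar{\mathcal R}_{s,s}M\to\bar{\mathcal R}_{s+1,s}M$ it then suffices, by the universal property of the cokernel, to check that the composite $\mathcal R_sM\xra{d_s}\mathcal R_{s+1}M\twoheadrightarrow\bar{\mathcal R}_{s+1,s}M$ annihilates the submodule $\im(d_{s-1})\subseteq\mathcal R_sM$; the resulting factorization is then automatically $\A$--linear, because $d_s$ and the quotient projections are.

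The only step carrying content is that vanishing, and it is immediate: \propref{complex prop} says the composite $\mathcal R_{s-1}\Sigma^{-1}M\xra{d_{s-1}}\mathcal R_sM\xra{d_s}\mathcal R_{s+1}\Sigma M$ is zero, so $d_s$ sends all of $\im(d_{s-1})$ to $0$, which a fortiori lies in $\im(q\,d_{s-1})$. Hence $d_s$ descends, which is the assertion of the lemma. One must of course carry along the suspension shifts dictated by the targets of $d_s$ and by the relabelling $R_s=\Sigma\mathcal R_s\Sigma^{s-1}$, but this is routine bookkeeping and affects nothing.

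I do not expect a genuine obstacle: the statement is essentially the observation that the differential $d_\bullet$ descends to the quotients $\bar{\mathcal R}_{\bullet,s}$, and the only nontrivial input, \propref{complex prop}, is already available. If it is convenient, one can also record here that this induced $d_s$ is compatible with the squaring maps $q\colon\bar{\mathcal R}_{k,s}M\to\bar{\mathcal R}_{k+1,s}M$ of the previous bullet --- this follows from the commuting square of \lemref{chain maps lemma}(b) --- though that compatibility is not part of the present statement and may be invoked later where it is used.
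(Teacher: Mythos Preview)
Your argument has a genuine gap hidden in what you call ``routine bookkeeping''.  You write the composite as $\mathcal R_sM \xra{d_s} \mathcal R_{s+1}M \twoheadrightarrow \bar{\mathcal R}_{s+1,s}M$, but $d_s$ does not land in $\mathcal R_{s+1}M$: its target is $\mathcal R_{s+1}\Sigma M$.  The map named in the lemma is instead $d_s\colon \mathcal R_s\Sigma^{-1}M \to \mathcal R_{s+1}M$ (that is, $d_s$ evaluated on $\Sigma^{-1}M$), and its \emph{source} is not the module of which $\bar{\mathcal R}_{s,s}M$ is the quotient.  Since $\Sigma$ does not commute with $\mathcal R_k$, this mismatch cannot be absorbed by a mere relabelling.

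What the paper actually does is re-present (a desuspension of) $\bar{\mathcal R}_{s,s}M$ as a quotient of $\mathcal R_s\Sigma^{-1}M$ via the surjection $\epsilon\colon \mathcal R_s\Sigma^{-1}M \twoheadrightarrow \Sigma^{-1}\mathcal R_sM$; the kernel of $\epsilon$ is $\im(q\colon \mathcal R_{s-1}\Sigma^{-1}M \to \mathcal R_s\Sigma^{-1}M)$, so under this presentation $\bar{\mathcal R}_{s,s}M$ becomes $\mathcal R_s\Sigma^{-1}M$ modulo the \emph{two} submodules $\im(d_{s-1}\colon \mathcal R_{s-1}\Sigma^{-2}M \to \mathcal R_s\Sigma^{-1}M)$ and $\im(q)$.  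Your use of $d_s d_{s-1}=0$ handles the first, but the second requires showing that $d_s$ carries $\im(q)$ into $\im(q\,d_{s-1})\subset \mathcal R_{s+1}M$, i.e.\ the identity $d_s q = q\,d_{s-1}$ coming from \lemref{chain maps lemma}(b).  You noted this compatibility at the end and said it is ``not part of the present statement'' --- in fact it is exactly the missing half of the proof.
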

\begin{proof} The module $\bar{\mathcal R}_{s,s}M$ is the quotient of $\mathcal R_s\Sigma^{-1}M$ by the subspace generated by the images of the maps $d_{s-1}: \mathcal R_{s-1}\Sigma^{-2}M \ra \mathcal R_s\Sigma^{-1}M$ and $q: \mathcal R_{s-1}\Sigma^{-1}M \ra \mathcal R_s\Sigma^{-1}M$. Meanwhile $\bar{\mathcal R}_{s+1,s}M$ is the quotient of $\mathcal R_{s+1} M$ by the subspace given by the image of $d_{s}q: \mathcal R_{s-1}\Sigma^{-1}M \ra \mathcal R_{s+1}M$.  Clearly $d_s: \mathcal R_{s}\Sigma^{-1}M \ra \mathcal R_{s+1}M$ carries the former subspace to the latter.
\end{proof}

Armed with these various constructions, we can define our spectral sequence.

\begin{defn} Let $(E^{alg,2^s}_{*,*}(M),d^{2^s}) = (U_q(V(s)),d_s)$, where the pair $(V(s),d_s)$ is defined by
letting
\begin{equation*}
V(s)_{2^k} =
\begin{cases}
L_kM & \text{for } k<s \\  \bar{\mathcal R}_{k,s}M& \text{for } k \geq s,
\end{cases}
\end{equation*}
and by letting the nonzero component of $d_s$ be the map $d_s: \bar{\mathcal R}_{s,s}M \ra \bar{\mathcal R}_{s+1,s}M$ of the last lemma.
\end{defn}

\propref{Hopf alg prop} applies to prove $H(E^{alg,2^s}_{*,*}(M);d^{2^s}) = E^{alg,2^{s+1}}_{*,*}(M)$ once we check the next proposition.

\begin{prop}  There are natural isomorphisms
$$ \ker\{d_s: \bar{\mathcal R}_{s,s}M \ra \bar{\mathcal R}_{s+1,s}M\} \simeq L_sM$$
and
$$ \coker\{d_s: \bar{\mathcal R}_{s,s}M \ra \bar{\mathcal R}_{s+1,s}M\} \simeq \bar{\mathcal R}_{s+1,s+1}M.$$
\end{prop}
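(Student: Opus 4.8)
The plan is to prove the two isomorphisms separately: the cokernel statement is essentially formal, while the kernel statement is where the long exact sequence attached to \propref{ses prop} does the work. Throughout I use the presentation of the preceding lemma, so that $\bar{\mathcal R}_{s,s}M$ is $\mathcal R_s(\Sigma^{-1}M)$ with both the boundaries $\im(d_{s-1})$ and the squares $\im(q)$ divided out, while $\bar{\mathcal R}_{s+1,s}M$ is $\mathcal R_{s+1}M$ with $\im(d_s q)$ divided out.

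For the cokernel, I would start from the identity $q\,d_{s-1}=d_s\,q$ supplied by \lemref{chain maps lemma}(b), which shows that the submodule $\im(d_s q)=\im(q\,d_{s-1})$ defining $\bar{\mathcal R}_{s+1,s}M$ is contained in $\im(d_s)$. Hence the image of $\bar d_s$ is $\im(d_s)/\im(d_s q)\subseteq\bar{\mathcal R}_{s+1,s}M$, and
$$
\coker\{\bar d_s\colon\bar{\mathcal R}_{s,s}M\to\bar{\mathcal R}_{s+1,s}M\}\;=\;\mathcal R_{s+1}M/\im(d_s)\;=\;\bar{\mathcal R}_{s+1,s+1}M
$$
directly from the definition of $\bar{\mathcal R}_{s+1,s+1}M$; naturality and $\A$--linearity are immediate.

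For the kernel, I would lift to the Singer complex. By \lemref{ses lem} the squares $\im(q)$ divided out of $\bar{\mathcal R}_{s,s}M$ are precisely the degree-$s$ term of the squaring subcomplex $\Phi(\mathcal R_{\bullet-1})$ of \propref{ses prop}. A lift $y$ of a class in $\ker\bar d_s$ is then characterized by $d_s(y)\in\im(d_s q)$, i.e.\ by $y-q(z)$ being a genuine $d_s$--cycle for some $z$; so $\ker\bar d_s$ equals $(\ker d_s+\im q)/(\im d_{s-1}+\im q)$, which a one-line diagram chase identifies with $H_sM$ modulo the image, under $\ker(d_s)\twoheadrightarrow H_sM$, of those cycles that lie in $\im q$. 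Since $\im q$ is exactly the squaring subcomplex, the image of its cycles in $H_sM$ is the image of the map $q_0$ occurring in the long exact sequence of \propref{ses prop}; as $q_0$ is immediately followed there by $\epsilon_*$, exactness identifies this quotient of $H_sM$ with $\im(\epsilon_*)$, which is $L_sM$ by \corref{image-epsilon-loop}. This yields the required isomorphism, again naturally and $\A$--linearly.

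I expect the genuine work to be bookkeeping rather than the diagram chases: one has to reconcile the three normalizations in play --- the internally shifted $\mathcal R_s$, the renormalized $R_s=\Sigma\mathcal R_s\Sigma^{s-1}$, and the bigraded barcode modules $V(s)$ --- and in particular verify that the extra quotient by $\im(q)$ built into $\bar{\mathcal R}_{s,s}M$ is exactly what turns $\mathcal R_s(\Sigma^{-1}M)$ into the quotient complex of \propref{ses prop} (via \lemref{ses lem}), so that the long exact sequence invoked above really is the one whose connecting homomorphisms are $sq_0$ and $q_0$ and whose middle terms are the $H_sM$. Once these identifications are pinned down, the only remaining inputs are \lemref{chain maps lemma}, \propref{ses prop}, \corref{image-epsilon-loop}, \lemref{ses lem}, and the trivial compatibility $d_s(\im q)=\im(d_s q)$.
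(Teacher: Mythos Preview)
Your proposal is correct and is close in spirit to the paper's argument, though organized somewhat differently.

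The paper packages the whole thing into an abstract diagram lemma (\lemref{sneaky lemma}): one sets up a diagram $(\heartsuit)$ with exact columns coming from the short exact sequences of \lemref{ses lem}, passes to a quotient diagram $(\spadesuit)$, and proves directly by diagram chase that (b) $\ker(d')=\im\{e_{V_*}\colon H(V')\to H(V)\}$ and (c) $\coker(d')\simeq\coker(d_V')$. Specializing to $(\heartsuit)$ built from $\mathcal R_{s-1},\mathcal R_s,\mathcal R_{s+1}$ with the $q_0/\epsilon$ columns, part (b) reads $\ker\bar d_s=\im\{\epsilon_*\colon H_s(\Sigma^{-1}M)\to H_sM\}=L_sM$ and part (c) gives the cokernel.

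Your cokernel argument is exactly the specialization of the paper's part (c). For the kernel, you instead compute $\ker\bar d_s\cong H_s(\Sigma^{-1}M)/\im(q_{0*})$ and then invoke exactness of the long exact sequence attached to \propref{ses prop} to identify this quotient with $\im(\epsilon_*)$. This is equivalent to the paper's direct chase: the long exact sequence is just the snake lemma applied to the same short exact sequence of complexes, so you are using the same input rewrapped. One small point to clean up: when you write ``$H_sM$ modulo the image \dots'' you are really computing inside $H_s(\Sigma^{-1}M)$ (the homology of the middle row), and only after applying $\epsilon_*$ do you land in $H_sM$; your final paragraph correctly flags this suspension bookkeeping as the thing to pin down. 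The paper's abstracted lemma also yields two auxiliary facts --- the isomorphism $\bar V'\cong\bar V$ reconciling the two presentations of $\bar{\mathcal R}_{s,s}M$, and exactness of the third column --- which you are implicitly using or bypassing; make sure the first of these is stated somewhere, since it underlies your opening identification of $\bar{\mathcal R}_{s,s}M$ as a quotient of $\mathcal R_s(\Sigma^{-1}M)$.
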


Proving this is slightly subtle.  We apply the next lemma, letting the diagram
\begin{equation*}
\xymatrix{
& \mathcal R_{s-1}\Sigma^{-1}M \ar[d]^{q_0} \ar[r]^-{d_{s-1}} & \mathcal R_{s}M \ar[d]^{q_0} \\
\mathcal R_{s-1}\Sigma^{-2}M \ar[d]^{\epsilon} \ar[r]^-{d_{s-1}} & \mathcal R_{s}\Sigma^{-1}M \ar[d]^{\epsilon} \ar[r]^-{d_s} & \mathcal R_{s+1}M \ar[d]^{\epsilon} \\
\mathcal R_{s-1}\Sigma^{-1}M \ar[d] \ar[r]^-{d_{s-1}} & \mathcal R_{s}M \ar[d] \ar[r]^-{d_s} & \mathcal R_{s+1}\Sigma M \ar[d] \\
0  & 0  & 0  }
\end{equation*}
play the role of ($\heartsuit$).

So suppose given a diagram of vector spaces
\begin{equation}\tag{$\heartsuit$}
\xymatrix{
& U \ar[d]^{q_U} \ar[r]^-{d_U} & V \ar[d]^{q_V} \\
U^{\prime} \ar[d]^{e_U} \ar[r]^-{d^{\prime}_U} & V^{\prime} \ar[d]^{e_V} \ar[r]^-{d^{\prime}_V} & W^{\prime} \ar[d]^{e_W} \\
U \ar[d] \ar[r]^-{d_U} & V \ar[d] \ar[r]^-{d_V} & W \ar[d] \\
0  & 0  & 0  }
\end{equation}
where the columns are exact, and the two bottom rows are chain complexes.

Let $\bar V = V/\im d_U$, $\bar W = W$, $\bar V^{\prime} = V^{\prime}/(\im d_U^{\prime} + \im q_U)$, and $\bar W^{\prime} = W^{\prime}/\im d_V^{\prime}\circ q_U$. The diagram ($\heartsuit$) maps in an evident way to the diagram
\begin{equation}\tag{$\spadesuit$}
\xymatrix{
& 0 \ar[d] \ar[r] & \bar V \ar[d]^{q} \\
0 \ar@{=}[d] \ar[r] & \bar V^{\prime} \ar[d]^{\bar e_{V}} \ar[r]^-{d^{\prime}} & \bar W^{\prime} \ar[d]^{\bar e_{W}} \\
0 \ar@{=}[d] \ar[r] & \bar V \ar[d] \ar[r]^-{d} & \bar W \ar[d] \\
0  & 0  & 0.  }
\end{equation}

\begin{lem} \label{sneaky lemma} The following properties hold.\\

\noindent{\bf (a)} $\bar e_V: \bar V^{\prime} \ra \bar V$ is an isomorphism. \\

\noindent{\bf (b)} $\ker (d^{\prime}) = \im\{e_{V_*}:H(V^{\prime}) \ra H(V)\}$. \\

\noindent{\bf (c)} $\coker(d^{\prime}) \simeq \coker(d_V^{\prime})$. \\

\noindent{\bf (d)} The third column of $(\spadesuit)$ is exact. \\
\end{lem}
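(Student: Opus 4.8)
The plan is to prove the four statements by direct diagram chases in $(\heartsuit)$ and $(\spadesuit)$, using nothing beyond the exactness of the columns of $(\heartsuit)$ — so that $q_U,q_V$ are monic, $e_U,e_V,e_W$ are epic, $\im q_U=\ker e_V$ and $\im q_V=\ker e_W$ — together with the commutativity of the three visible squares, the decisive one being $q_V d_U=d'_V q_U$. The first thing I would do is check that the maps occurring in $(\spadesuit)$, namely the induced $q$, $d$, $d'$, $\bar e_V$ and $\bar e_W$, are well defined; in each case this amounts to seeing that some map carries the numerator subspace of its source quotient into the denominator subspace of its target. For $q$ (induced by $q_V\colon V\to W'$) this is $q_V(\im d_U)=\im(d'_V q_U)$, which is exactly the decisive square; for $\bar e_V$ and $\bar e_W$ it is $e_V q_U=0$ and $e_W(d'_V q_U)=d_V e_V q_U=0$, both from column exactness; for $d'$ it is the relation $d'_V d'_U=0$ together with $d'_V(\im q_U)=\im(d'_V q_U)$; for $d$ it is $d_V d_U=0$. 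Once this is done, the second and third columns of $(\spadesuit)$ are the sequences $0\to\bar V'\xra{\bar e_V}\bar V\to 0$ and $\bar V\xra{q}\bar W'\xra{\bar e_W}\bar W\to 0$, so statements (a) and (d) are precisely the assertions that these two columns are exact.

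For (a): $\bar e_V$ is epic because $e_V$ is, and for injectivity, given $v'$ with $e_V(v')=d_U(u)\in\im d_U$, I would lift $u=e_U(u')$ (using that $e_U$ is epic), apply $e_V d'_U=d_U e_U$ to get $e_V(v'-d'_U u')=0$, hence $v'-d'_U u'\in\ker e_V=\im q_U$, so $v'$ maps to $0$ in $\bar V'$. Statement (c) falls out of the third isomorphism theorem: since $\im(d'_V q_U)\subseteq\im d'_V$, one has $\coker d'=\bar W'/\im d'=\bigl(W'/\im(d'_V q_U)\bigr)\big/\bigl(\im d'_V/\im(d'_V q_U)\bigr)\cong W'/\im d'_V=\coker d'_V$. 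For (d): $\bar e_W$ is epic since $e_W$ is and $\bar W=W$; using $\ker e_W=\im q_V$ and $\im(d'_V q_U)=q_V(\im d_U)$ one computes $\ker\bar e_W=\im q_V/q_V(\im d_U)$, which is exactly $\im q$; and $q$ is monic because $q_V$ is and $q_V^{-1}(q_V(\im d_U))=\im d_U$. Hence $0\to\bar V\xra{q}\bar W'\xra{\bar e_W}\bar W\to 0$ is exact.

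For (b): I would use the isomorphism $\bar e_V\colon\bar V'\xra{\sim}\bar V$ of (a) to compare $\ker d'\subseteq\bar V'$ with the image of $e_{V_*}\colon H(V')\to H(V)$, regarding $H(V)=\ker d_V/\im d_U$ as a subspace of $\bar V=V/\im d_U$. In one direction, a cycle $v'_0\in\ker d'_V$ has $e_V(v'_0)\in\ker d_V$ by $d_V e_V=e_W d'_V$, and the class of $v'_0$ in $\bar V'$ visibly lies in $\ker d'$, giving $\im e_{V_*}\subseteq\bar e_V(\ker d')$. Conversely, if $[v']\in\ker d'$, then $d'_V(v')\in\im(d'_V q_U)$, so $v'_0:=v'-q_U(u)\in\ker d'_V$ for an appropriate $u$; since $e_V q_U=0$ this gives $\bar e_V[v']=[e_V(v'_0)]=e_{V_*}[v'_0]\in\im e_{V_*}$. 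The whole argument is formal and there is no genuine obstacle; the only thing that needs care is bookkeeping — keeping track of which quotient each element inhabits and nailing down the well-definedness of the maps in $(\spadesuit)$ at the very start — so that is where any friction will be.
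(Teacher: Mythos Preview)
Your proof is correct and follows essentially the same diagram-chasing route as the paper. The only cosmetic differences are in packaging: the paper organizes (c) and (d) via the standard exact sequence $\coker f \to \coker(gf) \to \coker g \to 0$ (applied to $V' \xra{d'_V} W' \twoheadrightarrow \bar W'$ and to $U \xra{d_U} V \xra{q_V} W'$ respectively) and handles (b) by a small snake-lemma diagram with rows $U \to \tilde V' \to \bar V' \to 0$ and $U \to W' \to \bar W' \to 0$, whereas you do each part by a direct element chase and the third isomorphism theorem; your extra observation that $q$ is monic (using $q_V$ monic) is slightly more than the paper states for (d) but is true in the intended application.
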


\begin{proof}  Diagram chasing with the left two columns of ($\heartsuit$) shows that there is an exact sequence
$$ U \xra{q_U} V^{\prime}/\im d_U^{\prime} \xra{e_V} V/\im d_U \ra 0,$$
and statement (a) follows.

It is standard that given maps $A \xra{f} B \xra{g} C$ in an abelian category, there is an exact sequence $\coker f \ra \coker gf \ra \coker g \ra 0$.  Apply this to $U \xra{d_U} V \xra{q_V} W^{\prime}$ to deduce statement (d).  Apply this to $V^{\prime} \xra{d_V^{\prime}} W^{\prime} \twoheadrightarrow \bar W^{\prime}$ to deduce statement (c), noting that
$$\coker\{V^{\prime} \ra \bar W^{\prime}\} = \coker\{\bar V^{\prime} \xra{d^{\prime}} \bar W^{\prime}\}.$$

To deduce (b), let $\tilde V^{\prime} = V^{\prime}/\im d_U^{\prime}$. One has a commutative diagram
\begin{equation*}
\xymatrix{
\tilde V^{\prime} \ar[d]^{d_V^{\prime}} \ar[r] & \bar V^{\prime} \ar[d]^{d^{\prime}} \ar[r]^{\sim} & \bar V \ar[d]^d \\
W^{\prime}  \ar[r] & \bar W^{\prime}  \ar[r] & \bar W,  }
\end{equation*}
where the indicated isomorphism is the isomorphism of (a).  Taking kernels, one gets
$$ H(V^{\prime}) \ra \ker d^{\prime} \hra H(V),$$
and we need to check that the first map here is onto.  But this follows because the left square fits into a commutative diagram
\begin{equation*}
\xymatrix{
U \ar[r] \ar@{=}[d] &\tilde V^{\prime} \ar[d]^{d_V^{\prime}} \ar[r] & \bar V^{\prime}  \ar[d]^{d^{\prime}} \ar[r] & 0 \\
U \ar[r] & W^{\prime}  \ar[r] & \bar W^{\prime} \ar[r] & 0   }
\end{equation*}
with exact rows.
\end{proof}

\section{Examples} \label{example section}

\subsection{Eilenberg--MacLane spectra}
We begin our discussion of how the spectral sequence behaves when $X = \Sigma^n HA$ by noting that all of our constructions behave well with respect to filtered colimits, localization at 2,  and direct sums in the variable $A$.  It follows that the key cases to understand are when $A = \Z$, $\Z/2$, and $\Z/2^r$ with $r\geq 2$.

Our first task is compute $L_*(H_*(\Sigma^n HA))$ in these cases.

Recall that $H^*(H\Z/2) = \A$, $H^*(H\Z) = \A/\A Sq^1$, and $H^*(H\Z/2^r) = H^*(H\Z) \oplus \Sigma H^*(H\Z)$ for $r \ge 2$.  For convenience, let $\bar \A_* = H_*(H\Z)$.

\begin{lem} For all $s>0$,
$\Oinfty_s \Sigma^{1-s+n} \bar \A_* =
\begin{cases}
\Sigma \Z/2 & \text{if } n = 0 \\
 \Z/2 & \text{if } n = -1 \\
0 & \text{otherwise}.
\end{cases}
$
\end{lem}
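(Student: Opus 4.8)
The plan is to run everything off the short exact sequence
\begin{equation*}
0 \ra \bar\A_* \xra{\iota} \A_* \xra{\pi} \Sigma\bar\A_* \ra 0
\end{equation*}
in $\M$, which is dual to the presentation $H^*(H\Z) = \A/\A Sq^1$ (equivalently, it is the long exact mod $2$ homology sequence of the Bockstein cofibration $H\Z \xra{2} H\Z \ra H\Z/2 \xra{\beta} \Sigma H\Z$). Since each $\Sigma^j\A_*$ is injective in $\M$, \lemref{acyclicity lemma} gives $\Oinfty_i(\Sigma^j\A_*) = 0$ for $i \ge 1$; feeding $\Sigma^{1-s+n}$ of the displayed sequence into the long exact sequence of the right derived functors $\Oinfty_*$ and dimension shifting then yields, for every $s \ge 1$,
\begin{equation*}
\Oinfty_s(\Sigma^{1-s+n}\bar\A_*) \;\cong\; \Oinfty_1(\Sigma^n\bar\A_*) \;\cong\; \coker\bigl( \Oinfty(\Sigma^n\A_*) \xra{\pi_*} \Oinfty(\Sigma^{n+1}\bar\A_*) \bigr) =: C(n).
\end{equation*}
In particular the answer is independent of $s$, as the claimed formula requires, and it remains only to compute $C(n)$ for each $n\in\Z$.

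Next I would dispose of the cases $n \le 0$ by soft arguments. Because $\Oinfty\colon \M \ra \U$ is right adjoint to the inclusion and $\Omega\colon \U \ra \U$ is right adjoint to $\Sigma$, a Yoneda argument gives $\Oinfty(\Sigma^{-1}N) \cong \Omega\,\Oinfty(N)$ for all $N \in \M$. Now $\Oinfty(\A_*) = \Z/2$, concentrated in degree $0$ — it is the dual of the largest unstable quotient of $\A$, namely $\A/\bar\A$ — and hence $\Oinfty(\bar\A_*) = \Oinfty(\A_*) \cap \bar\A_* = \Z/2$ as well; since unstable modules vanish in negative degrees, $\Omega(\Z/2) = 0$, so $\Oinfty(\Sigma^m\A_*) = \Oinfty(\Sigma^m\bar\A_*) = 0$ for all $m < 0$. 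This immediately gives $C(n) = 0$ for $n \le -2$ and $C(-1) = \coker(0 \ra \Z/2) = \Z/2$. For $n = 0$ one computes directly in low degrees, using \lemref{R0 lemma} to describe largest unstable submodules: $\Oinfty(\Sigma\A_*)$ is spanned by its degree-$1$ socle and a single degree-$2$ class, so $\Oinfty(\Sigma\bar\A_*) = \Oinfty(\Sigma\A_*) \cap \Sigma\bar\A_*$ is just the degree-$1$ socle, i.e.\ $\Sigma\Z/2$; and $\pi_*$ kills $\Oinfty(\A_*)$ for degree reasons, so $C(0) = \Sigma\Z/2$.

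The remaining case $n \ge 1$ — where the claim is $C(n) = \Oinfty_1(\Sigma^n\bar\A_*) = 0$ — is the heart of the matter. Here I would apply the vanishing criterion \propref{vanishing prop}: it suffices to prove that
\begin{equation*}
sq_0\colon \Oinfty(\Sigma^m\bar\A_*) \ra \Phi\bigl(\Oinfty(\Sigma^m\bar\A_*)\bigr)
\end{equation*}
is onto for every $m \ge n+1 \ge 2$. To this end I would identify $\Oinfty(\Sigma^m\bar\A_*)$ with the image of the evaluation map $H_*(K(\Z,m)) = H_*(\Oinfty\Sigma^m H\Z) \ra H_*(\Sigma^m H\Z) = \Sigma^m\bar\A_*$: the image is unstable, hence contained in $\Oinfty(\Sigma^m\bar\A_*)$, while the reverse inclusion comes from the Cartan--Serre description of $H^*(K(\Z,m))$, whose $\A$-submodule generated by $\iota_m$ is, by the standard computation, the largest unstable quotient of $\Sigma^m\A/\A Sq^1 = H^*(\Sigma^m H\Z)$ — dually, the largest unstable submodule of $\Sigma^m\bar\A_*$. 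Granting this, surjectivity of $sq_0$ follows formally: $H^*(K(\Z,m))$ is a polynomial algebra satisfying the restriction axiom $Sq^{|x|}x = x^2$, so squaring is injective on it and therefore $sq_0$ is onto on $H_*(K(\Z,m))$; and surjectivity of the natural transformation $sq_0$ passes to any quotient since $\Phi$ is exact. Applying $\Omega$ to the three outcomes then recovers $L_*(H_*(\Sigma^n H\Z))$ as used in the surrounding text.

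The step I expect to be the main obstacle is precisely the identification $\Oinfty(\Sigma^m\bar\A_*) = \im\bigl(H_*(K(\Z,m)) \ra \Sigma^m\bar\A_*\bigr)$ for $m \ge 2$ — that the evaluation map hits the entire largest unstable submodule in this range. This is the homological shadow of the absence of rogue differentials off the $-1$ line for $\Sigma^m H\Z$ (compare \corref{alg = top cor}), and the alternative route — a direct, purely algebraic computation of the dual Brown--Gitler module $\Oinfty(\Sigma^m\A_*)$ together with its intersection with $\Sigma^m\bar\A_*$ — is feasible but combinatorially heavier.
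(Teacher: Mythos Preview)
Your argument is correct. The dimension–shifting reduction to the single quantity
\[
C(n)\;=\;\coker\bigl(\Oinfty(\Sigma^n\A_*)\to\Oinfty(\Sigma^{n+1}\bar\A_*)\bigr)
\]
is exactly dual to what the paper does: the paper takes the periodic $Sq^1$--resolution of $\A/\A Sq^1$, applies destabilization, and obtains the complex $F(n+2)\xra{Sq^1}F(n+1)\xra{Sq^1}F(n)$; your $C(n)$ is just the dual of its middle homology. So through the case split $n\le 0$ the two proofs are essentially the same argument in dual languages, and your low-degree computations for $n=-1,0$ match the paper's ``by inspection.''

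Where the two genuinely diverge is the case $n\ge 1$. The paper simply \emph{inspects} the complex $F(n+2)\to F(n+1)\to F(n)$ and observes it is exact for $n\ge 1$: this is a short admissible-basis check and needs nothing beyond the definition of $F(m)$. You instead invoke \propref{vanishing prop}, reduce to surjectivity of $sq_0$ on $\Oinfty(\Sigma^m\bar\A_*)$ for $m\ge 2$, and then import the Cartan--Serre description of $H^*(K(\Z,m))$ to get that surjectivity. This is valid, but it trades an elementary combinatorial check for a classical (and heavier) external input—precisely the identification you flag as the ``main obstacle.'' The paper's route buys self-containment and uniformity across all $n$; your route buys a more conceptual explanation of \emph{why} the $n\ge 1$ vanishing holds (it is the homological shadow of $H^*(K(\Z,m))$ being polynomial), at the cost of relying on a fact whose verification is close in difficulty to the direct inspection itself.
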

\begin{proof}  We work with the equivalent dual left $\A$--module situation.  Let $F(n) = \Oinfty \Sigma^n \A$, the free unstable $\A$--module on an $n$--dimensional class.  (This is 0, if $n<0$.)  The module $\A/\A Sq^1$ has a projective resolution
$$ \cdots \ra \Sigma^2 \A \xra{\cdot Sq^1} \Sigma \A \xra{\cdot Sq^1}  \A \ra \A/\A Sq^1 \ra 0.$$
Applying $\Oinfty \Sigma^{1-s+n}$ yields the complex
$$ \cdots \ra F(1-s+n+2) \xra{\cdot Sq^1} F(1-s+n+1) \xra{\cdot Sq^1}  F(1-s+n). $$
The module $\Oinfty_s \Sigma^{1-s+n} \bar \A_*$ is thus dual to the homology of
$$ F(n+2) \xra{\cdot Sq^1} F(n+1) \xra{\cdot Sq^1} F(n).$$
By inspection, one sees that this is exact except when $n = -1$ or 0.
\end{proof}

\begin{cor} {\bf (a)} $L_sH_*(\Sigma^n H\Z/2) = 0$ for all $s>0$ and all $n$. \\

\noindent{\bf (b)} For all $s>0$, $L_sH_*(\Sigma^n H\Z) = \begin{cases}
\Z/2 & \text{if } n = 0 \\
0 & \text{otherwise}.
\end{cases}$ \\

\noindent{\bf (c)} For all $s>0$ and $r\geq 2$,
$$L_sH_*(\Sigma^n H\Z/2^r) = L_sH_*(\Sigma^n H\Z) \oplus L_sH_*(\Sigma^{n+1} H\Z) = \begin{cases}
\Z/2 & \text{if } n = -1, 0 \\
0 & \text{otherwise}.
\end{cases}$$
\end{cor}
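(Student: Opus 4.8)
The plan is to reduce every part of the corollary to the preceding lemma together with the definition $L_sM = \Omega\Oinfty_s\Sigma^{1-s}M$, the only real work being to keep careful track of internal degrees. For part (a) I would start from $H_*(\Sigma^n H\Z/2) = \Sigma^n\A_*$, so that $L_sH_*(\Sigma^n H\Z/2) = \Omega\Oinfty_s(\Sigma^{1-s+n}\A_*)$. Every suspension of $\A_*$ is an injective object of $\M$, and for $s>0$ the functor $\Oinfty_s$ is a right derived functor of the left exact functor $\Oinfty$, hence vanishes on injectives; therefore $\Oinfty_s(\Sigma^{1-s+n}\A_*)=0$, and applying $\Omega$ gives $0$.

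For part (b) I would use $H_*(\Sigma^n H\Z) = \Sigma^n\bar\A_*$, so $L_sH_*(\Sigma^n H\Z) = \Omega\Oinfty_s(\Sigma^{1-s+n}\bar\A_*)$, and the lemma evaluates the inner term: it is $0$ unless $n\in\{0,-1\}$, it is $\Sigma\Z/2$ when $n=0$, and it is $\Z/2$ concentrated in degree $0$ when $n=-1$. The key input at this stage is that $\Omega M$ is the largest unstable submodule of $\Sigma^{-1}M$ and that unstable modules vanish in negative degrees (apply the instability relation with $i=0$): this forces $\Omega$ to annihilate the $n=-1$ contribution, whereas $\Omega\Sigma=\id$ on $\U$ recovers $\Z/2$ from $\Sigma\Z/2$ in the $n=0$ case. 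This produces exactly the stated values.

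For part (c) I would first promote the $\A$-module splitting $H^*(H\Z/2^r)=H^*(H\Z)\oplus\Sigma H^*(H\Z)$, valid for $r\geq 2$, to a splitting in $\M$ by dualizing degreewise, which after suspending yields $H_*(\Sigma^n H\Z/2^r)\cong H_*(\Sigma^n H\Z)\oplus H_*(\Sigma^{n+1}H\Z)$. Since $L_s=\Omega\Oinfty_s\Sigma^{1-s}$ is a composite of additive functors it preserves finite direct sums, so the computation collapses to part (b): the first summand contributes $\Z/2$ exactly when $n=0$ and the second exactly when $n=-1$, which is the claimed answer.

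I expect the only genuinely delicate point to be the degree bookkeeping in part (b): the lemma produces a nonzero module for both $n=0$ and $n=-1$, and one must observe that only the $n=0$ output is literally a suspension, so that $\Omega$ treats the two cases asymmetrically and exactly one of them survives. Everything else in the argument is formal manipulation with injectives, derived functors, and additivity.
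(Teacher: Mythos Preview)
Your proposal is correct and matches the approach the paper implicitly takes: the corollary is stated without proof there, as it follows directly from the preceding lemma, the definition $L_sM=\Omega\Oinfty_s\Sigma^{1-s}M$, the injectivity of $\Sigma^n\A_*$ in $\M$, and the splitting of $H_*(H\Z/2^r)$ recalled just before. Your observation that $\Omega$ kills the degree-$0$ class (because unstable modules vanish in negative degrees) is exactly the small but necessary check that distinguishes the $n=0$ and $n=-1$ cases in part (b).
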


Now we need to know how the Dyer--Lashof operation $Q^0$ acts.

\begin{lem} $Q^0: L_s H_*(H\Z) \ra L_{s+1} H_*(H\Z)$ is an isomorphism for all $s \geq 0$.
\end{lem}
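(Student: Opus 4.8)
The plan is to transport the statement into the free Dyer--Lashof module $\mathcal R_* H_*(H\Z)$, where $Q^0$ is completely explicit, using that $L_* H_*(H\Z)$ is a subquotient of $\mathcal R_* H_*(H\Z)$ in $\QM$ by \propref{DL structure on derived functors prop}.

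The first step is to identify $L_s H_*(H\Z)$ as a graded module for each $s \geq 0$. For $s \geq 1$ the preceding lemma gives $\Oinfty_s \Sigma^{1-s}\bar\A_* = \Sigma\Z/2$, so $L_s H_*(H\Z) = \Omega\Oinfty_s\Sigma^{1-s}\bar\A_* = \Omega(\Sigma\Z/2)$ is $\Z/2$ concentrated in degree $0$. For $s = 0$ we have $L_0 M = \Oinfty M$, and I would check directly that $\Oinfty H_*(H\Z) = \Z/2$, concentrated in degree $0$: dually, the largest unstable quotient of $H^*(H\Z) = \A/\A Sq^1$ is $\Z/2$, since the instability relations $Sq^i[1] = 0$ for $i \geq 1$ already kill everything of positive degree. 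So $L_s H_*(H\Z)$ is one--dimensional and lives in degree $0$ for all $s \geq 0$.

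The second step is the parallel computation in $\mathcal R_* H_*(H\Z)$. Since $H_*(H\Z)$ is concentrated in non--negative degrees with $H_0(H\Z) = \Z/2\langle \iota_0 \rangle$, and since the Dyer--Lashof unstable relation forces any nonzero $Q^I x \in \mathcal R_s H_*(H\Z)$ to have degree $\geq |x| \geq 0$, with equality only when $x = \iota_0$ and $I = (0,\dots,0)$, the degree--$0$ part of $\mathcal R_s H_*(H\Z)$ is spanned by the single class $a_s := (Q^0)^s \iota_0 = \iota_0^{2^s}$ (the last identity using the restriction relation $Q^{|x|}x = x^2$, valid in $H_*(\mathbb P(H\Z))$); this class is a nonzero admissible monomial inside $\mathcal R_s H_*(H\Z) \hookrightarrow H_*(D_{2^s}H\Z)$. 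One further application of $Q^0$ yields $Q^0(a_s) = (\iota_0^{2^s})^2 = \iota_0^{2^{s+1}} = a_{s+1}$, again nonzero.

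Finally, combining these: as $L_* H_*(H\Z)$ is a subquotient of $\mathcal R_* H_*(H\Z)$ in $\QM$, the operation $Q^0$ on $L_*$ is the restriction of that on $\mathcal R_*$; and since both $L_s H_*(H\Z)$ and $\mathcal R_s H_*(H\Z)$ are one--dimensional in degree $0$, the subquotient inclusion identifies $(L_s H_*(H\Z))_0$ with $\Z/2\langle a_s \rangle$ for every $s \geq 0$. Therefore $Q^0 \colon L_s H_*(H\Z) \to L_{s+1} H_*(H\Z)$ sends the generator $a_s$ to the generator $a_{s+1}$, hence is an isomorphism. The step I expect to require the most care is this last one --- checking that the abstract $Q^0$ on the derived functors genuinely matches the explicit $Q^0$ on $\mathcal R_*$ under the subquotient identification --- but that compatibility is exactly the ``subquotient in $\QM$'' part of \propref{DL structure on derived functors prop}, so no real obstacle should arise.
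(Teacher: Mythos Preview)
Your argument is correct and takes a genuinely different route from the paper's.  The paper instead invokes the long exact sequence of \corref{les cor 2}: the segment
\[
\Oinfty_s \Sigma^{1-s}\bar\A_* \xra{sq_0} \Phi(\Oinfty_s \Sigma^{1-s}\bar\A_*) \xra{q_0} \Sigma \Oinfty_{s+1}\Sigma^{-s}\bar\A_*
\]
identifies, via the preceding lemma, with $\Sigma\Z/2 \xra{sq_0} \Sigma^2\Z/2 \xra{q_0} \Sigma^2\Z/2$; since $sq_0$ has odd-degree source it vanishes, so $q_0$ is injective and hence an isomorphism, and this $q_0$ is $Q^0$ on $L_*$ by \defref{q_0 defn}.

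Your approach trades the long exact sequence for a direct computation inside $\mathcal R_*$: you pin down the degree-$0$ part of $\mathcal R_s H_*(H\Z)$ as $\langle (Q^0)^s\iota_0\rangle$ and then use the subquotient-in-$\QM$ statement of \propref{DL structure on derived functors prop} to transport the explicit isomorphism to $L_*$.  This is more elementary and is uniform in $s$ (the paper's identification via the preceding lemma is literally stated only for $s>0$, so the $s=0$ case needs a separate check of $\Oinfty\Sigma\bar\A_*$).  The paper's argument, on the other hand, is shorter once the LES machinery is in place and makes the reason for the isomorphism transparent: $sq_0$ vanishes for purely degree reasons.  Your closing worry about compatibility is well placed but, as you say, is exactly what ``subquotient in $\QM$'' provides; the one-dimensionality of $(\mathcal R_s)_0$ forces $(Z_s)_0 = (\mathcal R_s)_0$ and $(B_s)_0 = 0$ in any subquotient presentation $L_* = Z_*/B_*$, so no obstacle arises.
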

\begin{proof} The key point is that the exact sequence
$$ \Oinfty_s \Sigma^{1-s}\bar \A_* \xra{sq_0} \Phi(\Oinfty_s \Sigma^{1-s}\bar \A_*) \xra{q_0} \Sigma \Oinfty_{s+1} \Sigma^{-s}\bar \A_*$$
identifies with the exact sequence
$$ \Sigma \Z/2 \xra{sq_0} \Sigma^2 \Z/2 \xra{q_0} \Sigma^2 \Z/2.$$
As the first map here is clearly zero, the second is an isomorphism.
\end{proof}

This lemma and the previous corollary combine to give us the next calculations.

\begin{prop} \label{UL prop} {\bf (a)} $U_{\Qq}(L_*H_*(H\Z)) = \Z/2[x]$ where $x$ is the nonzero 0 dimensional class in  $\Oinfty H_*(H\Z)$. \\

\noindent {\bf (b)} For $r\geq 2$, $U_{\Qq}(L_*H_*(H\Z/2^r)) = \Z/2[x] \otimes \Lambda^*(y)$ where $x$ and $y$ are the nonzero 0 and 1 dimensional classes in $\Oinfty H_*(H\Z/2^r)$. \\

\noindent {\bf (c)} For $r\geq 2$, $U_{\Qq}(L_*H_*(\Sigma^{-1}H\Z/2^r)) = \Z/2[y]$ where $y$  is the nonzero 0 dimensional class in $\Oinfty \Sigma^{-1}H_*(H\Z/2^r)$. \\

\noindent{\bf (d)} If $A$ is either $\Z$ or $\Z/2^r$, then $U_{\Qq}(L_*H_*(\Sigma^{n}HA)) = \Lambda^*(\Oinfty H_*(\Sigma^nHA))$, except in the cases covered by (a), (b), and (c).
\end{prop}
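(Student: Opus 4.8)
The plan is to compute, in each relevant case, the object $L_*H_*(\Sigma^nHA)\in\QM$ --- which makes sense by \propref{DL structure on derived functors prop} --- and then apply $U_{\Qq}$. The one simplification that does most of the work is this: if $N\in\QM$ is concentrated in homological degree $0$, so that all Dyer--Lashof operations vanish on $N$ and in particular $Q^{|x|}x=0$ for every $x\in N$, then
$$U_{\Qq}(N)=S^*(N)/(Q^{|x|}x-x^2)=S^*(N)/(x^2)=\Lambda^*(N).$$
By the corollary computing the higher derived functors $L_s$ of Eilenberg--MacLane spectra, $L_sH_*(\Sigma^nHA)=0$ for all $s>0$ whenever $A=\Z$ with $n\neq 0$, or $A=\Z/2^r$ ($r\geq 2$) with $n\notin\{-1,0\}$ (and also for $A=\Z/2$, any $n$). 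In each such case $L_*H_*(\Sigma^nHA)$ is just $L_0H_*(\Sigma^nHA)=\Oinfty H_*(\Sigma^nHA)$ in homological degree $0$, so $U_{\Qq}(L_*H_*(\Sigma^nHA))=\Lambda^*(\Oinfty H_*(\Sigma^nHA))$; this is part (d).

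Next I would record the small computations $\Oinfty\bar\A_*=\Z/2$ (degree $0$), $\Oinfty\Sigma\bar\A_*=\Sigma\Z/2$ (degree $1$), and $\Oinfty\Sigma^{-1}\bar\A_*=0$. Dualizing, $\Oinfty\Sigma^k\bar\A_*$ is the dual of the largest unstable quotient of the left $\A$-module $\Sigma^k(\A/\A Sq^1)$, which is cyclic on the generator $\sigma^k\bar 1$ of degree $k$. In that quotient $Sq^i(\sigma^k\bar 1)=0$ for $i>k$, while $Sq^1(\sigma^k\bar 1)=\sigma^k\,\overline{Sq^1}=0$ already holds in $\A/\A Sq^1$; for $k=-1$ even $Sq^0(\sigma^{-1}\bar 1)=\sigma^{-1}\bar 1$ must vanish, so the quotient is $0$, while for $k=0,1$ the quotient is a single $\Z/2$ in degree $k$. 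This gives the three identifications.

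For part (a), I would combine the corollary, which gives $L_sH_*(H\Z)=\Z/2$ for every $s>0$ (necessarily in internal degree $0$, by the stated computation of $\Oinfty_s\Sigma^{1-s}\bar\A_*$), with the lemma that $Q^0\colon L_sH_*(H\Z)\to L_{s+1}H_*(H\Z)$ is an isomorphism for all $s\geq 0$. Since $L_0H_*(H\Z)=\Oinfty\bar\A_*=\Z/2$ in degree $0$ as well, it follows that $L_*H_*(H\Z)$, as an object of $\QM$, is a single $\Z/2$ in each homological degree $s\geq 0$ (all in internal degree $0$), with $Q^0$ the shift isomorphism and every other $Q^i$ acting as zero ($Q^i=0$ for $i<0$ by the unstable axiom, for $i>0$ for degree reasons). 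Writing $x$ for the generator of $L_0H_*(H\Z)=\Oinfty H_*(H\Z)$ and $x_s=(Q^0)^sx$, we get
$$U_{\Qq}(L_*H_*(H\Z))=\Z/2[x_0,x_1,x_2,\dots]/(Q^0x_s-x_s^2)=\Z/2[x_0,x_1,\dots]/(x_{s+1}-x_s^2),$$
and since $x_s=x_0^{2^s}$ this collapses to $\Z/2[x]$, proving (a).

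For parts (b) and (c), I would use the homology splittings $H_*(H\Z/2^r)=\bar\A_*\oplus\Sigma\bar\A_*$ and $H_*(\Sigma^{-1}H\Z/2^r)=\Sigma^{-1}\bar\A_*\oplus\bar\A_*$ (dual to the cohomological splitting of $H^*(H\Z/2^r)$ quoted above, with $\bar\A_*=H_*(H\Z)$), the additivity of $L_*$, and the fact that $U_{\Qq}$ sends direct sums to tensor products (it is a left adjoint; alternatively, $S^*(M\oplus N)=S^*(M)\otimes S^*(N)$ and the operations are additive). Thus (b) becomes $U_{\Qq}(L_*H_*(H\Z))\otimes U_{\Qq}(L_*H_*(\Sigma H\Z))$ and (c) becomes $U_{\Qq}(L_*H_*(\Sigma^{-1}H\Z))\otimes U_{\Qq}(L_*H_*(H\Z))$. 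The second tensor factor of (b) is $\Lambda^*(\Oinfty\Sigma\bar\A_*)=\Lambda^*(y)$ with $y$ in degree $1$ (using paragraph one, since $L_sH_*(\Sigma H\Z)=0$ for $s>0$, and paragraph two), and the first factor is $\Z/2[x]$ by (a), with $x,y$ the $0$- and $1$-dimensional classes of $\Oinfty H_*(H\Z/2^r)=\Oinfty\bar\A_*\oplus\Oinfty\Sigma\bar\A_*$; this gives (b). For (c), the first tensor factor is $U_{\Qq}(L_*H_*(\Sigma^{-1}H\Z))=\Lambda^*(\Oinfty\Sigma^{-1}\bar\A_*)=\Z/2$, so (c) reduces to (a) with the polynomial generator now the $0$-dimensional class $y\in\Oinfty\Sigma^{-1}H_*(H\Z/2^r)$. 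The only delicate point anywhere is the bookkeeping in (a): one must check that the $\QM$-structure on $L_*H_*(H\Z)$ really is the $Q^0$-tower described, so that $U_{\Qq}$ produces a polynomial rather than merely exterior algebra; everything else is an immediate consequence of the two preceding results together with the elementary unstable-module computations of paragraph two.
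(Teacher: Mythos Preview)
Your proposal is correct and follows exactly the approach the paper intends: the paper's proof is simply the sentence ``This lemma and the previous corollary combine to give us the next calculations,'' and you have carefully filled in those details. Your use of the $\A$--module splitting $H_*(H\Z/2^r)\simeq \bar\A_*\oplus\Sigma\bar\A_*$ together with the observation that $U_{\Qq}$ carries direct sums to tensor products is a clean way to reduce (b) and (c) to (a) and (d).
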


By inspection we see that the algebraic condition of \corref{alg = top cor 2} holds for all Eilenberg--MacLane spectra.

\begin{cor}  For all abelian groups $A$ and all $n \in \Z$, $L_*H_*(\Sigma^{n}HA)$ is generated as a module over the Dyer--Lashof algebra by $L_0H_*(\Sigma^{n}HA) = \Oinfty H_*(\Sigma^{n}HA)$.
\end{cor}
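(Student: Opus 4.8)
The plan is to reduce to the three basic coefficient groups $\Z/2$, $\Z$ and $\Z/2^r$ ($r\geq 2$), and then read the conclusion off the computations of $L_*H_*(\Sigma^n HA)$ made above. For the reduction, recall from the start of this subsection that $L_*$, the Dyer--Lashof operations of \propref{DL structure on derived functors prop}, and hence the property ``$L_0$ generates $L_*$ over the Dyer--Lashof algebra'' all behave well with respect to filtered colimits, localization at $2$, and direct sums in $A$. Every abelian group is a filtered colimit of finitely generated ones; a finitely generated abelian group localized at $2$ is a finite direct sum of copies of $\Z$ and of groups $\Z/2^r$; and the generation property passes through finite direct sums (trivially) and through filtered colimits (a Dyer--Lashof generator at a finite stage maps to one in the colimit, since $L_*$ is additive and commutes with filtered colimits). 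So it suffices to treat $A\in\{\Z/2,\ \Z,\ \Z/2^r\ (r\geq 2)\}$.

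Two of these are immediate. For $A=\Z/2$ one has $L_sH_*(\Sigma^n H\Z/2)=0$ for all $s>0$ and all $n$, so $L_*H_*(\Sigma^n H\Z/2)=L_0H_*(\Sigma^n H\Z/2)$. For $A=\Z$ and $n\neq 0$, likewise $L_sH_*(\Sigma^n H\Z)=0$ for $s>0$. The one substantive case is $A=\Z$ with $n=0$: the computations above give $L_sH_*(H\Z)=\Z/2$ for every $s\geq 0$, all in internal degree $0$, and $Q^0\colon L_sH_*(H\Z)\to L_{s+1}H_*(H\Z)$ is an isomorphism, so $L_*H_*(H\Z)$ is generated over the Dyer--Lashof algebra by its bottom class $L_0H_*(H\Z)=\Oinfty H_*(H\Z)$ through iterated $Q^0$'s. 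Equivalently, in $U_{\Qq}(L_*H_*(H\Z))=\Z/2[x]$ of \propref{UL prop}(a) the primitives $x^{2^s}$ are exactly the classes $(Q^0)^sx$.

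For $A=\Z/2^r$ with $r\geq 2$, the splitting $H^*(H\Z/2^r)\cong H^*(H\Z)\oplus\Sigma H^*(H\Z)$ of $\A$--modules induces a splitting $H_*(\Sigma^n H\Z/2^r)\cong H_*(\Sigma^n H\Z)\oplus H_*(\Sigma^{n+1}H\Z)$ in $\M$; since the derived functors $\Oinfty_s$ and the functor $\Omega$ are additive and the operations $Q^i$ of \propref{DL structure on derived functors prop} are natural, this is in fact a splitting
$$ L_*H_*(\Sigma^n H\Z/2^r)\ \cong\ L_*H_*(\Sigma^n H\Z)\oplus L_*H_*(\Sigma^{n+1}H\Z)$$
of objects of $\QM$. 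At most one of $n$ and $n+1$ equals $0$, so by the $A=\Z$ case each summand is generated over the Dyer--Lashof algebra by its $L_0$--part, and hence so is the sum; this handles the last case.

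The only point needing care is the compatibility of the last splitting with the full $\QM$--structure on $L_*$, as opposed to just the underlying bigraded vector space decomposition recorded above. As indicated, this is purely formal, resting on the additivity of the right derived functors $\Oinfty_s$ and of $\Omega$ and on the naturality of the Dyer--Lashof operations built in this section; beyond that, everything is contained in \propref{UL prop} and the lemmas preceding it.
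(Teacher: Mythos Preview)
Your proof is correct and follows essentially the same approach as the paper: reduce to $A\in\{\Z/2,\Z,\Z/2^r\}$ using the naturality properties stated at the start of the subsection, then read off the conclusion from the preceding computations of $L_sH_*(\Sigma^nHA)$ and the $Q^0$ isomorphism lemma. The paper's own proof is the single sentence ``by inspection'' from \propref{UL prop}; your argument simply unpacks that inspection, and your use of the $\M$--splitting $H_*(\Sigma^nH\Z/2^r)\cong H_*(\Sigma^nH\Z)\oplus H_*(\Sigma^{n+1}H\Z)$ together with the additivity and naturality of $L_*$ as a functor to $\QU$ is a perfectly good way to handle the $\Z/2^r$ case.
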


The classic calculations of $H^*(K(A,n))$ when $A$ is either $\Z$ or $\Z/2^r$ allow us to determine when the geometric condition of \corref{alg = top cor 2} holds.

\begin{lem} If $A$ is either $\Z$ or $\Z/2^r$, the evaluation map
$$  H_*(K(A,n)) \ra \Oinfty H_*(\Sigma^{n}HA)$$
is onto except when $A = \Z/2^r$ with $r\geq 2$ and $n = 0$ or $-1$.
\end{lem}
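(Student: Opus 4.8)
The plan is to dualize to cohomology and feed in the classical computation of $H^*(K(A,n);\Z/2)$. By the remarks opening this subsection we may assume $A$ is finitely generated, so $\Sigma^n HA$ has homology of finite type and dualization is harmless. The evaluation map $H_*(\Oinfty\Sigma^n HA)\ra\Oinfty H_*(\Sigma^n HA)$ is then dual to the canonical comparison
$$\bar e\colon\ D\bigl(H^*(\Sigma^n HA)\bigr)\ \lra\ H^*(\Oinfty\Sigma^n HA),$$
where $D$ is cohomological destabilization — the left adjoint to the inclusion of unstable left $\A$--modules into all left $\A$--modules, and the exact dual (under finite-type vector space duality) of the largest-unstable-submodule functor denoted $\Oinfty$ on the homology side, since the homology unstability condition $xSq^i=0$ for $2i>|x|$ is dual to the cohomology one — and $\bar e$ factors the pullback $H^*(\Sigma^n HA)\ra H^*(\Oinfty\Sigma^n HA)$ through $D$. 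Thus the evaluation is onto precisely when $\bar e$ is a monomorphism, and the lemma becomes: $\bar e$ is injective except when $A=\Z/2^r$ with $r\geq2$ and $n\in\{0,-1\}$. The inputs I would record are: $H^*(H\Z)=\A/\A Sq^1$; $H^*(H\Z/2^r)\cong\A/\A Sq^1\oplus\Sigma\A/\A Sq^1$ as $\A$--modules for $r\geq2$; and, with $F(m)=D(\Sigma^m\A)$ the free unstable module on a degree-$m$ class and $\bar F(m)=D(\Sigma^m\A/\A Sq^1)$, that $D$ is exact and commutes with suspension (so $D(H^*(\Sigma^n HA))$ equals $F(n)$, $\bar F(n)$, or $\bar F(n)\oplus\bar F(n+1)$ according as $A=\Z/2$, $\Z$, or $\Z/2^r$ with $r\geq2$), that $F(m)=\bar F(m)=0$ for $m<0$ while $F(0)=\bar F(0)=\Z/2$ in degree $0$, and that for $m\geq1$ the module $F(m)$ (resp.\ $\bar F(m)$) has vector space basis the admissible monomials $Sq^I\iota_m$ of excess $\leq m$ (resp.\ the same, with $Sq^I$ additionally not ending in $Sq^1$).

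The heart of the argument is $n\geq1$, where $\Oinfty\Sigma^n HA=K(A,n)$. The computations of Cartan and Serre present $H^*(K(A,n);\Z/2)$ as a polynomial algebra (with a truncation or exterior factor in some low-dimensional cases) whose algebra generators are precisely the admissible $Sq^I\iota_n$ of excess $<n$ — with $Sq^I$ not ending in $Sq^1$ when $A\neq\Z/2$ — together with, when $A=\Z/2^r$ and $r\geq2$, the admissible $Sq^I$ of excess $<n+1$ not ending in $Sq^1$ applied to the degree-$(n+1)$ Bockstein class. I would then check that $\bar e$ carries the stated basis of $D(H^*(\Sigma^n HA))$ bijectively onto this set of generators together with powers $g^{2^j}$ ($j\geq1$) of generators $g$: a basis element $Sq^I\iota_m$ of excess exactly $m$ equals such a power — in particular $Sq^m\iota_m=\iota_m^2$ — since iterating the instability relation $Sq^{|y|}y=y^2$ rewrites any excess-$m$ admissible monomial on a degree-$m$ class as $g^{2^j}$ with $g$ a generator of strictly smaller excess, and these excess-$m$ elements are exactly the basis vectors of $F(m)$, $\bar F(m)$ not already listed as generators. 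Distinct generators and distinct powers $g^{2^j}$ of them are linearly independent in a polynomial $\Z/2$--algebra, so $\bar e$ is injective. This is essentially the classical fact $F(n)\hra H^*(K(\Z/2,n))$, extended to the other two coefficient groups; the required bookkeeping — pinning down the excesses, carrying the extra degree-$(n+1)$ Bockstein generator for $\Z/2^r$, and observing that the squares which vanish in the truncated targets (e.g.\ $K(\Z,1)=S^1$ or $K(\Z/4,1)=B\Z/4$) are exactly the monomials ending in $Sq^1$, already deleted in passing from $\Sigma^m\A$ to $\Sigma^m\A/\A Sq^1$ — is the delicate point and the step I expect to demand the most care.

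Finally I would dispose of $n\leq0$, where $H^*(\Oinfty\Sigma^n HA)$ sits in degree $0$ ($\Oinfty\Sigma^0 HA$ is the discrete group $A$, and $\Oinfty\Sigma^n HA$ is a point for $n<0$). For $n\leq-2$, $D(H^*(\Sigma^n HA))=0$ for every $A$ under consideration (each relevant suspension $\Sigma^m\A/\A Sq^1$ or $\Sigma^m\A$ has $m<0$, hence destabilizes to $0$), so $\bar e$ is trivially monic and the evaluation onto. For $n=-1$: $D(H^*(\Sigma^{-1}H\Z))=\bar F(-1)=0$, hence onto; but $D(H^*(\Sigma^{-1}H\Z/2^r))=\bar F(-1)\oplus\bar F(0)=\Z/2$ in degree $0$, which $\bar e$ kills because the evaluation $S=\Sinfty_+\Oinfty\Sigma^{-1}H\Z/2^r\ra\Sigma^{-1}H\Z/2^r$ is null ($\pi_0$ of the target vanishes) — the $n=-1$ exception. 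For $n=0$: $D(H^*(H\Z))=\bar F(0)=\Z/2$ in degree $0$ maps isomorphically onto a line in $H^0(A)=\Map(A,\Z/2)$ (the fundamental class pulls back to the nonzero reduction-mod-$2$ homomorphism), so $\bar e$ is monic and the evaluation onto; but $D(H^*(H\Z/2^r))=\bar F(0)\oplus\bar F(1)$ has the summand $\bar F(1)=\Z/2$ in degree $1$, necessarily killed by $\bar e$ since the target lies in degree $0$ — the $n=0$ exception. When $A=\Z/2$ no extra summand occurs at $n=0$ or $-1$, so no exception arises there, in agreement with the statement; collecting the cases proves the lemma.
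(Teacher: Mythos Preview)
Your approach is exactly what the paper intends: the lemma is stated without proof, prefaced only by the remark that ``the classic calculations of $H^*(K(A,n))$\ldots allow us to determine when the geometric condition\ldots holds,'' and your argument is a careful unpacking of precisely that sentence --- dualize, identify $D(H^*(\Sigma^n HA))$ with $F(n)$, $\bar F(n)$, or $\bar F(n)\oplus\bar F(n+1)$, and feed in Cartan--Serre for $n\geq 1$ together with the elementary degree-zero analysis for $n\leq 0$.

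One small correction that does not affect the argument: the destabilization functor $D$ is only right exact, not exact, and it does not literally commute with suspension. You do not actually use either property --- the identification $D(H^*(\Sigma^n H\Z/2^r))=\bar F(n)\oplus\bar F(n+1)$ needs only that $D$ is additive (which it is, being a left adjoint between additive categories), and $F(m)$, $\bar F(m)$ are defined directly as $D(\Sigma^m\A)$ and $D(\Sigma^m\A/\A Sq^1)$ rather than via any suspension compatibility. So the parenthetical justification should be rephrased, but the computation and the rest of the proof stand.
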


We conclude that the algebraic spectral sequence equals the topological spectral sequence for all Eilenberg--MacLane spectra $\Sigma^n HA$, unless $A$ has 2--torsion of order at least 4, and $n = 0$ or $-1$.

As a check of our work, note that our spectral sequence correctly tells us that, up to filtration, $H_*(K(\Z/2,n))$ equals $\Lambda^*(F(n)_*)$ with no nonzero Dyer--Lashof operations.

Now we use our calculations to say more about how the topological spectral sequence behaves in the cases covered by  \propref{UL prop}(a), (b), and (c).

\subsection{Convergence of the spectral sequence for $H\Z$}  We have shown that $E^{\infty}_{*,*}(H\Z) = \Z/2[x]$.  The spectral sequence converges to the correct answer as well as possible: $H_*(\Oinfty H\Z) = H_*(\Z) = \Z/2[t,t^{-1}]$ while $\displaystyle \lim_d H_*(P_d(H\Z)) = \Z/2[[x]]$, and the former embeds densely in the latter via the homomorphism sending $t$ to $x+1$.

\subsection{The spectral sequence for $H\Z/2^r$ with $r\geq 2$} When $r\geq 2$, we have shown that $E^{alg,\infty}_{*,*}(H_*(H\Z/2^r)) = \Z/2[x] \otimes \Lambda^*(y)$.  This time only $x$ is in the image of the evaluation, so there should be a rogue differential off of $y$.  The elements $x^{2^s}$ are the only nonzero 0 dimensional primitive classes in $E^1$, so the first rogue differential must hit one of these.

We claim that $d^{2^r-1}(y) = x^{2^r}$, this is the only rogue differential, and $E^{\infty}_{*,*}(H\Z/2^r) = \Z/2[x]/(x^{2^r})$.  Furthermore, the spectral sequence converges to the correct answer: $H_*(\Oinfty H\Z/2^r) = H_*(\Z/2^r) = \Z/2[t]/(t^{2^r}-1) = Z/2[x]/(x^{2^r})$, when $t=x+1$.

To prove the claim, we first make some observations about the beginning of the spectral sequence in low degrees.  In total degree 0, $E^1$ is spanned by the classes $x^n$, and in total degree 1, $E^{1}$ is spanned by the classes $x^ny$, and $x^nQ^1x$.  If $z \in H_*(H\Z/2^r)$ is the two dimensional class with $zSq^2 =x$, then $d^1(x^nz) = x^nQ^1x$.  It follows that the only classes in $E^2$ in degrees 0 and 1 will be $x^n$ and $x^ny$, none of which can possibly be in the image of an algebraic differential.

We now show that $x^{2^r} = 0$ in $E^{\infty}_{*,*}(H\Z/2^r)$.  To see this, we consider the diagram
\begin{equation*}
\xymatrix{
Z/2[t,t^{-1}] \ar[d] \ar[r] & \Z/2[[x]] \ar[d]  \\
Z/2[t]/(t^{2^r}-1) \ar[r] & \lim_d H_*(P_d(H\Z/2^r)) }
\end{equation*}
in which both horizontal maps send $t-1$ to $x$.  As $(t-1)^{2^r} = t^{2^r}-1 = 0$ in $Z/2[t]/(t^{2^r}-1)$, we see that $x^{2^r} = 0$ in $\displaystyle \lim_d H_*(P_d(H\Z/2^r))$, and thus in $E^{\infty}_{*,*}(H\Z/2^r)$.

Finally we show that  $x^{2^s} \neq 0$ for all $s<r$, or equivalently, that $y$ lives to $E^{2^{r}-1}$.  This we show by induction on $r$.  The $r=2$ case is true because $d^1(y)=0$. For the inductive step, let $E^{alg,\infty}_{*,*}(\Z/2^{r-1}) = \Z/2[x^{\prime}] \otimes \Lambda^*(y^{\prime})$.  The inclusion $\Z/2^{r-1} \ra \Z/2^r$ induces a map of both the topological and algebraic spectral sequences sending $x^{\prime}$ to 0, and $y^{\prime}$ to $y$.  Then the inductive hypothesis --- that $y^{\prime}$ lives to $E^{2^{r-1}-1}_{*,*}(H\Z/2^{r-1})$ and $d^{2^{r-1}-1}(y^{\prime}) = (x^{\prime})^{2^{r-1}}$ --- implies that $y$ lives to $E^{2^{r-1}-1}_{*,*}(H\Z/2^{r})$ and $d^{2^{r-1}-1}(y) = 0$, i.e.\ $y$ lives to $E^{2^{r-1}}_{*,*}(H\Z/2^{r})$, and thus to $E^{2^{r}-1}_{*,*}(H\Z/2^{r})$.

\subsection{The spectral sequence for $\Sigma^{-1}H\Z/2^r$ with $r\geq 2$} \label{crazy ex subsection} Our most complicated example is the spectral sequence for $\Sigma^{-1}H\Z/2^r$, with $r\geq 2$.

Let $x$ and $y$ be the nonzero classes in $H_*(\Sigma^{-1}H\Z/2^r)$ of dimensions $-1$ and 0.
$E^{alg,\infty}_{*,*}(H_*(\Sigma^{-1} H\Z/2^r)) = \Z/2[y]$, and obviously $y$ is not in the image of the evaluation.
The only primitive elements in $E^1$ of total degree $-1$ are the elements $(Q^0)^sx \in E^1_{-2^s,2^s-1}$, so a first rogue differential must hit one of these.

We claim that $y$ lives to $E^{2^r}$, and $d^{2^r-1}(y) = (Q^0)^rx$.  To see this, we compare this example to our previous one, using the map of spectral sequences induced by
$$ \Sigma P(\Sigma^{-1}H\Z/2^r) \ra P(H\Z/2^r).$$
This sends the elements $x$ and $y$ to the elements with the same name in the last example.  It also induces an isomorphism from the primitives of total degree $-1$ in $E^1(\Sigma^{-1}H\Z/2^r)$ to the primitives of total degree 0 in $E^1(H\Z/2^r)$.  The calculation that $d^{2^r-1}(y) = x^{2^r} = (Q^0)^rx$ in the spectral sequence for $H\Z/2^r$ then implies that $d^{2^r-1}(y) = (Q^0)^rx$ in the spectral sequence for $\Sigma^{-1}H\Z/2^r$.

The formula $d^{2^r-1}(y) = (Q^0)^rx$ then implies that, for any $s \geq 0$,
$$d^{2^s(2^r-1)}(y^{2^s}) = d^{2^s(2^r-1)}((Q^0)^sy) = (Q^0)^sd^{2^r-1}(y) = (Q^0)^{s+r}x.$$

We also note that $d^1(x) = Q^{-1}x = x^2$, and it follows that, for any $s\geq 0$,
$$ d^{2^s}((Q^0)^sx) = (Q^0)^sQ^{-1}x = Q^{-1}(Q^0)^sx = ((Q^0)^sx)^2.$$

We now explain how these calculations completely determine how the algebraic and topological spectral sequences differ.  Let $x_s = (Q^0)^sx$.  Using the standard primitive generators, the $E^1$ term of both spectral sequences decomposes:
$$ E^1 = \Z/2[y,x_0,x_1,x_2, \dots] \otimes E^{\bot, 1}.$$
This, in fact, represents a decomposition of both spectral sequences, where the algebraic and topological spectral sequences agree on $E^{\bot,*}$, and the differentials on $\Z/2[y,x_0,x_1,x_2, \dots]$ go as follows:
\begin{itemize}
\item The algebraic spectral sequence has $d^{2^s}(x_s) = x_s^2$.
\item The topological spectral sequence also has $d^{2^s(2^r-1)}(y^{2^s}) = x_{s+r}$.
\end{itemize}
It is then easy to compute that, for all $s\geq 0$,
$$ E^{alg, 2^s} = \Z/2[y,x_s,x_{s+1}, x_{s+2}, \dots] \otimes E^{\bot,2^s},$$
while, for all $s\geq r$,
$$ E^{top, 2^s} = \Z/2[y^{2^{s+1-r}},x_{s+1}, x_{s+2}, \dots] \otimes E^{\bot,2^s}.$$

\subsection{The spectral sequence for suspension spectra}  Let $X = \Sinfty Z$, a suspension spectrum, so that $H_*(X)$ is unstable. The tower is known to split; for example, when $Z$ is connected,
$$ \Sinfty \Oinfty \Sinfty Z \simeq \bigvee_d \Sinfty D_d Z.$$ Thus the spectral sequence collapses at $E^1$, and so
$$ E^{\infty}_{*,*}(X) \simeq U_{\Qq}(\mathcal R_*H_*(X)).$$

As we clearly have no rogue differentials, our works says that
$$E^{\infty}_{*,*}(X) \simeq U_{\Qq}(L_*H_*(X)).$$

This is in agreement with \thmref{unstable thm}(a), which says that, since $H_*(X)$ is unstable,  $L_*H_*(X) = \mathcal R_*H_*(X)$.

\subsection{The spectral sequence for $S^1\langle 1 \rangle$}  The partially published work of Lannes and Zarati \cite{lz2} suggests that one might be able to `mix and match' the suspension spectra and Eilenberg--MacLane spectra examples.  Here is the simplest such example.

Let $S^1\langle 1\rangle$ be the cofiber of $S \ra H\Z$.

By dimension shifting, one can easily compute that, for all $s\geq 0$,
$$L_sH_*(S^1\langle 1\rangle) \simeq L_{s+1}H_*(S^1) = \mathcal R_{s+1}H_*(S^1),$$
and this is compatible with Dyer--Lashof operations.

Meanwhile, there is a map $\Sinfty D_2S^1 \xra{t} S^1\langle 1\rangle$ such that $t_*$ realizes the isomorphism $H_*(D_2S^1) = \mathcal R_1H_*(S^1) \simeq \Oinfty H_*(S^1\langle 1\rangle)$: $t$ can be taken to be the bottom horizontal map in the commutative square of symmetric products
\begin{equation*}
\xymatrix{
SP^2(S) \ar[d] \ar[r] & SP^{\infty}(S) \ar[d]  \\
SP^2(S)/S \ar[r] & SP^{\infty}(S)/S.}
\end{equation*}
One formally concludes that $H_*(\Oinfty S^1\langle 1\rangle) \ra \Oinfty H_*(S^1\langle 1\rangle)$ is onto.

\corref{alg = top cor 2} now applies to say that
$E^{\infty}_{*,*}(S^1\langle 1\rangle) = \U_{\Qq}(\mathcal R_{*\geq 1}H_*(S^1))$.

This is in agreement with known calculation: $\Oinfty S^1\langle 1\rangle$ is the fiber of the split fibration $\Oinfty \Sinfty S^1 \ra S^1$, and it is not hard to  see that, localized at 2,
$$ \Sinfty \Oinfty S^1\langle 1\rangle \simeq \bigvee_{d} D_{2d}S^1.$$

\subsection{Adams resolutions of suspension spectra}  Here is a much more sophisticated version of the last example.  Let $Z$ be a connected space, and, for $s \geq 0$, recursively define spectra $Z(s)$ and $K(s)$ by letting $Z(0) = \Sinfty Z$, $K(s) = Z(s) \sm H\Z/2$, and $Z(s) = \hofib\{Z(s-1) \xra{i} K(s-1)\}$.

In \cite{lz2}, an only partially finished manuscript from the 1980's but supported by \cite{lz}, Lannes and Zarati study $H^*(\Oinfty Z(s))$.  Enroute, they show (in dual form) \cite[Prop.2.5.2(iv)]{lz2} that $H_*(\Oinfty Z(s)) \ra \Oinfty H_*(Z(s))$ is onto. (This is proved by a rather elaborate induction on $s$ using the Eilenberg--Moore spectral sequence.)

Meanwhile, dimension shifting immediately shows that, for $t>0$, there are isomorphisms
$$ L_tH_*(Z(s)) \simeq \mathcal R_{s+t}H_*(Z).$$

We claim that there is also an epimorphism $L_0H_*(Z(s)) \ra \mathcal R_{s}H_*(Z)$, so that $L_0H_*(Z(s))$ generates $L_*H_*(Z(s))$, and once again \corref{alg = top cor 2} applies.

To see this, we begin with the exact sequence
\begin{multline*}
0 \ra \Oinfty H_*(Z(s-1)) \xra{i_*} \Oinfty H_*(K(s-1)) \\
\ra \Oinfty \Sigma H_*(Z(s)) \ra  \Oinfty_1 H_*(Z(s-1)) \ra 0.
\end{multline*}

Let $M$ denote the cokernel of $i_*$, so there is a short exact sequence
$$ 0 \ra M \ra  \Oinfty \Sigma H_*(Z(s)) \ra  \Oinfty_1 H_*(Z(s-1)) \ra 0.$$

Applying $\Omega$ to this yields an exact sequence
$$ 0 \ra \Omega M \ra L_0H_*(Z(s)) \ra R_sH_*(Z) \ra \Omega_1 M \ra \dots.$$
Now we note that $\Omega_1M=0$.  \lemref{omega 1 lemma} tells us that $\Sigma \Omega_1 M$ is the cokernel of $sq_0: M \ra \Phi(M)$.  But this map is onto, as it fits into a square
\begin{equation*}
\xymatrix{
\Oinfty H_*(K(s-1)) \ar[d]^{sq_0} \ar[r] & M \ar[d]^{sq^0}  \\
\Phi(\Oinfty H_*(K(s-1))) \ar[r] & \Phi(M) }
\end{equation*}
in which the horizontal maps are onto by construction, and the left vertical arrow is onto by \lemref{reduced lemma}.

To conclude,  $E^{\infty}_{*,*}(Z(s)) = \U_{\Qq}(L_*H_*(Z(s)))$, and this fits into a short exact sequence in $\HQU$:
$$\Z/2  \ra \Lambda^*(\Omega M) \ra \U_{\Qq}(L_*H_*(Z(s))) \ra \U_{\Qq}(\mathcal R_{*\geq s} H_*(Z)) \ra \Z/2.$$

\subsection{A rogue differential for a 0--connected finite complex} \label{BG example subsection}

Here is perhaps the simplest example of a rogue differential occurring in the spectral sequence of a $0$--connected spectrum $X$.

Let the spectrum $X$ be the cofiber of $4: \R P^4 \ra \R P^4$, so that $X$ fits into a cofibration sequence
$$ \R P^4 \ra X \ra \Sigma \R P^4.$$
As $4$ has Adams filtration 2, we are guaranteed that
$$H_*(X) \simeq H_*(\R P^4 \vee \Sigma \R P^4) \simeq H_*(\R P^4) \oplus \Sigma H_*(\R P^4),$$ as right $\A$--modules.  For $i = 1,2,3,4$, let $a_i \in H_i(X)$ be the image of the nonzero element under the inclusion $\R P^4 \hra X$, and let $b_i \in H_{i+1}(X)$ project to a nonzero element under the projection $X \ra \Sigma \R P^4$.

As $H_*(X) \in \U$, if there were no rogue differentials, then $E^{\infty}_{*,*}(X) = E^{1}_{*,*}(X)$.  We show this is impossible.

\begin{prop} \label{example prop}In the spectral sequence, $d^3(b_4) = a_1^4$.
\end{prop}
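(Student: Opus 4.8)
The plan is to turn the assertion into the single statement that $b_4$ fails to survive to $E^\infty$, and then to deduce that from the failure of the evaluation map $e_*\colon H_*(\Oinfty X)\to H_*(X)$ to be onto in degree $5$.

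First I would pin down the two classes and the unique differential that can link them. By \thmref{global ss thm}(b), $b_4\in H_5(X)=E^1_{-1,6}(X)$ is a primitive generator on the $-1$ line, while in $E^1_{*,*}(X)=U_\Qq(\mathcal R_*H_*(X))$ one has $a_1^2=Q^{|a_1|}a_1=Q^1a_1$ and $a_1^4=(a_1^2)^2=Q^2Q^1a_1\in(\mathcal R_2H_*(X))_4\subseteq E^1_{-4,8}(X)$; the instability and Adem relations show $(\mathcal R_2H_*(X))_4$ is one–dimensional, spanned by $a_1^4$. Since $H_*(X)$ is unstable, every term $Q^{i-1}(x\,Sq^i)$ occurring in \thmref{theorem 1}(a) and \corref{corollary 1} is killed by instability (if $x\,Sq^i\neq0$ then $2i\le|x|$, so $i-1<|x|-i=|x\,Sq^i|$); hence all the algebraic differentials $d^{2^s}$ vanish on the $-2^s$ lines, and, being derivations and coderivations determined there, they vanish on all of $E^1_{*,*}(X)$. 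Thus $E^1=E^2=E^3$. Now each $d^r$ is a coderivation, hence takes primitives to primitives (\thmref{global ss thm}(d),(e)), and primitives occur only on the $-2^s$ lines, so $d^3(b_4)\in E^3_{-4,8}(X)$ is primitive and therefore lies in $(\mathcal R_2H_*(X))_4=\langle a_1^4\rangle$. Finally, for $s\geq3$ the target $E^{2^s-1}_{-2^s,\,*}(X)$ of any $d^{2^s-1}(b_4)$ has primitives that are, in the relevant internal degree, a subquotient of $(\mathcal R_sH_*(X))_4$, which is $0$ by \lemref{conn lemma} since $2^s-1>4$. Consequently $d^3$ is the only differential $b_4$ can ever support, $d^3(b_4)$ is the only differential that can hit $a_1^4$, and the target is one–dimensional; so the claimed formula $d^3(b_4)=a_1^4$ is equivalent to $d^3(b_4)\neq0$, i.e.\ to $b_4$ not being a permanent cycle. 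Since $X$ is $0$–connected the spectral sequence converges and $E^\infty_{-1,*}(X)$ is the image of the edge map $H_*(\Oinfty X)\ra H_*(X)$ induced by evaluation; hence everything reduces to showing $b_4\notin\im e_*$.

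The remaining step is the real content, and is where I expect the work to lie. The point is that although $H_*(X)\cong H_*(\R P^4)\oplus\Sigma H_*(\R P^4)$ as an $\A$–module, the spectrum $X$ does not split, since its attaching datum is the self–map $4=2\circ2$ of $\Sinfty\R P^4$, which has Adams filtration $2$ and is detected by $h_0^2$. Dually, $H^*(X;\Z/2)$ carries $Sq^1b_1^*=b_2^*$ and $Sq^2b_2^*=b_4^*$, and the filtration–$2$ phenomenon forces $b_1^*$ to be the value of a secondary cohomology operation (attached to the Adams filtration $2$ of the map $4$) on the class $a_1^*\in H^1(X;\Z/2)$ dual to $a_1$; so $b_4^*$ is a genuinely ``secondary'' class, which cannot lie in the image of $e^*$ because $H^*(\Oinfty X)$ is an unstable $\A$–algebra on which such operations act trivially on the relevant classes. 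I would make this precise by running the octahedral axiom on $\Sinfty\R P^4\xra{2}\Sinfty\R P^4\xra{2}\Sinfty\R P^4$, which presents $X$ as the middle term of a cofibration $Y\ra X\ra Y$ with $Y=\hocofib\{\Sinfty\R P^4\xra{2}\Sinfty\R P^4\}$, and then transporting that datum through the evaluation/diagonal maps $\epsilon$ of \secref{cohomology section}. Alternatively — since $X$ is a finite $0$–connected spectrum — one can compute $\pi_k(X)$ for $k\le5$ and $H_k(\Oinfty X)$ in those degrees directly, and compare $\dim H_5(\Oinfty X)$ with the $E^1$–prediction $\dim E^1_{-1,6}(X)+\cdots$; the deficit of exactly one then forces the single surviving nonzero differential to be $d^3(b_4)=a_1^4$.

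The hard part is thus the control of $\im e_*$ in degree $5$, equivalently the identification of $b_4$ with the value of a secondary operation on the bottom class — the precise manifestation of the fact, emphasized in the introduction, that $H_*(X)$ and $H_*(\Sinfty(\R P^4\vee\Sigma\R P^4))$ agree yet their spectral sequences differ. Once that is in hand, the first paragraph closes the argument, and in fact shows that $d^3(b_4)=a_1^4$ is the unique rogue differential.
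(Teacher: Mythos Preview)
Your reduction in the first paragraph is correct and essentially matches the paper's: since $H_*(X)$ is unstable, all algebraic differentials vanish; primitives live only on the $-2^s$ lines; the only possible target for any differential off of $b_4$ is the one--dimensional space $\langle a_1^4\rangle\subset(\mathcal R_2H_*(X))_4$; and higher targets vanish by connectivity. So the statement is equivalent to the evaluation $e_*: H_*(\Oinfty X)\to H_*(X)$ not being onto.

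Where you diverge from the paper is in the ``real content'' step, and your proposed routes are considerably harder than what the paper actually does. The paper does \emph{not} attempt to show directly that $b_4\notin\im e_*$ via secondary operations or by computing $H_*(\Oinfty X)$ in low degrees. Instead it argues globally: first, $X\not\simeq \Sinfty(\R P^4\vee\Sigma\R P^4)$, because the identity on $\Sinfty\R P^4$ has stable order $8$ (Toda), so multiplication by $4$ is not null. Second---and this is the key idea you are missing---$\R P^4\vee\Sigma\R P^4$ is a wedge of (dual) Brown--Gitler spectra, and by the Hunter--Kuhn characterization \cite{hunterkuhn} any $2$--complete connective spectrum $Y$ with isomorphic mod~$2$ homology and with $e_*$ onto is homotopy equivalent to it. Since $X$ is not, $e_*$ cannot be onto, and the reduction finishes the proof.

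Your secondary--operations sketch is plausible in spirit but, as written, does not constitute a proof: the passage from ``$4$ has Adams filtration $2$'' to ``$b_4^*$ is the value of a secondary operation on $a_1^*$'' and then to ``$b_4\notin\im e_*$'' needs substantial work (you would need to identify the operation, verify it is defined on $a_1^*$ with zero indeterminacy, and then argue it vanishes after pulling back along $e$). The Brown--Gitler argument bypasses all of this with a single structural observation.
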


Before proving this, we note some properties that $X$ must (not) have.

\begin{lem} $X$ is not homotopy equivalent to $\R P^4 \vee \Sigma \R P^4$.
\end{lem}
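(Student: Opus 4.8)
The plan is to derive everything from the single fact that $4\cdot\id_{\Sinfty\R P^4}\neq 0$, by showing that an equivalence $X\simeq\R P^4\vee\Sigma\R P^4$ would force the cofibration sequence $\Sinfty\R P^4\xrightarrow{4}\Sinfty\R P^4\xrightarrow{i}X\xrightarrow{p}\Sigma\Sinfty\R P^4$ to split off $i$. First I would use that a wedge of spectra is also a product, so an equivalence $\psi\colon X\xrightarrow{\sim}\R P^4\vee\Sigma\R P^4$ gives $\psi\circ i=\iota_1 u+\iota_2 v$ with $u\in[\Sinfty\R P^4,\Sinfty\R P^4]$, $v\in[\Sinfty\R P^4,\Sigma\Sinfty\R P^4]$, and I want to show $u$ is a homotopy equivalence. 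Since $4\cdot\id$ is zero on mod $2$ homology, $i_*$ is injective; comparing with $H_*(\R P^4\vee\Sigma\R P^4)=H_*(\R P^4)\oplus H_*(\Sigma\R P^4)$ and noting that in degree $1$ the second summand vanishes, one sees $u_*=\pi_{1*}\psi_* i_*$ is an isomorphism in degree $1$. The next step is Steenrod-algebra rigidity: $u^*$ is a degree-zero $\A$-linear self-map of $H^*(\R P^4;\Z/2)=\Z/2[x]/(x^5)$ with $u^*x=x$, and since $x^2=Sq^1x$, $x^4=Sq^2x^2$, and $Sq^1x^3=x^4$, $\A$-linearity forces $u^*x^2=x^2$, $u^*x^4=x^4$, and then $u^*x^3=x^3$ (if $u^*x^3=0$ then $Sq^1u^*x^3=0\neq x^4=u^*Sq^1x^3$). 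Hence $u$ induces the identity on mod $2$ homology, so $u$ is an equivalence (a self-map of the finite spectrum $\Sinfty\R P^4$ inducing an isomorphism on mod $2$ homology is a homotopy equivalence).

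Given that $u$ is an equivalence, the map $r:=u^{-1}\circ\pi_1\circ\psi\colon X\to\Sinfty\R P^4$ satisfies $r\circ i=u^{-1}\pi_1(\iota_1u+\iota_2v)=u^{-1}u=\id$, so $i$ is a split monomorphism. But in the cofibration sequence $i\circ(4\cdot\id)\simeq 0$, whence $4\cdot\id=r\circ i\circ(4\cdot\id)=0$. To reach the desired contradiction I then invoke $4\cdot\id_{\Sinfty\R P^4}\neq 0$: this map acts on $\widetilde{KO}^0(\R P^4)\cong\Z/8$ (Adams' computation) as multiplication by $4$, which is nonzero, so $4\cdot\id$ cannot be null. (Equivalently, the order of the identity of $\Sinfty\R P^4$ is $8$, not $4$.) This completes the argument.

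The step I expect to be the real input, rather than an obstacle in the formal argument, is the nonvanishing of $4\cdot\id_{\Sinfty\R P^4}$; the $\widetilde{KO}^0(\R P^4)=\Z/8$ citation is the cleanest way to get it, but one could instead extract it from the Adams spectral sequence for $[\Sinfty\R P^4,\Sinfty\R P^4]$, where $h_0^2\cdot[\id]$ detects $4\cdot\id$. Everything else — the splitting reduction and the rigidity of $u$ — is formal, the only mild subtlety being to keep track of the wedge/product identification so that $\pi_1\circ(\psi\circ i)=u$ holds on the nose in the homotopy category.
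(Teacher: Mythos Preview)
Your proof is correct and follows the same idea as the paper: the paper's one-line argument simply cites Toda for the fact that the identity on $\Sinfty\R P^4$ has stable order $8$, not $4$, leaving the implication ``splitting $\Rightarrow 4\cdot\id=0$'' implicit. You have spelled out that implication carefully (including the Steenrod-rigidity argument for $u$), and substituted Adams' computation $\widetilde{KO}^0(\R P^4)\cong\Z/8$ for Toda's result as the source of $4\cdot\id\neq 0$; either citation works.
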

\begin{proof} This follows easily from the fact that the identity on $\R P^4$ has stable order 8, not 4 \cite{toda}.
\end{proof}

\begin{cor} The evaluation $H_*(\Oinfty X) \ra H_*(X)$ is not onto.
\end{cor}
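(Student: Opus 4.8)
The plan is to argue by contraposition: if the evaluation map $H_*(\Oinfty X)\to H_*(X)$ were onto, then the Goodwillie tower $P(X)$ for $\Sinfty_+\Oinfty X$ would collapse and $X$ would split as $\Sinfty\R P^4\vee\Sigma\Sinfty\R P^4$, contradicting the preceding Lemma.

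First, $H_*(X)$ is an unstable $\A$--module, being an extension of $H_*(\Sinfty\R P^4)$ by its suspension and $\U$ being closed under suspensions and extensions; hence $\Oinfty H_*(X)=H_*(X)$. So the assumed surjectivity is exactly hypothesis (b) of \corref{alg = top cor 2}, and hypothesis (a) holds as well: by \thmref{unstable thm}(a), unstability of $H_*(X)$ gives $L_*H_*(X)=\mathcal R_*H_*(X)$, which is generated over the Dyer--Lashof algebra by $\mathcal R_0H_*(X)=L_0H_*(X)$. Thus \corref{alg = top cor 2} identifies $\{E^r_{*,*}(X)\}$ with $\{E^{alg,r}_{*,*}(H_*(X))\}$. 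Because $H_*(X)$ is unstable the latter collapses at $E^1$: by \thmref{theorem 1}(a), $d^1(x)=\sum_{i\geq 0}Q^{i-1}(xSq^i)$, and a direct check using the $\A$--action on $H_*(X)$ (namely $a_iSq^j=\binom{i-j}{j}a_{i-j}$ and the analogous formula on the $b_i$) together with $Q^ky=0$ for $k<|y|$ shows $d^1=0$ on $H_*(X)$; hence all $d^{2^s}$ vanish by \corref{corollary 1}. Therefore the topological spectral sequence collapses, $E^\infty_{*,*}(X)=E^1_{*,*}(X)$, with no rogue differentials.

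It remains to extract a contradiction from this collapse. Since $X$ is $0$--connected, $\Sinfty_+\Oinfty X\simeq\holim_d P_d(X)$, and the vanishing of all differentials forces each connecting map $\tilde P_d(X)\to\Sigma D_{d+1}(X)$ of the reduced tower to be null, so the tower splits; in particular the evaluation $\Sinfty\Oinfty X\to X$ admits a section, exhibiting $X$ as a stable retract of the suspension spectrum $\Sinfty(\Oinfty X)$. Combining this with the naturality of the connecting map $\delta\colon X\to\Sigma D_2X$ along the cofibration sequence $\Sinfty\R P^4\to X\to\Sigma\Sinfty\R P^4$ forces that cofibration to split, i.e.\ $4\cdot\id_{\Sinfty\R P^4}=0$, contradicting the Lemma. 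I expect this last passage --- producing the spectrum--level splitting of $X$ from the collapse of its mod $2$ homology spectral sequence --- to be the main obstacle, since collapse of the homology spectral sequence of a tower need not split the tower of spectra; one has to use the explicit Arone model for $P(X)$ together with the fact that $H_*(X)\cong H_*(\R P^4\vee\Sigma\R P^4)$ as $\A$--modules. (An alternative route, once \propref{example prop} is available: its rogue differential $d^3(b_4)=a_1^4$ is nonzero, so $b_4\in H_5(X)=E^1_{-1,6}(X)$ does not survive, whence $E^\infty_{-1,*}(X)\subsetneq H_*(X)$; but the proof of \propref{example prop} itself invokes the Lemma, so this introduces no circularity.)
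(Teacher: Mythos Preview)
Your main argument has a genuine gap at the step you yourself flag: from the collapse of the mod~2 homology spectral sequence you cannot conclude that the connecting maps $\tilde P_d(X)\to\Sigma D_{d+1}(X)$ are null homotopic. Vanishing of all differentials tells you only that these maps are zero on $H_*(-;\Z/2)$, and there is no mechanism here to upgrade that to a spectrum-level nullhomotopy. Your subsequent claim --- that a stable retraction $X\hookrightarrow\Sinfty\Oinfty X$ together with naturality of $\delta$ forces the cofibration $\Sinfty\R P^4\to X\to\Sigma\Sinfty\R P^4$ to split --- is also unjustified as stated. So the argument stops short of a contradiction.

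Your parenthetical alternative route is circular. In the paper, the proof of \propref{example prop} invokes this very Corollary (not merely the Lemma): it argues that if $d^3(b_4)\neq a_1^4$ then evaluation would be onto, \emph{contradicting the Corollary}. So you cannot appeal to \propref{example prop} to establish the Corollary.

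The paper's proof is quite different and avoids all of this. It invokes the Hunter--Kuhn characterization of spectra with the Brown--Gitler property \cite{hunterkuhn}: $\R P^4\vee\Sigma\R P^4$ is a wedge of (dual) Brown--Gitler spectra, and any $2$--complete connective spectrum $Y$ with the same mod~2 homology for which $H_*(\Oinfty Y)\to H_*(Y)$ is onto must be homotopy equivalent to it. If evaluation were onto for $X$, this would force $X\simeq\R P^4\vee\Sigma\R P^4$, contradicting the Lemma. This external input is exactly what bridges the gap you could not close: it converts the homological surjectivity hypothesis directly into a spectrum-level identification, with no need to split the tower.
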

\begin{proof}  $\R P^4 \vee \Sigma \R P^4$ is the wedge of two (dual) Brown--Gitler spectra, and thus is homotopy equivalent to any other 2--complete connective spectrum $Y$ with isomorphic mod 2 homology such that $H_*(\Oinfty Y) \ra H_*(Y)$ is onto \cite{hunterkuhn}.
\end{proof}

\begin{proof}[Proof of \propref{example prop}] Figure 1 shows the $-1$ line, and the bottom nonzero elements in the next few lines, of $E^1_{*,*}(X)$ for the spectral sequence converging to $H_*(\Oinfty X)$.

Recalling that $d^1 \equiv 0$, and that differentials take primitives to primitives, the only possible nonzero differential off of the $-1$ line would be $d^3(b_4) = a_1^4$.
Thus if  $d^3(b_4) = a_1^4$ did {\em not} hold, then we could conclude that $E^{\infty}_{-1,*}(X) = E^{1}_{-1,*}(X)$, so that $H_*(\Oinfty X) \ra H_*(X)$ would be onto, contradicting the corollary.

\begin{figure}
\begin{equation*}
\begin{array}{ccccc|c}
a_1^4& &&&& 8\\
& &&&& 7\\
 &a_1^3 &&\text{\hspace{.2in}}b_4&& 6\\
&   &&a_4,b_3&& 5\\
 &   &a_1^2&a_3,b_2&& 4\\
 &    &&a_2,b_1&& 3\\
&     &&a_1 \text{\hspace{.17in}}&& 2\\
& &&&& 1\\
& &&&  1   & 0  \\ \hline
-4& -3&-2&-1& 0&s\backslash t \\
\end{array}
\end{equation*}
\caption{$E^1_{s,t}(X)$} \label{figure 1}
\end{figure}

\end{proof}

\subsection{The spectral sequence for $S^1 \cup_{\eta} D^3$}  \label{CP2 example} Let $X = S^1 \cup_{\eta} D^3 = \Sigma^{-1} \Sinfty \mathbb C P^2$, so the spectral sequence is converging to $H_*(\Omega \Oinfty \Sinfty \mathbb C P^2)$.  Then
$H_*(X) = \langle x, y \rangle$ with $|x|=1$, $|y|=3$, and $ySq^2 = x$. Thus $d^1(y) = Q^1x$, and so $d^2(Q^3y) = Q^3Q^1x$, which is zero by the Dyer--Lashof Adem relations.  It follows that $Q^3y$ is an element in $L_1H_*(X)$ that is {\em not} in the Dyer--Lashof algebra module generated by $L_0H_*(X) = \langle x \rangle$.  Thus this is an example where the algebraic condition of \corref{alg = top cor 2} fails to hold, even while the geometric condition clearly does.

Another aspect of our theory easily seen here is that, though $E^1$ and $E^{\infty}$ support Dyer--Lashof operations, the in between pages needn't.  For example, $Q^1x \in B^2$, but $Q^4Q^1x = Q^3Q^2x \not \in B^2$.
\appendix
\section{Proof of \propref{S-mod prop}} \label{spectra appendix}

We need to explain the last property of $S$--modules listed in \propref{S-mod prop}.  This said that, given an $S$--module $X$, there is a weak natural equivalence $$\displaystyle \hocolim_n \Sigma^{-n}\Sinfty X_n \ra X.$$

We thank Mike Mandell for helping us be accurate in the following discussion.

Let $\Sinfty_n: \T \ra \text{Spectra}$ be left adjoint to $X \rightsquigarrow X_n$.  Recall that an $S$--module is a special sort of $\sL$--module.  The functor sending a spectrum $X$ to the $S$--module $S\sm_{\sL}\bL X$ is left adjoint to the functor sending an $S$--module $X$ to $F_{\sL}(S,X)$, just regarded as a spectrum (and not as an $\sL$--module).

There is a weak equivalence of $S$--modules
$$ S\sm_{\sL}\bL(\Sinfty_nX_n) \ra \Sigma^{-n} \Sinfty X_n $$
given as the adjoint to the composite of maps of spectra
$$ \Sinfty_nX_n \ra \Sigma^{-n} \Sinfty X_n \ra \Sigma^{-n} F_{\sL}(S,\Sinfty X_n) =  F_{\sL}(S,\Sigma^{-n}\Sinfty X_n).$$

There is a map of $S$--modules
$$ S\sm_{\sL}\bL(\Sinfty_nX_n) \ra X$$
given as the adjoint to the composite of maps of spectra
$$ \Sinfty_nX_n \ra \Sinfty_nF_{\sL}(S,X)_n \ra  F_{\sL}(S,X).$$

The desired weak natural equivalence is obtained by taking the hocolimit over $n$ of the zig-zag
$$ \Sigma^{-n} \Sinfty X_n \xla{\sim} S\sm_{\sL}\bL(\Sinfty_nX_n) \ra X$$

We note that the $n=0$ case of the zig-zag here has the form
$$ \Sinfty \Oinfty X \xla{\sim} S\sm_{\sL}\bL(\Sinfty \Oinfty X) \ra X,$$
which induces the evaluation (counit) map in the homotopy category.

\section{The tower $P(X)$ with its operad action} \label{tower appendix}

We explain how the results of \cite{ak} show that the operad $\C_{\infty}$ acts suitably on the tower $P(X)$ as described in \thmref{operad action on tower thm}.

The paper \cite{ak} explored the explicit model from \cite{arone} for the tower associated to the functor sending a space $Z$ to the spectrum $\Sinfty_+ \Map_\T(K,Z)$, where $K$ is a fixed CW complex.  Call this tower $P(K,Z)$, indicating its functoriality in both variables. (The more awkward notation $P^K(X)$ was used in \cite{ak}.)  It comes with a natural transformation $e: \Sinfty_+ \Map_\T(K,Z) \ra P(K,Z)$ which is an equivalence if the dimension of $K$ is less than the connectivity of $Z$.

We note that the properties of our category of spectra needed to form our constructions correspond to the first five properties of $\Sp$ listed in \propref{S-mod prop}.

The product theorem, \cite[Thm.1.4]{ak}, says that there is a weak natural equivalence of towers
$$ P(K \vee L,Z) \xra{\sim} P(K) \sm P(L).$$
This generalizes to more than two factors in a straightforward way.  In particular, if $\bigvee_d K$ denotes the wedge of $d$ copies of $K$, there is a $\Sigma_d$--equivariant map of towers of spectra
$$ P(\bigvee_d K, X) \ra P(K)^{\sm d}$$
which is a nonequivariant equivalence.

Specialized to $K=S^n$, one gets a tower $P(S^n,Z)$ approximating $\Sinfty_+ \Omega^n Z$ with $d$th fiber naturally weakly equivalent to $\C_n(d)_+ \sm_{\Sigma_d}(\Sigma^{-n}Z)^{\sm d}$, as expected.  Here $\C_n$ is the little $n$--cubes operad.

The naturality and continuity of the $P(K,Z)$ construction in the variable $K$ make it quite easy to define maps of towers
$$ \Theta(d): \C_n(d)_+ \sm_{\Sigma_d} P(\bigvee_d S^n,Z) \ra P(S^n,Z)$$
compatible with the usual $\C_n$ operad action  on $\Omega^n Z$ \cite{may}.  In particular, from \cite[Thm.1.10]{ak}, we learn that the square in the diagram
\begin{equation*}
\xymatrix{
& \Sinfty_+ \C_n(d) \times_{\Sigma_d} (\Omega^n Z)^d \ar[d]^{\C_n(d)_+ \sm_{\Sigma_d} e^{\sm d}} \ar[r]^-{\Theta(d)} & \Sinfty_+ \Omega^n Z \ar[d]^e  \\
\C_n(d)_+ \sm_{\Sigma_d} P(S^n,Z)^{\sm d} & \C_n(d)_+ \sm_{\Sigma_d} P(\bigvee_d S^n,Z)\ar[r]^-{\Theta(d)} \ar[l]_-{\sim}& P(S^n,Z) }
\end{equation*}
commutes.  Furthermore, the map on fibers induced by the map of towers corresponds to the maps induced by the operad structure in the expected way.

Given a spectrum $X$, our tower is then defined to be
$$ P(X) = \hocolim_n P(S^n, X_n),$$
where the homotopy colimit is over natural transformations
$$ P(S^n,X_n) \xra{\sm} P(S^{n+1},\Sigma X_n) \ra P(S^{n+1}, X_{n+1}).$$
Here the first map is the smashing map from \cite[Thm.1.1]{ak}.

The $d$th fiber of the tower $P(X)$ then naturally identifies with
$$\hocolim_n  \C_n(d)_+ \sm_{\Sigma_d}(\Sigma^{-n}\Sinfty X_n)^{\sm d} \simeq \C_{\infty}(d)_+ \sm_{\Sigma_d}X^{\sm d} = D_dX.$$

Finally the weak natural transformation $e: \Sinfty_+ \Oinfty X \ra P(X)$ is defined as the composite
$$ \Sinfty_+ \Oinfty X \xla{\sim} \hocolim_n \Sinfty_+ \Omega^n X_n \xra{\hocolim_n e} \hocolim_n P(S^n, X_n),$$
and the diagram of \thmref{operad action on tower thm} is obtained by taking the hocolimit over $n$ of diagrams as above (with $d$ specialized to 2).

%
%
%
%

\end{document}